\DeclareMathOperator{\Proj}{Proj}
\DeclareMathOperator{\Spec}{Spec}
\DeclareMathOperator{\Supp}{Supp}
\DeclareMathOperator{\vol}{vol}
\DeclareMathOperator{\Ivol}{Ivol}
\DeclareMathOperator{\sdeg}{sdeg}
\DeclareMathOperator{\Nklt}{Nklt}
 \numberwithin{equation}{subsection}
 \numberwithin{footnote}{subsection}
 \newtheorem{lem}[subsection]{Lemma}
 \newtheorem{prop}[subsection]{Proposition}
 \newtheorem{thm}[subsection]{Theorem}
 \newtheorem{conj}[subsection]{Conjecture}
 \newtheorem{quest}[subsection]{Question}
    \newtheoremstyle{upright}%
        {8pt plus2pt minus4pt}%
        {8pt plus2pt minus4pt}%
        {\upshape}%
        {}%
        {\bfseries\scshape}%
        {}%
        {1em}%
        {}%
\theoremstyle{upright}
 \newtheorem{defn}[subsection]{Definition}
 \newtheorem{exa}[subsection]{Example}
 \newcommand{\N}{\mathbb N}
 \newcommand{\PP}{\mathbb P}
 \newcommand{\A}{\mathbb A}
 \newcommand{\Q}{\mathbb Q}
 \newcommand{\R}{\mathbb R}
 \newcommand{\Z}{\mathbb Z}
  \newcommand{\C}{\mathbb C}
 \newcommand{\bir}{\dashrightarrow}
 \newcommand{\rddown}[1]{\left\lfloor{#1}\right\rfloor} 
\title{\large M\MakeLowercase{oduli of algebraic varieties}}
\thanks{2010 MSC:
14J10,  
14J32,  
14J45, 
14E30. 
14J17, 
}
\author{\large C\MakeLowercase{aucher} B\MakeLowercase{irkar}}
\date{\today}
\begin{document}
\maketitle

\begin{abstract}
We develop a moduli theory of algebraic varieties and pairs of non-negative Kodaira dimension. 
We define stable minimal models and construct their projective coarse moduli spaces under certain natural conditions. This can be applied to a wide range of moduli problems in algebraic geometry.
\end{abstract}

\tableofcontents


\section{\bf Introduction}

We work over an algebraically closed field $k$ of characteristic zero unless stated otherwise.\\

In this paper, we investiage compact moduli spaces of algebraic varieties. 
Moduli theory is an old and extensively studied area in algebraic geometry. It is very closely related to birational geometry. According to standard conjectures in birational geometry, any smooth projective variety $W$ of Kodaria dimension $\kappa(W)\ge 0$ is birational to a good minimal variety, that is, a projective variety $X$ with good singularities such that $mK_X$ is base point free for some $m\in \N$. 
From the point of view of classification of algebraic varieties it is then quite natural to focus on the moduli of such minimal varieties.
Not suprisingly, historically, such varieties have been at the centre of attention in moduli theory, especially when $\kappa(W)=\dim W$ and $\kappa(W)=0$.

Riemann constructed moduli spaces $M_g$ of smooth projective curves of fixed genus $g$. It was only in the 1960's that a meaningful compactification of these moduli spaces for $g\ge 2$ was constructed in \cite{DM69} via the introduction of stable nodal curves. 
In \cite{KSB88} a program was initiated to extend the theory to surfaces of general type. The program was completed in \cite{Al94}. They defined the so-called KSBA-stable varieties of general type, which are projective schemes $X$ with semi-log canonical (slc) singularities and ample canonical divisor $K_X$. Slc singularities are analogues of nodal singularities for curves. The moduli theory of KSBA-stable varieties of general type has been extended to higher dimension in full generality by contributions of many people, culminating in \cite{K21} (also see \cite{Al96}). This theory heavily relies on developments in birational geometry, in particular, on existence of minimal models \cite{BCHM10} and boundedness of varieties of general type \cite{HMX18}. The latter says that to get a finite type moduli space one should fix $d=\dim X$ and fix the volume $\vol(K_X)=K_X^d$. 

How about moduli of varieties that are not of general type? In this case, in the absence of a natural polarisation, unlike the case of general type, one needs to add a polarisation. One option is to consider polarisation by an ample invertible sheaf (cf. \cite{V95}) in which case the moduli space usually cannot be compactified in a ``meaningful way". Another option is to consider polarisation by an effective ample divisor (or ``weakly ample" effective divisor as we see below) which is much more suitable for construction of compact moduli spaces. In some special cases the two options coverge.
  
In the literature, the focus has been largely on certain classes of Calabi-Yau varieties. 
An example is that of elliptic curves where one marks a point on the curve. The moduli space of such marked curves can be compactified in a natural way by adding marked degenerations of elliptic curves. Similarly, in higher dimensions abelian varieties with a theta divisor admit a moduli space that can be naturally compactified \cite{Al02}. On the other hand, moduli of log Calabi-Yau varieties relates to moduli of Fano varieties and other interesting classes of varieties. For example, 
\cite{Ha04} studied compactification of moduli of plane curves and in the process studied moduli of certain log Calabi-Yau surfaces. Inspired by these, \cite{KX20} defined polarised log Calabi-Yau varieties which are just stable log    
Calabi-Yau varieties by our definition below.

The purpose of this paper is 
\begin{itemize}
\item to define and study stable varieties of arbitrary non-negative Kodaira dimension that generalises both KSBA-stable varieties and polarised Calabi-Yau varieties, 
\item to establish their boundedness under natural assumptions, and 
\item then construct their projective moduli spaces. 
\end{itemize}
In the Calabi-Yau case, boundedness was established in \cite{B20} which says that to get a finite type moduli space one should fix $d=\dim X$ and fix the volume $\vol(A)=A^d$ of the polarisation. Boundedness in the intermediate Kodaira dimension cases will be established in this paper. To achieve this we introduce new ideas and techniques and also make use of relevant works in recent years including \cite{B19}, \cite{B21a}, \cite{B22}, \cite{B20}, \cite{B21b}, \cite{BH22}, the theory of generalised pairs \cite{BZ16}, and \cite{HMX18}, etc. To construct the moduli spaces we do not rely on existence of moduli spaces of KSBA-stable varieties of general type directly but instead we use the tools in its construction. Our proof in the KSBA-stable general type case is not new and simply reduces to that given in \cite{K21}. Having said that, we still get new moduli functors in the general type case when $K_X$ is nef and big but not ample.  

In its simple form, a \emph{stable minimal model} $X,A$ consists of a connected projective scheme $X$ and a divisor $A\ge 0$ such that   
\begin{itemize}
\item $X$ has slc singularities,
\item $K_X$ is semi-ample, 
\item $K_X+tA$ is ample for some $t>0$, and 
\item $(X,tA)$ is slc for some $t>0$.
\end{itemize}
Note that $A$ does not need to be ample globally and this is what we meant by ``weakly ample". Indeed, if $X\to Z$ is the contraction defined by $K_X$, then $K_X+tA$ being ample for some $t>0$ is equivalent to saying $A$ is ample over $Z$. So $A$ is only a weak kind of polarisation. Moreover, $t$ is not fixed and we think of it as a sufficiently small number.

When $A=0$, $X$ is just a KSBA-stable scheme of general type. And when $K_X\sim_\Q 0$, $X,A$ is a stable Calabi-Yau scheme (that is, a polarised Calabi-Yau). 

In order to get a finite type moduli space of stable minimal models, we fix $d=\dim X$, fix the volume of $A$ along general fibres of $X\to Z$, and fix a polynomial $\sigma$ so that $\sigma(t)=\vol(K_X+tA)$ for small $t>0$. So we need to fix more data compared with the general type and Calabi-Yau cases but this is just a reflection of the fact that we are treating a much more complex situation. 

We will develop the theory in the setting of minimal model pairs $(X,B)$ with slc singularities. In particular, this also allows us to construct moduli spaces of Fano varieties and Fano fibrations that are suitably polarised. 

Quasi-projective moduli spaces of smooth good minimal models $X$ polarised by an ample invertible sheaf $\mathcal{L}$ was constructed in \cite{V95}. Although this is a different approach to moduli but we should emphasize that whatever approach one takes, our boundedness results in this paper and earlier works (\cite{B20}, \cite{B21b}) are key for the full or partial compactification of moduli spaces, e.g. see \cite{Od20},\cite{Od21} for the case of Calabi-Yau varieties. It is also worth mentioning that in \cite{V95}, a double Hilbert polynomial $\mathcal{X}(\omega_X^{m}\otimes \mathcal{L}^{n})$ is fixed to get a finite type moduli space. In our approach, we fix a weaker numerical version, that is, we fix a polynomial $\sigma$ controlling the volume of $K_X+tA$. 

In the rest of this introduction, we define our main concepts in a general form and state our main results concerning boundedness and moduli of stable minimal models as well as other auxilliary results, e.g.  Stein degrees, which are of independent interest. In Section \ref{s-examples} we will look at special cases of moduli spaces and some open problems. 

\vspace{0.5cm}
{\textbf{\sffamily{Stable minimal models.}}} 
We start with defining our stable objects. For the definition of log canonical (lc) and slc pairs, see \ref{ss-pairs} and \ref{ss-slc-pairs}. 

\begin{defn}\label{d-stabl-mmodels-I}
A \emph{stable minimal model} $(X,B),A$ over $k$ consists of $(X,B)$ and an $\R$-divisor $A\ge 0$ where
\begin{itemize}
\item $(X,B)$ is a projective connected slc pair,
\item $K_X+B$ is semi-ample defining a contraction $f\colon X\to Z$, 
\item $K_X+B+tA$ is ample for some $t>0$, i.e. $A$ is ample over $Z$, and 
\item $(X,B+tA)$ is slc for some $t>0$.\
\end{itemize}
We usually denote the model by $(X,B),A$ or more precisely $(X,B),A\to Z$. As pointed out above, $t$ is not fixed and we usually think of it as a sufficiently small number. 
Note that the last condition of the definition just means that $\Supp A$ does not contain any non-klt centre of $(X,B)$. When $(X,B)$ is klt, this condition is automatically satisfied.

We say $(X,B),A$ is \emph{strongly stable} if in addition $K_X+B+A$ is ample. Strongly stable minimal models were defined and studied in \cite{B21b} in the lc case.

A stable minimal model $(X,B),A$ is an \emph{lc stable minimal model} if $(X,B)$ is lc. 
\end{defn}

\begin{exa}
Let $X=\PP^2$ and let $B$ be a nodal cubic curve. Let $A$ be any curve not passing through the node. Then $(X,B),A$ is a stable minimal model.   
\end{exa}

\begin{exa}
A KSBA-stable pair means a stable minimal model $(X,B),A$ with $A=0$. 
\end{exa}

\begin{exa}
A stable Calabi-Yau pair means a stable minimal model $(X,B),A$ with $K_X+B\sim_\R 0$. 
\end{exa}

\begin{exa}
Assume that $(X,B),A$ is an lc stable minimal model of dimension 3 and $X\to Z$ is the corresponding contraction. Then $\dim Z$ is equal to the Kodaira dimension $\kappa(K_X+B)$. We have the following cases:
\begin{itemize}
\item $\kappa(K_X+B)=3$: $X\to Z$ is birational, $(X,B)$ is of general type (but $K_X+B$ may not be ample),

\item $\kappa(K_X+B)=2$: $X\to Z$ is a fibration of relative dimension $1$, an elliptic fibration  or a conic bundle,

\item $\kappa(K_X+B)=1$: $X\to Z$ is a fibration of relative dimension $2$, an abelian fibration or K3 fibration or del Pezzo fibration, etc,

\item $\kappa(K_X+B)=0$: $X\to Z$ is constant, $(X,B)$ is Calabi-Yau.
\end{itemize}
\end{exa}

\begin{exa}
A main source of examples of slc stable minimal models is degeneration of lc stable minimal models. 
A family of lc stable minimal models over a smooth curve can be extended to a family over the compactification of the curve, after a finite base change. For the precise statement, see \ref{l-s.min.model-extension-over-curves}.  All the fibres are then stable minimal models and the special fibres are often not normal. A simple example can be constructed by degenerating an elliptic curve into the union of a conic and a line. 
\end{exa}

\begin{exa} 
Another main source of examples comes from singularities. Let $(V,\Delta)$ be a projective lc pair with $K_V+\Delta$ ample and let $(Y,\Theta)$ be a $\Q$-factorial dlt model. Let $X=\rddown{\Theta}$ and define $K_X+B=(K_Y+\Theta)|_X$. Then $K_X+B$ is semi-ample defining a contraction $X\to Z$. Moreover, we have an induced surjective finite morphism $Z\to N$ where $N$ is the non-klt locus of $(V,\Delta)$. By the connectedness principle, $X\to N$ has connected fibres, so $Z\to N$ is bijective. On the other hand, $N$ is semi-normal \cite{Am03},\cite{F11}, so $Z\to N$ is an isomorphism. 
 Pick any divisor $A$ on $X$ that is ample over $Z$ and such that $(X,B+tA)$ is slc for some $t>0$. Then each connected component of $(X,B),A\to Z$ is a stable minimal model. In particular, in this way we get lots of examples in which $Z$ is not pure dimensional. 

For a specific example, let $V=\PP^3$ and let $S_1,\dots,S_7$ be hyperplanes. Assume $S_1,S_2,S_3$ pass through a line $L$ but that $S_4,\dots,S_7$ are in general position. Let 
$$
\Delta=\sum_{i=1}^3\frac{2}{3}S_i+S_4+\sum_{i=5}^7 \frac{1}{2}S_i.
$$ 
Then $K_V+\Delta$ is ample and $(V,\Delta)$ is lc with non-klt locus $N=S_4\cup L$. Take $Y\to V$ to be the blowup along $L$ and $K_Y+\Theta$ be the pullback of $K_V+\Delta$. Then $X=\rddown{\Theta}=S_4^\sim+E$ where $S_4^\sim$ is the birational transform of $S_4$ and $E$ is the exceptional divisor, and $X\to Z=N$ is the induced morphism. 
\end{exa}

\begin{defn}\label{d-stabl-mmodels-II}
Given a set of data $\Xi$, a $\Xi$-stable minimal model is a stable minimal model satisfying the given data. We then denote the set of all stable minimal models $(X,B),A$ with data $\Xi$ as $\mathcal{S}_{\rm slc}\Xi$. The subset consisting of the lc stable minimal models is denoted by $\mathcal{S}_{\rm lc}\Xi$. The klt case is similarly defined. 

To be more precise, let 
$$
d\in \N, ~~~~\Phi,\Gamma\subset \R^{\ge 0}, ~~~~ \sigma\in \R[t]
$$ 
where $\R[t]$ is the polynomial ring in one variable over $\R$. We will define various classes using this data.

(1) A $(d,\Phi)$-stable minimal model is a stable minimal model $(X,B),A$ such that    
\begin{itemize}
\item $\dim X=d$, and
\item the coefficients of $B$ and $A$ are in $\Phi$.
\end{itemize}
The set of all the $(d,\Phi)$-stable minimal models is denoted $\mathcal{S}_{\rm slc}(d,\Phi)$.

(2) A $(d,\Phi,\Gamma,\sigma)$-stable minimal model is a $(d,\Phi)$-stable minimal model $(X,B),A$ such that    
\begin{itemize}
\item $\vol(A|_F)\in \Gamma$ where $F$ is any general fibre of $f\colon X\to Z$ over any irreducible component of $Z$, and  
\item we have 
$$
\vol(K_X+B+tA)=\sigma(t) \ \ \mbox{for} \ \ 0\le t \ll 1.
$$
\end{itemize}
Let $\mathcal{S}_{\rm slc}(d,\Phi,\Gamma,\sigma)$ denote the set of all $(d,\Phi,\Gamma,\sigma)$-stable minimal models.
\end{defn}

Although $t$ is a variable in $\sigma(t)$ but we often abuse language and use it also to denote real numbers. 

\begin{exa}
A $(d,\Phi,\Gamma,\sigma)$-stable minimal model $(X,B),A$ is a KSBA-stable pair iff $A=0$ iff $\sigma=\vol(K_X+B)$ is constant. 
\end{exa}

\begin{exa}
A $(d,\Phi,\Gamma,\sigma)$-stable minimal model $(X,B),A$ is a stable Calabi-Yau pair iff $K_X+B\sim_\R 0$ iff $\sigma(t)=\vol(A)t^{d}$ is a monomial of degree $d$. Existence of moduli spaces for such pairs was established in the first version of \cite{B20} when $\Phi=c\Z^{\ge 0}$ for a positive rational number $c$ and $\Gamma=\{u\}$ for a positive rational number $u$.     
But the proofs have been moved to this article and integrated into the main theorem on existence of moduli spaces (Theorem \ref{t-moduli-s.min.models}). 
\end{exa}

\begin{exa}
Consider a smooth minimal projective surface $X$ of Kodaira dimension one. Let $X\to Z$ be the contraction defined by $K_X$ and assume that the Iitaka volume $\Ivol(K_X)=v$ (see \ref{ss-vol-Ivol} for the definition of Iitaka volumes). Assume $A$ is a multi-section of $X\to Z$ of degree $u$. Let $\sigma(t)=2uvt+A^2t^2$. Then $(X,0),A\to Z$ is a $(2,\{1\},\{u\},\sigma)$-stable minimal model. 
\end{exa}

\vspace{0.5cm}
{\textbf{\sffamily{Boundedness of stable minimal models.}}} 
We are ready to state our boundedness result which is one of the main results of this paper.  See \ref{ss-bnd-s.min.models} for the definition of boundedness of stable minimal models.

\begin{thm}\label{t-bnd-s-mmodels-slc}
Let $d\in\N$, $\Phi\subset \Q^{\ge 0}$ be a DCC set, $\Gamma\subset \Q^{>0}$ be a finite set, and $\sigma\in\Q[t]$ be a polynomial. Then $\mathcal{S}_{\rm slc}(d,\Phi,\Gamma,\sigma)$ is a bounded family.
\end{thm}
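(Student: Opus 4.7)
The strategy is to reduce slc to lc via normalisation, descend from $X$ to the base $Z$ of the Iitaka fibration of $K_X+B$ using the canonical bundle formula, bound the base and the generic fibre separately, and then bootstrap to boundedness of the total space.

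For the reduction, the normalisation of an slc stable minimal model splits it into finitely many lc pairs $(X^\nu,B^\nu+D^\nu)$ where $D^\nu$ is the conductor: semi-ampleness of $K_X+B$ is preserved, the coefficients stay in a DCC set (enlarged by $\{1\}$), and the gluing data (the conductor involution) is controlled once the normalised components are bounded. So it suffices to treat the lc case.

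In the lc case, let $f\colon X\to Z$ be the contraction defined by $K_X+B$, so $K_X+B\sim_\R f^*L$ with $L$ ample on $Z$, and apply the canonical bundle formula to write $L=K_Z+B_Z+M_Z$, producing a generalised lc pair $(Z,B_Z+M_Z)$ of log general type. Expanding $\sigma(t)=(f^*L+tA)^d$, the terms with $d-k>\dim Z$ vanish by the projection formula, giving
\[
\sigma(t)=\sum_{k=r}^{d}\binom{d}{k}t^{k}(f^*L)^{d-k}\cdot A^{k},\qquad r=d-\dim Z,
\]
with coefficient $\binom{d}{r}\vol(L)\vol(A|_F)$ at $t^r$ for a general fibre $F$. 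Since $\sigma$ is fixed and $\vol(A|_F)\in\Gamma$ is finite, we may stratify and assume $\dim Z$ and $\vol(L)=\Ivol(K_X+B)$ are fixed. The general fibre $(F,B_F),A|_F$ is then a stable log Calabi-Yau pair in dimension $r$ with coefficients in $\Phi$ and fixed polarisation volume, hence forms a bounded family by \cite{B20}. For the base $(Z,B_Z+M_Z)$ I would invoke boundedness of generalised lc pairs of log general type with fixed volume: DCC for the coefficients of $B_Z$ propagates from DCC of $\Phi$ through adjunction in the canonical bundle formula, while a uniform bound on the ``Cartier index'' of $M_Z$ follows from effective b-semi-ampleness applied to the bounded family of general fibres.

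With $Z$ and the generic fibre each bounded, a bounded-fibration argument places $X\to Z$ in a bounded family of fibrations, and effective very ampleness of $K_X+B+t_0A$ for a uniform rational $t_0>0$ embeds every $X$ into a fixed projective space with bounded Hilbert polynomial. The main obstacle will be establishing such a uniform $t_0$: one needs a lower bound on the slc threshold of $A$ with respect to $(X,B)$ (from ACC of lc thresholds, \cite{HMX18}) together with a uniform lower bound on the ``ample threshold'', that is, the largest $t$ for which $f^*L+tA$ is still ample, which amounts to controlling the relative nef cone of $X/Z$ across the family. Handling these thresholds uniformly, and then verifying that the gluing data in the slc-to-lc reduction also lives in a bounded family, is where the bulk of the technical work will lie.
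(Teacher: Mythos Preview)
The reduction from slc to lc hides the central difficulty of the theorem, and as written it does not go through. After normalising, each irreducible component $(X_j^\nu,B_j^\nu),A_j^\nu$ is indeed an lc stable minimal model, but it is \emph{not} a $(d,\Phi,\Gamma,\sigma)$-stable minimal model: there is no individual polynomial $\sigma_j$ determined by the data, and---more seriously---the contraction $f_j\colon X_j^\nu\to Z_j$ defined by $K_{X_j^\nu}+B_j^\nu$ need not dominate a component of $Z$. When the image $V_j\subset Z$ is a proper subvariety, a general fibre of $f_j$ is \emph{not} a general fibre of $X\to Z$, so the hypothesis $\vol(A|_F)\in\Gamma$ gives you nothing about $\vol(A_j^\nu|_{F_j^\nu})$, and the Iitaka volume $\Ivol(K_{X_j^\nu}+B_j^\nu)$ is not controlled by any coefficient of $\sigma$. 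Thus you cannot simply invoke the lc case on each component. The paper handles exactly this problem: it first proves a ``weighted'' boundedness statement (Proposition~\ref{p-bnd-w-s-min-models-DCC}) that replaces the $\Gamma$-condition by a DCC condition on the Iitaka volumes $\Ivol(K_{X_j^\nu}+B_j^\nu)$, and then---for components with $\dim Z_j<\dim Z$---establishes that DCC by chaining along the conductor divisors to a component with $\dim Z_j=\dim Z$ and comparing Iitaka volumes via bounded Stein degrees of non-klt centres on log Calabi-Yau fibrations (Theorem~\ref{t-bnd-stein-deg-lc-main} and Section~\ref{ss-stein-deg}). This Stein-degree machinery is the new ingredient and cannot be bypassed by a direct normalise-then-glue argument.

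A secondary issue: in your lc step you appeal to ``boundedness of generalised lc pairs of log general type with fixed volume''. That statement is false in general (see \cite[Subsection 5.3]{BH22}); what is true, and what the paper uses (Theorem~\ref{t-bnd-base-adjunction}), is boundedness for the specific generalised pairs that arise by adjunction from a log Calabi-Yau fibration with bounded general fibre. Your remark about effective b-semi-ampleness is in the right spirit, but the statement as phrased is too strong.
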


In Proposition \ref{p-bnd-w-s-min-models-DCC} we prove a different boundedness statement, which instead of the condition on the volume of $A$ along the general fibres of $X\to Z$, assumes DCC property of Iitaka volumes after normalising $X$. 


When $\sigma$ is constant, the theorem is boundedness of KSBA-stable minimal models which is \cite[Theorem 1.1]{HMX18}. When $\sigma$ is of the form $ut^d$, the theorem is boundedness of stable Calabi-Yau pairs which is \cite[Corollary 1.8]{B20}. Treating the general case of the theorem 
occupies a large proportion of this paper where some of the main new ideas are introduced. 

It is natural to ask whether the theorem holds if we remove $\sigma$ and instead fix the Iitaka volume of $K_X+B$. It is not hard to see that the theorem would not hold. For example, consider Hirzebruch surfaces $X$ with their $\PP^1$-bundle structure $X\to Z=\PP^1$ and their toric boundary 
$\Delta$, and a section $A$ of $X\to Z$. Let $B$ be the sum of $\Delta$ and a general fibre $F$ of $X\to Z$. Then we can choose $A$ so that $(X,B),A$ is a stable minimal model. But such models are not bounded although $\dim X=2$, $B,A$ are integral, $\vol(A|_F)=1$, and $\Ivol(K_X+B)=1$.    

However, conjecturally we can remove the condition on $\vol(A|_F)$ (see \ref{conj-no-vertical-volume}). 

\vspace{0.5cm}
{\textbf{\sffamily{Moduli of stable minimal models.}}} 
We now define families of minimal models and the corresponding moduli functors. For simplicity we will restrict ourselves to reduced base schemes as we will have enough difficulties to overcome. 
See section \ref{s-locally-stable-families} for the relevant definitions of families of pairs over reduced bases.

\begin{defn}\label{defn-stable-minimal-family}
Let $S$ be a reduced scheme over $k$. 

(1) 
When $S=\Spec K$ for a field $K$, define a stable minimal model over $K$ as in \ref{d-stabl-mmodels-I} replacing $k$ with $K$ and replacing connected with geometrically connected (slc pairs over $K$ are defined in \ref{ss-slc-pairs}). Similarly we define $(d,\Phi,\Gamma,\sigma)$-stable minimal models over $K$.

(2)
For general $S$, a \emph{family of stable minimal models} over $S$ consists of a 
projective morphism $X\to S$ of schemes and $\Q$-divisors $B,A$ on $X$ such that  
\begin{itemize}
\item $(X,B+tA)\to S$ is a locally stable family for every sufficienty small rational number $t\ge 0$, and 

\item  $({X_s},B_s),A_s$ is a stable minimal model over $k(s)$ for each $s\in S$.
\end{itemize}
Here $X_s$ is the fibre of $X\to S$ over $s$ and $B_s,A_s$ are the divisorial pullbacks of $B,A$ to $X_s$.
We will denote the family by $(X,B),A\to S$ (but this should not be confused with the notation $(X,B),A\to Z$ in \ref{d-stabl-mmodels-I}). 

We say the family is a \emph{family of lc stable minimal models} if $({X_s},B_s)$ is lc for each $s\in S$. 

(3)
Now assume $d\in\N$, $c\in \Q^{\ge 0}$, $\Gamma\subset \Q^{>0}$, and $\sigma\in\Q[t]$ is a polynomial.
To simplify notation, put $\Phi_c:=c\Z^{\ge 0}$.
A \emph{family of $(d,\Phi_c, \Gamma,\sigma)$-stable minimal models} over $S$ is a family of stable minimal models $(X,B),A\to S$ such that   
\begin{itemize}
\item $B=cD$ and $A=cN$ where $D,N\ge 0$ are relative Mumford divisors, and

\item $(X_s,B_s),A_s$ is a $(d,\Phi_c, \Gamma,\sigma)$-stable minimal model over $k(s)$ for each $s\in S$.
\end{itemize}
\end{defn}
The reason for taking $\Phi_c=c\Z^{\ge 0}$ rather than a more general set $\Phi$ is that when we consider log fibres $(X_s,B_s)$ of the family we want to have the same type of coefficients. This is ensured for $c\Z^{\ge 0}$ but not for arbitrary DCC sets.

(4)
Define the moduli functor $\mathfrak{S_{\rm slc}}(d,\Phi_c, \Gamma,\sigma)$ on the category of reduced $k$-schemes by setting  
$$
\mathfrak{S_{\rm slc}}(d,\Phi_c, \Gamma,\sigma)(S)=\{\mbox{families of $(d,\Phi_c, \Gamma,\sigma)$-stable minimal models over $S$},
$$
$$
\hspace{9cm} \mbox{ up to isomorphism over $S$}\}.
$$
Define ${\mathfrak{S}_{\rm lc}}(d,\Phi_c, \Gamma,\sigma)$ similarly by replacing slc with lc. 

Given a morphism of reduced schemes $T\to S$, the map 
$$
\mathfrak{S_{\rm slc}}(d,\Phi_c, \Gamma,\sigma)(S)\to \mathfrak{S_{\rm slc}}(d,\Phi_c, \Gamma,\sigma)(T)
$$ 
is defined as in \ref{ss-base-change-fam.s.m.model} (similarly for the lc case).

\begin{thm}\label{t-moduli-s.min.models}
The functor $\mathfrak{S_{\rm slc}}(d,\Phi_c, \Gamma,\sigma)$ has a projective coarse moduli space ${M_{slc}}(d,\Phi_c, \Gamma,\sigma)$. Moreover, the functor $\mathfrak{S_{\rm lc}}(d,\Phi_c, \Gamma,\sigma)$ has a quasi-projective coarse moduli space ${M_{\rm lc}}(d,\Phi_c, \Gamma,\sigma)$ which is an open subset of ${M_{\rm slc}}(d,\Phi_c, \Gamma,\sigma)$. 
\end{thm}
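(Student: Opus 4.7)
The plan is to adapt the classical construction of coarse moduli of polarised varieties to stable minimal models, where $A$ is only relatively ample over $Z$, using four ingredients already in place in the paper: boundedness of the class (Theorem \ref{t-bnd-s-mmodels-slc}), the theory of locally stable families developed in Section \ref{s-locally-stable-families}, the extension lemma over curves \ref{l-s.min.model-extension-over-curves}, and finiteness of automorphisms of stable minimal models coming from ampleness of $K_X+B+t_0 A$. I would carry out (i) construction of a locally closed finite-type parameter scheme $H$; (ii) formation of a quotient $M_{\rm slc}:=H/G$ as a coarse moduli; (iii) verification of the valuative criteria; (iv) descent of an ample CM-type line bundle; and finally derive the lc case as an open subfunctor.

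\textbf{Step 1: parameter space.} Theorem \ref{t-bnd-s-mmodels-slc} together with the fixed polynomial $\sigma$ provides uniform $t_0\in\Q^{>0}$ and $m\in\N$ such that for every $(X,B),A$ in the class the divisor $m(K_X+B+t_0A)$ is very ample with vanishing higher cohomology and fixed Hilbert polynomial, and $(X,B+t_0A)$ is slc. Embedding each such $(X,B+t_0A)$ in a fixed $\PP^N$ together with the Mumford divisors $D,N$ satisfying $B=cD$, $A=cN$, one obtains a Hilbert-type parameter scheme. The stable minimal model condition cuts out a locally closed subscheme $H$: local stability and slc are open in flat families over reduced bases, relative ampleness of $K+B+tA$ over $Z$ is an open condition, and the fibrewise volume constraints $\vol(A|_F)\in\Gamma$ and $\vol(K+B+tA)=\sigma(t)$ are constructible and locally closed. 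So $H$ is of finite type.

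\textbf{Step 2: quotient, separatedness, properness.} The group $G=\mathrm{PGL}(N+1)$ acts on $H$ with orbits equal to isomorphism classes of members of $\mathfrak{S}_{\rm slc}(d,\Phi_c,\Gamma,\sigma)$; stabilisers are finite and reduced because $m(K_X+B+t_0A)$ is an ample, canonically defined polarisation. By Keel--Mori the stack $[H/G]$ has a separated coarse moduli space $M_{\rm slc}(d,\Phi_c,\Gamma,\sigma)$ of finite type over $k$, and one checks that it corepresents $\mathfrak{S}_{\rm slc}(d,\Phi_c,\Gamma,\sigma)$ on reduced bases. Separatedness reduces to the fact that two families of stable minimal models over a DVR with isomorphic generic fibres are isomorphic: run the relative MMP for $(X/S,B+t_0A)$, observe that each family already is its own relative log canonical model, and invoke uniqueness of canonical models. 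Properness is the valuative criterion, supplied by Lemma \ref{l-s.min.model-extension-over-curves}: families over a punctured smooth curve extend, after finite base change, uniquely to the whole curve.

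\textbf{Step 3: projectivity and the lc locus.} For the fixed $m,t_0$ of Step 1 the determinant $\lambda_m=\det f_*\mathcal{O}_X(m(K_{X/S}+B+t_0A))$ on $H$ is $G$-equivariant and has trivial character on the finite stabilisers, so it descends to a line bundle on $M_{\rm slc}$; nefness of some multiple of $\lambda_m$ follows from the positivity theorems for direct images of pluri log canonical sheaves on locally stable families. Kollár's ampleness lemma then upgrades nefness to ampleness of a high tensor power, using the Matsusaka-type observation that in an isotrivial family of stable minimal models all fibres are isomorphic. Hence $M_{\rm slc}(d,\Phi_c,\Gamma,\sigma)$ is projective. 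Finally, $(X_s,B_s)$ being lc is an open condition in any family of slc stable minimal models (equivalently, normality of $X_s$), so $M_{\rm lc}(d,\Phi_c,\Gamma,\sigma)$ is an open subscheme of $M_{\rm slc}$ and therefore quasi-projective. I expect the main obstacle to be the projectivity step: unlike the KSBA case the polarisation $K_X+B$ is only semi-ample, so one must carefully use the small parameter $t_0$ and the fixed Hilbert polynomial to extract enough positivity from $K+B+t_0A$ and show that the CM-type bundle descends to an ample line bundle across the full bounded family.
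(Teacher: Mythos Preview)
Your proposal is correct and follows essentially the same route as the paper: pick uniform $a=t_0$ and $r=m$ from boundedness (the paper packages this as Lemma \ref{l-bnd-lct-v.ampleness}), build a finite-type parameter scheme of strongly embedded objects, take the $\mathrm{PGL}$-quotient via Keel--Mori, get properness from the extension lemma over curves, and invoke the Koll\'ar--Fujino--Kov\'acs--Patakfalvi positivity package for projectivity; the lc locus is open by normality of fibres. Two minor remarks: the Hilbert polynomial is not literally fixed but takes finitely many values (the paper stratifies by $n=h^0-1$), and the paper spends most of its effort on what you compress into ``cuts out a locally closed subscheme $H$'' --- it carries this out as a sequence of explicit locally closed partial decompositions (semi-ampleness of $K+B$, local stability of $(X,cD)$, the $\Gamma$ and $\sigma$ constraints, and matching the polarisation with $r(K+B+aA)$) using the representability of the functor of embedded marked locally stable pairs from \cite{K21}.
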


The closure 
$$
\overline{{M}_{\rm lc}}(d,\Phi_c, \Gamma,\sigma)\subset {M_{\rm slc}}(d,\Phi_c, \Gamma,\sigma)
$$ 
gives a ``meaningful" compactification of ${M_{\rm lc}}(d,\Phi_c, \Gamma,\sigma)$. In fact, we can define limits of lc $(d,\Phi_c, \Gamma,\sigma)$-stable minimal models and their corresponding functor $\overline{\mathfrak{S_{\rm lc}}}(d,\Phi_c, \Gamma,\sigma)$ whose moduli space would be $\overline{{M}_{\rm lc}}(d,\Phi_c, \Gamma,\sigma)$. 
We can also consider the functor $\overline{{M}_{\rm klt}}(d,\Phi_c, \Gamma,\sigma)$ for klt singularities and other functors for appropriate classes of singularities. The relevant moduli spaces can be derived from the theorem.

\vspace{0.5cm}
{\textbf{\sffamily{Stein degree on log Calabi-Yau fibrations.}}}
Let $S\to Z$ be a projective morphism between varieties and let $S\to V\to Z$ be the Stein factorisation. We define the \emph{Stein degree} of $S$ over $Z$ to be $\sdeg(S/Z):=\deg(V/Z)$. If $S\to Z$ is not surjective, this degree is $0$ by convention. 

We are mainly interested in Stein degrees on log Calabi-Yau fibrations. Recall that a log Calabi-Yau fibration $(X,B)\to Z$ consists of an lc pair $(X,B)$ and a contraction $X\to Z$ such that $K_X+B\sim_\R 0/Z$.

\begin{thm}\label{t-bnd-stein-deg-lc-main}
Let $d\in \N$. Let $(X,B)\to Z$ be a log Calabi-Yau fibration of dimension $d=\dim X$. 
Then $\sdeg(I/Z)$ is bounded from above depending only on $d$ for every non-klt centre $I$ of $(X,B)$.
\end{thm}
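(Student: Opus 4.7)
I would argue by induction on $d=\dim X$. The case $d\le 1$ is immediate, since a non-klt centre is then a zero-dimensional subscheme and $\sdeg(I/Z)\in\{0,1\}$. For the inductive step, I may assume $I$ dominates $Z$, otherwise $\sdeg(I/Z)=0$ by convention. Take a $\Q$-factorial dlt modification $\phi\colon(Y,B_Y)\to(X,B)$ which extracts all lc places, so $K_Y+B_Y=\phi^*(K_X+B)\sim_\R 0/Z$. Since $I$ is a non-klt centre, there is a prime component $S\subset\lfloor B_Y\rfloor$ with $\phi(S)=I$; the surjection $S\twoheadrightarrow I$ induces a surjection of the corresponding Stein factorisations over $Z$, so $\sdeg(I/Z)\le\sdeg(S/Z)$, and it suffices to bound the latter.

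By divisorial adjunction, $K_S+B_S:=(K_Y+B_Y)|_S$ gives a projective lc pair $(S,B_S)$ of dimension $d-1$ with $K_S+B_S\sim_\R 0/Z$. Let $S\to V\to Z$ be the Stein factorisation; then $(S,B_S)\to V$ is a log Calabi-Yau fibration of dimension $d-1$, and the inductive hypothesis bounds the Stein degree over $V$ of every non-klt centre of $(S,B_S)$ in terms of $d$ alone. The remaining task is to bound the degree $[V:Z]$ depending only on $d$. Over a general closed point $z\in Z$, this degree equals the number of connected components of $S_z\subset Y_z$; these components are prime components of $\lfloor B_{Y,z}\rfloor$ lying in a single Galois/monodromy orbit on the geometric generic fibre, all conjugate to $S$. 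To bound the size of this orbit, I would invoke structural results on lc centres of dlt log Calabi-Yau pairs --- for example Koll\'ar's source theory and the associated crepant $\mathbb{P}^1$-link equivalence, grouping lc centres into finitely many classes with controlled ``B-birational'' monodromy --- or alternatively perform a base change on $Z$ of bounded degree (via a Galois-closure / normalisation argument, or a refined connectedness principle) that trivialises the monodromy and exhibits $[V:Z]$ as the degree of this base change.

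The main obstacle is precisely this orbit bound. The total number of components of $\lfloor B_{Y,z}\rfloor$ in the fibre $Y_z$ depends on the Picard rank of $Y_z$ and is not controlled by $d$ alone; the argument must therefore genuinely exploit that the components in question form a single Galois orbit arising from one irreducible divisor $S$ on $Y$. Producing such an orbit bound depending only on $d$ is the key input beyond routine induction and adjunction, and I expect it to require either the structural theory of sources for dlt log Calabi-Yau pairs or a version of the Koll\'ar--Ambro connectedness theorem that remains effective in the absence of bigness of $-(K_X+B)$.
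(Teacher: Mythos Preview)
Your reduction is sound up to the point you yourself flag: after passing to a dlt model and a component $S$ of $\lfloor B_Y\rfloor$ above $I$, everything hinges on bounding $\deg(V/Z)=\sdeg(S/Z)$, and this is a gap that source theory, $\PP^1$-link equivalence, or connectedness does not close. Those tools tell you that minimal lc centres in the same connected component are crepant-birational over the base (hence share a common Stein degree), but they do not produce a numerical bound on that common degree in terms of $d$ alone. The Galois/monodromy orbit size is precisely what is at stake, and nothing in the connectedness package controls it without further input; a bounded-degree base change ``trivialising the monodromy'' is exactly what you are trying to establish, not something you can invoke.

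The paper's strategy is structurally different. Rather than bounding $\sdeg(S/Z)$ for an \emph{arbitrary} component $S$, it first proves (Lemma~\ref{l-stein-deg-lc-bnd-comp}) that on \emph{some} dlt crepant model $(X',B')$ there \emph{exists} a component $S'$ of $\lfloor B'\rfloor$ with $\sdeg(S'/Z)$ bounded depending only on $d$. The mechanism is an MMP argument: one runs an MMP on $K_X+B-\epsilon\lfloor B\rfloor$ to a Mori fibre space, and in the residual case where all horizontal coefficients equal $1$, the general fibre is $\delta$-lc Fano for a fixed $\delta>0$, so BAB-type boundedness \cite{B21a} bounds the number of boundary components on the fibre, hence the Stein degree of any one of them. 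Only then do comparison results (Lemmas~\ref{l-stein-deg-dlt} and~\ref{l-stein-deg-dlt-crep}) --- which \emph{are} in the spirit of source theory and $\PP^1$-links --- transfer this bound from the one good $S'$ to every minimal horizontal non-klt centre on every dlt model, and hence to every non-klt centre of $(X,B)$.

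So the missing ingredient in your outline is exactly the content of Lemma~\ref{l-stein-deg-lc-bnd-comp}, and it genuinely requires boundedness of Fano varieties; the structural theory you point to is used in the paper, but only as a comparison device once a single bounded-degree centre has been produced by other means.
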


This and related results in Section \ref{ss-stein-deg} form a crucial ingredient of the boundedness of slc stable minimal models. These are of independent interest and will likely find other applications. We conjecture a more general form of the theorem. 

\begin{conj}\label{conj-stein}
Let $d\in \N$ and let $t\in \R^{>0}$. 
Let $(X,B)\to Z$ be a log Calabi-Yau fibration of dimension $d$ and let $S$ be a horizontal$/Z$ component of $B$ whose coefficient in $B$ is $\ge t$. Then $\sdeg(S/Z)$ is bounded from above depending only on $d,t$.
\end{conj}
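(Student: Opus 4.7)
The plan is to argue by induction on $d$, reducing to Theorem \ref{t-bnd-stein-deg-lc-main} via an auxiliary MMP and the canonical bundle formula. Set $s := \mbox{coef}_S B$, so $s \geq t$. If $s = 1$, then $S$ is already a non-klt centre and Theorem \ref{t-bnd-stein-deg-lc-main} applies directly; so assume $s < 1$. I would set $B' := B + (1-s)S$, giving $S$ coefficient $1$ in $B'$, whence
$$
K_X + B' \sim_\R (1-s) S \ \ \mbox{over} \ Z,
$$
an effective horizontal $\R$-divisor. In particular $K_X + B'$ is relatively pseudo-effective over $Z$.

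The first step is to pass to a $\Q$-factorial dlt modification $(Y, B_Y') \to (X, B')$ and run a $(K_Y + B_Y')$-MMP over $Z$. I expect this to terminate with a good minimal model $(Y', B_{Y'}')$ equipped with a contraction $g\colon Y' \to Z'$ over $Z$ satisfying $K_{Y'} + B_{Y'}' \sim_\R 0/Z'$, so that $(Y', B_{Y'}') \to Z'$ is a log Calabi-Yau fibration of dimension $d$. Because $g$ is the Iitaka fibration of the semi-ample divisor $(1-s)S_{Y'}$ relative to $Z$ and $S_{Y'}$ is effective, $S_{Y'}$ must be vertical over $Z'$; write $S_{Y'} = g^*T$ for a prime divisor $T \subset Z'$. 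Since $S$ is horizontal over $Z$, so is $T$, and because $g$ has connected fibres and Stein degree is a birational invariant,
$$
\sdeg(S/Z) = \sdeg(S_{Y'}/Z) = \sdeg(T/Z).
$$

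Next I would invoke the canonical bundle formula on $(Y', B_{Y'}')\to Z'$ to obtain a generalized lc pair $(Z', B_{Z'}, M_{Z'})$ with $K_{Y'} + B_{Y'}' \sim_\R g^*(K_{Z'} + B_{Z'} + M_{Z'})$. Since $g^*T = S_{Y'}$ and $\mbox{coef}_{S_{Y'}} B_{Y'}' = 1$, the discriminant formula forces $\mbox{coef}_T B_{Z'} = 1$, and combined with $K_{Y'} + B_{Y'}' \sim_\R (1-s)g^*T$ one obtains
$$
K_{Z'} + (B_{Z'} - (1-s)T) + M_{Z'} \sim_\R 0.
$$
Thus $(Z', B_{Z'} - (1-s)T, M_{Z'}) \to Z$ is a generalized log Calabi-Yau fibration of dimension $\dim Z' \leq d-1$ in which $T$ is horizontal of coefficient $s \geq t$. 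The inductive hypothesis, proved simultaneously in its generalized-pair form, will then bound $\sdeg(T/Z)$ and hence $\sdeg(S/Z)$. The base case $d = 1$ is immediate from $\deg B \leq 2$ on a log Calabi-Yau curve.

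The hard part will be twofold. First, the $(K_Y + B_Y')$-MMP must terminate with a good minimal model, an abundance-type assertion for lc pairs whose log canonical class is $\R$-equivalent to an effective horizontal divisor; this should be reachable with current MMP technology for dlt pairs since the effective part has a particularly simple structure (a single prime divisor). Second, and more serious, one must establish the statement (and Theorem \ref{t-bnd-stein-deg-lc-main} itself) for generalized lc pairs, since the canonical bundle formula introduces the nef moduli b-divisor $M_{Z'}$; the framework of \cite{BZ16} is the natural setting, but adapting the adjunction and complement-theoretic inputs of Theorem \ref{t-bnd-stein-deg-lc-main} to the generalized framework is where the bulk of the technical effort would lie.
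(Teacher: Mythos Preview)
The statement you are attempting to prove is Conjecture~\ref{conj-stein}, which the paper explicitly leaves open; there is no proof in the paper to compare against. The paper proves only the special case of non-klt centres (Theorem~\ref{t-bnd-stein-deg-lc-main}), and immediately afterwards remarks that the generalised-pair version of the conjecture is also open. So your proposal should be read as a proof strategy for an open problem, not as an alternative to an existing argument.

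There is a genuine gap at the very first step. You set $B' = B + (1-s)S$ and then take a $\Q$-factorial dlt modification of $(X,B')$, but $(X,B')$ need not be lc. For a concrete obstruction, take $X=\mathbb{A}^2$ with three lines $L_1,L_2,L_3$ through the origin and $B=\tfrac{2}{3}(L_1+L_2+L_3)$: then $(X,B)$ is lc, but $(X,L_1+\tfrac{2}{3}L_2+\tfrac{2}{3}L_3)$ is not. Without lc-ness of $(X,B')$ you cannot take a dlt model, and everything downstream (the MMP, the semi-ampleness of $K_{Y'}+B_{Y'}'$, the adjunction to $Z'$) collapses. One could instead add $\lambda S$ only up to the lc threshold, but then $S$ need not acquire coefficient $1$, new non-klt centres appear that are not $S$, and the clean picture $S_{Y'}=g^*T$ with $\mu_T B_{Z'}=1$ is lost.

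Two further points. First, your assertion that ``Stein degree is a birational invariant'' is false as stated: the paper gives an explicit counterexample at the start of Section~\ref{ss-stein-deg}, and equality only holds between normal models. One can salvage an inequality via Lemma~\ref{l-stein-deg-contractions}, but you should be explicit about normality of $S_{Y'}$ (which is not automatic after the MMP, since the output of an lc MMP is only lc, not dlt). Second, your induction requires the conjecture for generalised pairs on $Z'$, which is precisely the strengthening the paper singles out as open; so even granting the first step, the scheme reduces one open conjecture to a strictly stronger one rather than to something known.
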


Note that the theorem is stated for non-klt centres $I$ which may not be divisors but the theorem can easily be reduced to the case when $I$ is a divisor. Moreover, we conjecture that the generalised version of the conjecture in the context of generalised pairs also holds where we replace the log Calabi-Yau fibration $(X,B)\to Z$ with a generalised log Calabi-Yau fibration $(X,B+M)\to Z$. On the other hand, we can ask a similar question when $S$ is not horizontal over $Z$ in which case we can ask whether $\sdeg(S/T)$ is bounded where $T$ is the image of $S$ in $Z$.
Yet another interesting question to consider is in the setting of slc log Calabi-Yau fibrations in which case we can also consider the Stein degree of the irreducible components of $X$ over $Z$.

We formulate a similar conjecture in the context of non-closed fields which is of interest in arithmetic geometry, e.g. over number fields.  
 
\begin{conj}
Let $d\in \N$ and let $t\in \R^{>0}$. Let $K$ be a field of characteristic zero.
Let $(X,B)$ be a log Calabi-Yau pair over $K$ of dimension $d$ and let $S$ be a component of $B$ whose coefficient in $B$ is $\ge t$. 
Then 
$$
\sdeg(S/\Spec K)=\dim_KH^0(S,\mathcal{O}_S)
$$ 
is bounded from above depending only on $d,t$.
\end{conj}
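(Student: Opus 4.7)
My plan is to reduce the arithmetic statement to a purely geometric one by base change to $\bar K$, and then to count Galois conjugates of the geometric components of $S$. First, after replacing $X$ by one of its geometric components if necessary, I may assume $X$ is geometrically integral over $K$; since we are in characteristic zero and $X$ is normal, the prime divisor $S$ is automatically geometrically reduced. Write $(\bar X,\bar B):=(X,B)\times_K\bar K$ and $\bar S:=S\times_K\bar K$. Then $(\bar X,\bar B)$ is an lc log Calabi--Yau pair of dimension $d$ over $\bar K$, and $\bar S=S_1+\cdots+S_m$ is a reduced divisor whose irreducible components are permuted transitively by $\mathrm{Gal}(\bar K/K)$, each appearing in $\bar B$ with the same coefficient $\ge t$ as $S$ does in $B$.

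Flat base change gives $H^0(\bar S,\mathcal{O}_{\bar S})\cong H^0(S,\mathcal{O}_S)\otimes_K\bar K$, so
$$
\sdeg(S/\Spec K) \;=\; \dim_{\bar K}H^0(\bar S,\mathcal{O}_{\bar S});
$$
for a reduced proper scheme over an algebraically closed field this equals the number of connected components of $\bar S$, which is at most $m$. So the whole problem reduces to bounding $m$ in terms of $d$ and $t$.

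At this stage the remaining task is the following geometric sub-claim: for every lc log Calabi--Yau pair $(Y,\Gamma)$ of dimension $d$ over an algebraically closed field of characteristic zero, the number of prime components of $\Gamma$ with coefficient $\ge t$ is bounded by some $C(d,t)$. I would stress that Conjecture \ref{conj-stein} applied with $Z$ a point is trivially satisfied with bound $1$ --- any irreducible proper scheme is connected --- and so does not by itself imply the sub-claim.

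The sub-claim is the principal obstacle, and where I expect the main difficulty. My line of attack would combine ACC for log canonical thresholds and the global ACC of \cite{HMX18}, which constrain the coefficient vector of $\Gamma$, with boundedness of complements and the Fano-type boundedness used elsewhere in the paper: on a $\mathbb{Q}$-factorial dlt modification of $(Y,\Gamma)$, running an MMP on the pair $(Y,\Gamma-t\sum_i\Gamma_i)$, where the sum is over the prime components of $\Gamma$ of coefficient $\ge t$, I would hope to land on a log-Fano-type pair on which the birational transforms of the $\Gamma_i$ still appear in the boundary with coefficient $\ge t$, at which point boundedness of the target caps their number. Controlling the components that get contracted during the MMP, and handling the generalisation to the slc setting without $\mathbb{Q}$-factoriality, are where I expect the hard work to lie.
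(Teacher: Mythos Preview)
The statement you are addressing is a \emph{conjecture} in the paper; the paper offers no proof, so there is nothing to compare your argument against on that front. Your reduction by base change to $\bar K$ is correct, and you are right that the question becomes one of bounding the number $m$ of geometric components of $\bar S$. You are also right that the $Z=\mathrm{pt}$ case of Conjecture~\ref{conj-stein} is vacuous.

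The genuine gap is that your geometric ``sub-claim'' is \emph{false} as stated, so the reduction cannot close. Take any smooth projective toric surface $Y$ whose fan has $n$ rays (obtained, say, by repeated toric blow-ups of $\mathbb P^2$), and let $\Gamma$ be the toric boundary. Then $(Y,\Gamma)$ is log smooth, hence lc, with $K_Y+\Gamma\sim 0$, and $\Gamma$ has exactly $n$ irreducible components, each with coefficient $1$. Since $n$ is unbounded, there is no $C(d,t)$ bounding the number of components of coefficient $\ge t$ in an lc log Calabi--Yau pair of dimension $2$. Your proposed MMP on $K_Y+\Gamma-t\sum_i\Gamma_i\equiv -t\Gamma$ simply contracts boundary curves one by one until you reach $\mathbb P^2$ or a Hirzebruch surface; almost all of the $\Gamma_i$ are lost, which is exactly the failure mode you flagged but hoped to control.

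What this shows is that in passing from ``the $S_i$ form a single Galois orbit'' to ``the $S_i$ are arbitrary components of coefficient $\ge t$'' you have thrown away the essential constraint. The Galois-transitivity forces a strong symmetry among the $S_i$ (for instance, they are pairwise numerically equivalent on $\bar X$, and any birational self-map of $(\bar X,\bar B)$ induced by Galois permutes them), and any successful approach must exploit this rather than discard it. Without that ingredient the sub-claim is simply not true, and so your outline does not constitute a viable strategy for the conjecture.
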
 
 
Of course one can ask whether the same holds in positive characteristic.  
 
\vspace{0.5cm}
{\textbf{\sffamily{Acknowledgements.}}} 
This work was supported by grants from Tsinghua University. Thanks J\'anos Koll\'ar for answering a question about his book on moduli.


\section{\bf Preliminaries}

We work over an algebraically closed field $k$ of characteristic zero. All varieties and schemes are defined over $k$ unless stated otherwise.
Varieties are assumed to be irreducible. 

\subsection{Contractions}
By a \emph{contraction} we mean a projective morphism $f\colon X\to Y$ of schemes 
such that $f_*\mathcal{O}_X=\mathcal{O}_Y$ ($f$ is not necessarily birational). In particular, 
$f$ is surjective and has connected fibres.

\subsection{Divisors}
Let $X$ be a pure dimensional scheme of finite type over $k$ and let $M$ be an $\R$-divisor on $X$, i.e. a finite $\R$-linear combination of prime divisors. We denote the coefficient of a prime divisor $D$ in $M$ by $\mu_DM$.  
    
For a morphism $f\colon X\to Z$ of schemes and an $\R$-Cartier divisor $N$ on $Z$, we sometimes denote $f^*N$ by $N|_X$. An $\R$-Cartier divisor is a finite $\R$-linear combination of Cartier divisors.

For a birational map $X\bir X'$ (resp. $X\bir X''$)(resp. $X\bir X'''$)(resp. $X\bir Y$) of varieties 
whose inverse does not contract divisors, and for 
an $\R$-divisor $M$ on $X$, we usually denote the pushdown of $M$ to $X'$ (resp. $X''$)(resp. $X'''$)(resp. $Y$) 
by $M'$ (resp. $M''$)(resp. $M'''$)(resp. $M_Y$).

Recall that an $\R$-Cartier $\R$-divisor $M$ on a variety $X$ projective over $Z$ 
is said be big if we can write $M\sim_\R A+P$ where $A$ is ample$/Z$ and $P\ge 0$.
When $M$ is $\Q$-Cartier we can replace $\sim_\R$ with $\sim_\Q$ and assume $A,P$ are $\Q$-divisors.

For $l\in \N$ and for two $\Q$-divisors $M,N$ on a variety $X$, the notation $M\sim_lN$ means that $lM\sim lN$.

\subsection{Pairs and singularities}\label{ss-pairs}
A \emph{pair} $(X,B)$ consists of a normal quasi-projective variety $X$ and an $\R$-divisor $B\ge 0$ such that $K_X+B$ is $\R$-Cartier. We call $B$ the \emph{boundary divisor}.

Let $\phi\colon W\to X$ be a log resolution of a pair $(X,B)$. Let $K_W+B_W$ be the 
pullback of $K_X+B$. The \emph{log discrepancy} of a prime divisor $D$ on $W$ with respect to $(X,B)$ 
is defined as 
$$
a(D,X,B):=1-\mu_DB_W.
$$
A \emph{non-klt place} of $(X,B)$ is a prime divisor $D$ over $X$, that is, 
on birational models of $X$,  such that $a(D,X,B)\le 0$, and a \emph{non-klt centre} is the image of such a $D$ on $X$. 

We say $(X,B)$ is \emph{lc} (resp. \emph{klt})(resp. \emph{$\epsilon$-lc}) if 
 $a(D,X,B)$ is $\ge 0$ (resp. $>0$)(resp. $\ge \epsilon$) for every $D$. This means that  
every coefficient of $B_W$ is $\le 1$ (resp. $<1$)(resp. $\le 1-\epsilon$). 
Note that since $a(D,X,B)=1$ for most prime divisors, we necessarily have $\epsilon\le 1$.

A \emph{log smooth} pair is a pair $(X,B)$ where $X$ is smooth and $\Supp B$ has simple 
normal crossing singularities. Assume $(X,B)$ is a log smooth pair and assume $B=\sum_1^r B_i$ is reduced  
where $B_i$ are the irreducible components of $B$. 
A \emph{stratum} of $(X,B)$ is a component of $\bigcap_{i\in I}B_i$ for some $I\subseteq \{1,\dots,r\}$.  
Since $B$ is reduced, a stratum is nothing but an lc centre of $(X,B)$.

\subsection{Semi-log canonical pairs}\label{ss-slc-pairs}
A \emph{semi-pair} $(X,B)$ over a field $K$ of characteristic zero 
(not necessarily algebraically closed) consists of a reduced quasi-projective scheme $X$ of pure dimension and an $\R$-divisor $B\ge 0$ on $X$ satisfying the following conditions: 
\begin{itemize}
\item $X$ is $S_2$ with nodal codimension one singularities,

\item no component of $\Supp B$ is contained in the singular locus of $X$, and 

\item $K_X+B$ is $\R$-Cartier.
\end{itemize} 

We say that $(X,B)$ is \emph{semi-log canonical (slc)} if in addition we have:
\begin{itemize}
\item if $\pi\colon X^\nu\to X$ is the normalisation of $X$ and $B^\nu$ is the sum of the birational 
transform of $B$ and the conductor divisor of $\pi$, then $(X^\nu,B^\nu)$ is lc.
\end{itemize} 
By $(X^\nu,B^\nu)$ being lc we mean after passing to the algebraic closure of $K$, 
$(X^\nu,B^\nu)$ is an lc pair on each of its irreducible components.
The conductor divisor of $\pi$ is the sum of the prime divisors on $X^\nu$ whose 
images on $X$ are contained in the singular locus of $X$. It turns out that $K_{X^\nu}+B^\nu=\pi^*(K_X+B)$ 
for a suitable choice of $K_{X^\nu}$ in its linear equivalence class: 
to see this note that $X$ is Gorenstein outside a codimension $\ge 2$ closed subset, so shrinking $X$ 
we can assume it is Gorenstein and that $X^\nu$ is regular; in this case $B$ is $\R$-Cartier so 
we can remove it in which case the equality follows from 
\cite[5.7]{K13}. 

To simplify the language we usually say $(X,B)$ is an slc pair rather than an slc semi-pair. The word slc indicates that we are in the setting of semi-pairs rather than usual pairs defined in \ref{ss-pairs}. See  \cite[Chapter 5]{K13} for more on slc pairs.

\subsection{b-divisors}
A \emph{b-$\R$-Cartier b-divisor} over a variety $X$ is the choice of  
a projective birational morphism 
$Y\to X$ from a normal variety and an $\R$-Cartier $\R$-divisor $M$ on $Y$ up to the following equivalence: 
 another projective birational morphism $Y'\to X$ from a normal variety and an $\R$-Cartier $\R$-divisor
$M'$ define the same b-$\R$-Cartier  b-divisor if there is a common resolution $W\to Y$ and $W\to Y'$ 
on which the pullbacks of $M$ and $M'$ coincide.  

A b-$\R$-Cartier  b-divisor  represented by some $Y\to X$ and $M$ is \emph{b-Cartier} if  $M$ is 
b-Cartier, i.e. its pullback to some resolution is Cartier.

\subsection{Generalised pairs}\label{ss-gpp}
A \emph{generalised pair} consists of 
\begin{itemize}
\item a normal quasi-projective variety $X$ equipped with a projective
morphism $X\to Z$, 

\item an $\R$-divisor $B\ge 0$ on $X$, and 

\item a b-$\R$-Cartier  b-divisor over $X$ represented 
by some projective birational morphism $X' \overset{\phi}\to X$ and $\R$-Cartier $\R$-divisor
$M'$ on $X$
\end{itemize}
such that $M'$ is nef$/Z$ and $K_{X}+B+M$ is $\R$-Cartier,
where $M:= \phi_*M'$. 

We refer to $M'$ as the \emph{nef part} of the pair.  
Since a b-$\R$-Cartier b-divisor is defined birationally, 
in practice we will often replace $X'$ with a resolution and replace $M'$ with its pullback.
When $Z$ is a point we drop it but say the pair is projective. 

Now we define generalised singularities.
Replacing $X'$ we can assume $\phi$ is a log resolution of $(X,B)$. We can write 
$$
K_{X'}+B'+M'=\phi^*(K_{X}+B+M)
$$
for some uniquely determined $B'$. For a prime divisor $D$ on $X'$ the \emph{generalised log discrepancy} 
$a(D,X,B+M)$ is defined to be $1-\mu_DB'$. 

We say $(X,B+M)$ is 
\emph{generalised lc} (resp. \emph{generalised klt})(resp. \emph{generalised $\epsilon$-lc}) 
if for each $D$ the generalised log discrepancy $a(D,X,B+M)$ is $\ge 0$ (resp. $>0$)(resp. $\ge \epsilon$).

For the basic theory of generalised pairs see \cite[Section 4]{BZ16} and \cite{B20c}.

\subsection{Minimal models, Mori fibre spaces, and MMP}
Let $ X\to Z$ be a
projective morphism of normal quasi-projective varieties and $D$ be an $\R$-Cartier $\R$-divisor
on $X$. Let $Y$ be a normal quasi-projective variety, projective over $Z$, and $\phi\colon X\bir Y/Z$
be a birational map whose inverse does not contract any divisor. 
Assume $D_Y:=\phi_*D$ is also $\R$-Cartier and that 
there is a common resolution $g\colon W\to X$ and $h\colon W\to Y$ such that
$E:=g^*D-h^*D_Y$ is effective and exceptional$/Y$, and
$\Supp g_*E$ contains all the exceptional divisors of $\phi$.

Under the above assumptions we call $Y$ 
a \emph{minimal model} of $D$ over $Z$ if $D_Y$ is nef$/Z$.
On the other hand, we call $Y$ a \emph{Mori fibre space} of $D$ over $Z$ if there is an extremal contraction
$Y\to T/Z$ with $-D_Y$  ample$/T$ and $\dim Y>\dim T$.

If one can run a \emph{minimal model program} (MMP) on $D$ over $Z$ which terminates 
with a model $Y$, then $Y$ is either a minimal model or a Mori fibre space of 
$D$ over $Z$. If $X$ is a Mori dream space, 
eg if $X$ is of Fano type over $Z$, then such an MMP always exists by \cite{BCHM10}.

\subsection{Bounded families of pairs}\label{ss-bnd-couples}
We say a set $\mathcal{Q}$ of normal projective varieties is \emph{birationally bounded} (resp. \emph{bounded}) 
if there exist finitely many projective morphisms $V^i\to T^i$ of varieties  
such that for each $X\in \mathcal{Q}$ there exist an $i$, a closed point $t\in T^i$, and a 
birational isomorphism (resp. isomorphism) $\phi\colon V^i_t\bir X$  where $V_t^i$ is the fibre of $V^i\to T^i$ over $t$.

Next we will define boundedness for couples. 
A \emph{couple} $(X,S)$ consists of a normal projective variety $X$ and a  divisor 
$S$ on $X$ whose coefficients are all equal to $1$, i.e. $S$ is a reduced divisor. 
We use the term couple instead of pair because  
$K_X+S$ is not assumed $\Q$-Cartier and $(X,S)$ is not assumed to have good singularities.
 
We say that a set $\mathcal{P}$ of couples  is \emph{birationally bounded} if there exist 
finitely many projective morphisms $V^i\to T^i$ of varieties and reduced divisors $C^i$ on $V^i$ 
such that for each $(X,S)\in \mathcal{P}$ there exist an $i$, a closed point $t\in T^i$, and a 
birational isomorphism $\phi\colon V^i_t\bir X$ such that $(V^i_t,C^i_t)$ is a couple and 
$E\le C_t^i$ where $V_t^i$ and $C_t^i$ are the fibres over $t$ of the morphisms $V^i\to T^i$ 
and $C^i\to T^i$, respectively, and $E$ is the sum of the 
birational transform of $S$ and the reduced exceptional divisor of $\phi$.
We say $\mathcal{P}$ is \emph{bounded} if we can choose $\phi$ to be an isomorphism. 
  
A set $\mathcal{R}$ of projective pairs $(X,B)$ is said to be {log birationally bounded} (resp. log {bounded}) 
if the set of the corresponding couples $(X,\Supp B)$ is birationally bounded (resp. bounded).
Note that this does not put any condition on the coefficients of $B$, e.g. we are not requiring the 
coefficients of $B$ to be in a finite set.

\subsection{Bounded families of stable minimal models}\label{ss-bnd-s.min.models}
It is not hard to see that a set $\mathcal{P}$ of couples is bounded iff there is a fixed $r\in \N$ such that 
for any $(X,S)$ in $\mathcal{P}$ we can find a very ample divisor $H$ on $X$ so that 
$$
H^d\le r ~~\mbox{and}~~(K_X+S)\cdot H^{d-1}\le r.
$$
Although $K_X+S$ is not necessarily $\Q$-Cartier but the intersection number is well-defined. 

We take a similar approach with stable minimal models to define boundedness. We need to discuss boundedness only when the coefficients are not too small.  
That is, assume that $0$ is not an accummulation point of $\Phi\subset \Q^{\ge 0}$, e.g. when it is a DCC set. A subset $\mathcal{E}\subset \mathcal{S}_{\rm slc}(d,\Phi)$ is said to be a bounded family 
if there is a fixed $r\in \N$ such that 
for any $(X,B),A$ in $\mathcal{E}$ we can find a very ample divisor $H$ on $X$ so that 
$$
H^d\le r ~~\mbox{and}~~(K_X+B+A)\cdot H^{d-1}\le r.
$$
This in particular bounds $(X,\Supp(B+A))$ but also bounds the coefficients of $A$ from above.

\subsection{Volume and Iitaka volume of divisors}\label{ss-vol-Ivol}

Let $X$ be a projective scheme of dimension $d$ over a field $K$. Let $L$ be a nef $\R$-Cartier $\R$-divisor on $X$. The \emph{volume of $L$} is defined as $\vol(L)=L^d$. Note that the volume depends not only on $X,L$ but also on $K$ as $L^d$ is calculated with respect to $K$. 

When $L$ is ample and $\Q$-Cartier, applying the asymptotic Riemann-Roch theorem, for sufficiently divisible $m\ge 0$, we have 
$$
h^0(mL)=\mathcal{X}(mL)=\frac{L^d}{d!}m^{d}+\mbox{lower degree terms in $m$}
$$ 
from which one can derive 
$$
\vol(L)=\lim_{m\to \infty} \frac{h^0({mL})}{m^d/d!}
$$
where $m$ is sufficiently divisible. When $X$ is reduced and $Y$ is the union of its irreducible components of dimension $d$, then $\vol(L)=\vol(L|_Y)$.

Now assume that $L$ is semi-ample, that is, there is a contraction $f\colon X\to Z$ such that $L\sim_\R f^*A$ where $A$ is an ample $\R$-Cartier $\R$-divisor. Then we define the \emph{Iitaka volume of $L$} to be $\Ivol(L)=\vol(A)$. When $\dim Z= 0$, by convention we let $\Ivol(L)=1$.

\begin{lem}\label{l-vol-loc-constant}
Let $X\to S$ be a flat projective morphism of Noetherian schemes with fibres of dimension $d$ and let ${L}$ be a $\Q$-Cartier $\Q$-divisor on $X$ that is ample over $S$. Then $\vol(L|_{X_s})$ considered as a function on $S$ is locally constant. 
\end{lem}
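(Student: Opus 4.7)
The plan is to reduce the statement to the well known local constancy of Hilbert polynomials in flat projective families, and then extract the leading coefficient.

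First I would choose $m\in\N$ so that $mL$ is a Cartier divisor; this is possible because $L$ is $\Q$-Cartier. Since $X\to S$ is flat and $\mathcal O_X(nmL)$ is a line bundle for every $n\ge 0$, the sheaf $\mathcal O_X(nmL)$ is flat over $S$. By the standard base change / cohomology-and-base-change formalism for flat projective morphisms of Noetherian schemes (Hartshorne III.9.9 or Mumford's ``Abelian varieties''), the Euler characteristic
\[
P_s(n)\;:=\;\chi\bigl(X_s,\mathcal O_{X_s}(nmL|_{X_s})\bigr)
\]
is locally constant on $S$ for each fixed $n$, and moreover, since $mL$ is ample over $S$, the function $n\mapsto P_s(n)$ is a polynomial in $n$ of degree $d$ (the relative dimension); the map $s\mapsto P_s$ is locally constant on $S$ as a polynomial-valued function. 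Concretely, around any $s_0\in S$ one finds a Zariski open neighbourhood on which $P_s(n)$ agrees with $P_{s_0}(n)$ for each of $n=0,1,\dots,d$, and hence, being polynomials of degree at most $d$, they agree for all $n$.

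Next I would identify the leading coefficient of $P_s$ with the volume. By asymptotic Riemann--Roch on the projective fibre $X_s$ (applied to the ample Cartier divisor $mL|_{X_s}$, as recalled in \ref{ss-vol-Ivol}), we have
\[
P_s(n)\;=\;\frac{(mL|_{X_s})^{d}}{d!}\,n^{d}\;+\;(\text{lower order terms in }n),
\]
so the leading coefficient of $P_s$ equals $\vol(mL|_{X_s})/d!$. Combining with the previous paragraph, $\vol(mL|_{X_s})$ is locally constant on $S$, and therefore
\[
\vol(L|_{X_s})\;=\;\frac{1}{m^{d}}\,\vol(mL|_{X_s})
\]
is locally constant as well.

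I do not expect a serious obstacle: the only mild point to verify is that the asymptotic Riemann--Roch identification of the top intersection number with the leading Hilbert coefficient is valid on the fibres $X_s$ as given, without any normality or reducedness assumption. This is covered by Snapper's theorem, which gives the polynomiality of $\chi(nD_1+\cdots)$ on any proper scheme and identifies the top coefficient with the intersection number; for a single ample Cartier divisor on a projective $d$-dimensional scheme this is exactly the statement recalled in \ref{ss-vol-Ivol}. Once this is in hand, the argument above goes through verbatim.
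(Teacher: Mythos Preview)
Your proof is correct and follows essentially the same approach as the paper: reduce to a Cartier multiple, use flatness to get local constancy of the Hilbert polynomial $\chi(mL|_{X_s})$, and extract the volume as the leading coefficient via asymptotic Riemann--Roch (the ``above discussion'' in \ref{ss-vol-Ivol}). The paper's proof is just a terser version of yours.
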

\begin{proof}
It is enough to consider the case when $S$ is connected. In this case, for sufficiently divisible $m\ge 0$, the polynomial $\mathcal{X}(mL|_{X_s})$ is the same for every $s$ as $X\to S$ is flat, so $\vol(L|_{X_s})$ is a constant function by the above discussion.

\end{proof}

\begin{lem}\label{l-base-change-field}
Let $X$ be a projective scheme of dimension $d$ over a field $K$. Let $L$ be a $\Q$-Cartier $\Q$-divisor on $X$. Let $K\subset K'$ be a field extension, $X'=X\times_{\Spec K}\Spec K'$, and let $L'$ be the pullback of $L$ to $X'$. Then the following hold.
\begin{enumerate}
\item $L$ is ample iff $L'$ is ample. 

\item $L$ is semi-ample iff $L'$ is semi-ample. 

\item  If $L$ is ample, then $\vol(L')=\vol(L)$.

\item If $L$ is semi-ample, then $\Ivol(L')=\Ivol(L)$.

\item Assume $h^0(\mathcal{O}_X)=1$ and that $L$ is Cartier. Then $L\sim 0$ iff $L'\sim 0$.

\end{enumerate}
\end{lem}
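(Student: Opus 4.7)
The plan is to derive all five parts from flat base change for cohomology and faithfully flat descent, applied to the map $X' \to X$, which is faithfully flat since $\Spec K' \to \Spec K$ is. Parts (1) and (2) are essentially formal consequences of this. For (1), ampleness is both preserved and reflected under faithfully flat base change (standard EGA result), so $L$ is ample iff $L'$ is. For (2), semi-ampleness means that for some $m$, the evaluation map $H^0(X, mL) \otimes_K \mathcal{O}_X \to \mathcal{O}_X(mL)$ is surjective; by flat base change $H^0(X', mL') \cong H^0(X, mL) \otimes_K K'$, and surjectivity of a morphism of coherent sheaves is preserved and reflected by faithful flatness.

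For (3), I would use the asymptotic formula $\vol(L) = \lim_{m \to \infty} h^0(mL)/(m^d/d!)$ recalled in \ref{ss-vol-Ivol}. Flat base change gives $\dim_{K'} H^0(X', mL') = \dim_K H^0(X, mL)$, so the two limits agree. For (4), choose a contraction $f \colon X \to Z$ with $L \sim_\R f^*A$ and $A$ ample. Base-changing to $Z' = Z \times_{\Spec K} \Spec K'$ gives $f' \colon X' \to Z'$, and flat base change ensures $f'_* \mathcal{O}_{X'} = \mathcal{O}_{Z'}$, so $f'$ is still a contraction with $L' \sim_\R f'^* A'$ and $A'$ ample by (1). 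Applying (3) to $(Z, A)$ then yields $\Ivol(L') = \vol(A') = \vol(A) = \Ivol(L)$.

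For (5), the forward direction is immediate. For the converse, suppose $L' \sim 0$. Since $h^0(\mathcal{O}_X) = 1$, flat base change gives $h^0(\mathcal{O}_{X'}) = 1$, so from $\mathcal{O}_{X'}(L') \cong \mathcal{O}_{X'}$ we get $h^0(X', L') = h^0(X', -L') = 1$, hence by flat base change $h^0(X, L) = h^0(X, -L) = 1$. Pick nonzero $s \in H^0(X, L)$ and $t \in H^0(X, -L)$, so $st \in H^0(X, \mathcal{O}_X) = K$. The crux is to verify $st \neq 0$: after base change, under the trivialisation $\mathcal{O}_{X'}(L') \cong \mathcal{O}_{X'}$, the sections $s \otimes 1$ and $t \otimes 1$ correspond to nonzero constants in $K'$, so their product is a nonzero element of $K'$; but this product equals $st \otimes 1$, forcing $st$ to be a nonzero constant $c \in K^\times$. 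Then multiplication by $s$ defines a map $\mathcal{O}_X \to \mathcal{O}_X(L)$ whose composition with multiplication by $t$ equals the isomorphism given by $c$, so $s$ is itself an isomorphism, yielding $L \sim 0$.

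The only real subtlety is in part (5), where one must argue that the product $st$ does not vanish even though $X$ need not be integral; the trick of descending nonvanishing of $s't'$ from $K'$ back to $K$ via the isomorphism $H^0(X, \mathcal{O}_X) \otimes_K K' = H^0(X', \mathcal{O}_{X'})$ sidesteps this cleanly. The remaining parts are routine applications of flat base change and the asymptotic formula for volumes.
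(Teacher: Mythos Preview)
Your argument is correct. Parts (1)--(4) match the paper's proof essentially line for line: flat base change for $H^0$, the asymptotic formula for volume, and base-changing the contraction $X\to Z$ to identify $\Ivol$.

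For part (5) you take a different route. The paper argues as follows: since $L'\sim 0$, in particular $L'$ is globally generated, so by the reasoning of (2) $L$ is globally generated; the induced morphism $X\to Z$ then has $Z$ zero-dimensional (as $L'\sim 0$) with $h^0(\mathcal{O}_Z)=h^0(\mathcal{O}_X)=1$, forcing $Z=\Spec K$ and $L\sim 0$. Your approach instead exploits $h^0(L)=h^0(-L)=1$ directly, picking nonzero sections $s,t$ and checking $st\neq 0$ by passing to $K'$. Your method is a bit more self-contained --- it avoids invoking the contraction machinery and works purely with line-bundle sections --- while the paper's method has the virtue of reusing (2) and the Stein-factorisation picture already set up for (4). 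Both are clean; your handling of the ``$st\neq 0$'' step via the trivialisation over $K'$ is the right way to deal with the possibility that $X$ is non-reduced or reducible.
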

\begin{proof}
(1) Assume $L$ is ample. Then $mL$ is very ample for some $m\in \N$, so there is a closed embedding $X\to \PP^n_K$ such that $\mathcal{O}_X(mL)\simeq \mathcal{O}_X(1)$ where the latter sheaf is the pullback of $\mathcal{O}_{\PP^n_K}(1)$. Base change to $K'$ gives a closed embedding $X'\to \PP^n_{K'}$ such that $\mathcal{O}_{X'}(mL')\simeq \mathcal{O}_{X'}(1)$ which implies $mL'$ is very ample, hence $L'$ is ample. 

Now assume $L'$ is ample. Let $\mathcal{F}$ be a coherent sheaf on $X$ and $\mathcal{F}'$ its pullback to $X'$. Then $h^i(\mathcal{F}'(mL'))=0$ for some $m\in \N$ and every $i>0$ by the cohomological characterisation of ampleness. Thus $h^i(\mathcal{F}(mL))=0$ for some $m\in \N$ and every $i>0$, hence $L$ is ample.

(2) 
For each sufficiently divisible $m\in \N$, we have  
$$
H^0(\mathcal{O}_{X'}(mL'))\simeq H^0(\mathcal{O}_X(mL))\times_KK'.
$$ 
Then a basis of $H^0(\mathcal{O}_X(mL))$ gives a basis of $H^0(\mathcal{O}_{X'}(mL'))$.
If $x'\in X'$ and if $x$ is its image on $X$, then there exists a section of $\mathcal{O}_X(mL)$ not vanishing at $x$ iff there exists a section of $\mathcal{O}_{X'}(mL')$ not vanishing at $x'$ (this follows by considering the elements of the mentioned bases). So $\mathcal{O}_X(mL)$ is generated by global sections iff $\mathcal{O}_{X'}(mL')$ is generated by global sections, hence $L$ is semi-ample iff $L'$ is semi-ample.

(3) 
This follows from 
$$
\vol(L')=\lim_{m\to \infty} \frac{h^0({mL'})}{m^d/d!}=\lim_{m\to \infty} \frac{h^0({mL})}{m^d/d!}=\vol(L)
$$
where $m$ is sufficiently divisible.

(4)
Since $L$ is semi-ample, it defines a contraction $f\colon X\to Z$ over $\Spec K$ and $L\sim_\Q f^*A$ for some ample $\Q$-Cartier $\Q$-divisor $A$ on $Z$. Here $f_*\mathcal{O}_X=\mathcal{O}_Z$. Let $Z'=Z\times_{\Spec K}\Spec K'$. Then $X'\simeq Z'\times_ZX$. Denote the induced morphism $X'\to Z'$ by $f'$. 
Since $\Spec K'\to \Spec K$ is flat, the induced morphism $Z'\to Z$ is flat. Therefore, by the flat base change theorem, ${f'}_*\mathcal{O}_{X'}=\mathcal{O}_{Z'}$, hence $f'$ is a contraction. Moreover, if $A'$ is the pullback of $A$ via $Z'\to Z$, then $L'\sim_\Q {f'}^*A'$. Thus $f'$ is just the contraction defined by $L'$. But then 
$$
\Ivol(L')=\vol(A')=\vol(A)=\Ivol(L).
$$ 

(5)
$L\sim 0$ means $\mathcal{O}_X(L)\simeq \mathcal{O}_X$ (similarly for $L'$). Clearly $L\sim 0$ implies $L'\sim 0$. Conversely, assume $L'\sim 0$. Then $\mathcal{O}_{X'}(L')$ is generated by global sections, so 
$\mathcal{O}_X(L)$ is also generated by global sections by the above discussions. Thus $L$ is the pullback of an ample divisor $A$ via some contraction $X\to Z$ over $\Spec K$. Since $L'\sim 0$, $Z$ is zero-dimensional, and since $h^0(\mathcal{O}_Z)=h^0(\mathcal{O}_X)=1$, $Z=\Spec K$, hence $A\sim 0$ which implies $L\sim 0$.

\end{proof}

\subsection{Generic semi-ampleness}

\begin{lem}\label{l-generic-semi-ampleness}
Let $X\to Z$ be a projective morphism of Noetherian schemes where $Z$ is integral, and let ${L}$ be a Cartier divisor on $X$. If ${L}$ is semi-ample on the generic fibre of $X\to Z$, then it is semi-ample over some non-empty open subset of $Z$.
\end{lem}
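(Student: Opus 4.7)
The plan is to pick $m>0$ such that $mL|_{X_\eta}$ is base point free on the generic fibre $X_\eta$, and then spread this base point freeness out to a non-empty open neighbourhood of the generic point $\eta$ of $Z$ via the coherent sheaf $\mathcal{F}:=f_*\mathcal{O}_X(mL)$ and the canonical evaluation morphism
\[
\varphi\colon f^*\mathcal{F}\longrightarrow \mathcal{O}_X(mL).
\]
Since $f$ is projective, hence proper, $\mathcal{F}$ is coherent on $Z$, so the cokernel $\mathcal{Q}$ of $\varphi$ is a coherent sheaf on $X$ with closed support.

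The key point is to identify the restriction of $\varphi$ to the generic fibre. Because the localisation $\Spec k(\eta)\to Z$ is flat, flat base change gives
\[
\mathcal{F}_\eta\otimes_{k(\eta)}\mathcal{O}_{X_\eta}=H^0\!\left(X_\eta,\mathcal{O}_{X_\eta}(mL|_{X_\eta})\right)\otimes_{k(\eta)}\mathcal{O}_{X_\eta},
\]
and under this identification $\varphi|_{X_\eta}$ is precisely the evaluation map of global sections on $X_\eta$. By our choice of $m$ this evaluation map is surjective, so $\mathcal{Q}|_{X_\eta}=0$; equivalently, no point of $\Supp\mathcal{Q}$ maps to $\eta$.

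Now $f$ is proper, so $f(\Supp\mathcal{Q})$ is closed in $Z$, and by the previous step $\eta\notin f(\Supp\mathcal{Q})$. Its complement $U:=Z\setminus f(\Supp\mathcal{Q})$ is therefore a non-empty open subset of $Z$. Over $U$ the sheaf $\mathcal{Q}$ vanishes, so $\varphi$ is surjective on $f^{-1}(U)$, which means $mL$ is base point free on $f^{-1}(U)$ and hence semi-ample over $U$.

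The only mildly subtle step is the flat base change computation identifying $\varphi|_{X_\eta}$ with the evaluation map on $X_\eta$; everything else is a routine spreading-out argument using properness of $f$. No additional care is needed to choose a uniform $m$ over $U$, since semi-ampleness over $U$ only requires the existence of some positive multiple that is base point free there, and the $m$ chosen on the generic fibre does the job.
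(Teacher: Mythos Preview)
Your proof is correct and follows essentially the same approach as the paper: pick $m$ making $mL$ free on the generic fibre, use flat base change for the localisation $\Spec k(\eta)\to Z$ to identify global sections, and then spread out using properness. The only cosmetic difference is that the paper works affine-locally with explicit module generators $\alpha_1,\dots,\alpha_r$ of $H^0(L)$ and their common vanishing locus $V$, whereas you package the same information in the cokernel sheaf $\mathcal{Q}$ of the evaluation map; the set $V$ in the paper is exactly your $\Supp\mathcal{Q}$.
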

\begin{proof}
Let $\eta$ be the generic point of $Z$ and $X_\eta$ be the generic fibre. Replacing ${L}$ with some ${mL}$ we can assume that 
${L}|_{X_\eta}$ is base point free. We can assume $Z$ is affine, say $Z=\Spec A$. By flat base change of cohomology, 
$$
H^0({L})\otimes_Ak(\eta)\simeq H^0(L|_{X_\eta}).
$$  
Thus if $\alpha_1,\dots, \alpha_r$ generate $H^0({L})$ as an $A$-module, then the restrictions of $\alpha_1,\dots, \alpha_r$ to $X_\eta$ generate $H^0(L|_{X_\eta})$ as a $k(\eta)$-vector space. If $V$ is the set of points where all the $\alpha_i$ vanish, then $V\cap X_\eta=\emptyset$ because ${L}|_{X_\eta}$ is base point free. Therefore, ${L}$ is base point free on $X\setminus V$ which contains an open neighbourhood of the generic fibre as $V$ is a closed subset (with reduced structure). This means that $L$ is free over some non-empty open subset of $Z$.
 
\end{proof}


\section{\bf Stein degree on log Calabi-Yau fibrations}\label{ss-stein-deg}

In this section, we prove our main result Theorem \ref{t-bnd-stein-deg-lc-main} on the boundedness of Stein degrees on log Calabi-Yau fibrations. We prove other results which are used in the proof of boundedness of slc stable minimal models.

Recall that given a projective morphism $S\to Z$ between varieties, we let $S\to V\to Z$ be the Stein factorisation and then define the \emph{Stein degree} of $S$ over $Z$ to be $\sdeg(S/Z):=\deg(V/Z)$. If $S\to Z$ is not surjective, this degree is $0$ by convention. 

\begin{exa}
Note that Stein degree is not a birational invariant, that is, if $S'\bir S/Z$ is birational, then it may happen that $\sdeg(S'/Z)$ and $\sdeg(S/Z)$ are not equal (but equality holds if both $S',S$ are normal). For example, let $S\subset \PP^2\times \A^1$ be the 
surface defined by $u^2-tv^2$ where $u,v,w$ are the coordinates on $\PP^2$ and $t$ is the coordinate on $\A^1$. Let $S\to Z=\A^1$ be the projection. Then for each $a\neq 0$, the fibre $X_a$ is the union of two intersecting lines, hence connected. This shows that $\sdeg(S/Z)=1$. But if $S^\nu$ is the normalisation of $S$, then the general fibres of $S^\nu\to Z$ consist of two disjoint curves (the normalisation of $X_a$). Therefore, $\sdeg(S^\nu/Z)=2$.
\end{exa}

\subsection{Boundedness of Stein degree of some non-klt places}
We start with a lemma which helps to compare Stein degrees.

\begin{lem}\label{l-stein-deg-contractions}
Let $S\to Y\to Z$ be surjective projective morphisms of varieties. Then 
\begin{enumerate}
\item we have 
$$
\sdeg(S/Z)\ge \sdeg(Y/Z);
$$

\item if in addition $Y\to Z$ is a contraction with irreducible general fibres (e.g. $Y\to Z$ is a contraction and $Y$ is normal), then 
$$
\sdeg(S/Y)\ge \sdeg(S/Z).
$$
\end{enumerate}
\end{lem}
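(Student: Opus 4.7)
My plan is to reduce both parts to the universal property of Stein factorisation together with a short field-theoretic computation. Let $S\to V_S\to Z$, $Y\to V_Y\to Z$, and $S\to V_S'\to Y$ be the Stein factorisations, so that by definition $\sdeg(S/Z)=[K(V_S):K(Z)]$, $\sdeg(Y/Z)=[K(V_Y):K(Z)]$, and $\sdeg(S/Y)=[K(V_S'):K(Y)]$; moreover $K(V_S)$ is the algebraic closure of $K(Z)$ inside $K(S)$, and similarly $K(V_S')$ is the algebraic closure of $K(Y)$ inside $K(S)$.

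For (1) I will invoke the universal property of Stein factorisation on the tower $S\to Y\to V_Y$. Since $V_Y\to Z$ is finite, the Stein factorisation of the composition $S\to V_Y$ coincides with $S\to V_S\to V_Y$ by uniqueness, which produces a finite surjective morphism $V_S\to V_Y$ over $Z$. Multiplicativity of degrees then gives
\[
\sdeg(S/Z)=\deg(V_S/V_Y)\cdot\deg(V_Y/Z)\ge \deg(V_Y/Z)=\sdeg(Y/Z).
\]

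For (2) I will work at the level of function fields inside $K(S)$. Since $K(Z)\subset K(Y)$, every element of $K(S)$ algebraic over $K(Z)$ is a fortiori algebraic over $K(Y)$, so $K(V_S)\subset K(V_S')$ and hence the compositum $K(V_S)\cdot K(Y)$ is contained in $K(V_S')$. The key input is that the contraction hypothesis on $Y\to Z$ forces $K(Z)$ to be algebraically closed in $K(Y)$: indeed $(Y\to Z)_*\mathcal O_Y=\mathcal O_Z$ makes the Stein factorisation of $Y\to Z$ trivial, so the algebraic closure of $K(Z)$ in $K(Y)$ is just $K(Z)$. I will then apply the elementary field-theoretic fact that, if $L/K$ is a finite extension and $K$ is algebraically closed in an overfield $F$, then $[LF:F]=[L:K]$: picking a primitive element of $L/K$ in characteristic zero, its minimal polynomial over $K$ remains irreducible over $F$, since any monic factor in $F[x]$ has coefficients that are both algebraic over $K$ and contained in $F$, hence in $K$. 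Applying this with $K=K(Z)$, $L=K(V_S)$, $F=K(Y)$ yields
\[
\sdeg(S/Y)=[K(V_S'):K(Y)]\ge [K(V_S)\cdot K(Y):K(Y)]=[K(V_S):K(Z)]=\sdeg(S/Z).
\]

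The main obstacle is spotting the right field-theoretic reformulation; once the inequalities are phrased in terms of algebraic closures inside $K(S)$, everything follows from Stein factorisation and the primitive element theorem. The irreducible-general-fibres condition in (2) (or its sufficient form ``$Y$ normal'') enters only implicitly through the contraction hypothesis, which is what guarantees the algebraic closedness of $K(Z)$ in $K(Y)$; no further normality of $S$, $Y$, or $Z$ is required beyond their being varieties.
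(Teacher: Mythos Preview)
Your proof of (1) is correct and matches the paper's approach: both obtain a finite surjection $V_S\to V_Y$ and use multiplicativity of degree.

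Your proof of (2), however, rests on two assertions that fail without normality hypotheses that the lemma does not impose. First, you claim that $K(V_S)$ is the algebraic closure of $K(Z)$ in $K(S)$. This is true when $S$ is normal, but false in general: the paper's own example immediately preceding this lemma (the surface $u^2-tv^2=0$ over $\mathbb{A}^1_t$) has $\sdeg(S/Z)=1$, so $K(V_S)=k(t)$, yet $\sqrt{t}\in K(S)$, so the algebraic closure of $K(Z)$ in $K(S)$ strictly contains $K(V_S)$. Second, you assert that the contraction hypothesis on $Y\to Z$ by itself forces $K(Z)$ to be algebraically closed in $K(Y)$, and you explicitly dismiss the irreducible-general-fibres hypothesis as entering ``only implicitly''. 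The same example (now read with $Y$ in place of $S$) refutes this: that map is a contraction, yet $K(Z)$ is not algebraically closed in $K(Y)$. Algebraic closedness of $K(Z)$ in $K(Y)$ is equivalent, in characteristic zero, to geometric irreducibility of the generic fibre of $Y\to Z$, which is precisely what the irreducible-general-fibres hypothesis supplies and which the bare contraction condition does not.

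Both gaps are repairable, and once repaired your field-theoretic route does give a genuinely different proof from the paper's geometric one. The inclusion $K(V_S)\subset K(V_{S}')$ holds without any algebraic-closure description: Stein-factor the finite map $V_{S}'\to Z$ and observe that the resulting tower $S\to V_{S}'\to W\to Z$ must coincide with $S\to V_S\to Z$, producing a dominant morphism $V_{S}'\to V_S$. And the irreducible-general-fibres hypothesis gives exactly the linear-disjointness you need for the degree identity $[K(V_S)\cdot K(Y):K(Y)]=[K(V_S):K(Z)]$. The paper instead argues geometrically, showing that (after shrinking $Z$) the fibre product $V_S\times_Z Y$ is irreducible and that $V_{S}'\to Y$ factors through it, whence $\deg(V_{S}'/Y)\ge\deg(V_S\times_Z Y/Y)=\deg(V_S/Z)$; this avoids function fields entirely and makes the role of the irreducibility hypothesis transparent.
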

\begin{proof}
Let $S\to R\to Y$ be the Stein factoristion of $S\to Y$, let $Y\to T\to Z$ be the Stein factoristion of $Y\to Z$, and let $R\to V\to T$ be the Stein factoristion of $R\to T$. Then we get the following diagram 
$$
\xymatrix{
S\ar[rd]\ar[d]^f&&\\
R\ar[r]^a\ar[d]^g\ar[rd]&Y\ar[rd]\ar[d]^h&\\
V\ar[r]^b&T\ar[r]^c&Z
}
$$
where $f,g,h$ are contractions and $a,b,c$ are finite morphisms. In particular, $S\to V\to Z$ is the Stein factorisation of $S\to Z$. Then 
$$
\sdeg(S/Z)=\deg(V/Z)\ge \deg(T/Z)=\sdeg(Y/Z)
$$
which proves (1).

On the other hand, assume that $Y\to Z$ is a contraction with irreducible general fibres. Then in this case $T\to Z$ is an isomorphism.  Moreover, since the general fibres of $Y\to T$ are irreducible, the general fibres of $V\times_TY\to V$ are also irreducible, hence shrinking $T=Z$ we can assume that $V\times_TY$ is irreducible. But then since $R\to Y$ factors through $V\times_TY\to Y$, 
$$
\deg(R/Y)\ge \deg(V\times_TY/Y)=\deg(V/T).
$$
This in turn implies 
$$
\sdeg(S/Y)=\deg(R/Y)\ge \deg(V/T)=\deg(V/Z)=\sdeg(S/Z).
$$
\end{proof}

Recall that a log Calabi-Yau fibration $(X,B)\to Z$ consists of an lc pair $(X,B)$ and a contraction $X\to Z$ such that $K_X+B\sim_\R 0/Z$. Also recall that two log Calabi-Yau fibrations $(X,B)\to Z$ and $(X',B')\to Z$ are \emph{crepant birational} over $Z$ if there exist a birational map $X\bir X'/Z$ and a common resolution $W\to X$ and $W\to X'$ on which the pullbacks of $K_X+B$ and $K_{X'}+B'$ agree.

\begin{lem}\label{l-stein-deg-lc-bnd-comp}
Let $(X,B)\to Z$ be a log Calabi-Yau fibration of dimension $d=\dim X$. Assume that $(X,B)$ is not klt over the generic point $\eta_Z$. Then there exist a crepant birational model $(X',B')$ of $(X,B)$ over $Z$ and a component $S'$ of $\rddown{B'}$ such that 
\begin{itemize}
\item $(X',B')$ is dlt, 

\item $S'$ is horizontal over $Z$, and

\item the Stein degree $\sdeg(S'/Z)$ is bounded from above depending only on $d$. 
\end{itemize}
\end{lem}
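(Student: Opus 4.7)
The plan is to pass to a $\Q$-factorial dlt modification of $(X,B)$, reduce the problem to the geometric generic fibre of $X\to Z$, and exploit the connectedness principle for log Calabi-Yau fibrations to choose a horizontal component $S'$ whose Stein degree over $Z$ is controlled.

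First I would apply \cite{BCHM10} to produce a $\Q$-factorial dlt modification $\phi\colon (X',B')\to (X,B)$; this is crepant, so $(X',B')\to Z$ is again a log Calabi-Yau fibration of dimension $d$, and the non-klt centres of $(X',B')$ are precisely its lc strata, with the divisorial ones being the components of $\rddown{B'}$. Since $(X,B)$ is not klt over $\eta_Z$, some lc stratum of $(X',B')$ is horizontal over $Z$, and by extracting an appropriate horizontal non-klt place as a divisor (via a further dlt blowup) we may arrange that at least one component of $\rddown{B'}$ is horizontal over $Z$. Passing to the geometric generic fibre $F=X'_{\bar\eta}$ gives a dlt log Calabi-Yau pair $(F,B'|_F)$ of dimension at most $d$, and for any prime horizontal $S\subset \rddown{B'}$ we have $\sdeg(S/Z)$ equal to the number of irreducible components of $S|_F$, i.e.\ the size of the orbit of a component under $\mathrm{Gal}(\overline{k(\eta)}/k(\eta))$.

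The key input is the connectedness principle for the log Calabi-Yau fibration $(X',B')\to Z$: since $K_{X'}+B'\sim_\R 0/Z$, the non-klt locus $\Nklt(X',B')=\Supp\rddown{B'}$ has connected fibres over $Z$, and in particular $\Supp\rddown{B'|_F}$ is connected. To produce $S'$ with bounded Stein degree I would pick a minimal horizontal stratum $W$ of $(X',B')$ and, via a further $\Q$-factorial dlt modification, extract $W$ as a component of $\rddown{B'}$; then $\sdeg(S'/Z)=\sdeg(W/Z)$, and $W$ is normal by dlt adjunction, so Kawamata subadjunction equips $(W,B_W+M_W)$ with a generalised klt log Calabi-Yau structure over the Stein base of $W\to Z$.

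The hard part will be to turn these qualitative pieces into a bound on $\sdeg(W/Z)$ that depends only on $d$. My plan is to induct on the relative dimension of $X\to Z$: the number of components of $W|_F$ is the size of a Galois orbit, and the connectedness of $\Supp\rddown{B'|_F}$ together with the combinatorics of the dual complex of $(F,B'|_F)$ restricts how large that orbit can be. I expect the cleanest route is to arrange that one can always find a horizontal non-klt place whose centre is geometrically irreducible over $\eta_Z$, for instance by selecting a Galois-invariant vertex of the dual complex (such as one that lies on every maximal stratum in its connected component), yielding $\sdeg(S'/Z)=1$ in that case and, in general, a bound that depends only on the combinatorial complexity of the dual complex of a dlt log Calabi-Yau pair of dimension $\le d$ — hence only on $d$.
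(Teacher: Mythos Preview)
Your proposal has a genuine gap at the decisive step. You hope to bound $\sdeg(S'/Z)$ by bounding the size of a Galois orbit on the dual complex of the geometric generic fibre $(F,B'|_F)$, and you assert that this follows from ``the combinatorial complexity of the dual complex of a dlt log Calabi-Yau pair of dimension $\le d$ --- hence only on $d$''. But that implication is false: dlt log Calabi-Yau pairs of fixed dimension have dual complexes of unbounded size (already toric pairs $(T,\Delta_T)$ in dimension $2$ give cycles of arbitrary length), so neither the number of vertices nor the size of a transitive Galois orbit is controlled by $d$. The connectedness principle does not rescue this: in the $K+B\sim_\R 0/Z$ regime the non-klt locus need not even be connected (it may have two components in the plt case, cf.\ \cite{B20b},\cite{FS20}), and even when it is connected and the dual complex has controlled homotopy type, connectedness says nothing about orbit sizes on an unbounded complex. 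Likewise there is no reason to expect a Galois-fixed \emph{vertex}: a cyclic Galois action on a cycle of length $n$ has no fixed vertex.

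The paper's proof proceeds quite differently and the boundedness input is not combinatorial but geometric. One first runs MMP over $Z$ to reduce to a Mori fibre space situation with $\rddown{B}$ big over the base. Then the argument bifurcates. If $B\neq\rddown{B}$ over $\eta_Z$, a further MMP produces a Mori fibre space $X'\to Y'$ on which some component $S'$ of $\rddown{B'}$ is ample, $(X',S')$ is plt, and $-(K_{X'}+S')$ is ample over $Y'$; the connectedness principle then gives $\sdeg(S'/Y')=1$, hence $\sdeg(S'/Z)\le 1$. If instead $B=\rddown{B}$ over $\eta_Z$, one uses the discrepancy gap of \cite[Lemma~2.48]{B19} to show $X$ is $\delta$-lc over $\eta_Z$ for a fixed $\delta(d)>0$, runs MMP on $K_X$ to a Mori fibre space whose general fibres are $\delta$-lc Fanos, and invokes BAB \cite[Theorem~1.1]{B21a} to bound these fibres --- hence the number of components of $B_F$, and therefore the Stein degree of any horizontal $S$. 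It is this BAB-type boundedness, not dual-complex combinatorics, that supplies the $d$-dependent bound you are missing.
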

\begin{proof}
First assume that $X\to Z$ factors as contractions $X\to Y\to Z$ and that $(X,B)$ is not klt over $\eta_Y$. Then it is enough to prove the lemma for $(X,B)\to Y$: indeed, 
assume that there exist a dlt crepant birational model $(X',B')$ of $(X,B)$ over $Y$ and a component $S'$ of $\rddown{B'}$ such that $S'$ is horizontal over $Y$ and that $\sdeg(S'/Y)$ is bounded from above. Then 
$(X',B')$ is also a crepant birational model of $(X,B)$ over $Z$, and 
$\sdeg(S'/Y)\ge \sdeg(S'/Z)$, by Lemma \ref{l-stein-deg-contractions} as $Y$ is automatically normal since $X$ is normal.

Now replacing $(X,B)$ with a $\Q$-factorial dlt model, we can assume that $(X,B)$ is $\Q$-factorial dlt and $\rddown{B}\neq 0$ over $\eta_Z$. Run an MMP on $K_X+B-\epsilon \rddown{B}$ over $Z$ for some small $\epsilon>0$. Since 
$$
K_X+B-\epsilon \rddown{B}\sim_\R -\epsilon \rddown{B}/Z
$$ 
is not pseudo-effective over $Z$, the MMP ends with a Mori fibre space. Replacing $X$ with the Mori fibre space, we can assume that we have a contraction $X\to Y/Z$ such that $\rddown{B}$ is big over $Y$. By the first paragraph, we can replace $Z$ with $Y$, hence assume  $\rddown{B}$ is big over $Z$. In the process we may have lost the dlt property of $(X,B)$. Let $(X'',B'')$ be a $\Q$-factorial dlt model of $(X,B)$ and let $\alpha$ denote $X''\to X$. Then $\alpha^*\rddown{B}$ is big over $Z$. Since $\Supp \alpha^*\rddown{B}\subset \rddown{B''}$, we deduce that $\rddown{B''}$ is big over $Z$. Thus  we can replace $(X,B)$ with $(X'',B'')$, hence again assume $(X,B)$ is $\Q$-factorial dlt and $\rddown{B}$ is big over $Z$. 

Assume that $B\neq \rddown{B}$ over $\eta_Z$. Run an MMP on 
$$
K_X+B-\epsilon(B-\rddown{B})
$$ 
over $Z$ for some small $\epsilon>0$. The MMP ends with a Mori fibre space $X'\to Y'$. Since $\rddown{B}$ is big over $Z$, $\rddown{B'}$ is big over $Z$, hence $\rddown{B'}$ is big over $Y'$ which shows that some component $S'$ of $\rddown{B'}$ is ample over $Y'$. Moreover, $(X',S')$ is plt because 
$$
(X',B'-\epsilon(B'-\rddown{B'}))
$$ 
is dlt and 
$$
B'-\epsilon(B'-\rddown{B'})\ge \rddown{B'}\ge S'.
$$ 
So the non-klt locus of $(X',S')$ is just $S'$. Moreover, by construction, $B'-\rddown{B'}$ is ample over $Y'$, so $K_{X'}+S'$ is anti-ample over $Y'$. 
Thus the fibres of $S'\to Y'$ are connected by the connected principle (cf. \cite{B20b} or \cite{FS20}). This combined with Lemma \ref{l-stein-deg-contractions} implies that  
$$
\sdeg(S'/Z)\le \sdeg(S'/Y')=1.
$$ 
Since $S'$ is normal, replacing $(X',B')$ with a $\Q$-factorial dlt model would not change $\sdeg(S'/Z)$.

Now assume that $B=\rddown{B}$ over $\eta_Z$. Pick a sufficiently small $\delta>0$ depending on $d$. 
We claim that $X$ is $\delta$-lc over $\eta_Z$.  
Let $(F,B_F)$ be a general log fibre of $(X,B)\to Z$. Then $(F,B_F)$ is lc, $(F,0)$ is klt, $K_F+B_F\sim_\R 0$, and each non-zero coefficient of $B_F$ is $1$.
By \cite[Lemma 2.48]{B19}, we can assume that if $D$ is a prime divisor over $F$ with $a(D,F,B_F)< \delta$, then $a(D,F,B_F)=0$. Assume $E$ is a 
prime divisor over $X$ which is horizontal over $Z$ and with $a(E,X,0)< \delta$.  
Then $E$ gives a prime divisor $D$ over $F$ so that 
$$
a(D,F,B_F)=a(E,X,B)<\delta,
$$ 
hence 
$$
0=a(D,F,B_F)=a(E,X,B).
$$ 
But since $(X,B)$ is dlt, the generic point of the centre of $E$ on $X$ is contained in the log smooth locus of $(X,B)$, so $a(E,X,0)\ge 1$ which shows that such $E$ do not exist. 

Next, run an MMP on $K_X$ over $Z$ which ends with a Mori fibre space $X'\to Y'$ as $B=\rddown{B}\neq 0$  over $\eta_Z$. 
Then $X'$ is $\delta$-lc as $X$ is $\delta$-lc, and $\rddown{B'}$ is big over $Y'$ as  $\rddown{B}$ is big over $Z$. Thus $(X',B')$ is not klt over $\eta_{Y'}$. Applying the first paragraph, we can replace $(X,B)\to Z$ with $(X',B')\to Y'$, hence assume that the general fibres $F$ of $X\to Z$ are $\delta$-lc Fano varieties and that $B=\rddown{B}$ (but $(X,B)$ may no longer be dlt). 

In particular, the general fibres $F$ of $X\to Z$ belong to a bounded family, by \cite[Theorem 1.1]{B21a}, and this in turn implies that the general log fibres $(F,B_F)$ belong to a bounded family because $K_F+B_F\sim_\R 0$ and because $B_F$ is reduced. 
Pick a horizontal$/Z$ component $S$ of $\rddown{B}$. Then the general fibres of $S\to Z$ are the intersections $S\cap F$. On the other hand, viewing $S\cap F$ as a subset of $F$, we have $S\cap F=S|_F\subset \Supp B_F$. 
Since $(F,B_F)$ belongs to a bounded family, the number of the irreducible components of $B_F$ is bounded, hence the number of the irreducible components of $S|_F$ is bounded. Thus 
the number of the irreducible components of the general fibres of $S\to Z$ is bounded. 

Let $\pi\colon S^\nu\to S$ be the normalisation of $S$ and let $V^\nu\subset S^\nu$ be the largest closed subset such that $\pi$ is an isomorphism outside $V^\nu$. Then the general fibres of $S^\nu\to Z$ are of pure dimension $\dim S^\nu-\dim Z$. Thus no irreducible component of such fibres is contained in $V^\nu$ as the general fibres of $V^\nu\to Z$ have smaller dimension.
This implies that a general fibre of $S^\nu\to Z$ is birational to a general fibre of $S\to Z$. Therefore, the number of irreducible components of the general fibres of $S^\nu\to Z$ is bounded. 
The latter number is exactly $\sdeg(S^\nu/Z)$, hence $\sdeg(S^\nu/Z)$ is bounded. 

Now it is enough to take a $\Q$-factorial dlt model $(X',B')$ of $(X,B)$ and let $S'$ be the birational transform of $S$. Then $S'$ is birational to $S^\nu$ and they are both normal, hence $\sdeg(S'/Z)=\sdeg(S^\nu/Z)$ is bounded as desired.    
   
\end{proof}

\subsection{Comparing Stein degrees of non-klt centres}

Let $(X,B)$ be a pair and $X\to Z$ be a contraction. By a \emph{horizontal$/Z$ non-klt centre} of $(X,B)$ we mean a non-klt centre which is horizontal over $Z$, and by a \emph{minimal horizontal$/Z$ non-klt centre} of $(X,B)$ we mean a horizontal$/Z$ non-klt centre which is minimal with respect to inclusion among the horizontal non-klt centres.

\begin{lem}\label{l-stein-deg-dlt}
Let $(X,B)\to Z$ be a log Calabi-Yau fibration where $(X,B)$ is dlt. 
Then 
\begin{enumerate}
\item  if $I,J$ are minimal horizontal$/Z$ non-klt centres of $(X,B)$, then there is a birational map $I\bir J/Z$ and 
$$
\sdeg(I/Z)=\sdeg(J/Z);
$$

\item if $I,J$ are horizontal$/Z$ non-klt centres of $(X,B)$ and if $I$ is minimal, then 
$$
\sdeg(I/Z)\ge \sdeg(J/Z).
$$
\end{enumerate}
\end{lem}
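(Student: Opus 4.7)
The plan is to reduce (1) to the absolute case via the generic fibre and Koll\'ar's theory of sources of lc centres, and to derive (2) from (1) together with Lemma \ref{l-stein-deg-contractions}.

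For (1), I would pass to the generic point $\eta$ of $Z$ and work with the generic fibres $(X_\eta,B_\eta)$, $I_\eta$, $J_\eta$ over $k(Z)$. The pair $(X_\eta,B_\eta)$ is a dlt log Calabi-Yau pair over $k(Z)$, and $I_\eta,J_\eta$ are minimal lc centres of it, because a proper lc centre of $(X_\eta,B_\eta)$ lying in $I_\eta$ would spread out to a proper horizontal$/Z$ lc centre of $(X,B)$ inside $I$, contradicting minimality. I would then invoke Koll\'ar's theorem on sources of lc centres (see \cite[\S4]{K13}) to conclude that $I_\eta$ and $J_\eta$ are (crepant) birational over $k(Z)$; spreading out yields a birational map $I\bir J$ over $Z$. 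Finally, since minimal lc centres of lc pairs are normal (Ambro--Kawamata, cf.\ \cite[\S4]{K13}), both $I$ and $J$ are normal, so their Stein factorizations over $Z$ coincide with the normalization of $Z$ in their common function field $k(I)=k(J)$ over $k(Z)$. In particular $V_I\cong V_J$ as $Z$-schemes, giving $\sdeg(I/Z)=\sdeg(J/Z)$.

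For (2), I would first note that every horizontal$/Z$ lc centre $J$ contains a minimal horizontal$/Z$ lc centre $I'$: either $J$ is already minimal horizontal, or it strictly contains a horizontal$/Z$ lc centre of smaller dimension; iterating must terminate and produces $I'\subseteq J$ minimal horizontal. By (1), $\sdeg(I/Z)=\sdeg(I'/Z)$. Next I would form the Stein factorization $J\to V_J\to Z$; since $J$ is irreducible and projective over $Z$, the same holds for $V_J$, which is moreover finite over $Z$. The composition $I'\hookrightarrow J\to V_J$ sends $I'$ to a closed irreducible subset of $V_J$ surjecting onto $Z$ (as $I'$ is horizontal); since $V_J$ is irreducible and finite over $Z$, this image must be all of $V_J$, so $I'\to V_J$ is surjective. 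Applying Lemma \ref{l-stein-deg-contractions}(1) to the surjective projective morphisms $I'\to V_J\to Z$ yields $\sdeg(I'/Z)\geq \sdeg(V_J/Z)=\sdeg(J/Z)$, and combining with the above gives $\sdeg(I/Z)\geq \sdeg(J/Z)$.

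The hard part is the birationality step in (1): Koll\'ar's theorem on sources is a substantial black box, and its use in the relative setting requires the two small technical checks indicated (preservation of minimality on the generic fibre, and spreading out of the birational equivalence from $k(Z)$ to $Z$). If one prefers to avoid this citation, the alternative is an induction on $\dim X - \dim Z$ using dlt adjunction along components of $\rddown{B}$ together with connectedness of the relative dual complex of $(X,B)/Z$, essentially reconstructing Koll\'ar's argument in the relative setting. All remaining steps amount to routine manipulations with Stein factorizations and Lemma \ref{l-stein-deg-contractions}.
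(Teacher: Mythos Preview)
Your proposal is correct, and your argument for (2) is essentially the same as the paper's: pick a minimal horizontal centre $I'\subseteq J$, use (1) to get $\sdeg(I/Z)=\sdeg(I'/Z)$, and factor $I'\to V_J\to Z$ to compare degrees via Lemma~\ref{l-stein-deg-contractions}.

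For (1) the paper takes the route you mention as the ``alternative'': it argues by induction on $\dim X$, chaining through a sequence $S_1,\dots,S_r$ of components of $\rddown{B}$ using dlt adjunction and the induced Stein factorisations $S_j\to T_j\to Z$ when $I,J$ lie in the same connected component of $\rddown{B}$, and in the disconnected case it invokes the connectedness principle \cite{B20b},\cite{FS20} to reduce to a relative $\PP^1$-bundle picture in which $\rddown{B}$ consists of two disjoint sections, showing in fact that $\sdeg(I/Z)=\sdeg(J/Z)=1$. Your approach instead packages all of this into a single citation to Koll\'ar's $\PP^1$-linking/source theory \cite[\S4]{K13} applied over the generic point of $Z$, then uses normality of minimal lc centres to identify the two Stein factorisations as the normalisation of $Z$ in the common algebraic closure of $k(Z)$ inside $k(I)=k(J)$. (Your phrasing ``normalization of $Z$ in $k(I)$'' is slightly imprecise; it is the normalisation in the algebraic closure of $k(Z)$ in $k(I)$, but this does not affect the argument.) Your route is shorter but leans on a substantial black box and requires the two small checks you flag (minimality is preserved on the generic fibre, and the birational map spreads out); the paper's route is self-contained and yields the extra information that the Stein degree is $1$ in the disconnected case.
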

\begin{proof}
We prove the lemma inductively, so assume it holds in lower dimension. 
We can shrink $Z$ hence assume that all the non-klt centres are horizontal over $Z$. 
We show that part (2) follows from part (1). Indeed, assume that $I,J$ are non-klt centres and that $I$ is minimal. Pick a minimal non-klt centre $L\subseteq J$. Let $J\to V\to Z$ be the Stein factorisation of $J\to Z$. And let $L\to R\to V$ be the Stein factorisation of the induced morphism $L\to V$. Then $L\to R\to Z$ is the Stein factorisation of $L\to Z$, so 
$$
\sdeg(I/Z)=\sdeg(L/Z)=\deg(R/Z)\ge \deg(V/Z)=\sdeg(J/Z) 
$$ 
where the first equality holds by part (1).

We prove part (1). We use arguments somewhat similar to \cite[Theorem 4.40]{K13}.
Since $(X,B)$ is dlt, the non-klt locus $\Nklt(X,B)=\rddown{B}$.
First assume that $I,J$ are contained in the same connected component of $\rddown{B}$.
Then we can find components $S_1,\dots,S_r$ of $\rddown{B}$ such that $I\subset S_1$, $J\subset S_r$, and $S_j\cap S_{j+1}\neq \emptyset$ for each $1\le j<r$. Pick a minimal non-klt centre $L_1$ of $(X,B)$ inside $S_1\cap S_2$. Define 
$$
K_{S_1}+B_{S_1}=(K_X+B)|_{S_1}
$$ 
by adjunction. Then $(S_1,B_{S_1})$ is dlt. 

Let $S_1\to T_1\to Z$ be the Stein factorisation of $S_1\to Z$. Then $(S_1,B_{S_1})\to T_1$ is a log Calabi-Yau fibration and $I$ and $L_1$ are minimal non-klt centres of $(S_1,B_{S_1})$ which are horizontal over $T_1$. Therefore, by induction on dimension, there is a birational map $I\bir L_1/T_1$ and $\sdeg(I/T_1)=\sdeg(L_1/T_1)$. This implies that 
$$
\sdeg(I/Z)=\sdeg(I/T_1)\deg(T_1/Z)=\sdeg(L_1/T_1)\deg(T_1/Z)=\sdeg(L_1/Z).
$$ 
Next consider a minimal non-klt centre $L_2$ of $(X,B)$ inside $S_2\cap S_3$ and apply induction on $S_2$ to show that there is a birational map $L_1\bir L_2/Z$ and that $\sdeg(L_1/Z)=\sdeg(L_2/Z)$. 
Repeating the argument we eventually prove that there exist minimal non-klt centres $L_i$ of $(X,B)$ and birational maps 
$$
I\bir L_1\bir L_2 \bir \cdots \bir L_{r-1}\bir L_r\bir J/Z
$$
and equalities 
$$
\sdeg(I/Z)=\sdeg(L_1/Z)=\cdots =\sdeg(L_r/Z)=\sdeg(J/Z)
$$ 
as claimed.

 Now assume that $I,J$ are contained in different connected components of $\Nklt(X,B)$. Then we show that there is a birational map $I\bir J/Z$ and that 
$$
\sdeg(I/Z)=\sdeg(J/Z)=1.
$$ 
Since $\Nklt(X,B)$ is not connected, by \cite{B20b} or \cite{FS20}, $(X,B)$ is plt, $\Nklt(X,B)=\rddown{B}$ has exactly two connected components   which are both irreducible components of $\rddown{B}$, say $S,T$, and there is a birational map $S\bir T/Z$. We give a sketch of the argument for convenience (see \cite[Theorem 3.5 and its proof]{B20b} for more details). Since $(X,B)$ is dlt, there is a $\Q$-factorial dlt  model $(X'',B'')$ of $(X,B)$ where $X''\to X$ is small and it is an isomorphism near the generic point of each non-klt centre. Then one runs an MMP on $K_{X''}+B''-\epsilon \rddown{B''}$ over $Z$ for a small $\epsilon>0$ which produces a Mori fibre space $X'\to Y'$. One then argues that the MMP does not contract any connected component of the non-klt locus, and that no two connected components can merge, by the connectedness principle in the birational setting. In particular, $\Nklt(X',B')=\rddown{B'}$ is not connected. From this one can easily deduce that every irreducible component of $\rddown{B'}$ is horizontal$/Y'$, that the general fibres of $X'\to Y'$ are $\PP^1$, and that $\rddown{B'}$ consists of two disjoint prime divisors $S',T'$ and that both $S'\to Y'$ and $T'\to Y'$ are birational. So we get an induced birational map $S'\bir T'/Z$.

Moreover, one shows that $(X',B')$ is plt with $S',T'$ its only non-klt centres as follows: assume this is not the case;  any non-klt centre other than $S',T'$ is vertical over $Y'$ and is of codimension $\ge 2$ in $X'$; extract a non-klt place $R'''$ say via $X'''\to X'$, whose centre is not $S',T'$; then $X'''$ is of Fano type over $Y'$, so we can run an MMP on $-R'''$ over $Y'$ which ends with a good minimal model $X''''$ of $-R'''$ as $R'''$ is vertical over $Y'$; say $X''''\to V'/Y'$ is the contraction defined by $-R''''$; assume the centre of $R'''$ on $X'$ is inside $S'$; then one argues that in the course of the MMP, $R'''$ remains disjoint from the birational transform of $T'$; one then gets a contradiction by arguing that the birational transform $T''''$ dominates $V'$, hence it intersects $R''''$ because $R''''$ is the pullback of some divisor on $V'$.  
 
  Let $S,T\subset X$ be the birational transforms of $S',T'$. Then $S'\bir T'$ induces a birational map $S\bir T$ over $Z$. By the previous paragraph, $(X,B)$ is plt and its only non-klt centres are $S,T$. But then perhaps switching $S,T$, we have $I=S$ and $J=T$. Then since $S,T,S',T',Y'$ are all normal, $S'\to Y'$ and $T'\to Y'$ are birational, and $Y'\to Z$ is a contraction, we have    
$$
\sdeg(I/Z)=\sdeg(S/Z)=\sdeg(S'/Z)=\sdeg(Y'/Z)=1
$$
and similarly 
$$
\sdeg(J/Z)=\sdeg(T/Z)=\sdeg(T'/Z)=\sdeg(Y'/Z)=1.
$$

\end{proof}

\subsection{Stein degree across crepant birational models}
Next we compare Stein degrees of non-klt centres on two crepant birational pairs.

\begin{lem}\label{l-stein-deg-dlt-crep}
Let $(X_1,B_1)\to Z$ and $(X_2,B_2)\to Z$ be crepant birational log Calabi-Yau fibrations where $(X_j,B_j)$ are dlt. Assume $I_j$ is a minimal horizontal$/Z$ non-klt centre of $(X_j,B_j)$. Then there 
is a birational map $I_1\bir I_2/Z$ and 
$$
\sdeg(I_1/Z)=\sdeg(I_2/Z).
$$ 
\end{lem}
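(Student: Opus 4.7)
The plan is to pass to a common $\Q$-factorial dlt model of $(X_1,B_1)$ and $(X_2,B_2)$ and reduce to Lemma \ref{l-stein-deg-dlt}(1). First, I would construct a common $\Q$-factorial dlt model $(Y,B_Y)$ together with crepant birational morphisms $\pi_j\colon (Y,B_Y)\to (X_j,B_j)$ for $j=1,2$. One builds $Y$ in the standard way by resolving $X_1\bir X_2$ with a common log resolution and then performing a $(K+B)$-trivial partial MMP that contracts only divisors of positive log discrepancy. Every $\pi_j$-exceptional divisor on $Y$ is then a non-klt place of both pairs and appears as a component of $\rddown{B_Y}$; moreover, each component of $\rddown{B_j}$ lifts to a component of $\rddown{B_Y}$ via its strict transform. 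In particular $(Y,B_Y)\to Z$ is itself a dlt log Calabi-Yau fibration.

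Next, for each $j$ I would exhibit a minimal horizontal$/Z$ non-klt centre $J_j$ of $(Y,B_Y)$ with $\pi_j\colon J_j\to I_j$ birational. Writing $I_j$ as an irreducible component of $\bigcap_{i\in S_j}T_i$ for components $T_i\subset \rddown{B_j}$, I take $J_j$ to be the irreducible component of $\bigcap_{i\in S_j}\widetilde{T}_i$ in $Y$ dominating $I_j$, where $\widetilde{T}_i$ is the strict transform of $T_i$. Then $J_j$ is a stratum of $\rddown{B_Y}$, horizontal over $Z$, and $\pi_j\colon J_j\to I_j$ is birational because it restricts to an isomorphism on the isomorphism locus of $\pi_j$. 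To force $J_j$ to be minimal, I pick any minimal horizontal$/Z$ non-klt centre $J_j'\subseteq J_j$ of $(Y,B_Y)$; then $\pi_j(J_j')$ is a horizontal non-klt centre of $(X_j,B_j)$ contained in $I_j$, so minimality of $I_j$ forces $\pi_j(J_j')=I_j$, giving $\dim J_j'\ge \dim I_j=\dim J_j$ and hence $J_j'=J_j$.

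Now applying Lemma \ref{l-stein-deg-dlt}(1) to $(Y,B_Y)\to Z$ yields a birational map $J_1\bir J_2/Z$ together with $\sdeg(J_1/Z)=\sdeg(J_2/Z)$. Composing with the morphisms $\pi_j\colon J_j\to I_j$ produces the desired birational map $I_1\bir I_2/Z$. For the Stein degree equality, I use that minimal non-klt centres of dlt pairs are normal; for a normal irreducible $S$ projective surjective over $Z$, the Stein factor $V$ satisfies that $k(V)$ is the algebraic closure of $k(Z)$ inside $k(S)$, so $\sdeg(\cdot/Z)$ depends only on $k(S)$ over $k(Z)$ and is a birational invariant among normal varieties. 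Therefore $\sdeg(I_j/Z)=\sdeg(J_j/Z)$ for each $j$, giving $\sdeg(I_1/Z)=\sdeg(I_2/Z)$.

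The hard part is verifying that $\pi_j\colon J_j\to I_j$ is genuinely birational rather than merely surjective; this requires the generic point of $I_j$ to lie outside the image of the exceptional locus of $\pi_j$. When this fails, one must either refine the common dlt model so that no extracted divisor has image equal to $I_j$, or replace $J_j$ by a minimal horizontal non-klt centre contained in an exceptional divisor dominating $I_j$ and appeal to the birational connectedness principle (as in the proof of Lemma \ref{l-stein-deg-dlt}) to preserve the Stein degree equality through that extra step.
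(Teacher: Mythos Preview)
Your route through a common $\Q$-factorial dlt model is genuinely different from the paper's argument, but it runs into a structural obstruction at the very first step. A crepant dlt pair $(Y,B_Y)$ admitting \emph{morphisms} to both $X_1$ and $X_2$ need not exist: any such $Y$ factors through the normalised graph $\Gamma$ of $X_1\bir X_2$, so every prime divisor on $\Gamma$ appears on $Y$, and $\Gamma$ may carry a divisor $E$ exceptional over both sides with $a(E,X_j,B_j)>1$, forcing $\mu_E B_Y<0$. This already happens when $X_1\bir X_2$ is a threefold flop along a curve disjoint from $\rddown{B_j}$: the exceptional divisor of the common blowup has log discrepancy $2$. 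Your MMP recipe (``contract only divisors of positive log discrepancy'') cannot contract such an $E$ while retaining morphisms to both $X_j$, because $E$ is precisely what resolves the indeterminacy; and your assertion that every $\pi_j$-exceptional divisor on $Y$ is a non-klt place fails for the same reason, since $Y$ is forced to carry divisors of $X_2$ exceptional over $X_1$ with coefficient $<1$. The second issue you flag --- whether $J_j\to I_j$ is birational once $\pi_j$ extracts toroidal divisors over $I_j$ --- is also real (already for $(\mathbb A^2,\text{axes})$ blown up at the origin the intersection of strict transforms is empty), and your proposed workarounds are not arguments, but this is secondary to the non-existence of $Y$.

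The paper avoids any common roof and instead inducts on dimension. If some component $S_1$ of $\rddown{B_1}$ survives on $X_2$ as a component $S_2\subset\rddown{B_2}$, adjunction yields crepant birational dlt pairs $(S_1,B_{S_1})$ and $(S_2,B_{S_2})$ over the Stein factor of $S_1\to Z$; by induction their minimal horizontal centres are birational with equal Stein degree over $Z$, and Lemma~\ref{l-stein-deg-dlt} transports this to the given $I_1$ and $I_2$. When every component of $\rddown{B_1}$ is exceptional over $X_2$, one interposes a dlt model $(X_2',B_2')$ of $(X_2,B_2)$ on which some component of $\rddown{B_1}$ survives, and applies the first case to $X_1\bir X_2'$ and to $X_2'\to X_2$ separately. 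If you want to rescue your strategy, the clean fix is to abandon the common-roof construction and appeal directly to the theory of sources of lc centres \cite[Theorem 4.45]{K13}, which says that minimal lc centres of any two crepant dlt models of the same pair are crepant birational; this delivers both $I_1\bir I_2/Z$ and the Stein-degree equality at once, but imports a heavier result than the paper's self-contained induction.
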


\begin{proof} 
We prove the lemma by induction on dimension, so assume it holds in lower dimension.
We can shrink $Z$ hence assume that all the non-klt centres are horizontal over $Z$. 
First assume that there is a component $S_1$ of $\rddown{B_1}$ which is not exceptional over $X_2$, hence it has a birational transform $S_2$.  Define 
$$
K_{S_1}+B_{S_1}=(K_{X_1}+B_1)|_{S_1} ~~\mbox{and}~~K_{S_2}+B_{S_2}=(K_{X_2}+B_2)|_{S_2} 
$$  
by adjunction. Letting $S_1\to T_1\to Z$ be the Stein factorisation of $S_1\to Z$, 
$({S_1},B_{S_1})$ and $({S_2},B_{S_2})$ are crepant birational dlt log Calabi-Yau fibrations over $T_1$. 

Pick a minimal non-klt centre $J_1\subset S_1$ of $(X_1,B_1)$ and a minimal non-klt centre $J_2\subset S_2$ of $(X_2,B_2)$. Then by induction, 
there is a birational map $J_1\bir J_2/Z$ and 
$$
\sdeg(J_1/T_1)=\sdeg(J_2/T_1)
$$ 
which in turn implies 
$$
\sdeg(J_1/Z)=\sdeg(J_2/Z).
$$ 
On the other hand, both $I_1,J_1$ are minimal non-klt centres of $(X_1,B_1)$ and both $I_2,J_2$ are minimal non-klt centres of $(X_2,B_2)$. Therefore, applying Lemma \ref{l-stein-deg-dlt}, there exist birational maps $I_1\bir J_1/Z$ and $I_2\bir J_2/Z$. Thus we have birational maps 
$$
I_1\bir J_1\bir J_2 \bir I_2/Z
$$ 
and we have  
$$
\sdeg(I_1/Z)=\sdeg(J_1/Z)=\sdeg(J_2/Z)=\sdeg(I_2/Z).
$$

Now assume that every component of $\rddown{B_1}$ is exceptional over $X_2$. There is a dlt model $(X_2',B_2')$ of $(X_2,B_2)$ so that some component of $\rddown{B_1}$ is not exceptional over $X_2'$. 
On the other hand, there is also a component of $\rddown{B_2'}$ which is not exceptional over $X_2$.
Thus if $L_2'$ is a minimal non-klt centre of $(X_2',B_2')$, then applying the arguments above to both 
$$
(X_1,B_1)\bir (X_2',B_2')~~\mbox{and}~~ (X_2',B_2')\to (X_2,B_2),
$$ 
we deduce that there exist birational maps $I_1\bir L_2'/Z$ and $L_2'\bir I_2/Z$ which induce a birational map $I_1\bir I_2/Z$, and that   
$$
\sdeg(I_1/Z)=\sdeg(L_2'/Z)=\sdeg(I_2/Z).
$$

\end{proof}

\subsection{Proof of main result}

\begin{proof}[Proof of Theorem \ref{t-bnd-stein-deg-lc-main}]
We can assume that $(X,B)$ is not klt over $\eta_Z$.
We can shrink $Z$ hence assume that all the non-klt centres are horizontal over $Z$ (note that vertical centres have Stein degree $0$). 
By Lemma \ref{l-stein-deg-lc-bnd-comp}, there exist a dlt crepant birational model $(X',B')$ of $(X,B)$ over $Z$ and a normal component $S'$ of $\rddown{B'}$ such that $S'$ is horizontal over $Z$ and  $\sdeg(S'/Z)$ is bounded from above depending only on $d$. 

Let $S'\to T\to Z$ be the Stein factorisation of $S'\to Z$. Then $\deg(T/Z)$ is bounded. By adjunction define 
$$
K_{S'}+B_{S'}=(K_{X'}+B')|_{S'}
$$
and let $J'$ be a minimal non-klt centre of $(S',B_{S'})$. Then $(S',B_{S'})\to T$ is a log Calabi-Yau fibration, so by induction, $\sdeg(J'/T)$ is bounded from above. Therefore, $\sdeg(J'/Z)$ is also bounded.  

On the other hand,
let $(X'',B'')$ be a dlt model of $(X,B)$.  
For any non-klt centre $I$ of $(X,B)$, there is a non-klt centre $I''$ of $(X'',B'')$ mapping onto $I$. 
By Lemma \ref{l-stein-deg-contractions}, 
$$
\sdeg(I''/Z)\ge \sdeg(I/Z),
$$ 
so it is enough to bound $\sdeg(I''/Z)$. If $L''\subset I''$ is a minimal non-klt centre of $(X'',B'')$, then 
$$
\sdeg(I''/Z)\le \sdeg(L''/Z)=\sdeg(J'/Z),
$$ 
where the inequality holds by Lemma \ref{l-stein-deg-dlt}, and the equality holds by Lemma \ref{l-stein-deg-dlt-crep} as $(X'',B'')$ and $(X',B')$ are dlt crepant birational pairs and $I'',J'$ are their respective minimal non-klt centres. Therefore, $\sdeg(I''/Z)$ is bounded depending only on $d$.

\end{proof}


\section{\bf Lc stable minimal models}

In this section, we treat boundedness of stable minimal models in the lc case. The proof of this case is easier than that of the general case of slc stable minimal models (Theorem \ref{t-bnd-s-mmodels-slc}), so it is worth treating it separately. Also  
the arguments in this section can be regarded as preparation for the next two sections.

\begin{thm}\label{t-bnd-s-mmodels-lc}
Let $d\in\N$, $\Phi\subset \Q^{\ge 0}$ be a DCC set, $\Gamma\subset \Q^{>0}$ be a finite set, and $\sigma\in\Q[t]$ be a polynomial. Then $\mathcal{S}_{\rm lc}(d,\Phi,\Gamma,\sigma)$ is a 
bounded family.
\end{thm}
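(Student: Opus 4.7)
The plan is to reduce boundedness of $\mathcal{S}_{\rm lc}(d,\Phi,\Gamma,\sigma)$ to boundedness of the base $Z$ (as a generalised polarised pair) plus boundedness of the general fibres (as stable log Calabi--Yau pairs), and then combine these with a bounded polarisation coming from a multiple of $K_X+B+tA$ for a controlled rational $t$. First, after passing to a $\Q$-factorial dlt model (which does not affect boundedness of $(X,B),A$), I fix an lc stable minimal model $(X,B),A\to Z$ in the class and consider a general fibre $F$ of $f\colon X\to Z$. Then $(F,B_F),A_F$ is a stable minimal model with $K_F+B_F\sim_{\R}0$ and $\vol(A_F)\in\Gamma$, so by \cite[Corollary 1.8]{B20} (the stable Calabi--Yau case) the general log fibres with their polarisations form a bounded family, with a uniform integer $N$ such that $N(K_F+B_F)\sim 0$ and $NA_F$ is very ample on $F$ with bounded Hilbert polynomial.

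Next, I would extract from the polynomial $\sigma$ the key numerical invariants of the base. Expanding
\[
\sigma(t)=\vol(K_X+B+tA)=\sum_{i=0}^{d}\binom{d}{i}(K_X+B)^{d-i}\cdot A^{i}\,t^{i},
\]
and using that $K_X+B$ is pulled back from $Z$ via $f$, we see that $(K_X+B)^{d-i}=0$ whenever $d-i>\dim Z$; hence the smallest $i$ with nonzero coefficient in $\sigma$ determines $e:=\dim Z$, and the coefficient of $t^{d-e}$ equals $\binom{d}{e}\,\vol(H)\cdot\vol(A_F)$ for the ample push-down $H$ of $K_X+B$. Since $\vol(A_F)\in\Gamma$ is finite, we conclude $\vol(H)$ lies in a finite set. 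Now apply the canonical bundle formula (adjunction for lc-trivial fibrations) to produce a generalised pair $(Z,B_Z+M_Z)$ with $K_Z+B_Z+M_Z\sim_\Q H$ ample, where $B_Z$ is the discriminant and $M_Z$ is the moduli b-divisor; the coefficients of $B_Z$ lie in a DCC set depending on $\Phi$ and the fibres (by standard adjunction) and, crucially, the moduli part $M_Z$ has bounded Cartier index and its pullback can be made nef on a bounded family of models because the general fibres belong to a bounded family by Step~1.

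With $(Z,B_Z+M_Z)$ identified as a generalised polarised pair of log general type with DCC coefficients, bounded moduli data, and $\vol(K_Z+B_Z+M_Z)$ in a finite set, boundedness of $Z$ (together with the data $B_Z,M_Z$) follows from the generalised version of log birational boundedness for pairs of log general type (in the spirit of \cite{HMX18}, extended to generalised pairs using \cite{BZ16}, \cite{B19}, \cite{B21a}, \cite{BH22}). Given a bounded family of such $(Z,B_Z+M_Z)$, I would then pull back a very ample divisor from $Z$, add a sufficiently small multiple of $A$, and argue that for a uniform rational $0<t\ll 1$ the divisor $L:=K_X+B+tA$ becomes very ample with uniformly bounded self-intersection $L^d=\sigma(t)$; the Stein-degree results of Section~\ref{ss-stein-deg} together with Theorem~\ref{t-bnd-stein-deg-lc-main} ensure that the fibres and non-klt strata cannot be split in unbounded ways when passing between $X$ and $Z$. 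This provides the required very ample $H$ on $X$ with $H^d$ and $(K_X+B+A)\cdot H^{d-1}$ bounded, giving boundedness of the ambient varieties; then bounding the coefficients of $B$ and $A$ (which lie in a DCC set $\Phi$) and their supports follows from standard arguments on bounded Chow varieties.

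The main obstacle will be establishing Step~2--3 for the base: first, controlling the moduli b-divisor $M_Z$ so that $(Z,B_Z+M_Z)$ is actually a generalised pair of a bounded "complexity" (this requires effective uniformity of the canonical bundle formula for the bounded family of fibres from Step~1, typically via a common choice of a divisible integer $N$ and a bounded base change making $M_Z$ b-semi-ample with bounded index), and second, invoking a sufficiently general boundedness statement for generalised pairs of log general type with bounded volume and DCC coefficients---in the Calabi--Yau-fibre case this is already delicate and uses \cite{B20}, \cite{B21a}, and the Stein degree machinery developed earlier in the paper. Once these ingredients are in place, descending the bounded embedding from $(Z,B_Z+M_Z)$ back up to $(X,B),A$ using the fibre-wise bound on $A$ and the small-$t$ ampleness of $K_X+B+tA$ is comparatively routine.
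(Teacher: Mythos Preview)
Your initial setup matches the paper's: you correctly extract $\dim Z$ and $\Ivol(K_X+B)$ from $\sigma$ and $\Gamma$, bound the general fibres as stable Calabi--Yau pairs, and bound the base $(Z,B_Z+M_Z)$ via the adjunction result of \cite{BH22} (this is exactly Theorem~\ref{t-bnd-base-adjunction} in the paper, which gives a fixed $l$ with $l(K_Z+B_Z+M_Z)$ very ample). However, there is a genuine gap at the decisive step, and two misplaced ingredients.

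\textbf{The gap.} You assert that ``for a uniform rational $0<t\ll 1$ the divisor $L:=K_X+B+tA$ becomes very ample''. This is precisely the heart of the theorem and does not follow from boundedness of the base and of the general fibres alone. Knowing that $l(K_X+B)\sim f^*L$ with $L$ very ample controls horizontal curves only if you also control how negative $A$ can be on such curves; and boundedness of $(F,B_F),A_F$ says nothing about special fibres or about the lc threshold of $A$ globally on $X$. The paper closes this gap by \emph{induction on dimension}: it takes a general $T\in |l(K_Z+B_Z+M_Z)|$, lets $S=f^{-1}T$, and observes that $(S,B_S),A_S$ is an lc stable minimal model in $\mathcal{S}_{\rm lc}(d-1,\Phi,\Gamma,\theta)$ for a polynomial $\theta$ determined by $d,\sigma,l$. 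By induction $(S,B_S),A_S$ is bounded, so Lemma~\ref{l-bnd-lct-on-bnd-family} gives a fixed $\tau$ with $(S,B_S+\tau A_S)$ lc; hence $(X,B+\tau A)$ is lc away from finitely many fibres of $f$. Since any $(K_X+B+\tau A)$-negative extremal ray is horizontal over $Z$ (as $K_X+B+\tau A$ is ample over $Z$), it avoids the vertical non-lc locus, and the cone theorem of Ambro--Fujino gives a length bound $(K_X+B+\tau A)\cdot C\ge -2d$; combined with $f^*L\cdot C\ge 1$ this forces $K_X+B+\frac{\tau}{1+2dl}A$ nef, hence ample after halving. Only then does one invoke the strongly stable case \cite[Theorem 1.10]{B21b}. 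Your outline skips this entire inductive hyperplane-section plus cone-theorem mechanism, and without it there is no way to produce the uniform $t$.

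\textbf{Misplaced ingredients.} First, the Stein degree results (Section~\ref{ss-stein-deg}, Theorem~\ref{t-bnd-stein-deg-lc-main}) are not used at all in the lc case; they enter only in the slc proof of Theorem~\ref{t-bnd-s-mmodels-slc} to control Iitaka volumes after normalisation. Invoking them here is a red herring. Second, passing to a $\Q$-factorial dlt model at the outset is both unnecessary and problematic: the pullback of $A$ to a dlt model is typically no longer ample over $Z$, so the resulting object is not a stable minimal model, and boundedness of the dlt model does not automatically descend without further argument. The paper works directly with $(X,B),A$ throughout.
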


A key ingredient of the proof of this theorem and also the general case (Theorem \ref{t-bnd-s-mmodels-slc}) is the theory of generalised pairs. We start with recalling some relevant notions and results about generalised pairs.

\subsection{Boundedness of generalised pairs}\label{ss-fam-gen-pairs}

Let $d\in \mathbb{N}$, $\Phi\subset \mathbb{R}^{\ge 0}$, and $v\in \R^{>0}$. 
Let $\mathcal{G}_{glc}(d,\Phi)$ be the set of projective 
generalised pairs $(X,B+M)$ with nef part $M'$ such that 
\begin{itemize}
\item $(X,B+M)$ is generalised lc of dimension $d$, 
\item the coefficients of $B$ are in $\Phi$, 
\item $M'=\sum \mu_i M_i'$ where $\mu_i\in \Phi$ and $M_i'$ are nef Cartier, and
\item $K_X+B+M$ is big.
\end{itemize}
Let 
$\mathcal{F}_{glc}(d,\Phi,v)$ be the set of those $(X,B+M)\in \mathcal{G}_{glc}(d,\Phi)$
such that
\begin{itemize}
\item $K_X+B+M$ is ample with volume $\vol(K_X+B+M)=v$.
\end{itemize}
We define $\mathcal{F}_{gklt}(d,\Phi,v)$ similarly by replacing the generalised lc condition with generalised klt.

In birational geometry, one is frequently faced with questions regarding boundedness of generalised pairs. 
The generalised klt case is settled in \cite{B21b}. 

\begin{thm}[{\cite[Theorem 1.4]{B21b}}]\label{t-bnd-gen-klt-pairs}
Let $d\in \N$, $\Phi\subset \R^{\ge 0}$ be a DCC set, and $v\in \R^{>0}$. 
Then the set
$
\mathcal{F}_{gklt}(d,\Phi,v)
$ 
forms a bounded family.
\end{thm}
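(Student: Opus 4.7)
The plan is to follow the by-now standard blueprint of \cite{HMX18} for boundedness of klt pairs of log general type, but carried out in the generalised-pair category. The statement to prove is that $\mathcal{F}_{gklt}(d,\Phi,v)$ is bounded, and the argument will proceed by induction on $d$. One wants, at the end, a bounded family of couples $(V,\Sigma)\to T$ and a fixed $m\in\N$ such that every $(X,B+M)\in \mathcal{F}_{gklt}(d,\Phi,v)$ is recovered as a log canonical model of the pullback of a fibre of $V\to T$ via $|m(K_X+B+M)|$.

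First I would establish \emph{log birational boundedness}. The key is an effective birationality statement for generalised klt pairs of log general type: there exists $m=m(d,\Phi,v)\in\N$ such that $|m(K_X+B+M)|$ defines a birational map for all $(X,B+M)\in\mathcal{F}_{gklt}(d,\Phi,v)$. To prove this, I would replace $\Phi$ by its closure and, after a standard perturbation, assume $\Phi\subset\Q^{\ge 0}$. Then one uses creation of non-klt centres at two general points via cutting with sections of $m(K_X+B+M)$ (whose existence is guaranteed by the DCC volume hypothesis through an asymptotic Riemann–Roch type estimate), tie-breaking, and then lifting sections from a minimal non-klt centre $G$ cut out this way. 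The lifting is done by a generalised version of the extension/Kawamata–Viehweg vanishing argument, and at the generic point of $G$ one applies the inductive hypothesis in dimension $\dim G<d$ together with the canonical bundle formula for generalised pairs \cite{BZ16} to descend the generalised-pair structure to $G$ so that induction applies.

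Having effective birationality, log birational boundedness of $(X,\mathrm{Supp}(B+M))$ follows as in \cite[Section 3]{HMX18}: the image of $X$ under $\Phi_{|m(K_X+B+M)|}$ sits in a fixed projective space with bounded degree, and a resolution of the graph yields a family $V\to T$ with a reduced divisor $\Sigma$ containing the birational transforms of $\mathrm{Supp}\,B$ and of the components of $M'$. To pass from log birational boundedness to true boundedness, I would first invoke ACC for generalised lc thresholds (established in \cite{BZ16}) to bound the coefficients of $B$ from above away from $1$, together with the DCC hypothesis on $\Phi$, forcing the nonzero coefficients of $B$ to lie in a finite set; a similar argument controls the coefficients of $M'$. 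Then $K_X+B+M$ is the log canonical model of the corresponding pair on the bounded family, so by running a relative MMP fibrewise (or by using the uniqueness of log canonical models) one deduces that the $X$ themselves form a bounded family.

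The main obstacle is the effective birationality step: unlike the usual klt setting, one has to keep track of the nef part $M'$ throughout the non-klt centre creation and the section lifting, and the descent to a minimal non-klt centre $G$ must produce a generalised pair on $G$ whose data $(\dim,\Phi,v_G)$ is still controlled so that induction actually applies. Making the volume $v_G$ on $G$ lie in a DCC set—rather than merely being bounded below—requires the generalised-pair version of the global ACC / DCC-volume statements, and this is the technical heart of \cite{B21b}. Once this is in place, the remainder of the argument is a routine translation of \cite{HMX18} to the generalised setting.
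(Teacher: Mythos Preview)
The paper does not prove this theorem; it is quoted verbatim from \cite[Theorem 1.4]{B21b}. What the paper does provide, in the paragraph immediately following the statement, is a brief description of how the proof in \cite{B21b} runs, and that description differs in an important way from your outline.

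Your plan is to redo the entire \cite{HMX18} machinery---effective birationality via creation of non-klt centres, lifting sections, descent to minimal centres by the generalised canonical bundle formula, then log birational boundedness and ACC arguments---directly in the generalised-pair category, carrying the nef part $M'$ through every step. The paper indicates that \cite{B21b} instead \emph{reduces to the ordinary klt case}: one constructs an honest $\Q$-divisor $\Theta\ge 0$ on $X$ with $(X,\Theta)$ klt, coefficients of $\Theta$ in a fixed DCC set, and
\[
l(K_X+\Theta)\sim l(1+t)(K_X+B+M)
\]
for fixed $l\in\N$, $t\in\Q^{>0}$; one also arranges that $(X,\Theta)$ is log birationally bounded. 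Then $\vol(K_X+\Theta)$ is fixed, and boundedness of $(X,\Theta)$---hence of $X$ and of $K_X+B+M$---follows from the already-established \cite[Theorem 1.6]{HMX14} for ordinary pairs. In other words, the nef part is absorbed into a boundary once and for all, rather than tracked through the inductive cutting-and-lifting argument.

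Your route is not wrong in spirit, but the step you flag as ``the technical heart''---controlling the generalised-pair data on a minimal non-klt centre well enough for induction---is precisely what the reduction-to-$\Theta$ strategy sidesteps. The approach sketched in the paper buys you the ability to invoke \cite{HMX14} as a black box; your approach would require reproving those results in the generalised setting, which is substantially more work and is not what \cite{B21b} actually does.
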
  

Here by boundedness of $\mathcal{F}_{gklt}(d,\Phi,v)$ we mean that there is a fixed $r\in \N$ such that for each $(X,B+M)$ in $\mathcal{F}_{gklt}(d,\Phi,v)$, we can find a very ample divisor $H$ on $X$ with 
$$
H^d\le r ~~\mbox{and}~~(K_X+B+M)\cdot H^{d-1}\le r. 
$$
Boundedness of other sets of generalised pairs is defined similarly.

However, in the theorem, if $\Phi\subset \Q^{\ge 0}$ and $v\in \Q^{>0}$, then the proof of the theorem shows that there is a fixed $l\in\N$ such that $l(K_X+B+M)$ is very ample
for each 
$$
(X,B+M)\in \mathcal{F}_{gklt}(d,\Phi,v).
$$ 
Indeed, in the course of the proof, a divisor $\Theta$ is constructed such that $(X,\Theta)$ is klt, the coefficients of $\Theta$ belongs to a fixed DCC set, 
$$
l(K_X+\Theta)\sim l(1+t)(K_X+B+M)
$$
where $l\in \N$ and $t\in \Q^{>0}$ are fixed, and $(X,\Theta)$ is log birationally bounded. In particular, $\vol(K_X+\Theta)$ is fixed. But then $(X,\Theta)$ belongs to a bounded family by \cite[Theorem 1.6]{HMX14}. Thus we can replace $l$ so that $l(K_X+\Theta)$ and $l(K_X+B+M)$ are very ample.

On the other hand, it turns out that $\mathcal{F}_{glc}(d,\Phi,v)$ is not a bounded family in general \cite[Subsection 5.3]{BH22}. However, the next result says that the generalised pairs that are given by adjunction for fibrations form bounded families under natural conditions, which is the kind of boundedness we need.

First we introduce some notation. Let $d\in \mathbb N$, $\Phi\subset \R^{\ge 0}$, and $u,v\in \R^{>0}$.
A $(d,\Phi,u,v)$-stable minimal model is a stable minimal model $(X,B),A\to Z$ such that    
\begin{itemize}
\item $\dim X=d$,
\item the coefficients of $B$ and $A$ are in $\Phi$, 
\item $\vol(A|_F)=u$ where $F$ is any general fibre of $f\colon X\to Z$ over any irreducible component of $Z$, and 
\item $\Ivol(K_X+B)=v$.
\end{itemize}
The set of all the $(d,\Phi,u,v)$-stable minimal models is denoted $\mathcal{S}_{\rm slc}(d,\Phi,u,v)$. 
Similarly, $\mathcal{S}_{\rm lc}(d,\Phi,u,v)$ denotes the subset consisting of the lc stable minimal models. The notation is different from but consistent with that defined \ref{d-stabl-mmodels-II}.

\begin{thm}[{\cite[Theorem 1.3]{BH22}}]\label{t-bnd-base-adjunction} 
Let $d\in \mathbb N$, $\Phi\subset \Q^{\ge 0}$ be a DCC set, and $u,v\in \Q^{>0}$. 
Then there exists $l\in \N$ such that for any lc stable minimal model 
$$
(X,B),A\overset{f}\to Z\in \mathcal{S}_{lc}(d,\Phi,u,v),
$$
we can write an adjunction formula 
$$
K_X+B\sim_l f^*(K_Z+B_Z+M_Z)
$$ 
such that the corresponding set of generalized pairs $(Z,B_Z+M_Z)$ forms a bounded family. 
Moreover, $l(K_Z+B_Z+M_Z)$ is very ample.
\end{thm}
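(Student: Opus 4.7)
The plan is to apply the canonical bundle formula to $f\colon X\to Z$, thereby transporting the problem to boundedness of a family of generalised lc pairs of fixed volume on $Z$, and then to harvest the uniform very ample multiple from Theorem \ref{t-bnd-gen-klt-pairs}. Four ingredients are needed: (i) set up the canonical bundle formula and record that $K_Z+B_Z+M_Z$ is ample of fixed volume; (ii) prove DCC of the coefficients of $B_Z$; (iii) prove a uniform bound on the Cartier index of the nef part $M_Z'$ by exploiting boundedness of the general log fibre; and (iv) bridge from generalised lc to generalised klt so that \cite{B21b} applies.

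For (i), since $K_X+B$ is semi-ample defining $f$, it equals $f^*H_Z$ up to $\Q$-linear equivalence for some ample $\Q$-divisor $H_Z$. The canonical bundle formula in generalised pair form (see \cite{BZ16}) produces $B_Z\ge 0$ and a nef b-$\Q$-Cartier b-divisor $M_Z'$ with
$$
K_X+B\sim_\Q f^*(K_Z+B_Z+M_Z),
$$
where $(Z,B_Z+M_Z)$ is generalised lc; then $K_Z+B_Z+M_Z$ is ample and $\vol(K_Z+B_Z+M_Z)=\Ivol(K_X+B)=v$. For (ii), the coefficient of $B_Z$ along a prime divisor $D\subset Z$ is $1$ minus the log canonical threshold of an auxiliary divisor with respect to $(X,B)$ at the generic point of $D$. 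Because the coefficients of $B$ lie in the DCC set $\Phi$, the global ACC for log canonical thresholds of \cite{HMX14} yields a DCC set $\Psi=\Psi(d,\Phi)\subset[0,1]$ containing all such coefficients.

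For (iii), which is the main obstacle, observe that a general log fibre $(F,B_F)$ of $(X,B)\to Z$ is lc with $K_F+B_F\sim_\Q 0$, $A_F:=A|_F$ is ample with $\vol(A_F)=u$, and $(F,B_F+tA_F)$ is slc for small $t$; thus $(F,B_F),A_F$ is a stable Calabi-Yau pair. By the Calabi-Yau boundedness of \cite{B20} the set of such fibres forms a bounded family. In particular there exists a uniform index $n\in\N$ with $n(K_F+B_F)\sim 0$, and the polarised family $(F,B_F),A_F$ varies in a bounded moduli. Combined with an Ambro/Kawamata-type analysis of the variation of the moduli divisor through the generalised-pair canonical bundle formula, this yields a uniform $l_1=l_1(d,\Phi,u,v)\in\N$ such that $l_1M_Z'$ descends to a Cartier divisor on a single fixed birational model of $Z$.

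For (iv), since $K_Z+B_Z+M_Z$ is ample, for a small rational $\epsilon>0$ I replace $(Z,B_Z+M_Z)$ by a generalised klt perturbation $(Z,\Theta_Z+N_Z)$ with $K_Z+\Theta_Z+N_Z\sim_\Q (1+\epsilon)(K_Z+B_Z+M_Z)$ ample of fixed volume $(1+\epsilon)^{\dim Z}v$, by absorbing a general section of a sufficiently divisible very ample multiple of $K_Z+B_Z+M_Z$. Theorem \ref{t-bnd-gen-klt-pairs} and the very-ampleness refinement recorded immediately after its statement then produce $l\in\N$ with $l(K_Z+\Theta_Z+N_Z)$, hence $l(K_Z+B_Z+M_Z)$, very ample, and deliver boundedness of the family of $(Z,B_Z+M_Z)$. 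The hardest step is (iii): translating boundedness of the polarised log Calabi-Yau fibres into a uniform Cartier-index bound on the moduli part. This is the core effective b-semi-ampleness question for moduli divisors, and it is where the stable Calabi-Yau boundedness of \cite{B20} must be coupled with variation-of-Hodge-structure input and the generalised-pair machinery of \cite{BZ16}.
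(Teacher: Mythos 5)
There are two genuine gaps, and they sit exactly at the points where the real work lies. First, note that the paper does not reprove this statement: it is quoted from \cite{BH22}, and the text only explains how the final claim (very ampleness of $l(K_Z+B_Z+M_Z)$) is extracted from the proof given there. Your steps (i) and (ii) are fine and standard, but your step (iii) --- a uniform bound on the Cartier index of the moduli part $M_Z'$ --- is precisely the hardest ingredient of \cite{BH22}, and your proposal does not prove it: boundedness of the polarised Calabi--Yau fibres from \cite{B20} together with an unspecified ``Ambro/Kawamata-type analysis'' and ``variation-of-Hodge-structure input'' is an appeal to the effective b-semi-ampleness problem (the Prokhorov--Shokurov effective adjunction conjecture), not an argument. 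The actual proof avoids attacking this head-on: when $(X,B)$ is klt over $\eta_Z$ one realises $Z$ as the lc model of a bounded log smooth pair $(V,\Gamma_V)$ with $R(q(K_X+B))=R(q(K_V+\Gamma_V))$ and invokes \cite[Theorem 1.2]{HMX18}; when it is not, one restricts to a minimal horizontal non-klt centre, argues by induction on dimension, and then descends Cartier-ness and base point freeness through a generically Galois cover $V\to Z$ of bounded degree.

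Second, your step (iv) does not work as stated. Adding a general member of an ample linear system to the boundary can never turn a generalised lc pair into a generalised klt one: the non-klt places of $(Z,B_Z+M_Z)$ remain non-klt after adding any effective divisor, so there is no perturbation $(Z,\Theta_Z+N_Z)$ of the kind you describe when $(Z,B_Z+M_Z)$ is genuinely lc. This is not a removable technicality: the paper explicitly records that $\mathcal{F}_{glc}(d,\Phi,v)$ is \emph{not} a bounded family in general (\cite[Subsection 5.3]{BH22}), so no argument that forgets the fibration structure and tries to reduce to Theorem \ref{t-bnd-gen-klt-pairs} can succeed. The extra hypotheses $\vol(A|_F)=u$ and the polarisation of the fibres must enter through the fibration --- via the bounded fibres, the minimal non-klt centre, and the bounded Galois cover --- which is exactly what your outline omits. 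As it stands, the proposal reduces the theorem to two unproved assertions, one of which (the glc-to-gklt perturbation) is false.
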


The last claim in the theorem is not explicitly stated in \cite[Theorem 1.3]{BH22} but it follows from its proof. We include some explanations for convenience. Note also that the theorem is proved in the more general setting that one only requires $A$ to be ample over the generic point $\eta_Z$ of $Z$ and that $A$ does not contain non-klt centres of $(X,B)$ over $\eta_Z$. This allows more flexibility in the proof.

First, one shows that there exist $q\in \N$ and a DCC set $\Psi\subset \Q^{\ge 0}$ such that the adjuction formula can be written as 
$$
K_X+B\sim_q f^*(K_Z+B_Z+M_Z)
$$
where $qM_{Z'}$ is Cartier for a high resolution $Z'\to Z$, and 
$$
(Z,B_Z+M_Z)\in \mathcal{F}_{glc}(\dim Z,\Psi,v).
$$ 

Assume that $(X,B)$ is klt over the generic point $\eta_Z$ of $Z$. Then the proof of this case goes via finding a bounded log smooth dlt pair $(V,\Gamma_V)$ so that $Z$ is the lc model of $(V,\Gamma_V)$ and $K_Z+B_Z+M_Z$ is the corresponding ample divisor on $Z$. More precisely, there is an equality of algebras 
$$
R(q(K_X+B))= R(q(K_V+\Gamma_V)).
$$
Moreover,  the coefficients of $\Gamma_V$ belong to a fixed finite set and $(V,\Gamma_V)$ has a good minimal model. Thus in this case boundedness of $Z$ and the existence of $l$ follow from \cite[Theorem 1.2]{HMX18}. 

Now assume $(X,B)$ is not klt over $\eta_Z$. In this case, one first reduces to the sitution where $(X,B)$ is dlt over $\eta_Z$. Let $S$ be the normalisation of a non-klt centre of $(X,B)$ which maps onto $Z$ and which is minimal with respect to inclusion among such centres, and let $g\colon S\to T$ be the contraction given by the Stein foctorisation of $S\to Z$. Using the dlt condition, there is a divisorial adjunction formula $K_S+B_S=(K_X+B)|_S$ where $(S,B_S)$ is klt over $\eta_T$.  
In particular, 
$$
K_S+B_S\sim_qg^*\pi^*(K_Z+B_Z+M_Z)
$$
where $\pi$ denotes $T\to Z$. 
Letting $A_S=A|_S$, $(S,B_S),A_S\to T$ satisfies the same kind of properties as in \cite[Theorem 1.3]{BH22}. Applying induction we can assume that we have an adjunction formula
$$
K_S+B_S\sim_q g^*(K_T+B_T+M_T)
$$
so that $l(K_T+B_T+M_T)$ is very ample for some fixed $l$. Then 
$$
q\pi^*(K_Z+B_Z+M_Z)\sim q(K_T+B_T+M_T),
$$
hence replacing $l$ by $ql$ we deduce that $l\pi^*(K_Z+B_Z+M_Z)$ is very ample.

The proof of the theorem then goes via constructing a generically Galois finite cover $V\to Z$ with  Galois group $G$ of bounded size such that $V\to Z$ factors through $S\to Z$. Denoting $V\to S$ by $\mu$, we see that 
$$
\Sigma_{h\in G}h^*\mu^*\pi^*l(K_Z+B_Z+M_Z)
$$  
 is the pullback of an ample Cartier divisor on $Z$. But 
$$
\mu^*\pi^*(K_Z+B_Z+M_Z)
$$ 
is $G$-invariant, so the above is just 
$$
l|G|\mu^*\pi^*(K_Z+B_Z+M_Z)
$$ 
implying that 
$$
l|G|(K_Z+B_Z+M_Z)
$$ 
is Cartier. Moreover, using the Galois cover one also shows that 
the linear system of the latter divisor 
is base point free defining a birational contraction. This implies that replacing $l$ with a bounded multiple we can assume that $l(K_Z+B_Z+M_Z)$ is very ample as desired.

\subsection{Bigness and lc thresholds}
We make some further preparations for the proof of \ref{t-bnd-s-mmodels-lc}.

\begin{lem}\label{l-v.ample-div-dominating}
Let $d,r\in\N$ and let $\Phi\subset \Q^{\ge 0}$ be a DCC set. Then there 
exists $l\in \N$ satisfying the following. Assume 
\begin{itemize}
\item $X$ is a normal projective variety of dimension $d$, 
\item $H$ is a very ample divisor, 
\item $A$ is a divisor with coefficients in $\Phi$, and
\item $H^d\le r$ and $A\cdot H^{d-1}\le r$.  
\end{itemize}
Then $lH-A$ is big. 
\end{lem}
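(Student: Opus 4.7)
The approach is a standard boundedness-plus-spreading-out argument. First I extract concrete bounds on $A$ from the hypotheses. If $\Phi\cap\Q^{>0}=\emptyset$ then $A=0$ and any $l\ge 1$ works, so assume $\Phi\cap\Q^{>0}\neq\emptyset$ and let $c_0>0$ be its minimum, which exists by the DCC property. Writing $A=\sum a_iD_i$ with distinct prime components $D_i$, the very ampleness of $H$ gives $D_i\cdot H^{d-1}\ge 1$, so $A\cdot H^{d-1}\le r$ combined with $a_i\ge c_0$ bounds the number of components of $A$ by $r/c_0$, the degrees $D_i\cdot H^{d-1}$ by $r/c_0$, and each coefficient $a_i$ by $r$. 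Since $A\le r(\Supp A)_{\rm red}$, replacing $l$ by $rl$ reduces the problem to the case where $A$ is a reduced integral divisor with boundedly many components of bounded $H$-degree.

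Next I show that the triples $(X,H,A)$ form a bounded family. Since $H$ is very ample with $H^d\le r$, the complete linear system $|H|$ embeds $X$ as a subvariety of degree at most $r$ in some $\PP^N$ with $N$ bounded in terms of $d,r$; such subvarieties are parametrised by a bounded portion of a Hilbert scheme. The components of $A$ are then codimension-one cycles of bounded $H$-degree on the universal $X$, and hence lie in a bounded portion of a relative Chow scheme. Altogether there exist finitely many projective morphisms $\mathcal{X}^j\to T^j$ with relatively very ample divisors $\mathcal{H}^j$ and relative effective divisors $\mathcal{A}^j$ such that every $(X,H,A)$ in our class arises as some fibre $(\mathcal{X}^j_t,\mathcal{H}^j_t,\mathcal{A}^j_t)$.

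It remains to produce a uniform $l$ over each family. Over the generic point $\eta$ of an irreducible component of $T^j$, $\mathcal{H}^j_\eta$ is ample and $\mathcal{A}^j_\eta$ is effective, so $l\mathcal{H}^j_\eta-\mathcal{A}^j_\eta$ is ample, and in particular big, for some $l\in\N$. By the openness of ampleness for $\Q$-Cartier divisors on flat Noetherian families (after, if necessary, stratifying $T^j$ and passing to small $\Q$-factorialisations on each stratum to make $l\mathcal{H}^j-\mathcal{A}^j$ $\Q$-Cartier), this bigness extends to a non-empty open subset of the component. Noetherian induction on the complement then yields a uniform $l^j$ on $T^j$, and the maximum over the finitely many families gives the required $l$. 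The main technical point to be careful about is precisely this $\Q$-Cartier step, since on a general normal $X$ the divisor $A$ need not be $\Q$-Cartier; it can be handled by the standard device of passing to a $\Q$-factorialisation, under which bigness is preserved.
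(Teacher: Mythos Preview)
Your reduction to the case where $A$ is reduced, and your packaging of the data into finitely many families $\mathcal{X}^j\to T^j$ with universal $\mathcal{H}^j,\mathcal{A}^j$, are correct and match the paper. The gap is in the last step. You invoke openness of ampleness for $l\mathcal{H}^j-\mathcal{A}^j$ and propose to arrange $\Q$-Cartierness by passing to small $\Q$-factorialisations. But the lemma only assumes $X$ is normal; small $\Q$-factorialisations are produced by \cite{BCHM10} for klt (or with more work lc) varieties, and there is no reason the fibres here are klt. So this device is not available, and without it neither ``ample'' nor ``openness of ampleness'' makes sense for the Weil divisor $l\mathcal{H}^j-\mathcal{A}^j$.

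The paper avoids this entirely by a more elementary move on the total space. With your notation, let $V\to T$ be one of the families, $L$ the relatively very ample divisor restricting to $H_i$, and $C$ the universal reduced divisor with $A_i\le C_{t_i}$. Replacing $L$ by a high enough multiple (and $H_i$ accordingly), one finds $D\ge 0$ with $L\sim C+D$ over $T$; this uses only that $L$ is ample and $C$ is a fixed Weil divisor on $V$, no Cartier condition on $C$ is needed. After dropping the finitely many $t_i$ lying in $\Supp D$ or where the restriction of $C,D$ to the fibre is not a well-defined Weil divisor, one restricts to get $H_i\sim C_{t_i}+D_{t_i}\ge A_i$, hence $2H_i-A_i\sim H_i+G_i$ with $G_i\ge 0$, which is big in the sectional sense since $h^0(m(H_i+G_i))\ge h^0(mH_i)$. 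This contradicts the assumption that the minimal $l_i$ with $l_iH_i-A_i$ big is unbounded. If you want to keep your Noetherian-induction framing, the fix is to replace ``ampleness is open'' by ``an effective member of $|lH_\eta-A_\eta|$ on the generic fibre spreads out to an effective relative divisor over an open of $T$''; but the paper's formulation on the total space is cleaner and sidesteps the $\Q$-Cartier issue altogether.
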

\begin{proof}
If the lemma does not hold, then there is a sequence $X_i,H_i,A_i$ of varieties and divisors as in the lemma such that if $l_i\in \N$ is the smallest number so that $l_iH_i-A_i$ is big, then the $l_i$ form a strictly increasing sequence. Since $H_i^d\le r$ and $A_i\cdot H_i^{d-1}\le r$ and since the coefficients of $A_i$ are in the DCC set $\Phi$, the $(X_i,S_i:=\Supp A_i)$ belong to a bounded family, the cefficients of $A_i$ are bounded from above, and the number of components of $A_i$ is bounded. So $A_i\le pS_i$ for some fixed $p$. Thus 
replacing $A_i$ with $S_i$ and replacing $\Phi$ with $\{1\}$, it is enough to treat the case when $A_i$ is reduced. 

Replacing the sequence we can assume that there is a projective morphism $V\to T$ of normal varieties and a reduced divisor $C$ on $V$ such that for each $i$ there is a closed point $t_i\in T$ so that $X_i$ is isomorphic to the fibre $V_{t_i}$ and identifying $X_i$ and $V_{t_i}$ we have $A_i\le C_{t_i}$. Moreover, we can assume that there is an ample$/T$ divisor $L$ on $V$ such that $H_i\sim L|_{X_i}$. 

Now replacing $L$ and replacing $H_i$ accordingly we can assume that $L\sim C+D/T$ where $D\ge 0$. Replacing the sequence we can assume that $\Supp D$ does not contain any of the $X_i=V_{t_i}$. But then 
$$
H_i\sim L|_{X_i}\sim C|_{X_i}+D|_{X_i}.
$$ 
Note that $C,D$ are not necessarily $\Q$-Cartier but their restrictions to general fibres of $V\to T$ are well-defined as Weil divisors and we can assume $X_i$ are general fibres.
Then since $A_i\le C|_{X_i}=C_{t_i}$, we have $H_i\sim A_i+G_i$ for some $G_i\ge D|_{X_i}\ge 0$. Thus $2H_i-A_i$ is big. This is a contradiction.
  
\end{proof}

\begin{lem}\label{l-bnd-lct-on-bnd-family}
Let $d\in\N$, $\Phi\subset \Q^{\ge 0}$ be a DCC set, $u,v\in \Q^{>0}$. Assume that 
$$
\mathcal{E}\subseteq \mathcal{S}_{lc}(d,\Phi,u,v)
$$ 
is a bounded family. Then there exists $\lambda \in \Q^{>0}$ such that for any $(X,B),A$ in $\mathcal{E}$, the pair 
$(X,B+\lambda A)$ is lc.
\end{lem}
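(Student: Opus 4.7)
The plan is to exploit boundedness of $\mathcal{E}$ to put all members into a single family $V \to T$ with prescribed boundary, and then show by Noetherian induction on $T$ that the lc threshold is constant on each stratum of a finite stratification; the minimum of the resulting (finitely many, positive) constants will be the desired $\lambda$.

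First I would set up the parametrisation. By boundedness of $\mathcal{E}$, there exist a projective morphism $V \to T$ of schemes of finite type and a reduced divisor $C$ on $V$ such that every $(X,B),A \in \mathcal{E}$ is isomorphic to a fibre $V_t$ under an identification carrying $\Supp(B+A)$ into $C_t$. For each member the coefficients of $B$ and $A$ along components of $C_t$ lie in the DCC set $\Phi$ and are bounded above by $r$, using $H^{d-1}\!\cdot S \geq 1$ for any prime divisor $S$ together with $H^{d-1}\!\cdot B,\,H^{d-1}\!\cdot A \leq r$. Hence only finitely many coefficient patterns occur, and after stratifying $T$ into finitely many locally closed pieces I may assume that there are fixed $\Q$-divisors $\mathcal{B}, \mathcal{A}$ on $V$ supported in $C$ with coefficients in $\Phi$, such that $(X, B+A) \cong (V_t, \mathcal{B}_t + \mathcal{A}_t)$ for every member of $\mathcal{E}$ with parameter $t$.

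To execute the Noetherian induction, it suffices to produce $\lambda > 0$ that works over a dense open $U \subseteq T$, after which I would iterate on $T \setminus U$. For such a $\lambda$, I pick a log resolution $\phi \colon W \to V$ of $(V, \Supp(\mathcal{B}+\mathcal{A}))$ and write
$$K_W + \mathcal{B}_W + \mathcal{A}_W = \phi^*(K_V + \mathcal{B} + \mathcal{A}).$$
Shrinking $T$ by generic flatness and generic smoothness, I may assume $V \to T$ and $W \to T$ are flat with reduced fibres, that $\mathcal{B}_W$ and $\mathcal{A}_W$ have only $T$-horizontal components, and that $\phi_t \colon W_t \to V_t$ is a log resolution of $(V_t, \mathcal{B}_t + \mathcal{A}_t)$ for every $t \in U$. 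Restricting via adjunction gives
$$K_{W_t} + (\mathcal{B}_W)_t + (\mathcal{A}_W)_t = \phi_t^*(K_{V_t} + \mathcal{B}_t + \mathcal{A}_t),$$
with the coefficients of $(\mathcal{B}_W)_t$ and $(\mathcal{A}_W)_t$ equal to those of $\mathcal{B}_W$ and $\mathcal{A}_W$. Consequently, for every $\mu \in \R$ the log discrepancies of $(V_t, \mathcal{B}_t + \mu \mathcal{A}_t)$ along the prime divisors of $W_t$ are independent of $t \in U$, so $\lct(V_t, \mathcal{B}_t; \mathcal{A}_t)$ takes a single value $\lambda_U$ on $U$. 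Since each member of $\mathcal{E}$ is a stable minimal model, by definition some positive $t$ exists with $(X, B + tA)$ lc, hence $\lambda_U > 0$ and $(V_t, \mathcal{B}_t + \lambda_U \mathcal{A}_t)$ is lc for every $t \in U$.

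The hard part will be the compatibility of the log resolution with the family structure, namely ensuring that $\phi_t$ is itself a log resolution and that the coefficients of $\mathcal{B}_W$ and $\mathcal{A}_W$ specialise correctly on fibres; both are arranged by shrinking $T$ further via standard generic flatness and generic smoothness in characteristic zero. The Noetherian induction then terminates after finitely many steps, producing positive constants $\lambda_{U_1}, \dots, \lambda_{U_N}$, and taking $\lambda := \min_j \lambda_{U_j} > 0$ delivers the required uniform lc threshold.
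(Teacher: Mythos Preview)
Your approach is correct and takes a genuinely different route from the paper's.

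The paper argues pointwise: for each $(X,B),A$ in $\mathcal{E}$ it passes to a bounded log resolution $\phi\colon W\to X$, writes $K_W+B_W=\phi^*(K_X+B)$ and $A_W=\phi^*A$, and then needs two separate inputs. First, it bounds the coefficients of $A_W$ by showing $lH-A$ is big for fixed $l$ (Lemma~\ref{l-v.ample-div-dominating}) and using intersection numbers on $W$. Second, and this is the crucial external input, it invokes \cite[Lemma 6.9]{BH22}\cite[Lemma 8.2]{B21b} to conclude that the non-negative coefficients of $B_W$ belong to a fixed finite set depending on $d,\Phi,u,v$; this is where the specific data $u,v$ enter. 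Combining the two with the fact that $A_W$ and $\rddown{B_W}$ share no components yields the uniform $\lambda$.

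Your argument sidesteps that external lemma entirely. By pinning the coefficient patterns (finitely many, since $\Phi$ is DCC and coefficients are bounded above) to global divisors $\mathcal{B},\mathcal{A}$ on the total space $V$, and spreading out a single log resolution, the discrepancies on each fibre are literally restrictions of global data, hence constant on a dense open; Noetherian induction finishes. In effect you are using the boundedness hypothesis more fully: once everything lives in one family, there is no need to control the coefficients of $B_W$ by an independent finiteness theorem. A pleasant byproduct is that your argument never uses $u$ or $v$, so it proves the lemma for any bounded $\mathcal{E}\subseteq \mathcal{S}_{\rm lc}(d,\Phi)$.

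One point you gloss over deserves a sentence of care: before writing $K_W+\mathcal{B}_W+\mathcal{A}_W=\phi^*(K_V+\mathcal{B}+\mathcal{A})$ you need $K_V+\mathcal{B}$ and $\mathcal{A}$ to be $\Q$-Cartier on (a normalisation of) $V$, which you only know on the dense set of fibres coming from $\mathcal{E}$. This is indeed handled by shrinking $T$ (cf.\ Lemma~\ref{l-family-dense-lc-s-ample} of the paper, or standard constructibility of the $\Q$-Cartier locus), but it is a bit more than ``generic flatness and generic smoothness''.
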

\begin{proof}
By definition of boundedness, there is a fixed $r\in \N$ such that 
for any $(X,B),A$ in the family $\mathcal{E}$ we can find a general very ample divisor $H\ge 0$ on $X$ so that 
$$
H^d\le r~~\mbox{and} ~~ (K_X+B+A)\cdot H^{d-1}\le r.
$$
In particular, 
$$
(X,\Supp (B+A+H))
$$ 
belongs to a bounded family of couples. Thus there exist a log resolution $\phi\colon W\to X$ and a reduced divisor $\Sigma$ on $W$ so that $(W,\Sigma)$ is log smooth belonging to a bounded family and $\Sigma$ contains the support of the exceptional divisors of $W\to X$ and the birational transform of $\Supp (B+A+H)$. Moreover, we can assume that there exists a very ample divisor $G$ on $W$ such that $G^d$ and 
$\phi^*H\cdot G^{d-1}$ are bounded. 

On the other hand, by Lemma \ref{l-v.ample-div-dominating}, there is a fixed number $l\in \N$ such that $lH-A$ is big. But then $\phi^*A\cdot G^{d-1}\le l\phi^*H\cdot G^{d-1}$ is bounded which implies that the coefficients of $\phi^*A$ are bounded.  Write 
$$
K_W+B_W=\phi^*(K_X+B)~~ \mbox{and}~~ A_W=\phi^*A.
$$ 
By definition of stable minimal models, $(X,B+tA)$ is slc for some $t>0$, hence it is lc as $X$ is normal, so $A_W$ and $\rddown{B_W}$ have no common components. Moreover, the non-negative coefficients of $B_W$ belong to a fixed finite set depending only on $d,\Phi,u,v$, by \cite[Lemma 6.9]{BH22}\cite[Lemma 8.2]{B21b}.
Therefore, we can find a fixed $\lambda \in \Q^{>0}$ so that the coefficients of $B_W+\lambda A_W$ do not exceed $1$. Thus $(X,B+\lambda A)$ is lc.

\end{proof}

\subsection{Boundedness of stable Calabi-Yau pairs}
Slc stable Calabi-Yau pairs were treated in \cite{B20} where boundedness was essentially established. This is one of the key ingredients of the moduli theory developed in this paper.

\begin{thm}[\cite{B20}]\label{t-bnd-slc-min-model-CY-case}
Let $d\in\N$, $\Phi\subset \Q^{\ge 0}$ be a DCC set, $\Gamma\subset \Q^{>0}$ be a finite set, and $\sigma\in\Q[t]$ be a polynomial.
Consider those  
$$
(X,B),A\to Z \in S_{\rm slc}(d,\Phi,\Gamma,\sigma)
$$
with $\dim Z=0$. Then such $(X,B),A$ form a bounded family.
\end{thm}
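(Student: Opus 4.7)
The plan is to reduce the statement to the boundedness of polarised slc Calabi--Yau pairs established in \cite{B20}.

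First I translate the hypothesis into Calabi--Yau form. Since $X$ is connected and $X\to Z$ is a contraction with $\dim Z=0$, the base $Z$ must be a single reduced point; hence $K_X+B\sim_\R 0$ globally, $A$ is ample on $X$, and the identity $\vol(K_X+B+tA)=\sigma(t)$ for $0\le t\ll 1$ forces $\sigma(t)=\vol(A)\cdot t^d$. In particular $\vol(A)=\sigma(1)$ is fixed, and the auxiliary condition $\vol(A|_F)\in\Gamma$ is automatic since the only general fibre is $X$ itself. So the task reduces to bounding the family of polarised slc Calabi--Yau pairs $(X,B),A$ of fixed dimension $d$, with coefficients of $B,A$ in the DCC set $\Phi$, with $\vol(A)=\sigma(1)$, and with $(X,B+tA)$ slc for some $t>0$.

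Next I reduce to the lc case by normalisation. Let $\pi\colon X^\nu\to X$ and write $K_{X^\nu}+B^\nu=\pi^*(K_X+B)$ where $B^\nu$ is the sum of the birational transform of $B$ and the conductor divisor. By definition of slc, $(X^\nu,B^\nu)$ is lc on each irreducible component, $A^\nu:=\pi^*A$ is ample on each component, and $K_{X^\nu}+B^\nu\sim_\R 0$. The coefficients of $B^\nu$ lie in a DCC set $\Phi^\nu$ depending only on $\Phi$ and $d$ (conductor coefficients equal $1$; restricted coefficients satisfy DCC by standard adjunction for lc pairs). Each component $(X^\nu_i,B^\nu_i),A^\nu_i$ is therefore an lc polarised Calabi--Yau pair of dimension $d$ with boundary coefficients in $\Phi^\nu$ and $\vol(A^\nu_i)\le\sigma(1)$. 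Applying the lc boundedness statement of \cite{B20} to each component, and observing that the number of components is bounded because $\sum_i\vol(A^\nu_i)=\vol(A)=\sigma(1)$ is fixed while within the bounded family of lc pieces the individual volumes $\vol(A^\nu_i)$ admit a uniform positive lower bound, I conclude that $(X^\nu,B^\nu),A^\nu$ itself lies in a bounded family.

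Finally, to pass from $X^\nu$ back to $X$, I invoke Koll\'ar's theory of slc gluing \cite[Chapter 5]{K13}: $X$ is recovered from its normalisation together with an involution on the conductor divisor satisfying a suitable finite equivalence relation. Since $(X^\nu,B^\nu)$ and its conductor already lie in a bounded family, the admissible involutions are birational self-maps of pairs in a bounded family and are therefore of bounded complexity; hence the gluing data is parameterised by a scheme of bounded type and $(X,B),A$ itself lies in a bounded family. The main obstacle is this last step of bounding the slc gluing data, which is precisely the essential technical content of the slc passage carried out in \cite{B20}.
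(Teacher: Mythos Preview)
Your approach via normalisation and gluing is unnecessarily roundabout, and the middle step contains a gap. The paper's proof is direct: since $K_X+B\sim_\Q 0$ and $A$ is ample with $\vol(A)\in\Gamma$, \cite[Theorem~6.4]{B20} supplies a fixed rational $\tau>0$ depending only on $d,\Phi,\Gamma$ such that $(X,B+\tau A)$ is \emph{slc}; then $(X,B+\tau A)$ is a KSBA-stable pair with $K_X+B+\tau A$ ample of volume $\sigma(\tau)=\tau^d\vol(A)$ taking finitely many values, and boundedness follows from \cite[Theorem~1.1]{HMX18}. Both cited results are stated and proved in the slc setting, so no passage through the normalisation is needed.

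The gap in your argument is the sentence ``within the bounded family of lc pieces the individual volumes $\vol(A^\nu_i)$ admit a uniform positive lower bound'': this is circular, since you are invoking boundedness of the pieces in the course of establishing it. The repair is to first obtain a uniform lc threshold $\tau$ for the normalised components via \cite[Theorem~6.4]{B20}, note that the volumes $\tau^d\vol(A^\nu_i)=\vol(K_{X^\nu_i}+B^\nu_i+\tau A^\nu_i)$ lie in a DCC set by \cite[Theorem~1.3]{HMX14}, and then use that their sum equals the fixed number $\tau^d\sigma(1)$ to deduce that each takes only finitely many values and that the number of components is bounded. But once the uniform $\tau$ is in hand, you may as well apply it to the original slc pair $(X,B)$ and invoke \cite{HMX18} directly, bypassing both the normalisation and the delicate gluing step you flag at the end---which is precisely what the paper does.
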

\begin{proof}
By assumption, $K_X+B\sim_\Q 0$ and $A$ is ample with $\vol(A)\in \Gamma$, and for every small $t>0$, 
$$
\sigma(t)=\vol(K_X+B+tA)=A^dt^d=\vol(A)t^d.
$$
Since $K_X+B+tA$ is ample for every $t>0$, $\vol(K_X+B+tA)$ is a polynomial in $t$, so the above equalities hold for every $t>0$. 
When the divisor $A$ is integral, boundedness of $(X,B),A$ follows from \cite[Corollary 1.8]{B20}. In general, when $A$ is not necessarily integral, 
by \cite[Theorem 6.4]{B20}, there exists a positive rational number $t>0$ depending only on $d, \Phi, \Gamma$ such that $(X, B+ tA)$ is slc.  
Then by \cite[Theorem 1.1]{HMX18},  $(X, B+tA)$ is bounded. 

\end{proof}

\begin{lem}\label{l-s.min.mod-bnd-gen-fib}
Let $d\in\N$, $\Phi\subset \Q^{\ge 0}$ be a DCC set, $\Gamma\subset \Q^{>0}$ be a finite set, and $\sigma\in\Q[t]$ be a polynomial.
Let 
$$
(X,B),A\to Z \in S_{\rm slc}(d,\Phi,\Gamma,\sigma)
$$
and let $F$ be a general fibre of $X\to Z$ over an irreducible component of $Z$. 
Define $K_F+B_F=(K_X+B)|_F$ and $A_F=A|_F$. Then $(F,B_F),A_F$ is a stable Calabi-Yau pair and it belongs to a bounded family.
\end{lem}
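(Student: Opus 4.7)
The plan is to verify each bullet of Definition~\ref{d-stabl-mmodels-I} for $(F,B_F),A_F$ together with $K_F+B_F\sim_\R 0$, and then reduce the boundedness assertion to Theorem~\ref{t-bnd-slc-min-model-CY-case} by a finite-union argument. First I would establish the Calabi-Yau property and connectedness: since $K_X+B$ is semi-ample with associated contraction $f\colon X\to Z$, one can write $K_X+B\sim_\R f^*H$ for an ample $\R$-Cartier divisor $H$ on the chosen component $Z_i\subset Z$, so $K_F+B_F=(K_X+B)|_F\sim_\R 0$; since $f_*\mathcal{O}_X=\mathcal{O}_Z$, all fibres of $f$ are geometrically connected, and so is $F$.

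Next I would check the slc and ampleness conditions. Because $F$ is a general fibre of a contraction from the slc scheme $X$, standard Bertini-type arguments for slc pairs (see \cite[Ch.~5]{K13}) give that $F$ is $S_2$, nodal in codimension one, and that $(F,B_F+tA_F)$ is slc whenever $(X,B+tA)$ is slc; taking $t>0$ as in the hypothesis of Definition~\ref{d-stabl-mmodels-I}, both $(F,B_F)$ and $(F,B_F+tA_F)$ are slc. Since $A$ is ample over $Z$, the restriction $A_F$ is ample on $F$, so $K_F+B_F+tA_F\sim_\R tA_F$ is ample for every $t>0$. Combined with $K_F+B_F\sim_\R 0$, this shows $(F,B_F),A_F$ is a stable Calabi-Yau pair.

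Then I would pin down the numerical data and invoke boundedness. We have $\dim F=\dim X-\dim Z_i\le d$, and since $F$ is general the coefficients of $B_F$ and $A_F$ agree with those of the components of $B,A$ meeting $F$, hence lie in $\Phi$. By hypothesis $\vol(A_F)\in\Gamma$, and since $K_F+B_F\sim_\R 0$ the volume polynomial on $F$ is just $\sigma_F(t)=\vol(A_F)\,t^{\dim F}$. As $\dim F\in\{0,1,\dots,d\}$ and $\Gamma$ is finite, only finitely many polynomials $\sigma_F$ occur across the whole collection; applying Theorem~\ref{t-bnd-slc-min-model-CY-case} to each of the finitely many classes $\mathcal{S}_{\rm slc}(\dim F,\Phi,\{\vol(A_F)\},\sigma_F)$ (the associated contraction on $F$ has $0$-dimensional target since $K_F+B_F\sim_\R 0$) and taking the finite union yields boundedness of the whole set of $(F,B_F),A_F$. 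The main obstacle I anticipate is the slc-Bertini step, namely verifying uniformly that the $S_2$, codimension-one nodal, and lc-normalization conditions descend to a general fibre of $f$ and that $F$ meets the supports of $B$ and $A$ transversally; once this passage to the general fibre is in place, the remainder is essentially bookkeeping.
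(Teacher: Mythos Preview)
Your proposal is correct and follows essentially the same route as the paper: verify that $(F,B_F),A_F$ is a stable Calabi-Yau pair by restriction, observe that $\dim F\le d$ and $\vol(A_F)\in\Gamma$ force only finitely many possible volume polynomials $\vol(A_F)t^{\dim F}$, and then apply Theorem~\ref{t-bnd-slc-min-model-CY-case} to each. The paper is somewhat terser on the slc-restriction step and phrases the last condition as ``$\Supp A_F$ contains no non-klt centre of $(F,B_F)$'' rather than via $(F,B_F+tA_F)$, but the content is the same.
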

\begin{proof}
By Theorem \ref{t-bnd-slc-min-model-CY-case}, we can assume $\dim Z>0$. 
 Since $(X,B)$ is slc and since $F$ is a general fibre, $(F,B_F)$ is slc. Moreover, $A_F$ is ample as $A$ is ample over $Z$, and $\Supp A_F$ does contain any non-klt centre of $(F,B_F)$ as $\Supp A$ does not contain any non-klt centre of $(X,B)$. In addition, $K_F+B_F\sim_\Q 0$ as $K_X+B\sim_\Q 0/Z$. So $(F,B_F),A_F$ is a stable Calabi-Yau pair.
 By assumption, $\vol(A_F)\in \Gamma$, so   
$$
(F,B_F),A_F\in  S_{\rm slc}(r,\Phi,\Gamma,\theta)
$$  
where $r=\dim F$ and $\theta(t)=\vol(A_F)t^{r}$. Therefore, such $(F,B_F),A_F$ belong to a bounded family by Theorem \ref{t-bnd-slc-min-model-CY-case} because there are finitely many possibilities for $\theta$ as $\Gamma$ is finite. 

\end{proof}

\subsection{Invariants of a stable minimal model}
We will see in the next lemma that we can recover various invariants of a $(d,\Phi,\Gamma,\sigma)$-stable minimal model from the given data $d,\Phi,\Gamma,\sigma$.

\begin{lem}\label{l-s.min.mod.base-d,v-determined-by-u,sigma}
Let $d\in\N$, $\Phi,\Gamma\subset \R^{\ge 0}$, and $\sigma\in\R[t]$ be a polynomial.
Let 
$$
(X,B),A\to Z \in S_{\rm slc}(d,\Phi,\Gamma,\sigma).
$$
Then 
\begin{itemize}
\item  we have 
$$
\sigma(t)=(K_X+B+tA)^d=\sum_{i=0}^d \binom{d}{i} (K_X+B)^{d-i}\cdot A^it^i,
$$
so the intersection numbers $(K_X+B)^{d-i}\cdot A^i$ are determined by $d,\sigma$;
\item  $d-\dim Z$ is the smallest degree of $t$ that appears in $\sigma$; 

\item if $Z$ is irreducible, then 
$$
(K_X+B)^{\dim Z}\cdot A^{d-\dim Z}=\Ivol(K_X+B)\vol(A|_F)
$$ 
where $F$ is a general fibre of $X\to Z$.
\end{itemize}
\end{lem}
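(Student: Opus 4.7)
The plan is to take the three bullets in order; they are all consequences of the fact that $K_X+B$ is semi-ample and descends to an ample class on the base $Z$.

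For the first bullet, I would start from the definition of $(d,\Phi,\Gamma,\sigma)$-stable minimal model, which gives $\vol(K_X+B+tA)=\sigma(t)$ for all sufficiently small rational $t>0$. For such $t$ the $\R$-divisor $K_X+B+tA$ is ample (it is ample for some positive $t$ by hypothesis, and ampleness is an open condition on the coefficient of $A$), so its volume on the pure $d$-dimensional projective scheme $X$ equals the top self-intersection $(K_X+B+tA)^d$. Binomial expansion produces a polynomial in $t$ whose coefficient of $t^i$ is $\binom{d}{i}(K_X+B)^{d-i}\cdot A^i$, and equality with $\sigma(t)$ on an infinite set upgrades to equality in $\R[t]$. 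This recovers each of the intersection numbers $(K_X+B)^{d-i}\cdot A^i$ from $d$ and $\sigma$.

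For the second bullet, I would exploit that $K_X+B$ semi-ample defines $f\colon X\to Z$, so after clearing denominators $m(K_X+B)\sim f^*H$ for some $m\in\N$ and some ample Cartier divisor $H$ on $Z$; setting $L:=H/m$ gives $K_X+B\sim_\Q f^*L$ with $L$ ample on $Z$. The projection formula then rewrites the intersection number $(K_X+B)^{d-i}\cdot A^i$ as $L^{d-i}\cdot f_*(A^i)$, where $f_*(A^i)$ is a cycle of dimension $d-i$ on $Z$. When $d-i>\dim Z$ this pushforward vanishes, killing the corresponding coefficient of $\sigma$. When $d-i=\dim Z$, the pushforward is a positive combination of the top-dimensional components of $Z$ (since $A$ is $f$-ample, so $(A|_F)^{d-\dim Z}>0$ on a general fibre $F$ over each such component), and intersecting with $L^{\dim Z}$ is strictly positive by ampleness of $L$. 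Hence $d-\dim Z$ is the smallest exponent of $t$ appearing in $\sigma$.

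For the third bullet I would assume $Z$ irreducible and make the same projection-formula computation more precise: $f_*(A^{d-\dim Z})=\vol(A|_F)\,[Z]$ for a general fibre $F$ of dimension $d-\dim Z$, and $L^{\dim Z}=\vol(L)=\Ivol(K_X+B)$ by the definition of Iitaka volume in \ref{ss-vol-Ivol}; the degenerate case $\dim Z=0$ is covered by the convention $\Ivol(K_X+B)=1$ together with $\vol(A|_F)=A^d=\vol(A)$. The only mild point of care is that $X$ can be reducible and non-normal, so intersection numbers must be read as sums over top-dimensional components, but the projection formula for pullback of Cartier divisors along the contraction $f$ is insensitive to this and works component by component. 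I do not foresee a real obstacle here; the substantive ingredient is simply the ample descent $L$ on $Z$ provided by semi-ampleness of $K_X+B$, which is what makes all the positivity assertions go through.
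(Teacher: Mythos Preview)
Your argument is correct and follows essentially the same route as the paper: expand $(K_X+B+tA)^d$ binomially, identify with $\sigma$ via the polynomial equality, then descend $K_X+B$ to an ample class on $Z$ and read off the vanishing for $d-i>\dim Z$ and the positivity for $d-i=\dim Z$ from fibre volumes. The paper phrases the last two bullets by saying $H^{d-i}=0$ on $Z$ and that the cycle $(K_X+B)^{\dim Z}$ is numerically $\sum\vol(H|_{Z'})F'$, which is your projection-formula computation said in different words.

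One small imprecision: ampleness of $K_X+B+tA$ for all $0<t\ll 1$ does not follow from ``ampleness is open in $t$'' alone (openness only gives a neighbourhood of the particular $t_0$); you need the nef endpoint $K_X+B$ at $t=0$ together with ampleness at $t_0$ and convexity of the ample cone. Also, since $\Phi\subset\R^{\ge 0}$, write $K_X+B\sim_\R f^*L$ rather than $\sim_\Q$; the projection formula you invoke is unaffected.
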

\begin{proof}
By assumption, for small $t>0$, 
$$
\sigma(t)=(K_X+B+tA)^d=\sum \binom{d}{i} (K_X+B)^{d-i}\cdot A^it^i.
$$
Since both left hand and right hand side are polynomials in $t$ that agree for all small values of $t$, the two sides coincide. Thus to give $\sigma$ is the same as giving the string of numbers $\binom{d}{i} (K_X+B)^{d-i}\cdot A^i$.  

By assumption, $K_X+B\sim_\R f^*H$ where $f$ denotes $X\to Z$ and $H$ is some ample $\R$-divisor on $Z$.
Then $(K_X+B)^{d-i}\cdot A^i=0$ if $d-i>\dim Z$ because $H^{d-i}=0$. Moreover, the $0$-cycle $H^{\dim Z}$ is numerically the same as $\sum \vol(H|_{Z'})z'$ where $Z'$ runs through the irreducible components of $Z$ of maximal dimension and $z'$ is a general point of $Z'$. Thus 
when $d-i=\dim Z$, the cycle $(K_X+B)^{d-i}$ is numerically equivalent to the cycle $\sum \vol(H|_{Z'})F'$ where $Z'$ runs through the irreducible components of $Z$ of maximal dimension and $F'$ is a general fibre of $X\to Z$ over $Z'$. Thus 
$$
(K_X+B)^{\dim Z}\cdot A^{d-\dim Z}=\sum \vol(H|_{Z'})F'\cdot A^{d-\dim Z}
$$
$$
=\sum \vol(H|_{Z'})\vol(A|_{F'})>0.
$$
This shows that $d-\dim Z$ is the smallest degree of $t$ that appears in $\sigma$.
Moreover, if $Z$ is irreducible, then the above number is $\Ivol(K_X+B)\vol(A|_F)$ 
where $F$ is a general fibre of $X\to Z$. 

\end{proof}

\subsection{Boundedness of lc stable minimal models}

\begin{proof}[Proof of Theorem \ref{t-bnd-s-mmodels-lc}]
\emph{Step 1}.
We do induction on dimension. 
Pick 
$$
(X,B),A\to Z\in \mathcal{S}_{lc}(d,\Phi,\Gamma,\sigma).
$$
By Lemma \ref{l-s.min.mod.base-d,v-determined-by-u,sigma}, $\Ivol(K_X+B)\vol(A|_F)$ is fixed depending only on $d,\sigma$ where $F$ is a general fibre of $X\to Z$. Since $\Gamma$ is finite and since $\vol(A|_F)\in \Gamma$, there are finitely many possibilities for $\Ivol(K_X+B)$. Therefore, we may fix both $u=\vol(A|_F)$ and $v=\Ivol(K_X+B)$.  

Again by Lemma \ref{l-s.min.mod.base-d,v-determined-by-u,sigma}, 
$$
\sigma(t)=\sum_{i=0}^d \binom{d}{i} (K_X+B)^{d-i}\cdot A^it^i
$$
and the intersection numbers $(K_X+B)^{d-i}\cdot A^i$ are uniquely determined by $d,\sigma$.\\

\emph{Step 2}.
Assume that $K_X+B+A$ is ample, that is, $(X,B),A$ is a strongly stable minimal model,
Then for $t\in [0,1]$,  
$$
\vol(K_X+B+tA)=(K_X+B+tA)^d
$$
is a polynomial in $t$ in the interval $[0,1]$. Thus for every $t\in [0,1]$, 
$$
\vol(K_X+B+tA)=\sigma(t)
$$ 
because both sides are polynomials which agree for $t$ sufficiently small. 
Thus by \cite[Theorem 1.10]{B21b}, $(X,B),A$ is bounded. 
In particular, when $d=1$, $K_X+B+A$ is ample, so the theorem holds in this case. So we can assume that $d\ge 2$.\\

\emph{Step 3}.
By definition, $K_X+B$ is semi-ample defining the contraction $f\colon X\to Z$.
If $\dim Z=0$, then $(X,B),A$ is a stable Calabi-Yau pair and such pairs are bounded by Theorem \ref{t-bnd-slc-min-model-CY-case}. We can then assume that $\dim Z>0$.
By Theorem \ref{t-bnd-base-adjunction},  
there exist $l \in \N$ depending only on $d,\Phi, u,v$ 
such that we can write an adjunction formula
$$
K_X+B\sim_l f^*(K_Z+B_Z+M_Z)
$$
such that the generalised pairs $(Z, B_Z + M_Z)$ belong to a bounded family, and  
$$
L:=l(K_Z+B_Z+M_Z)
$$ 
is very ample. \\

\emph{Step 4}.
Let $T$ be a general member of $|L|$ and let $S$ be its pullback to $X$. 
Define 
$$
K_S+B_S=(K_X+B+S)|_S
$$ 
and $A_S=A|_S$. Then $(S,B_S),A_S\to T$ is a stable minimal model: indeed, $(S,B_S)$ is projective lc, $K_S+B_S$ is semi-ample defining the contraction $g\colon S\to T$, and $K_S+B_S+tA_S$ is ample and $(S,B_S+tA_S)$ is lc for every small $t>0$ because $(X,B+S+tA)$ is lc and $K_X+B+S+tA$ is ample for every small $t>0$.

On the other hand, if $G$ is a general fibre of $S\to T$, 
then 
$$
\vol(A_S|_G)=\vol(A|_G)=u
$$ 
because $G$ is among the general fibres of $X\to Z$.
Moreover, for $0\le t\ll 1$, we have
$$
\vol(K_S+B_S+tA_S)=\vol((K_X+B+S+tA)|_S)
$$
$$
=(K_X+B+S+tA)^{d-1}\cdot S
$$
$$
=((l+1)(K_X+B)+tA)^{d-1}\cdot S
$$
$$
=((l+1)(K_X+B)+tA)^{d-1}\cdot l(K_X+B).
$$
This is a polynomial $\theta$ in $t$ whose coefficients are uniquely determined by the intersection numbers 
$(K_X+B)^{d-i}\cdot A^i$ and by $l$. Therefore, $\theta$ is determined by $d,\sigma,l$. 
Thus $(S,B_S),A_S$ is a stable minimal model belonging to  
$$
\mathcal{S}_{lc}(d-1,\Phi,\Gamma,\theta).
$$ 
Therefore,  $(S,B_S),A_S$ belongs to a bounded family by induction.\\ 

\emph{Step 5}.
In the notation of Lemma \ref{l-bnd-lct-on-bnd-family}, $(S,B_S),A_S$ is a stable minimal model belonging to  
$$
\mathcal{S}_{lc}(d-1,\Phi,u,v')
$$ 
for some fixed $v'$ that depends on $d,\sigma,u$: this follows from Lemma \ref{l-s.min.mod.base-d,v-determined-by-u,sigma}.
Then there is a fixed $\tau\in \Q^{>0}$ such that $(S,B_S+\tau A_S)$ is lc, by Lemma \ref{l-bnd-lct-on-bnd-family}.
Then by our choice of $S$, 
$$
(X,B+S+\tau A)
$$ 
is lc near $S$. Then we deduce that    
$(X,B+\tau A)$ is lc over the complement of a finite set of closed points of $Z$: otherwise the non-lc locus of the pair maps onto a positive dimensional closed subset of $Z$ which is intersected by $T$ hence $S$ intersects the non-lc locus, a contradiction.\\ 

\emph{Step 6}.
In this step, assume that $K_X+B+\tau A$ is nef. Then since $K_X+B$ is also nef, $K_X+B+t A$ is nef for any $t\in [0,\tau]$. Then $K_X+B+t A$ is ample for any $t\in (0,\tau)$: otherwise there is an extremal ray $R$ intersecting $K_X+B+tA$ trivially for some $t$; but then $R$ intersects both $K_X+B$ and $K_X+B+\tau A$ trivially; then $R$ intersects $K_X+B+t A$ trivially for any  sufficiently small $t>0$ and this contradicts the assumption that $K_X+B+tA$ is ample for such $t$. So replacing $\tau$ with 
$\frac{\tau}{2}$ we can assume that $K_X+B+\tau A$ is ample.

Therefore, $(X,B),\tau A$ is strongly stable belonging to 
$$
\mathcal{S}_{lc}(d,\Phi\cup \tau \Phi,\tau \Gamma,\sigma(\tau t)),
$$ 
hence $(X,B),\tau A$ belongs to a bounded family, by Step 2, which in turn implies that $(X,B),A$ belongs to a bounded family.\\ 

\emph{Step 7}.
From now on we assume that $K_X+B+\tau A$ is not nef. By Step 5, the non-lc locus of $(X,B+\tau A)$ is contained in finitely many fibres of $X\to Z$.
Let $R$ be a $K_X+B+\tau A$-negative extremal ray. Since $X\to Z$ is the contraction defined by $K_X+B$ and since $K_X+B+\tau A$ is ample over $Z$, 
$R$ is not vertical over $Z$, that is, $(K_X+B)\cdot R>0$. This in particular means that $R$ is not generated by the non-lc locus of $(X,B+\tau A)$, that is, $R$ is not contained in the image of the map of the closed cone of curves $\overline{NE}(\Pi)\to \overline{NE}(X)$ where $\Pi$ is the non-lc locus.
Then by the general cone theorem  \cite{Am03}\cite[Theorem 1.1]{F11}, $R$ is generated by a curve $C$ with 
$$
(K_X+B+\tau A)\cdot C\ge -2d.
$$
Thus from $f^*L\cdot C>0$ we see that 
$$
(K_X+B+2df^*L+\tau A)\cdot C\ge 0.
$$ 
This argument implies that 
$$
K_X+B+2df^*L+\tau A
$$ 
is nef. So in view of $f^*L\sim l(K_X+B)$ we see that 
$$
K_X+B+\frac{\tau}{1+2dl} A\sim_\Q\frac{1}{1+2dl}(K_X+B+2df^*L+\tau A)
$$
is nef. Therefore, replacing $\tau$ with $\frac{\tau}{1+2dl}$, we are reduced to the case 
above when $K_X+B+\tau A$ is nef. 

\end{proof}


\section{\bf Weighted stable minimal models}

In this section, we define weighted stable minimal models which are normal but not necessarily irreducible. The main feature is that each component has an associated weight. Such models naturally appear in our inductive proof of Theorem \ref{t-bnd-s-mmodels-slc} in the next section. Moreover, they help to understand normalisation of slc stable minimal models in terms of Iitaka volumes.

\subsection{Weighted stable minimal models}

A \emph{weighted lc stable minimal model}  is a finite disjoint union 
$$
\Lambda\cdot (X,B),A:=\bigsqcup \lambda_j\cdot (X_j,B_j),A_j
$$ 
where each $(X_j,B_j),A_j$ is an lc stable minimal model and $\lambda_j$ is a positive real number. Here we think of $\Lambda$ as the sequence of the weights $\lambda_j$. The notation $\lambda_j\cdot (X_j,B_j),A_j$ simply means that we consider $(X_j,B_j),A_j$ with weight $\lambda_j$ which in turn means that $(X_j,B_j),A_j$ comes with a specified number $\lambda_j$ attached to it.

We say a set of weighted lc stable minimal models $\Lambda\cdot (X,B),A$ is bounded if the $(X_j,B_j),A_j$ belong to a fixed bounded family, the number of the $X_j$ in $X$ is bounded, and the $\lambda_j$ are bounded from above.  

In practice, when we normalise a usual slc stable minimal model we get a weighted lc stable minimal model where all the weights are $1$. But when we treat such models inductively, non-trivial weights appear as we will see in the proof of Proposition \ref{p-bnd-w-s-min-models-DCC} below. 

Let $d\in \N$, $\Phi,\Psi\subset \R^{\ge 0}$, and $\sigma\in \R[t]$ be a polynomial. 
Let $\mathcal{WS}_{\rm lc}(d,\Phi,\Psi,\sigma)$ denote the set of weighted lc stable minimal models 
$$
\Lambda\cdot (X,B),A=\bigsqcup \lambda_j\cdot (X_j,B_j),A_j
$$
such that 
\begin{itemize}
\item $\dim X_j=d$,
\item the coefficients of $B_j$ and $A_j$ are in $\Phi$,
\item $0<\lambda_j\in \Phi$,
\item $\Ivol(K_{X_j}+B_j)\in \Psi$, and 
\item the volume 
$$
\vol(\Lambda\cdot (X,B+tA)):=\sum \lambda_j\vol(K_{X_j}+B_j+tA_j)=\sigma(t)
$$
for every sufficiently small $t>0$. 
\end{itemize}

We hope that the notation $\mathcal{WS}_{\rm lc}(d,\Phi,\Psi,\sigma)$ is not confused with $\mathcal{S}_{\rm lc}(d,\Phi,\Gamma,\sigma)$. We will use $\Psi$ only for weighted models and use $\Gamma$ for usual (lc or slc) stable minimal models to avoid confusion.

\subsection{Boundedness of weighted stable minimal models}
We now come to the main result of this section. The main point of this result is that we derive boundedness from assumptions on the Iitaka volume rather than from assumptions on the volume of $A$ along the general fibres as in Theorem \ref{t-bnd-s-mmodels-lc}.

\begin{prop}\label{p-bnd-w-s-min-models-DCC}
Let $d\in \N$, $\Phi,\Psi\subset \Q^{\ge 0}$ be DCC sets, and $\sigma\in \Q[t]$ be a polynomial. 
Then 
\begin{enumerate}
\item $\mathcal{WS}_{\rm lc}(d,\Phi,\Psi,\sigma)$ is a bounded family;
\item there exist finite subsets $\Phi^\circ\subset \Phi$ and $\Psi^\circ\subset \Psi$ such that 
$$
\mathcal{WS}_{\rm lc}(d,\Phi,\Psi,\sigma)=\mathcal{WS}_{\rm lc}(d,\Phi^\circ,\Psi^\circ,\sigma);
$$ 
\item there exists $\tau\in \Q^{>0}$ such that for any 
$$
\Lambda\cdot (X,B),A=\bigsqcup \lambda_j\cdot (X_j,B_j),A_j \in \mathcal{WS}_{\rm lc}(d,\Phi,\Psi,\sigma),
$$
the pair $(X_j,B_j+\tau A_j)$ is lc and $K_{X_j}+B_j+\tau A_j$ is ample for each $j$.
\end{enumerate}
\end{prop}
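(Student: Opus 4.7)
The strategy is to reduce part (1) to a componentwise application of Theorem \ref{t-bnd-s-mmodels-lc}, produce the finite sets of part (2) along the way, and then derive part (3) by the lc-threshold and extremal-ray arguments of Steps 5--7 in the proof of Theorem \ref{t-bnd-s-mmodels-lc}. Since $(X_j,B_j)$ is lc and connected, each $X_j$ is irreducible, so the contraction $f_j\colon X_j\to Z_j$ defined by $K_{X_j}+B_j$ has irreducible target. Applying Lemma \ref{l-s.min.mod.base-d,v-determined-by-u,sigma} componentwise yields polynomials
$$
\sigma_j(t):=\vol(K_{X_j}+B_j+tA_j)=\sum_{i=0}^d\binom{d}{i}(K_{X_j}+B_j)^{d-i}\cdot A_j^i\, t^i
$$
with non-negative coefficients, satisfying the polynomial identity $\sum_j\lambda_j\sigma_j(t)=\sigma(t)$.

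The key finiteness step is to show that $\vol(A_j|_{F_j})$ takes only finitely many values. Because $\Phi$ is DCC, $\lambda_j\ge\epsilon:=\inf(\Phi\cap(0,\infty))>0$, so every coefficient of $\sigma_j$ is bounded above by $\epsilon^{-1}$ times the corresponding coefficient of $\sigma$. Setting $d_j:=\dim Z_j$, the lowest non-vanishing coefficient of $\sigma_j$ equals $\binom{d}{d-d_j}\Ivol(K_{X_j}+B_j)\vol(A_j|_{F_j})$; since $\Ivol(K_{X_j}+B_j)\in\Psi$ is bounded below away from zero by DCC, $\vol(A_j|_{F_j})$ is bounded from above. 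To obtain the DCC property, I would fix a small rational $\delta>0$ and apply the DCC for volumes of lc pairs of log general type (in the spirit of \cite{HMX18}) to $(F_j,B_{F_j}+\delta A_j|_{F_j})$: this pair is lc with coefficients in the DCC set $\Phi\cup\delta\Phi\cup(\Phi+\delta\Phi)$, and $K_{F_j}+B_{F_j}+\delta A_j|_{F_j}\sim_\Q\delta A_j|_{F_j}$ is ample, so its volume $\delta^{d-d_j}\vol(A_j|_{F_j})$ lies in a DCC set. Thus $\vol(A_j|_{F_j})$ lies in a DCC set bounded above, hence in a finite set $\Gamma$.

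With this finiteness, each component meets the hypotheses of Theorem \ref{t-bnd-s-mmodels-lc} with finite volume set $\Gamma$ and some polynomial $\sigma_j$ drawn from a finite list (its intersection numbers are DCC-valued and bounded), and so lies in a bounded family. To bound the number of components, group them by $d_j$ and argue downward from the maximal value $d^*$: the coefficient of $t^{d-d^*}$ in $\sigma$ coincides with $\sum_{\{j:d_j=d^*\}}\lambda_j\binom{d}{d-d^*}\Ivol(K_{X_j}+B_j)\vol(A_j|_{F_j})$, since components with $d_j<d^*$ have lowest-degree term of degree $>d-d^*$ in $\sigma_j$, and each summand lies in a DCC set bounded below away from zero and bounded above, so only finitely many such $j$ occur and each takes finitely many values. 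Subtracting off the contributions $\sum\lambda_j\sigma_j$ over these components yields a fresh polynomial identity for the remaining components (all with smaller $d_j$), and iterating bounds the total number of components and identifies the finite sets $\Phi^\circ\subset\Phi$ and $\Psi^\circ\subset\Psi$, completing parts (1) and (2).

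For part (3), apply Lemma \ref{l-bnd-lct-on-bnd-family} to each of the (now finitely many, bounded) components to obtain $\tau_j>0$ with $(X_j,B_j+\tau_j A_j)$ lc, and set $\tau':=\min_j\tau_j$. To force $K_{X_j}+B_j+\tau A_j$ ample simultaneously, I would replay the extremal-ray argument of Steps 6--7 in the proof of Theorem \ref{t-bnd-s-mmodels-lc}: any $(K_{X_j}+B_j+\tau' A_j)$-negative extremal ray on $X_j$ is horizontal over $Z_j$ (since $K_{X_j}+B_j$ is pulled back from an ample divisor on $Z_j$) and disjoint from the non-lc locus, so the cone theorem produces a generating curve $C$ with $(K_{X_j}+B_j+\tau' A_j)\cdot C\ge -2d$; using a uniformly bounded $l$ from Theorem \ref{t-bnd-base-adjunction} and $f_j^*L_j\sim l(K_{X_j}+B_j)$, replacing $\tau'$ by $\tau'/(1+2dl)$ gives nefness, and a further halving gives ampleness on every component. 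The main obstacle is the DCC input for $\vol(A_j|_{F_j})$: this is the one ingredient beyond the proof of Theorem \ref{t-bnd-s-mmodels-lc} and is precisely the phenomenon that makes the weighted setting necessary, in that DCC on $\Ivol(K_X+B)$ is traded for finiteness of the fibre volume via the polynomial constraint imposed by $\sigma$.
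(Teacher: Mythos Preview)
Your overall plan mirrors the paper's opening moves (Steps~1--3 there): expand $\sigma$ componentwise, bound $\vol(A_j|_{F_j})$ from above using the fixed coefficient of $t^{d-d_j}$ together with the DCC lower bounds on $\lambda_j$ and $\Ivol(K_{X_j}+B_j)$, and then upgrade to finiteness of $\vol(A_j|_{F_j})$ via DCC of volumes. One small gap here: the assertion that $(F_j,B_{F_j}+\delta A_j|_{F_j})$ is lc for a \emph{uniform} $\delta$ is exactly the nontrivial input---it does not follow from choosing $\delta$ ``small''. The paper obtains it by invoking \cite[Theorem~6.4]{B20} (the lc-threshold bound for stable Calabi--Yau pairs), which applies once $\vol(A_j|_{F_j})$ is bounded above. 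You should cite that, not assume it.

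The serious gap is the step ``each $\sigma_j$ is drawn from a finite list, since its intersection numbers are DCC-valued and bounded''. You have only shown DCC (indeed finiteness) for the \emph{lowest} nonzero coefficient of $\sigma_j$, namely $\binom{d}{d-d_j}\Ivol(K_{X_j}+B_j)\vol(A_j|_{F_j})$. For the higher mixed numbers $(K_{X_j}+B_j)^{d-i}\cdot A_j^i$ with $i>d-d_j$ there is no DCC statement available, and without it $\sigma_j$ is not pinned down. This breaks both the componentwise appeal to Theorem~\ref{t-bnd-s-mmodels-lc} (which needs a fixed polynomial) and your descending induction on $d_j$: to ``subtract off $\sum_{d_j=d^*}\lambda_j\sigma_j$'' you must already know those $\sigma_j$, i.e.\ all their coefficients, which you do not. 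Concretely, the coefficient of $t^{d-d^*+1}$ in $\sigma$ mixes the leading terms of the $d_j=d^*-1$ components with the \emph{second} coefficients of the $d_j=d^*$ components, and the latter are uncontrolled.

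The paper resolves this by a different induction---on $\dim X$, not on $d_j$. Assuming the statement $\mathcal{C}_{q+1}$ (very ampleness of $l(K_{Z_j}+B_{Z_j}+M_{Z_j})$ for $\dim Z_j>q$), it cuts each such $X_j$ by $q$ general hyperplane pullbacks, pairs these with general fibres of the $\dim Z_j=q$ components (with weights $l^q\lambda_j\Ivol(K_{X_j}+B_j)$), and assembles a new weighted lc stable minimal model of dimension $d-q$. The crucial computation (Step~6 there) is that the new polynomial $\sigma'$ is explicitly determined by $d,\sigma,q,l$: every coefficient of $\sigma'$ is a fixed linear combination of the coefficients of $\sigma$. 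This recovers the global polynomial constraint at the lower dimension and allows the induction to close, then feeds back via Theorem~\ref{t-bnd-base-adjunction} to establish $\mathcal{C}_q$. Your componentwise route loses exactly this global constraint, which is why the weighted formalism (and not just a disjoint union of lc stable minimal models) is introduced in the first place.
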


\begin{proof}
\emph{Step 1.}
We apply induction on the dimension $d$. 
Pick a weighted stable minimal model
$$
\Lambda\cdot (X,B),A=\bigsqcup \lambda_j\cdot (X_j,B_j),A_j\in \mathcal{WS}_{\rm lc}(d,\Phi,\Psi,\sigma).
$$
For sufficiently small $t>0$, $K_{X_j}+B_j+tA_j$ is ample for each $j$ as $({X_j},B_j),A_j$ is a stable minimal model. So for such $t$,  
$$
\sigma(t)=\sum \lambda_j\vol(K_{X_j}+B_j+tA_j)
$$
$$
=\sum \lambda_j(K_{X_j}+B_j+tA_j)^d
$$
$$
=\sum \lambda_j\big(\sum_{i=0}^d\binom{d}{i} (K_{X_j}+B_j)^{d-i}\cdot A_j^it^i\big)
$$
$$
=\sum_{i=0}^d\big(\sum \lambda_j\binom{d}{i} (K_{X_j}+B_j)^{d-i}\cdot A_j^i \big)t^i.
$$
 Thus for each $i$, the number 
$$
\sum \lambda_j(K_{X_j}+B_j)^{d-i}\cdot A_j^i
$$
is the coefficient of $t^i$ in $\sigma$ divided by $\binom{d}{i}$, hence it is fixed. 

When $d=1$, this means that both 
$$
\sum \deg \lambda_j(K_{X_j}+B_j) ~~\mbox{and}~~ \sum \deg \lambda_jA_j
$$ 
are fixed numbers, hence there are finitely many possibilities for $\deg (K_{X_j}+B_j)$, for $\deg A_j$, and for $\lambda_j$. So the proposition holds in this case. We will then assume that $d\ge 2$.\\

\emph{Step 2.}
Each $K_{X_j}+B_j$ is semi-ample by definition, hence it defines a contraction $f_j\colon X_j\to Z_j$. 
Let $e=\max\{\dim Z_j\}$. When $d-i>e$, $(K_{X_j}+B_j)^{d-i}=0$ as a cycle, so
$$
\sum \lambda_j(K_{X_j}+B_j)^{d-i}\cdot A_j^i=0.
$$
But for $d-i=e$,
$$
\sum \lambda_j(K_{X_j}+B_j)^{e}\cdot A_j^{d-e}
=\sum_{\dim Z_j=e} \lambda_j(K_{X_j}+B_j)^{e}\cdot A_j^{d-e}
$$
$$
=\sum_{\dim Z_j=e} \lambda_j\Ivol(K_{X_j}+B_j)F_j\cdot A_j^{d-e}
$$
$$
=\sum_{\dim Z_j=e} \lambda_j\Ivol(K_{X_j}+B_j)\vol(A_{F_j})>0  \hspace{1cm} (*)
$$
where $F_j$ is a general fibre of $X_j\to Z_j$ and $A_{F_j}=A_j|_{F_j}$, and we have used the facts that, numerically, the cycle $(K_{X_j}+B_j)^{e}$ is equivalent to the cycle $\Ivol(K_{X_j}+B_j)F_j$ and that $F_j\cdot A_j^{d-e}=A_{F_j}^{d-e}=\vol(A_{F_j})$.

In particular, 
$d-e$ is the smallest number such that the coefficient of $t^{d-e}$ in $\sigma$ is not zero. Thus $e$ is determined by $d,\sigma$.\\

\emph{Step 3.}
By assumption, for each $j$, we have 
$$
0<\lambda_j\in \Phi, \ \ 0<\Ivol(K_{X_j}+B_j)\in \Psi, \ \ \vol(A_{F_j})>0.
$$ 
In the rest of this step, we consider only those $j$ with $\dim Z_j=e$. Then $\vol(A_{F_j})$ is bounded from above depending only on $d,\Phi,\Psi,\sigma$, by the formula $(*)$ in Step 2 and by the fact that the value in $(*)$ is fixed as it is determined by $d,\sigma$ according to Step 1, and by the assumption that $\Phi,\Psi$ are DCC sets. 

On the other hand, defining
$$
K_{F_j}+B_{F_j}:=(K_{X_j}+B_j)|_{F_j}\sim_\Q 0,
$$ 
 $({F_j},B_{F_j}),A_{F_j}$ is a stable Calabi-Yau pair where the coefficients of $B_{F_j},A_{F_j}$ belong to $\Phi$. Therefore, since $\vol(A_{F_j})$ is bounded from above, the lc threshold of $A_{F_j}$ with respect to  $({F_j},B_{F_j})$ is bounded from below, by \cite[Thereom 6.4]{B20}. So we can choose a fixed rational number $u>0$ depending only on $d,\Phi,\Psi,\sigma$ such that $({F_j},B_{F_j}+uA_{F_j})$ is lc. This implies that the volume 
$$
\vol(K_{F_j}+B_{F_j}+uA_{F_j})=\vol(uA_{F_j})=u^{d-e}\vol(A_{F_j})
$$
belongs to a DCC set, by \cite[Theorem 1.3]{HMX14}. This in turn implies that $\vol(A_{F_j})$ belongs to a DCC set. But then by the formula $(*)$, there are finitely many possibilities for the numbers 
$\lambda_j$ and $\Ivol(K_{X_j}+B_j)$ and $\vol(A_{F_j})$. Therefore, by Theorem \ref{t-bnd-base-adjunction}, we can find a fixed $l\in \N$ such that we can write an adjunction formula 
$$
K_{X_j}+B_j\sim_l f_j^*(K_{Z_j}+B_{Z_j}+M_{Z_j})
$$
where $l(K_{Z_j}+B_{Z_j}+M_{Z_j})$ is very ample.\\

\emph{Step 4.}
For $1\le q\le e$, let $\mathcal{C}_q$ be the following statement: there exist $l\in \N$ and finite subsets $\Phi^\circ\subset \Phi$ and $\Psi^\circ\subset \Psi$ depending only on $d,\Phi,\Psi,\sigma$ such that for each $j$ with $\dim Z_j\ge q$, we have:
\begin{itemize}
\item the coefficients of $B_j$ and $\lambda_j$ belong to $\Phi^\circ$,
\item $\Ivol(K_{X_j}+B_j)$ belongs to $\Psi^\circ$, and 
\item we can write an adjunction formula 
$$
K_{X_j}+B_j\sim_l f_j^*(K_{Z_j}+B_{Z_j}+M_{Z_j})
$$
where $l(K_{Z_j}+B_{Z_j}+M_{Z_j})$ is very ample.
\end{itemize}
By the previous step, $\mathcal{C}_{e}$ holds.
Assume that statement $\mathcal{C}_{q+1}$ holds and that $l$ is chosen for $q+1$. We will show in the following steps that statement $\mathcal{C}_{q}$ also holds.\\

\emph{Step 5.}
For each $j$ with $\dim Z_j>q$, $l(K_{Z_j}+B_{Z_j}+M_{Z_j})$ is very ample by assumption. For such $j$, 
pick $q$ general elements $R_{1,j},\dots,R_{q,j}$ of 
$$
|l(K_{Z_j}+B_{Z_j}+M_{Z_j})|
$$ 
and let 
$$
T_j=R_{1,j} \cap \cdots \cap R_{q,j}, \ \ \ S_j=f_j^{-1}T_j.
$$ 
Letting $B_{S_j}=B_j|_{S_j}$ and using adjunction repeatedly, we have 
$$
K_{S_j}+B_{S_j}\sim (K_{X_j}+B_j+\sum f_j^*R_{r,j})|_{S_j}
$$
$$
\sim (K_{X_j}+B_j+ql(K_{X_j}+B_j))|_{S_j}
$$
$$
\sim (1+ql)(K_{X_j}+B_j)|_{S_j}.
$$
Letting $A_{S_j}=A_j|_{S_j}$, then $({S_j},B_{S_j}),A_{S_j}$ is an lc stable minimal model of dimension $d-q$ where the contraction defined by $K_{S_j}+B_{S_j}$ is just $S_j\to T_j$.

Consider the weighted lc stable minimal model 
$$
\Lambda'\cdot (S,B_S),A_S:=\bigsqcup \lambda_j'\cdot (S_j,B_{S_j}),A_{S_j}
$$
where we only consider those $j$ such that 
\begin{itemize}
\item $\dim Z_j>q$ in which case  $(S_j,B_{S_j}),A_{S_j}$ is as in the previous paragraph and $\lambda_j'=\lambda_j$; or 
\item  $\dim Z_j=q$ in which case $S_j=F_j$ is a general fibre of $X_j\to Z_j$, $B_{S_j}=B_j|_{S_j}$, $A_{S_j}=A_j|_{S_j}$ and $\lambda_j'=l^q\lambda_j\Ivol(K_{X_j}+B_j)$.\\
\end{itemize}

\emph{Step 6.}
We claim that 
$$
\Lambda'\cdot (S,B_S),A_S\in \mathcal{WS}_{\rm lc}(d-q,\Phi',\Psi',\sigma')
$$ 
for some DCC sets $\Phi',\Psi' \subset \Q^{\ge 0}$ and polynomial $\sigma'\in \Q[t]$ depending only on $d,\Phi,\Psi,\sigma,q,l$. It is clear that each component of $S$ is of dimension $d-q$. Moreover, by construction, the coefficients of $B_{S_j}$ and $A_{S_j}$ belong to $\Phi$. Also $\lambda_j'=\lambda_j$ for $\dim Z_j>q$ and 
$$
\lambda_j'=l^q\lambda_j\Ivol(K_{X_j}+B_j)
$$ 
for $\dim Z_j=q$, so in any case $\lambda_j'$ belongs to some fixed DCC set $\Phi'$ with $\Phi\subset \Phi'$ because $l^q$ is fixed, $\lambda_j\in \Phi$ and $\Ivol(K_{X_j}+B_j)\in \Psi$.

For $j$ with $\dim Z_j>q$, 
$$
\Ivol(K_{S_j}+B_{S_j})=((1+ql)(K_{Z_j}+B_{Z_j}+M_{Z_j})|_{T_j})^{\dim T_j}
$$
$$
=((1+ql)(K_{Z_j}+B_{Z_j}+M_{Z_j}))^{\dim T_j}\cdot T_j
$$
$$
=((1+ql)(K_{Z_j}+B_{Z_j}+M_{Z_j}))^{\dim Z_j-q}\cdot (l(K_{Z_j}+B_{Z_j}+M_{Z_j}))^q
$$
$$
=l^q(1+ql)^{\dim Z_j-q}(K_{Z_j}+B_{Z_j}+M_{Z_j})^{\dim Z_j}
$$
$$
=l^q(1+ql)^{\dim Z_j-q}\Ivol(K_{X_j}+B_j)
$$
which belongs to a fixed finite set $\Psi'$ as $l,q$ are fixed and as $\Ivol(K_{X_j}+B_j)$ belongs to the finite set $\Psi^\circ$ by assumption.  

On the other hand, for $j$ with $\dim Z_j=q$, $\Ivol(K_{S_j}+B_{S_j})=1$ by convention as $K_{S_j}+B_{S_j}\sim_\Q 0$. So in any case we can assume $\Ivol(K_{S_j}+B_{S_j})\in \Psi'$.

Now let $\sigma'$ be the polynomial such that for sufficiently small $t>0$, 
$$
\sigma'(t)=\sum \lambda_j'\vol(K_{S_j}+B_{S_j}+tA_{S_j}) 
$$
$$
=\sum \lambda_j'(K_{S_j}+B_{S_j}+tA_{S_j})^{d-q}.
$$
For $j$ with $\dim Z_j>q$, the coefficient $a_{i,j}$ of $t^i$ in 
$$
\lambda_j'(K_{S_j}+B_{S_j}+tA_{S_j})^{d-q}
$$ 
is equal to 
$$
\lambda_j\binom{d-q}{i}(K_{S_j}+B_{S_j})^{d-q-i}\cdot A_{S_j}^i
$$
$$
=\lambda_j\binom{d-q}{i} ((1+ql)(K_{X_j}+B_j))^{d-q-i}\cdot A_j^i\cdot S_j
$$
$$
=\lambda_j\binom{d-q}{i}  ((1+ql)(K_{X_j}+B_j))^{d-q-i}\cdot A_j^i\cdot (l(K_{X_j}+B_j))^q
$$
$$
=\binom{d-q}{i}(1+ql)^{d-q-i}l^q\lambda_j(K_{X_j}+B_j)^{d-i}\cdot A_j^i.
$$
Note that for $i=d-q$, this simplifies to $l^q\lambda_j(K_{X_j}+B_j)^{q}\cdot A_j^{d-q}$.

On the other hand, for $j$ with $\dim Z_j=q$, 
$$
\lambda_j'(K_{S_j}+B_{S_j}+tA_{S_j})^{d-q}=\lambda_j'A_{S_j}^{d-q}t^{d-q}
$$
$$
=l^q\lambda_j\Ivol(K_{X_j}+B_j)A_{S_j}^{d-q}t^{d-q}
$$
$$
=l^q\lambda_j\Ivol(K_{X_j}+B_j) S_j\cdot A_j^{d-q}t^{d-q}
$$
$$
=l^q\lambda_j (K_{X_j}+B_j)^q \cdot A_j^{d-q} t^{d-q},
$$
so its coefficient $a_{i,j}$ of $t^i$ is equal to $0$ for $i<d-q$ and equal to $l^q\lambda_j(K_{X_j}+B_j)^q\cdot A_{j}^{d-q}$ for $i=d-q$.

Therefore, for $i<d-q$ the coefficient of $t^i$ in $\sigma'(t)$ is equal to 
$$
\sum_{\dim Z_j\ge q} a_{i,j}=\sum_{\dim Z_j>q} \binom{d-q}{i}(1+ql)^{d-q-i}l^q\lambda_j(K_{X_j}+B_j)^{d-i}\cdot A_j^i 
$$ 
which is fixed depending on $\sigma,d,q,l$ because for $i<d-q$,
$$
\sum_{\dim Z_j\ge 0} \lambda_j(K_{X_j}+B_j))^{d-i}\cdot A_j^i=\sum_{\dim Z_j>q} \lambda_j(K_{X_j}+B_j))^{d-i}\cdot A_j^i
$$ 
is fixed depending only on $d,\sigma$ by Step 1. 

For $i=d-q$, the coefficient of $t^{d-q}$ in $\sigma'(t)$ is 
$$
\sum_{\dim Z_j\ge q} a_{i,j}=\sum_{\dim Z_j\ge q} l^q\lambda_j(K_{X_j}+B_j)^{q}\cdot A_j^{d-q}
=\sum_{\dim Z_j\ge 0} l^q\lambda_j(K_{X_j}+B_j)^{q}\cdot A_j^{d-q}
$$
which is again fixed depending on $\sigma,d,q,l$. This shows that $\sigma'$ is determined by $\sigma,d,q,l$. So the claim of the beginning of this step is proved.\\

\emph{Step 7.}
Now by induction, 
$$
\mathcal{WS}_{\rm lc}(d-q,\Phi',\Psi',\sigma')
$$ 
is a bounded family and there exist finite subsets $\Phi'^\circ\subset \Phi'$ and $\Psi'^\circ\subset \Psi'$ such that 
$$
\Lambda'\cdot (S,B_S),A_S\in \mathcal{WS}_{\rm lc}(d-q,\Phi',\Psi',\sigma')=\mathcal{WS}_{\rm lc}(d-q,\Phi'^\circ,\Psi'^\circ,\sigma').
$$
For $j$ with $\dim Z_j>q$, we know that 
$$
\lambda_j\in \Phi^\circ \ \mbox{and} \ \Ivol(K_{X_j}+B_j)\in \Psi^\circ
$$ 
by assumption where $\Phi^\circ$ and $\Psi^\circ$ are given by statement $\mathcal{C}_{q+1}$ in Step 4. 
For $j$ with $\dim Z_j=q$,  
$$
\lambda_j'=l^q\lambda_j\Ivol(K_{X_j}+B_j)\in \Phi'^\circ,
$$
so since 
$$
\lambda_j\in \Phi \ \mbox{and} \ \Ivol(K_{X_j}+B_j)\in \Psi,
$$ 
we deduce that these numbers belong to a fixed finite set, hence enlarging $\Phi^\circ$ and $\Psi^\circ$, we can assume 
$$
\lambda_j\in \Phi^\circ \ \mbox{and} \ \Ivol(K_{X_j}+B_j)\in \Psi^\circ.
$$ 

On the other hand, again by induction, there exists a fixed rational number $\tau>0$ such that 
$(S_j,B_{S_j}+\tau A_{S_j})$ is lc and $K_{S_j}+B_{S_j}+\tau A_{S_j}$ is ample for each $j$. Then since $K_{S_j}+B_{S_j}$ is nef, $K_{S_j}+B_{S_j}+t A_{S_j}$ is ample for every $t\in (0,\tau]$, 
hence 
$$
\sigma'(t)=\sum \lambda_j'\vol(K_{S_j}+B_{S_j}+t A_{S_j})
$$ 
for all such $t$ because both sides are polynomials which coincide for small $t>0$. Therefore,
$$
\sigma'(\tau)=\sum \lambda_j'\vol(K_{S_j}+B_{S_j}+\tau A_{S_j})
$$ 
is a fixed number.

 Now 
 $$
 \vol(K_{S_j}+B_{S_j}+\tau A_{S_j})
 $$ belongs to a fixed DCC set, by \cite[Theorem 1.3]{HMX14}. This implies that there are finitely many possibilities for the numbers 
$$
\vol(K_{S_j}+B_{S_j}+\tau A_{S_j}).
$$ 
This number is $\tau^{d-q}A_{S_j}^{d-q}$ for $j$ with $\dim Z_j=q$.
Thus $A_{S_j}^{d-q}=\vol(A_{S_j})$ takes finitely many values for such $j$ (recall that for such $j$, $S_j$ is a general fibre of $X_j\to Z_j$). So applying Theorem \ref{t-bnd-base-adjunction}, there is a fixed $n\in \N$ such that, for each $j$ with $\dim Z_j=q$, we can write an adjunction formula
$$
K_{X_j}+B_j\sim_n f_j^*(K_{Z_j}+B_{Z_j}+M_{Z_j})
$$
where $n(K_{Z_j}+B_{Z_j}+M_{Z_j})$ is very ample. 
In particular, this shows that the coefficients of $B_j$ belong to a fixed finite set, hence we can assume they belong to $\Phi^\circ$. Therefore, statement $\mathcal{C}_{q}$ of Step 4 holds, after replacing $l$ with $nl$. 

We should emphasize that up to this point in the proof, we have used the DCC property of the Iitaka volumes $\Ivol(K_{X_j}+B_j)$ only for $j$ with $\dim Z_j\ge q$ (we will refer to this remark in the proof of Theorem \ref{t-bnd-s-mmodels-slc}).\\

\emph{Step 8.}
By the previous steps,  statement $\mathcal{C}_{q}$ holds for every $1\le q\le e$, inductively.
Note that inductively we have also proved that $\vol(A_{F_j})$ belongs to fixed finite set, for $j$ with $\dim Z_j\ge 1$ where $F_j$ is a general fibre of $X_j\to Z_j$.  Let $\Lambda'\cdot (S,B_S),A_S$ be the weighted lc stable minimal model constructed in Step 5 for $q=1$. By Step 7, there exists a fixed rational number $\tau>0$ such that $(S_j,B_{S_j}+\tau A_{S_j})$ is lc, for each $j$. By construction, $S_j$ is the pullback of a hyperplane section on $Z_j$, for $j$ with $\dim Z_j\ge 2$, and $S_j=F_j$ for $j$ with $\dim Z_j=1$. Thus  we deduce that $(X_j,B_{j}+\tau A_{j})$ is lc over the complement of a finite set of closed points of $Z_j$, for each $j$ with $\dim Z_j\ge 1$. Then arguing as in the end of the proof of Theorem \ref{t-bnd-s-mmodels-lc}, by applying the cone theorem, we can replace $\tau$ so that 
 $K_{X_j}+B_{j}+\tau A_{j}$ is ample for each $j$ with $\dim Z_j\ge 1$. On the other hand, for $j$ with $\dim Z_j=0$, 
$$
K_{X_j}+B_{j}+\tau A_{j}\sim_\Q \tau A_{j}
$$
is automatically ample. 

This in particular means that each $(X_j,B_{j}), \tau A_{j}$ is a strongly stable lc minimal model. 
Moreover, 
$$
\sigma(\tau)=\sum \lambda_j\vol(K_{X_j}+B_{j}+\tau A_{j}).
$$
Thus $\vol(K_{X_j}+B_{j}+\tau A_{j})$ is bounded from above for each $j$.
For $j$ with $\dim Z_j\ge 1$, the volumes $\vol(A_j|_{F_j})$ belong to a fixed finite set and the Iitaka volumes $\Ivol(K_{X_j}+B_j)$ belong to the fixed finite set $\Psi^\circ$, by Step 7.  Therefore, applying \cite[Lemma 8.3]{B21b} and decreasing $\tau$ we can assume that $(X_j,B_{j}+\tau A_{j})$ is lc, for $j$ with $\dim Z_j\ge 1$. On the other hand, applying \cite[Theorem 6.4]{B20} and again decreasing $\tau$, we can assume that 
$(X_j,B_{j}+\tau A_{j})$ is lc, for $j$ with $\dim Z_j=0$ as well.

Therefore, the volumes 
$$
\vol(K_{X_j}+B_{j}+\tau A_{j})
$$ 
belong to a DCC set, hence they belong to a fixed finite set by the above expression of $\sigma(\tau)$. Therefore, the number of the $X_j$ is bounded and each $(X_j,B_{j}+\tau A_{j})$ belongs to a bounded family by \cite[Theorem 1.1]{HMX18}. Moreover, we can assume that $\lambda_j$ and the coefficients of $B_j,A_j$ all belong to $\Phi^\circ$ after enlarging $\Phi^\circ$, where $\Phi^\circ, \Psi^\circ$ are given by statement $\mathcal{C}_1$. For $j$ with $\dim Z_j=0$, $\Ivol(K_{X_j}+B_j)=1$, so we can assume it belongs to $\Psi^\circ$.
We have then proved that 
$$
\mathcal{WS}_{\rm lc}(d,\Phi,\Psi,\sigma)=\mathcal{WS}_{\rm lc}(d,\Phi^\circ,\Psi^\circ,\sigma)
$$ 
and that this is a bounded family. We have proved (1),(2),(3) of the proposition.

\end{proof}


\section{\bf Slc stable minimal models}
In  this section, we prove our main result on boundedness of stable minimal models in the slc case, that is, Theorem \ref{t-bnd-s-mmodels-slc}. The basic idea is to look at the normalisation of a stable minimal model and try to show that the relevant Iitaka volumes satisfy DCC so that we can apply Proposition \ref{p-bnd-w-s-min-models-DCC}. To achieve the DCC we use boundedness of Stein degrees.

\subsection{Normalisation of an slc stable minimal model}
Given an slc stable minimal model $(X,B),A$, let $X^\nu$ be the normalisation of $X$ and let $X^\nu_j$ be its irreducible components. Let $K_{X^\nu}+B^\nu$ be the pullback of $K_X+B$ and let $K_{X_j^\nu}+B_j^\nu$ be its restriction to $X_j^\nu$. Also let $A^\nu$ be the pullback of $A$ and $A_j^\nu$ its restriction to 
$X_j^\nu$. Then each $(X^\nu_j,B^\nu_j),A_j^\nu$ is an lc stable minimal model. We will think of $(X^\nu,B^\nu),A^\nu$ as the disjoint union of the $(X^\nu_j,B^\nu_j),A_j^\nu$, hence as a weighted stable minimal model where all the wrights are $1$.

\begin{lem}\label{l-s.min.mod-Ivol-over-components}
Let $d\in\N$, $\Phi\subset \Q^{\ge 0}$ be a DCC set, $\Gamma\subset \Q^{> 0}$ be a finite set, and $\sigma\in\Q[t]$ be a polynomial.
Let 
$$
(X,B),A\to Z \in S_{\rm slc}(d,\Phi,\Gamma,\sigma)
$$
and let 
$$
(X^\nu,B^\nu),A^\nu=\bigsqcup (X^\nu_j,B^\nu_j),A_j^\nu
$$ 
be its normalisation and $X_j^\nu\to Z_j$ be the contraction defined by $K_{X_j^\nu}+B_j^\nu$. Then 
\begin{enumerate}
\item if $Z_j\to Z$ surjects onto an irreducible component of $Z$, then $\Ivol(K_{X_j^\nu}+B_j^\nu)$ 
belongs to a fixed DCC set and  $\vol(A_j^\nu|_{F_j^\nu})$ belongs to a fixed finite set where $F_j^\nu$ is a general fibre of $X_i^\nu\to Z_j$; the DCC set and the finite set depend only on $d,\Phi,\Gamma$; 

\item for $j$ with $\dim Z_j=\dim Z$,  $\Ivol(K_{X_j^\nu}+B_j^\nu)$ belongs to a fixed finite set depending only on $d,\Phi,\Gamma,\sigma$.
\end{enumerate}
\end{lem}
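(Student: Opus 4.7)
The plan is to deduce the invariants of each component $(X_j^\nu,B_j^\nu),A_j^\nu$ from the bounded general log fibre and the canonical bundle formula on the contraction $f_j^\nu\colon X_j^\nu\to Z_j$. Let $Z'\subset Z$ be an irreducible component, $F$ a general fibre of $X\to Z$ over $Z'$, and assume $Z_j\to Z$ surjects onto $Z'$, so $Z_j\to Z'$ is finite. By Lemma \ref{l-s.min.mod-bnd-gen-fib}, $(F,B_F),A_F$ is a stable Calabi-Yau pair belonging to a family bounded in terms of $d,\Phi,\Gamma$, so its normalisation is also bounded and the volumes $\vol(A|_{F^{(k)}})$ on the irreducible components $F^{(k)}$ of $F$ take only finitely many values. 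A general fibre $F_j^\nu$ of $X_j^\nu\to Z_j$ is connected, its image under $X_j^\nu\to X$ is a single irreducible component $F^{(k)}$ of $F$, and $F_j^\nu\to F^{(k)}$ is birational; hence $\vol(A_j^\nu|_{F_j^\nu})=\vol(A|_{F^{(k)}})$ lies in a finite set depending only on $d,\Phi,\Gamma$.

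For the Iitaka volume in part (1), apply the canonical bundle formula to the log Calabi-Yau fibration $(X_j^\nu,B_j^\nu)\to Z_j$ to obtain a generalised lc pair $(Z_j,B_{Z_j}+M_{Z_j})$ with
$$
K_{X_j^\nu}+B_j^\nu\sim_\Q (f_j^\nu)^*(K_{Z_j}+B_{Z_j}+M_{Z_j})
$$
and $K_{Z_j}+B_{Z_j}+M_{Z_j}$ ample, so $\Ivol(K_{X_j^\nu}+B_j^\nu)=\vol(K_{Z_j}+B_{Z_j}+M_{Z_j})$. Since the coefficients of $B_j^\nu$ belong to the DCC set $\Phi$, standard generalised adjunction \cite{BZ16} places the coefficients of $B_{Z_j}$ in a DCC set depending only on $d,\Phi$; and boundedness of the general fibre $(F_j^\nu,B_{F_j^\nu})$ supplies an integer $p$ depending only on $d,\Phi,\Gamma$ such that $pM_{Z_j}$ is b-Cartier. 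The DCC property of $\Ivol(K_{X_j^\nu}+B_j^\nu)$ then follows from DCC of volumes for generalised lc pairs with DCC data, i.e. the generalised-pair version of \cite[Theorem 1.3]{HMX14}. This completes part (1).

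For part (2), write $e=\dim Z$ and expand
$$
\sigma(t)=(K_X+B+tA)^d=(K_{X^\nu}+B^\nu+tA^\nu)^d=\sum_j(K_{X_j^\nu}+B_j^\nu+tA_j^\nu)^d
$$
for small $t>0$, using that normalisation preserves top self-intersections on a reduced equidimensional scheme. The coefficient of $t^{d-e}$ in $\sigma$ receives no contribution from $j$ with $\dim Z_j<e$, because then $(K_{X_j^\nu}+B_j^\nu)^e=0$, so it equals
$$
\binom{d}{d-e}\sum_{j:\,\dim Z_j=e}\Ivol(K_{X_j^\nu}+B_j^\nu)\cdot\vol(A_j^\nu|_{F_j^\nu}).
$$
As $\vol(A_j^\nu|_{F_j^\nu})$ lies in a positive finite set by part (1) and the total is fixed in terms of $d,\sigma$, each $\Ivol(K_{X_j^\nu}+B_j^\nu)$ with $\dim Z_j=e$ is bounded from above by a constant depending on $d,\Phi,\Gamma,\sigma$. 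Combined with the DCC from part (1), these Iitaka volumes are forced into a finite set.

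The main obstacle is the generalised adjunction step in part (1): one must control uniformly both the DCC boundary coefficients on $Z_j$ and the Cartier index of the nef part $M_{Z_j}$ across the whole family, which relies crucially on the boundedness of the general log fibre furnished by Lemma \ref{l-s.min.mod-bnd-gen-fib}. Once this is in place, part (1) is a direct application of DCC of volumes for generalised pairs, and part (2) is a short extraction from the polynomial $\sigma$ combined with the finiteness of $\vol(A_j^\nu|_{F_j^\nu})$.
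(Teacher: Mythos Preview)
Your proof is correct and follows essentially the same route as the paper. For part (1) you identify $F_j^\nu$ with an irreducible component of the normalised general fibre via Lemma~\ref{l-s.min.mod-bnd-gen-fib}, then run adjunction for fibrations to land in a family of generalised pairs with DCC data, and invoke DCC of volumes for generalised pairs; the paper does exactly this, citing \cite[Lemma 7.4]{B21b} (which packages the control of the moduli part using the polarisation $A$, and is why the paper briefly treats the case of non-integral $A^\nu$ separately) together with \cite[Theorem 1.3]{B21b}, whereas you appeal directly to boundedness of the general fibre for the Cartier index of $M_{Z_j}$ and to the generalised-pair version of \cite[Theorem 1.3]{HMX14}. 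For part (2) both you and the paper expand $\sigma(t)$ over the components of $X^\nu$, read off the coefficient of $t^{d-\dim Z}$, and combine the DCC of Iitaka volumes with the finiteness of $\vol(A_j^\nu|_{F_j^\nu})$ to force finiteness. One small point you leave implicit but the paper states: when $\dim Z_j=\dim Z$ the finite map $Z_j\to Z$ automatically surjects onto an irreducible component, so part (1) indeed applies to every such $j$.
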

\begin{proof}
(1)
Let $F$ be a general fibre of $X\to Z$ over an irreducible component $Z'$ of $Z$. 
Define 
$$
K_F+B_F=(K_X+B)|_F \ \mbox{and} \ A_F=A|_F.
$$ 
Then $(F,B_F),A_F$ belongs to a fixed bounded family, by Lemma \ref{l-s.min.mod-bnd-gen-fib}, depending only on the data $d,\Phi,\Gamma$ (note that $\sigma$ is not used in the proof of \ref{l-s.min.mod-bnd-gen-fib}). Thus if $(F^\nu,B_{F^\nu}),A_{F^\nu}$ is the normalisation of $(F,B_F),A_F$, then the irreducible components of $(F^\nu,B_{F^\nu}),A_{F^\nu}$ are bounded. So there are finitely many possibilities for the volume of $A_{F^\nu}$ on each irreducible component of $F^\nu$. 

On the other hand, if $F$ maps to $z\in Z'$, then 
$(F^\nu,B_{F^\nu}),A_{F^\nu}$ is just the log fibre of $(X^\nu,B^\nu),A^\nu\to Z$ over $z$: this follows from the fact that no irreducible component of $F$ is contained in the singular locus of $X$, hence the fibre of $X^\nu\to Z$ over $z$ maps birationally onto $F$ which means that $F^\nu$ is the fibre of $X^\nu\to Z$ over $z$.
Therefore, if $Z_j$ maps onto $Z'$ and if $F_j^\nu$ is a general fibre of $X^\nu_j\to Z_j$, then 
$F_j^\nu$ is an irreducible component of $F^\nu$ for some choice of $F$, so 
there are finitely many possibilities for $\vol(A_j^\nu|_{F_j^\nu})$ because  $A_j^\nu|_{F_j^\nu}={A_{F^\nu}}|_{F_j^\nu}$. 

If $A^\nu$ is integral, then by \cite[Lemma 7.4]{B21b} and by ACC for lc thresholds \cite{HMX14}, we can find a fixed $l\in \N$ and a fixed DCC set $\Omega$ depending only on $d,\Phi,\Gamma$ such that if $Z_j$ maps onto $Z'$, then we can write an adjunction formula 
$$
K_{X_j^\nu}+B_j^\nu\sim_l f_j^*(K_{Z_j}+B_{Z_j}+M_{Z_j})
$$
where $({Z_j},B_{Z_j}+M_{Z_j})$ is an lc generalised pair, the coefficients of $B_{Z_j}$ belong to $\Omega$ and $lM_{\overline{Z}_j}$ is Cartier where $M_{\overline{Z}_j}$ is the moduli divisor on some high resolution $\overline{Z}_j\to Z_j$. Adding $\frac{1}{l}$ to $\Omega$, we assume that    
$({Z_j},B_{Z_j}+M_{Z_j})$
belongs to $\mathcal{G}_{\rm glc}(e_j,\Omega)$ where $e_j=\dim Z_j$ (see \ref{ss-fam-gen-pairs} for the definition of $\mathcal{G}_{\rm glc}(e_j,\Omega)$). Therefore, 
$$
\Ivol(K_{X_j^\nu}+B_j^\nu)=\vol(K_{Z_j}+B_{Z_j}+M_{Z_j})
$$
belongs to a fixed DCC set, by \cite[Theorem 1.3]{B21b}. In case $A^\nu$ is not integral, we can use a multiple $aA^\nu$ where $a\in \N$ is bounded and $aA^\nu$ is integral over $\eta_{Z_j}$ and then apply the same arguments (note that by the first and second paragraphs, there are finitely many possibilities for the coefficients of $A^\nu$ over $\eta_{Z_j}$).

(2)
Now for small $t>0$, 
$$
\sigma(t)=\vol(K_X+B+tA)
=\sum \vol(K_{X_j^\nu}+B_j^\nu+tA^\nu_j)
$$
$$
=\sum (K_{X_j^\nu}+B_j^\nu+tA^\nu_j)^d.
$$
Expanding the right hand side and arguing similar to Step 1 of the proof of Proposition \ref{p-bnd-w-s-min-models-DCC}, we can see that 
$$
\sigma(t)=\sum_{i=0}^d\big(\sum \binom{d}{i} (K_{X_j^\nu}+B_j^\nu)^{d-i}\cdot {A_j^\nu}^i \big)t^i.
$$
Then taking $i=d-\dim Z$ and using 
$$
(K_{X_j^\nu}+B_j^\nu)^{\dim Z}\cdot ({A_j^\nu})^{d-\dim Z}=\Ivol(K_{X_j^\nu}+B_j^\nu)\vol(A_j^\nu|_{F_j^\nu})
$$ 
for $j$ with $\dim Z_j=\dim Z$ shows that 
$$
 \sum_{\dim Z_j=\dim Z} \binom{d}{d-\dim Z} \Ivol(K_{X_j^\nu}+B_j^\nu)\vol(A_j^\nu|_{F_j^\nu})
$$
is the coefficient of $t^{d-\dim Z}$ in $\sigma$ where $F_j^\nu$ is a general fibre of $X^\nu_j\to Z_j$. 

On the other hand, if $\dim Z_j=\dim Z$, then $Z_j$ maps onto an irreducible component of $Z$ as the induced map $Z_j\to Z$ is finite. So 
 $\Ivol(K_{X_j^\nu}+B_j^\nu)$ belongs to a fixed DCC set and $\vol(A_j^\nu|_{F_j^\nu})$ belongs to a fixed finite set by the above arguments, hence we deduce that $\Ivol(K_{X_j^\nu}+B_j^\nu)$ can take finitely many possible values by the previous paragraph.

\end{proof}

\subsection{Boundedness of slc stable minimal models}

\begin{proof}[Proof of Theorem \ref{t-bnd-s-mmodels-slc}]
The idea is to apply Proposition \ref{p-bnd-w-s-min-models-DCC}. In order to do so we need to ensure that the conditions of the proposition are satisfied which essentially means that after normalisation we need to ensure that the relevant Iitaka volumes belong to a DCC set.

\emph{Step 1}.
Let 
$$
(X,B),A\to Z \in S_{\rm slc}(d,\Phi,\Gamma,\sigma)
$$
and let 
$$
(X^\nu,B^\nu),A^\nu=\bigsqcup (X^\nu_j,B^\nu_j),A_j^\nu
$$ 
be its normaliation which we view as a weighted lc stable minimal model where all the weights are $1$. Let $\Psi$ be the set of all the $\Ivol(K_{X^\nu_j}+B^\nu_j)$ when $(X,B),A$ varies in $S_{\rm slc}(d,\Phi,\Gamma,\sigma)$. Then  
$$
\Lambda\cdot (X^\nu,B^\nu),A^\nu\in \mathcal{WS}_{\rm lc}(d,\Phi,\Psi,\sigma)
$$
where $\Lambda$ is a sequence of $1$'s, one for each $j$. 

Let $f_j\colon X^\nu_j\to Z_j$ be the contraction defined by $K_{X^\nu_j}+B^\nu_j$. We have an induced morphism $Z_j\to Z$ which is finite. Let $V_j$ be the image of $Z_j$ in $Z$. Assume $X_j$ is the irreducible component of $X$ corresponding to $X_j^\nu$.\\ 

\emph{Step 2}.
If $\dim Z=0$, then $(X,B),A$ is bounded, by Theorem \ref{t-bnd-slc-min-model-CY-case}. 
From now on we can assume that $\dim Z>0$. By Lemma \ref{l-s.min.mod-Ivol-over-components}, if $V_j$ is an irreducible component of $Z$, then $\Ivol(K_{X_j^\nu}+B_j^\nu)$ belongs to a fixed DCC subset of $\Psi$ and $\vol(A_j^\nu|_{F_j^\nu})$ belongs to a fixed finite set where $F_j^\nu$ is a general fibre of $X_j^\nu\to Z_j$. 
Moreover, again by the lemma, if $\dim Z_j=\dim V_j=\dim Z$, then there are finitely many possibilities for $\Ivol(K_{X_j^\nu}+B_j^\nu)$. Thus by Theorem \ref{t-bnd-base-adjunction}, for $j$ with $\dim Z_j=\dim Z$, we can find a fixed $l\in \N$ such that we can write an adjunction formula 
$$
K_{X_j^\nu}+B_j^\nu\sim_l f_j^*(K_{Z_j}+B_{Z_j}+M_{Z_j})
$$
where $l(K_{Z_j}+B_{Z_j}+M_{Z_j})$ is very ample.\\

\emph{Step 3.}
Let $e=\dim Z$ which is determined by $d,\sigma$, by Lemma \ref{l-s.min.mod.base-d,v-determined-by-u,sigma}. For $1\le q\le e$, let $\mathcal{D}_q$ be the following statement: there exist $l\in \N$ and a finite subset $\Psi^\circ\subset \Psi$ depending only on $d,\Phi,\Gamma,\sigma$ such that for each $j$ with $\dim Z_j\ge q$, we have:
\begin{itemize}
\item $\Ivol(K_{X_j^\nu}+B_j^\nu)$ belongs to $\Psi^\circ$, and 
\item we can write an adjunction formula 
$$
K_{X_j^\nu}+B_j^\nu\sim_l f_j^*(K_{Z_j}+B_{Z_j}+M_{Z_j})
$$
where $l(K_{Z_j}+B_{Z_j}+M_{Z_j})$ is very ample.
\end{itemize}
 By the previous step,  
$\mathcal{D}_{e}$ holds.
Assume that statement $\mathcal{D}_{q+1}$ holds. We will show in the following steps that statement $\mathcal{D}_{q}$ also holds. If there is no $j$ with $\dim Z_j=q$, then $\mathcal{D}_{q}$ trivially  holds. So assume that there exists $j$ with $\dim Z_j=q$.\\  

\emph{Step 4.}  
Pick $Z_j$ with $\dim Z_j=q$. We claim that $\Ivol(K_{X_j^\nu}+B_j^\nu)$ belongs to a fixed DCC subset $\Psi'\subset \Psi$. We prove this in Steps 4-7. If $V_j$ is a component of $Z$, then the claim follows from Lemma \ref{l-s.min.mod-Ivol-over-components}. So assume $V_j$ is not a component of $Z$ which means $V_j\subsetneq V_i$ for some $i$. To make notation simpler, rearranging the indices, we can assume $j=1$. Then $V_1$ is of dimension $q$ but $V_i$ is of dimension $\ge q+1$. 

Let $F$ be the fibre of $X\to Z$ over a general point $z\in V_1$. 
Then both $X_1$ and $X_i$ intersect $F$ (recall that $X_1,X_i$ are the irreducible components of $X$ corresponding to $X_1^\nu,X_i^\nu$). Since $F$ is connected, rearranging the indices again (but keeping $1$) and perhaps replacing $X_i$, we can assume that there exist components $X_2,\dots,X_{i-1}$ of $X$ such that 
\begin{itemize}
\item $V_j=V_1$ for every $1\le j<i$, and 
\item there is a prime divisor $S_{j,j+1}\subset X_{j}\cap X_{j+1}$ that maps onto $V_1$ for every $1\le j<i$.
\end{itemize}
This can be seen as follows: take a sufficiently small neighbourhood $\tilde{Z}\subset Z$ of the generic point $\eta_{V_1}$ and let $\tilde{X}\subset X$ be the inverse image of $\tilde{Z}$; then the image of every irreducible component of $\tilde{X}$ in $\tilde{Z}$ contains $\eta_{V_1}$, and any two irreducible components of $\tilde{X}$ either do not intersect or the image of any component of their intersection contains $\eta_{V_1}$; next rearrange the indices (but keep $1$) and take $i$ minimal so that $\bigcup_1^i \tilde{X_j}$ is connected and still $V_1\subsetneq V_i$, where $\tilde{X_j}:=X_j\cap \tilde{X}\neq \emptyset$; then possibly rearranging the indices $2,\dots,i-1$, we can assume that $X_{j}\cap X_{j+1}\neq \emptyset$ for every $1\le j<i$; moreover, $V_j=V_1$ for every $1\le j<i$ otherwise we can replace $i$ with a smaller number; on the other hand, since $X$ satisfies Serre's condition $S_2$, removing some codimension $2$ closed subset $X$ stays connected over $\eta_{V_1}$ and all the component intersections are along codimension one subsets; so by the same reasoning we can choose $X_2,\dots,X_i$ so that there is a prime divisor $S_{j,j+1}\subset X_{j}\cap X_{j+1}$ that maps onto $V_1$ for every $1\le j<i$.\\

\emph{Step 5.}  
Since $(X,B)$ is slc, $X$ has nodal singularities in codimension one, so there are prime divisors $T_1\subset X_1^\nu$ and $R_2\subset X_2^\nu$ so that $X_1^\nu\to X_1$ and $X_2^\nu\to X_2$ induce birational morphisms $T_{1}\to S_{1,2}$ and $R_{2}\to S_{1,2}$. 
So we get an induced birational map $T_1\bir R_2$ over $V_1$. Note that $T_1$ is a component of $\rddown{B_1^\nu}$ and $R_1$ is a component of $\rddown{B_2^\nu}$.
Similarly, for every $1\le j<i$, there are prime divisors $T_j\subset X_j^\nu$ and $R_{j+1}\subset X_{j+1}^\nu$ with  birational morphisms $T_j\to S_{j,j+1}$ and $R_{j+1}\to S_{j,j+1}$, and an induced birational map 
$$
\phi_j\colon T_j \bir R_{j+1}/V_1.
$$

Let $(X_j',B_j')$ be a dlt model of $({X_j^\nu},B_j^\nu)$.  Then for each $1\le j<i$ we have the diagram: 
$$
\xymatrix{
X_j'\ar[d] & T_j'\ar[l]\ar[d] \ar@{-->}[rr]& & R_{j+1}'\ar[d]\ar[r] & X_{j+1}'\ar[d]\\
X_j^\nu\ar[ddr]\ar[ddd] & T_j\ar[l]\ar[rd] \ar@{-->}[rr]& & R_{j+1}\ar[ld]\ar[r]& X_{j+1}^\nu\ar[ddl]\ar[ddd]\\
& & S_{j,j+1}\ar[dr] \ar[dl]\ar[d]& &\\
&X_j\ar[r] & X\ar[d] & X_{j+1}\ar[l]&\\
Z_j\ar[rr]&& Z &&Z_{j+1}\ar[ll]
}
$$
where $T_j'\subset X_j'$ is the birational transform of $T_j$ and 
$R_{j+1}'\subset X_{j+1}'$ is the birational transform of $R_{j+1}$. 

Define 
$$
K_{T_{j}'}+B_{T_j'}=(K_{X_j'}+B_j')|_{T_j'}
$$ 
and 
$$
K_{R_{j+1}'}+B_{R_{j+1}'}=(K_{X_{j+1}'}+B_{j+1}')|_{R_{j+1}'}
$$  
by adjunction. Then since both 
$$
K_{X_j'}+B_j' \ \mbox{and} \ K_{X_{j+1}'}+B_{j+1}'
$$ 
are pullbacks of $K_X+B$, the diagram induces a crepant birational map 
$$
\phi_j'\colon (T_j',B_{T_j'})\bir (R_{j+1}',B_{R_{j+1}'})
$$
over $V_1$. Moreover, if $T_j'\to N_j\to V_1$ is the Stein factorisation of $T_j'\to V_1$, then both $(T_j',B_{T_j'})$ and  $(R_{j+1}',B_{R_{j+1}'})$ are dlt log Calabi-Yau fibrations over $N_j$ as $K_X+B\sim_\Q0/Z$. \\ 

\emph{Step 6.} 
 Pick a minimal horizontal$/V_1$ non-klt centre $L_j\subseteq T_j'$ of $(X_j',B_j')$ for $1\le j<i$ and a minimal horizontal$/V_1$ non-klt centre $M_j\subseteq R_j'$ of $(X_j',B_j')$ for $1<j\le i$. Such centres exist because both $T_j'$ and $R_{j}'$ are horizontal$/V_1$ non-klt centres by construction. 
 
 We will show that there is a birational map $L_1\bir M_i$ over $V_1$.
If $L_j=T_j'$ for some $1\le j<i$, then $(T_j',B_{T_j'})$ is klt over the generic point of $V_1$, so $(R_{j+1}',B_{R_{j+1}'})$ is also klt over the same generic point by the crepant birational property of $\phi_j'$, hence in this case $M_{j+1}=R_{j+1}'$, so there is a birational map $\alpha_j\colon L_j\bir M_{j+1}$ by the previous step (which is just $\phi_j'$). However, if $L_j\neq T_j'$, then 
$(T_j',B_{T_j'})$ is not klt over the generic point of $V_1$, so $(R_{j+1}',B_{R_{j+1}'})$ is also not klt over the same generic point which means $M_{j+1}\neq R_{j+1}'$. In this case, by Lemma \ref{l-stein-deg-dlt-crep} (applied over $N_j$), there is again a birational map $\alpha_j\colon L_j\bir M_{j+1}$. 

Moreover,  for  $1< j<i$, both $L_j,M_j$ are minimal horizontal$/V_1$ non-klt centres of $(X_j',B_j')$. 
Also $(X_j',B_j')\to Z_j$ is a dlt log Calabi-Yau fibration and $V_j=V_1$ for $1< j<i$ which ensures that $L_j,M_j$ are horizontal over $Z_j$.
So  for such $j$ there is a birational map $\beta_j\colon M_j\bir L_j$, by Lemma \ref{l-stein-deg-dlt}. 

Combining all these birational maps, we get 
$$
L_1\overset{\alpha_1}\bir M_2\overset{\beta_{2}}\bir L_2\bir \cdots \bir M_{i-1} \overset{\beta_{i-1}}\bir L_{i-1}\overset{\alpha_{i-1}}\bir M_i/V_1,
$$
hence we get a birational map $L_1\bir M_i$ over $V_1$.\\

\emph{Step 7.} 
On the other hand, since we are assuming statement $\mathcal{D}_{q+1}$ of Step 3 and since $\dim V_i>q$, 
\begin{itemize}
\item $\Ivol(K_{X_i^\nu}+B_i^\nu)$ belongs to a fixed finite subset $\Psi^\circ\subset \Psi$, and 
\item we can write an adjunction formula 
$$
K_{X_i^\nu}+B_i^\nu\sim_l f_i^*(K_{Z_i}+B_{Z_i}+M_{Z_i})
$$
where $l(K_{Z_i}+B_{Z_i}+M_{Z_i})$ is very ample. 
\end{itemize}
In particular, if $P_i$ is the image of $M_i$ under the morphism $X_i'\to Z_i$, 
then 
$$
l^{\dim P_i}\vol((K_{Z_i}+B_{Z_i}+M_{Z_i})|_{P_i})\in \N.
$$ 
But then we deduce that the Iitaka volume 
$$
\Ivol((K_{X_i'}+B_i')|_{M_i})=\sdeg(M_i/P_i)\vol((K_{Z_i}+B_{Z_i}+M_{Z_i})|_{P_i})
$$ 
belongs to $\frac{1}{l^{\dim P_i}}\N$. Therefore, the birational map $L_1\bir M_i/V_1$ of Step 6 shows that 
$$
\Ivol((K_{X_1'}+B_1')|_{L_1})=\Ivol((K_{X_i'}+B_i')|_{M_i})
$$
also belongs to $\frac{1}{l^{\dim P_i}}\N$ where the equality follows from the fact that both $(K_{X_1'}+B_1')|_{L_1}$ and $(K_{X_i'}+B_i')|_{M_i}$ are pullbacks of a common ample divisor on $V_1$, up to $\Q$-linear equivalence.

By construction, $(X_1',B_1')\to Z_1$ is a log Calabi-Yau fibration and $L_1$ is a horizontal$/Z_1$ non-klt centre. Then by Theorem \ref{t-bnd-stein-deg-lc-main}, the Stein degree $\sdeg(L_1/Z_1)$ is bounded from above depending only on $d$. Moreover, 
$$
\Ivol(K_{X_1^\nu}+B_1^\nu)=\Ivol(K_{X_1'}+B_1')=\frac{1}{\sdeg(L_1/Z_1)}\Ivol((K_{X_1'}+B_1')|_{L_1}). 
$$
Therefore, by the previous paragraph, we deduce that $\Ivol(K_{X_1^\nu}+B_1^\nu)$ belongs to a fixed DCC subset $\Psi'\subset \Psi$. This proves the claim in the beginning of Step 4.\\ 

\emph{Step 8.} 
We can now use the arguments of the proof of Proposition \ref{p-bnd-w-s-min-models-DCC} to derive statement $\mathcal{D}_{q}$. Indeed, in Steps 1-7 of the proof of the proposition, where in particular we derived statement $\mathcal{C}_{q}$ from statement $\mathcal{C}_{q+1}$, we only used the DCC property of the Iitaka volumes for the components with Kodaira dimension $\ge q$, that is, for $j$ with $\dim Z_j\ge q$ (see the remark at the end of Step 7 of that proof). 
 Thus all those steps also apply to $(X^\nu,B^\nu),A^\nu$ in our setting. Therefore, from statement $\mathcal{C}_q$ we deduce that replacing $l\in \N$ and $\Psi^\circ\subset \Psi$ given in statement $\mathcal{D}_{q+1}$, we can assume that  
  for each $j$ with $\dim Z_j\ge q$, $\Ivol(K_{X_j^\nu}+B_j^\nu)$ belongs to $\Psi^\circ\subset \Psi$ and  we can write an adjunction formula 
$$
K_{X_j^\nu}+B_j^\nu\sim_l f_j^*(K_{Z_j}+B_{Z_j}+M_{Z_j})
$$
where $l(K_{Z_j}+B_{Z_j}+M_{Z_j})$ is very ample. That is, $\mathcal{D}_{q}$ holds. 

Inductively, statement $\mathcal{D}_{q}$ holds for every $q\ge 1$. Thus $\Ivol(K_{X_j^\nu}+B_j^\nu)$ belongs to a fixed finite set for every $j$ with $\dim Z_j\ge 1$. The same holds trivially when $\dim Z_j=0$. We can then apply Proposition \ref{p-bnd-w-s-min-models-DCC} to deduce that the weighted lc stable minimal model $\Lambda\cdot (X^\nu,B^\nu),A^\nu$ belongs to a bounded family. Moreover, again by the proposition, we can choose a fixed rational number $\tau>0$ so that  $(X^\nu_j,B^\nu_j+\tau A^\nu_j)$ is lc and $K_{X^\nu_j}+B^\nu_j+\tau A^\nu_j$ is ample for each $j$. This implies that  
$(X,B+\tau A)$ is slc and $K_X+B+\tau A$ is ample with volume $\sigma(\tau)$. Therefore, $(X,B),A$ belongs to a bounded family, by \cite{HMX18}.

\end{proof}


\section{\bf Families of locally stable pairs}\label{s-locally-stable-families}

In this section, we recall some definitions and results regarding certain families of pairs. These have appeared in the literature in various levels of generality. Our aim here is not to give a historical account. We follow \cite{K21} which treats the most general setting and where references to the literature can be found.

 Throughout this section we fix natural numbers $d,n$ and a vector $\alpha=(a_1,\dots,a_m)$ with positive rational coordinates (we allow the possibility $m=0$ in which case $\alpha$ is empty).

\subsection{Relative Mumford divisors}\label{ss-Mumford-div}
Let $f \colon X \to S$ be a flat morphism of schemes with $S_2$ fibres of pure dimension. 
A closed subscheme $D \subset X$ is a (effective) \emph{relative Mumford divisor over $S$} \cite[Definition 4.68]{K21} if there is an open subset $U \subset X$
such that
\begin{itemize}
\item codimension of $X_s \setminus U_s$ in $X_s$ is $\ge 2$ for every $s \in S$,
\item $D|_U$ is a relative Cartier divisor,
\item $D$ is the closure of $D|_U$, and
\item $X_s$ is smooth at the generic points of $D_s$ for every $s\in S$.
\end{itemize}

Here $D_s$ is the closure of $D|_{U_s}$ in $X_s$. 
By $D|_U$ being relative Cartier we mean that $D|_U$ is a Cartier divisor on $U$ and 
that its support does not contain any irreducible component of any fibre $U_s$. 

Given a morphism $T\to S$, pulling back $D|_U$ to $T\times_SU$ and taking its 
closure gives a relative Mumford divisor $D_T$ on $T\times_SX$ over $T$ which we 
refer to as the \emph{divisorial pullback} of $D$. In particular, as mentioned above, for each $s\in S$, 
we define $D_s=D|_{X_s}$ to be the closure of $D|_{U_s}$ which is the divisorial 
pullback of $D$ to $X_s$.

Now assume $f \colon X \to S$ is a flat projective morphism of schemes with $S_2$ fibres of pure dimension  where $S$ is reduced.
The functor ${\mathcal MDiv}(X/S)$ on the category of reduced schemes over $S$ is defined by setting 
$$
{\mathcal MDiv}(X/S)(T)=\{\mbox{relative Mumford divisors on $T\times_SX$ over $T$}\}
$$
for any morphism $T\to S$ from a reduced scheme and by using divisorial pullback 
to define 
$$
{\mathcal MDiv}(X/S)(T)\to {\mathcal MDiv}(X/S)(T')
$$ 
for a morphism  
$T'\to T$ of reduced schemes over $S$. Since over reduced bases relative Mumford divisors are the same 
as K-flat divisors \cite[Definition 7.1 and comment 7.4.2]{K21}, 
this functor is represented by a reduced separated scheme ${M\!Div}(X/S)$ over $S$ \cite[Theorem 7.3]{K21}.
 In particular, since this moduli space is a fine moduli 
space (as the functor is represented), there is a universal family over ${M\!Div}(X/S)$ of 
relative Mumford divisors.
Moreover, given a relatively very ample 
divisor $A$ on $X$, restricting the above functor to relative Mumford divisors of degree $l$ 
with respect to $A$, where $l$ is a fixed number, the corresponding moduli space 
${M\!Div}_l(X/S)$ is of finite type over $S$. \\

\subsection{Marked locally stable families}
A \emph{$(d,\alpha)$-marked  locally stable pair} over a field $K$ of characteristic zero is a projective 
geometrically connected slc pair $(X,\Delta)$ 
over $K$ with $\dim X=d$ and a decomposition $\Delta=\sum a_iD_i$ where $D_i\ge 0$ are 
integral divisors. The $D_i$ are not assumed to be $\Q$-Cartier and they are not necessarily distinct. 
Two such marked pairs $(X,\Delta)$ and $(X',\Delta')$ are isomorphic if 
there is an isomorphism $X\to X'$ mapping $D_i$ onto $D_i'$ hence mapping $\Delta$ onto $\Delta'$ preserving the 
decomposition. When we are not concerned with the data $d,\alpha$ and the marking we refer to $(X,\Delta)$ just as a locally stable pair.

In general, $\Delta$ can possibly be written as $\sum a_iD_i$ in several ways, so the underlying 
slc pair can be marked in several ways giving distinct marked pairs. 
For example, let $X=\PP^2$ and let $L,C$ be a line and a smooth 
conic, respectively, intersecting transversally, and let $\Delta=\frac{1}{2}L+\frac{1}{2}C$. 
Letting $\alpha=(\frac{1}{4}, \frac{1}{4})$, we can mark $\Delta$ by taking $D_1=2L,D_2=2C$, or 
taking $D_1=2C,D_2=2L$, or taking $D_1=D_2=L+C$. Thus we have at least three mutually non-isomorphic 
$(2,\alpha)$-marked pairs with the same underlying pair $(X,\Delta)$.    

Next we recall the definition of marked locally stable families \cite[Definitions 4.7, 8.4, and 8.7]{K21} in the projective setting.
Let $S$ be a reduced scheme over $k$. 
A \emph{$(d,\alpha)$-marked locally stable family} $f\colon (X,\Delta)\to S$ is given by a 
morphism $f\colon X\to S$ of schemes and closed subschemes $D_i\subset X$ for $i=1,\dots,m$ where  
\begin{itemize}
\item $f$ is flat and projective and its fibres are reduced, geometrically connected, $S_2$, of pure dimension $d$ and with nodal codimension one singularities,

\item $D_i$ are relative Mumford divisors over $S$,  

\item $\Delta=\sum a_iD_i$,

\item $K_{X/S}+\Delta$ is $\Q$-Cartier, and 

\item for each $s\in S$, $(X_s,\Delta_s)$ is an slc pair over the residue field $k(s)$ where 
$\Delta_s=\sum a_iD_{i,s}$.
\end{itemize} 
In particular, the log fibres $(X_s,\Delta_s)$ are $(d,\alpha)$-marked locally stable pairs. 
Note that in the above setting the dualising sheaf $\omega_{X/S}$ exists and 
commutes with base change \cite[2.68]{K21}. 
Moreover, since codimension one singularities of the fibres of $f$ are nodal,  
there is an open subset $V\subseteq X$ such that codimension of $X_s\setminus V_s$ 
in $X_s$ is $\ge 2$ and such that 
$\omega_{X/S}$ is locally free on $V$; thus   
$\omega_{X/S}$ corresponds to a canonical divisor class $K_{X/S}$; therefore, given the assumptions 
it makes sense to say that $K_{X/S}+\Delta$ is $\Q$-Cartier.

When we are not concerened with the data $d,\alpha$, we just say that $f\colon (X,\Delta)\to S$ is a locally stable family. Moreover, given a morphism $T\to S$ of reduced schemes, we get the induced locally stable family $(X_T,\Delta_T)\to T$ where $X_T=T\times_SX$ and $\Delta_T$ is defined by divisorial pullback.\\

\subsection{Moduli of strongly embedded locally stable pairs}

 We now recall the definition of embedded versions of the above notions \cite[\S 8.5]{K21}. Given a reduced scheme $S$ over $k$, a 
\emph{strongly embedded $(d,\alpha,\PP^n)$-marked locally stable family} 
$$
f\colon (X\subset \PP^n_S,\Delta)\to S
$$
is a $(d,\alpha)$-marked locally stable family $f\colon (X,\Delta)\to S$ together with a closed 
embedding $g\colon X\to \PP^n_S$ such that 
\begin{itemize}
\item $f=\pi g$ where $\pi$ denotes the projection $\PP^n_S\to S$, and 

\item letting $\mathcal{L}=g^*\mathcal{O}_{\PP^n_S}(1)$, we have 
$f_*\mathcal{L}\simeq \pi_*\mathcal{O}_{\PP^n_S}(1)$ and $R^qf_*\mathcal{L}=0$ for $q>0$.
\end{itemize}

\bigskip 


Consider the moduli functor $\mathfrak{MLS^{\rm e}}(d,\alpha,\PP^n)$ of 
strongly embedded $(d,\alpha,\PP^n)$-marked locally stable pairs 
from the category of reduced $k$-schemes to the category of 
sets by setting 
$$
\mathfrak{MLS^{\rm e}}(d,\alpha,\PP^n)(S)=\{\mbox{strongly embedded $(d,\alpha,\PP^n)$-marked}
$$
$$
\hspace{8cm}\mbox{ locally stable families over $S$}\}.
$$
 For a morphism $T\to S$ of reduced schemes, the map 
$$
\mathfrak{MLS^{\rm e}}(d,\alpha,\PP^n)(S)\to \mathfrak{MLS^{\rm e}}(d,\alpha,\PP^n)(T)
$$ 
is given via base change: given a family 
$$
(X\subset \PP^n_S,\Delta=\sum a_iD_i)\to S,
$$ 
we take the divisorial pullback of each $D_i$ to $X_T:=X\times_ST$ which then determines $\Delta_T$ with an $\alpha$-marking, 
hence a strongly embedded $(d,\alpha,\PP^n)$-marked locally stable family 
$$
(X_T\subset \PP^n_T,\Delta_T)\to T.
$$  
By \cite[Proposition 8.47 and Theorem 4.42]{K21}, we have the following. 
 
\begin{thm}\label{t-moduli-embedded-marked-stable-pairs}
The moduli functor $\mathfrak{MLS^{\rm e}}(d,\alpha,\PP^n)$ is represented by a reduced separated  
 scheme ${M\!LS^{e}}(d,\alpha,\PP^n)$ over $k$.
\end{thm}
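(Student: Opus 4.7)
The plan is to construct ${M\!LS^{e}}(d,\alpha,\PP^n)$ as a reduced locally closed subscheme of a natural parameter space built from the Hilbert scheme of $\PP^n$ together with the Mumford divisor scheme from \ref{ss-Mumford-div}, and then verify the universal property directly against the definition of the functor $\mathfrak{MLS^{\rm e}}(d,\alpha,\PP^n)$.

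First I would take the Hilbert scheme $\mathrm{Hilb}(\PP^n_k)$ parametrising closed subschemes of $\PP^n$, and pass to the reduced locally closed subscheme $H \subset \mathrm{Hilb}(\PP^n_k)$ whose $k(s)$-points correspond to subschemes $X_s \subset \PP^n_{k(s)}$ that are geometrically connected, reduced, $S_2$ of pure dimension $d$, with nodal codimension one singularities. That each of these properties cuts out a locally closed condition on the Hilbert scheme is standard deformation theory. Let $f\colon \mathcal{X} \to H$ be the restricted universal family; by construction $f$ is flat and projective with fibres of the required type, so $\omega_{\mathcal{X}/H}$ exists and commutes with base change, giving a canonical class $K_{\mathcal{X}/H}$.

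Next I would parametrise the marked divisors. By the representability of the relative Mumford divisor functor recalled in \ref{ss-Mumford-div}, the scheme $M\!Div(\mathcal{X}/H)$ is reduced and separated over $H$, and so is its $m$-fold fibre product
$$
P := M\!Div(\mathcal{X}/H) \times_H \cdots \times_H M\!Div(\mathcal{X}/H) \quad (m \text{ copies}),
$$
which represents the functor of $m$-tuples of relative Mumford divisors on $\mathcal{X}_T \to T$ for any reduced $H$-scheme $T$. Pulling back the universal family to $P$ gives the tautological tuple $(D_1,\ldots,D_m)$ on $\mathcal{X}_P$, and one sets $\Delta_P = \sum a_i D_i$. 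Then I would impose the remaining conditions in order: the locus where $K_{\mathcal{X}_P/P} + \Delta_P$ is $\Q$-Cartier is locally closed in $P$, and within it the locus where every geometric log fibre $(X_s,\Delta_s)$ is slc is open. Finally, letting $\mathcal{L}$ be the pullback of $\mathcal{O}_{\PP^n}(1)$ to the universal $\mathcal{X}$, semi-continuity makes the locus where $R^q f_*\mathcal{L} = 0$ for $q>0$ open, and on it $f_*\mathcal{L}$ is locally free; the further condition that the natural map $\pi_*\mathcal{O}_{\PP^n}(1) \to f_*\mathcal{L}$ be an isomorphism is then open since both sides are locally free of rank $n+1$. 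The resulting reduced locally closed subscheme is the candidate ${M\!LS^{e}}(d,\alpha,\PP^n)$, and the universal family over it realises the desired representability by tracing through the construction.

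The hard part will be verifying that each of these geometric and cohomological conditions genuinely defines a locally closed subscheme with the expected functorial behaviour under base change along \emph{reduced} schemes — in particular that the slc locus is open and that the $\Q$-Cartier locus is locally closed in the subtle sense appropriate for families with non-$\Q$-Gorenstein total space. This is not formal and relies on the detailed theory of K-flat families and Mumford divisors developed in \cite{K21}; the representability ultimately rests on \cite[Proposition 8.47 and Theorem 4.42]{K21}, from which the present statement is extracted by combining the Hilbert scheme step with the Mumford divisor representability.
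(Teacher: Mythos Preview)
Your proposal is correct and follows essentially the same route as the paper's sketch: start from the Hilbert scheme to get a base parametrising the underlying schemes, pass to the $m$-fold fibre product of the Mumford divisor scheme to add the marked divisors, cut down to the strongly embedded locus, and then invoke \cite[Theorem 4.42]{K21} to impose the locally stable condition as a locally closed partial decomposition. The only cosmetic difference is that the paper imposes the strong embedding condition before applying Theorem 4.42 rather than after, and it packages the $\Q$-Cartier and slc conditions together in that single application of Theorem 4.42 instead of treating them separately as you do; your caveat in the last paragraph correctly identifies that the delicate point is precisely this step, and your citation of \cite[Proposition 8.47 and Theorem 4.42]{K21} matches the paper's.
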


Letting $M={M\!LS^{e}}(d,\alpha,\PP^n)$, the theorem says that there is a strongly embedded $(d,\alpha,\PP^n)$-marked locally stable family
$$
(X\subset \PP^n_M,\Delta=\sum a_iD_i)\to M
$$ 
which is a universal family for the functor $\mathfrak{MLS^{\rm e}}(d,\alpha,\PP^n)$, that is, every strongly embedded $(d,\alpha,\PP^n)$-marked locally stable family is the pullback of this universal family.

If we restrict the functor $\mathfrak{MLS^{\rm e}}(d,\alpha,\PP^n)$ to families 
$$
(X\subset \PP^n_S,\Delta=\sum a_iD_i)\to S,
$$ 
in which over each point $s\in S$ the log fibres $(X_s\subset \PP^n_{s},\Delta_s)$ have bounded degree 
with respect to $\mathcal{O}_{\PP^n_{s}}(1)$, that is, degree of both $X_s$ and $\Delta_s$ are bounded 
by some fixed number, then the corresponding moduli space (which is an open subscheme of 
${M\!LS^{e}}(d,\alpha,\PP^n)$) is of finite type over $k$. In our situation 
below, the degrees will be bounded. 

We recall a rough sketch of the proof of the theorem for convenience. First we consider $X$ marked with 
$m$ integral divisors $D_1,\dots,D_m\ge 0$ where $X$ is a projective, reduced, connected,  
$S_2$ scheme over $k$ of pure dimension $d$ strongly embedded into $\PP^n$ and no irreducible component of 
any $D_i$ is contained in the singular locus of $X$ which means $D_i$ are Mumford divisors. 
Families of such marked varieties can be defined using Mumford divisors. 
Such $X$ are parametrised by a reduced scheme $V$ derived from the Hilbert scheme of $\PP^n$, admitting a 
universal family ${\bf{V}}\to V$. To get a universal family for marked schemes, 
we use the universal family of Mumford divisors for ${\bf{V}}\to V$ as in \ref{ss-Mumford-div}, say 
$$
{\bf{MDiv}}({\bf{V}}/V) \to {M\!Div}({\bf V}/V).
$$ 
Since there are $m$ marked divisors, we need to take 
$$
{MV}={M\!Div}({\bf V}/V)\times_{V} \cdots \times_{V} {M\!Div}({\bf V}/V)\to \rm V
$$ 
and the induced universal family ${\bf{MV}}\to MV$, where the fibred product is taken $m$ times, 
and $\bf{MV}$ has $m$ marked divisors ${\bf D}_{1},\dots, {\bf D}_{m}$. The fibres which are strongly embedded are given by some open subset $MV^e$. 
Next, applying \cite[Theorem 4.42]{K21} to the universal family over $MV^e$, there is a locally closed partial decomposition ${M\!LS^{e}}(d,\alpha,\PP^n)\to MV^e$ 
such that pulling back the universal family over ${M\!LS^{e}}(d,\alpha,\PP^n)$ 
we get a universal family for strongly embedded $(d,\alpha,\PP^n)$-marked locally stable pairs, 
hence we get a fine moduli space for the functor $\mathfrak{MLS^{\rm e}}(d,\alpha,\PP^n)$. 
It is in this last step where the coordinates of $\alpha$ and the local stability condition come into the 
construction.


\section{\bf Good minimal models in families}

In this section, we aim to study good minimal models in locally stable families, satisfying various properties. We will use these in the next section when constructing moduli of embedded pairs which is in turn used to prove existence of coarse moduli spaces in a later section. 

\subsection{Lc pairs in families}

\begin{lem}\label{l-l.stable-family-lc}
Assume that $(X,B)\to S$ is a locally stable family where $S$ is a reduced $k$-scheme of finite type. 
Let $S'$ be the set of (not necessarily closed) points $s\in S$ such that $(X_s,B_s)$ is lc. Then $S'$ is a locally closed subset of $S$ (with reduced structure).
\end{lem}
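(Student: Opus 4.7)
The plan is to show that $S'$ is in fact \emph{open} in $S$, which a fortiori makes it locally closed. The first step is to reduce the condition ``$(X_s,B_s)$ is lc'' to a condition on $X_s$ alone. Since the family is locally stable, each $(X_s,B_s)$ is already slc in the sense of \ref{ss-slc-pairs}, so unwinding the definitions the pair $(X_s,B_s)$ is lc if and only if $X_s$ is normal. Indeed, if $X_s$ is normal then its normalisation is itself with empty conductor, so the slc condition on $(X_s^\nu,B_s^\nu)$ reads verbatim as the lc condition on $(X_s,B_s)$; conversely if $(X_s,B_s)$ is lc then $X_s$ must be normal by the very definition of a pair in \ref{ss-pairs}. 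Note also that once $X_s$ is normal and geometrically connected it is automatically integral (locally integral plus connected forces integral), hence a variety over $\kappa(s)$.

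Next, because each $X_s$ is reduced and $S_2$ of pure dimension $d$, it is normal if and only if its singular locus has codimension at least $2$. Let $Z\subset X$ denote the non-smooth locus of $f\colon X\to S$, which is a closed subscheme of $X$. Since $k$ has characteristic zero every residue field $\kappa(s)$ is perfect, and combined with the reduced fibres hypothesis this gives that each $X_s$ is geometrically reduced. The formation of the singular locus therefore commutes with base change, yielding $Z_s = Z\cap X_s = \mathrm{Sing}(X_s)$ for every $s\in S$. Consequently,
$$
S' \;=\; \{\,s\in S \,:\, \dim Z_s \le d-2\,\}.
$$

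Finally, since $f$ is projective the induced morphism $Z\to S$ is proper, and by upper semicontinuity of fibre dimension the set $\{\,s\in S : \dim Z_s \ge d-1\,\}$ is closed in $S$. Its complement is precisely $S'$, which is therefore open in $S$; endowed with the reduced induced structure it is the desired locally closed subscheme. The principal technical point is the base-change compatibility of the fibrewise singular locus used in the second paragraph; once that is in hand, the remainder is a routine application of the upper semicontinuity of fibre dimension for proper morphisms.
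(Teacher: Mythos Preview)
Your proof is correct and in fact establishes the stronger statement that $S'$ is \emph{open}, not merely locally closed. The paper takes a more indirect route: it runs a Noetherian induction, reducing to the case where $S$ is irreducible with $S'$ dense, then shrinks so that $S$ is smooth (making the total pair $(X,B)$ slc) and argues that the singular locus of the total space $X$ has codimension $\ge 2$, hence $X$ is normal and $(X,B)$ is lc---concluding only that $S'$ contains a non-empty open subset. Your argument is more direct: since each fibre is already slc and $S_2$, the lc condition is equivalent to normality of $X_s$, which by Serre's criterion becomes $\dim\mathrm{Sing}(X_s)\le d-2$; identifying $\mathrm{Sing}(X_s)$ with the fibre of the relative non-smooth locus (this only needs that $\kappa(s)$ is perfect so that smooth equals regular---the geometric reducedness you mention is true but not the essential ingredient) and invoking upper semicontinuity of fibre dimension for the proper map $Z\to S$ gives openness at once. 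The paper's Noetherian-induction template is reused in the neighbouring lemmas on semi-ampleness and vertical volume, where the fibrewise condition is genuinely only locally closed and no such shortcut is available; for the present lemma your approach is cleaner, and indeed the paper itself later appeals to openness of normality in flat families (Step 4 of the proof of Theorem \ref{t-moduli-s.min.models}).
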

\begin{proof} 
 To show that $S'$ is locally closed we need to show that $\overline{S'}\setminus S'$ is a closed subset of $S$ where $\overline{S'}$ is the closure of $S'$ in $S$. 
Replacing $S$ with $\overline{S'}$ and replacing $(X,B)$ with its base change (as defined in the previous section), we can assume that $S'$ is dense in $S$. 
Let $T$ be an irreducible component of $S$ and assume that there is a non-empty open subset $U\subset T$ such that $U\subseteq S'$. Let $R=S\setminus U$. Then $S\setminus S' \subseteq R$, hence 
$R\setminus (S'\cap R)=S\setminus S'$. Thus it is enough to show that $R\setminus (S'\cap R)$ is a closed subset of $R$. 

Let $(X_R,B_R)\to R$ be the locally stable family obtained by base change to $R$. Sicne $R$ is a closed subset in $S$, 
for each $s\in R$, the log fibre of $(X_R,B_R)\to R$ over $s$ coincides with the log fibre of $(X,B)\to S$ over $s$. So $S'\cap R$ is the set of points in $R$ over which the log fibre is lc. 
Thus applying Noetherian induction we can assume that $R\setminus (S'\cap R)$ is closed in $R$, hence $S\setminus S'$ is closed in $S$ as desired. Therefore, replacing $S$ with $T$, we can assume that $S$ is irreducible and then it is enough to show that $S'$ contains some non-empty open subset of $S$. We can shrink $S$ to any non-empty open subset if necessary. In particular, we can assume $S$ is smooth and that $(X,B)$ is slc by \cite[Theorem 4.54]{K13}. 

Let $P$ be the singular locus of $X$. Then $P_s$ is a subset of the singular locus of $X_s$ because $S$ is smooth. 
Since the fibres $X_s$ are normal for $s\in S'$, $P_s$ has codimension $\ge 2$ in $X_s$ for such $s$.
Thus since $S'$ is dense, we can assume that $P$ has codimension $\ge 2$ in $X$ meaning that $X$ is smooth in codimension $1$. But then $X$ is normal and $(X,B)$ is lc as $(X,B)$ is slc. Therefore, $S'$ contains a non-empty open subset of $S$.

\end{proof}

\subsection{Good minimal models in families}

\begin{lem}\label{l-family-dense-lc-s-ample}
$(1)$
Assume that 
\begin{itemize}
\item $X$ is a normal variety and $B\ge 0$ is a $\Q$-divisor, 

\item $X\to S$ is a contraction onto a variety, 

\item $\Pi\subset S$ is a dense set of closed points, and

\item  $(X_s,B_s)$ is an lc pair for general  $s\in \Pi$ where $B_s=B|_{X_s}$.  
\end{itemize} 
Then there exists a non-empty open set $U\subset S$ such that $(X,B)$ is lc over $U$.

$(2)$
If in addition 
\begin{itemize}
\item  $K_{X_s}+B_s$ is semi-ample for each  $s\in \Pi$, 
\end{itemize}
then we can choose $U$ so that $K_X+B$ is semi-ample over $U$.
\end{lem}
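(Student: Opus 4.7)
The plan for (1) is a log resolution argument comparing coefficients on the total space with those on a general fibre. I would take a log resolution $\phi\colon W\to X$ of $(X,\Supp B)$ and write $K_W+B_W=\phi^*(K_X+B)$. After shrinking $S$ to a non-empty open $U$, I would arrange that $W\to S$ is smooth, every prime divisor in $\Supp B_W$ together with the exceptional locus is flat over $U$ and has simple normal crossings on each fibre, and for every $s\in U$ the restriction $\phi_s\colon W_s\to X_s$ is a log resolution of $(X_s,B_s)$ satisfying $K_{W_s}+B_{W_s}=\phi_s^*(K_{X_s}+B_s)$, where $B_{W_s}=B_W|_{W_s}$. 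Under this setup, for every prime divisor $D\subset W$ the coefficient $\mu_D B_W$ equals $\mu_{D'} B_{W_s}$ for any irreducible component $D'$ of $D\cap W_s$ and any $s\in U$. Choosing $s\in \Pi\cap U$ with $(X_s,B_s)$ lc, which exists by density of $\Pi$, then forces all coefficients of $B_{W_s}$, and hence of $B_W$, to be $\le 1$; so $(X,B)$ is lc, and this conclusion propagates over all of $U$.

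For (2), after applying (1) I would shrink $S$ so that $(X,B)$ is lc on $X$. By Lemma \ref{l-generic-semi-ampleness} it then suffices to show that $K_{X_\eta}+B_\eta$ is semi-ample, where $\eta$ is the generic point of $S$. First I would check that $K_{X_\eta}+B_\eta$ is nef: any curve $C\subset X_\eta$ spreads out to a flat family $\widetilde{C}\to U'$ of one-dimensional schemes over some non-empty open $U'\subseteq S$, and $(K_X+B)\cdot \widetilde{C}_s$ is locally constant on $U'$ by flatness. For $s\in \Pi\cap U'$ this intersection is $\ge 0$ as $K_{X_s}+B_s$ is nef, so the same holds over the generic fibre.

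To upgrade nefness to semi-ampleness on $X_\eta$, the plan is to exploit the fact that for a general $s\in \Pi$ the pair $(X_s,B_s)$ is its own good minimal model. I would invoke existence of relative good minimal models for lc pairs one of whose fibres has a good minimal model (in the style of Hacon--Xu and Hashizume, or by running a $(K_X+B)$-MMP over $S$ with scaling and appealing to specialisation of good minimal models) to produce a relative good minimal model $(X',B')$ of $(X,B)$ over $S$ with $K_{X'}+B'$ semi-ample over $S$. Since $K_{X_\eta}+B_\eta$ is already nef, the induced birational map $X_\eta\bir X'_\eta$ is an isomorphism in codimension one and decomposes as a sequence of $K$-trivial flops, so semi-ampleness of $K_{X'_\eta}+B'_\eta$ descends to $K_{X_\eta}+B_\eta$. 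The main obstacle is this final reduction to relative good minimal models and the transfer of semi-ampleness across the flop; the rest of the argument is generic smoothness, flatness of intersection numbers, and the direct comparison of multiplicities used in (1).
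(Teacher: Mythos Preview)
Your argument for (1) has a genuine gap: you write $K_W+B_W=\phi^*(K_X+B)$, but the hypotheses do \emph{not} assume $K_X+B$ is $\Q$-Cartier. In fact, being lc includes $K_X+B$ being $\R$-Cartier, so this is part of what you must prove, not something you can use. The paper explicitly flags this point and works around it: on a log resolution $\phi\colon X'\to X$ one takes $B'$ to be the birational transform of $B$ plus the horizontal$/S$ exceptional divisors (so $K_{X'}+B'$ is automatically $\Q$-Cartier on the smooth $X'$), then adds a general sufficiently ample divisor so that $K_{X_s}+B_s$ becomes ample for $s\in\Pi$. This makes $(X_s,B_s)$ the lc model of $(X_s',B_s')$ for $s\in\Pi$; since $(X_s',B_s')$ then has a good minimal model, by \cite[Theorem 1.2]{HMX18} one builds the relative lc model $(Y,B_Y)$ of $(X',B')$ over $S$, and for $s\in\Pi$ one has two lc models $Y_s$ and $X_s$ of the same pair, hence $Y_s\to X_s$ is an isomorphism. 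Density of $\Pi$ forces $Y\to X$ to be an isomorphism after shrinking $S$, which in particular yields that $K_X+B$ is $\Q$-Cartier and $(X,B)$ is lc. Your coefficient-comparison argument would work only once you know $K_X+B$ is $\Q$-Cartier, but you never establish this.

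For (2), your outline is in the right direction but the final step is loose. Invoking ``existence of relative good minimal models'' directly for the lc pair $(X,B)$ is not available; the relevant input \cite[Theorem 1.2]{HMX18} is for relatively log smooth pairs, which is why the paper first passes to a $\Q$-factorial dlt model of $(X,B)$ and uses the log resolution $(X',B')$ from part (1) to feed into that theorem. Moreover, your transfer of semi-ampleness from $X'_\eta$ back to $X_\eta$ via ``$K$-trivial flops'' is not justified: you have not argued that the map $X_\eta\dashrightarrow X'_\eta$ is small, nor that semi-ampleness descends in the lc (as opposed to klt) setting. The paper avoids this by a more direct device: after obtaining a good minimal model $V$ of $(X,B)$ over $S$ by running an MMP, it observes that for $s\in\Pi$ the divisor $K_{X_s}+B_s$ is already nef, so no extremal ray contracted in the MMP can meet $X_s$; hence $X\dashrightarrow V$ is an isomorphism near each such fibre, and by density of $\Pi$ it is an isomorphism after shrinking $S$. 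This sidesteps any delicate descent argument on the generic fibre.
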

\begin{proof} 
(1)
Shrinking $S$ we can assume that $f$ is flat, $S$ is smooth, and that $B_s=B|_{X_s}$ is well-defined for every $s\in S$, where as usual $X_s$ is the fibre of $X\to S$ over $s$. We can assume $(X_s,B_s)$ is lc for every $s\in \Pi$. Note that we are not assuming $K_X+B$ to be $\Q$-Cartier.

Let $\phi\colon X'\to X$ be a log resolution and let $B'$ be the sum of the birational 
transform of $B$ and the prime exceptional divisors of $\phi$ which are horizontal over $S$. Let $s\in S$ 
be a closed point and let $X_s',X_s$ be the fibres of $X'\to S$ and $X\to S$ over $s$. By  \cite[Lemma 2.5]{B20},
shrinking $S$ we can assume that $X_s'$ is smooth and $X_s$ is normal, that $\Supp B'$ does not contain
any irreducible component of any fibre so $B_{s}':=B'|_{X_s'}$ is well-defined, and that $B_{s}'$ 
is the sum of the birational transform of $B_s$ and the reduced exceptional divisor of 
the induced morphism $\psi\colon X_s'\to X_s$. Moreover, we can assume that $(X',B')$ is lc because the horizontal coefficients of $B$ do not exceed $1$, and that $(X',B')$ is relatively log smooth over $S$. In particular, $(X_s',B_s')$ is lc.

Since $(X_s,B_s)$ is lc for each $s\in \Pi$, adding a general sufficiently ample divisor $A$ on $X$ to $B$ we can assume that $K_{X_s}+B_s$ is ample for every $s\in \Pi$ by the cone theorem. 
Then $(X_s,B_s)$ is the lc model of $(X_s',B_s')$ for such $s$. So we can run an MMP on 
$K_{X_s'}+B_{s}'$ ending with a dlt model of $(X_s,B_s)$, by \cite[Theorem 1.8]{B12}, which means that $(X_s',B_s')$ has a good minimal model. Therefore, by \cite[Theorem 1.2]{HMX18}, $(X',B')$ has a good minimal model over $S$, and by \cite[Theorem 1.9]{B12} it can be obtained by running an MMP on $K_{X'}+B'$ over $S$. So $(X',B')$ has an lc model $(Y,B_Y)$ over $S$. Since we added a general sufficiently ample divisor to $B$, the induced map $Y\bir X$ is a morphism by the cone theorem. Shrinking $S$ we can assume that $(Y_s,B_{Y,s})$ is the lc model of $(X_s',B_s')$. But $({X_s},B_s)$ is also the lc model of $(X_s',B_s')$ for $s\in \Pi$ as mentioned above, hence the induced map $Y_s\to X_s$ is an isomorphism for such $s$. Shrinking $S$ again we can assume that $Y\to X$ is an isomorphism because it cannot contract any curve inside $Y_s$ for $s\in \Pi$, which implies that $(X,B)$ is lc. 

(2)
By the above arguments, shrinking $S$ we can assume that $(X,B)$ is lc. 
Taking a $\Q$-factorial dlt model we can assume that $(X,B)$ is $\Q$-factorial dlt. 
Let $(X',B')$ be as above.
Shrinking $S$, for $s\in \Pi$,  $(X_s,B_s)$ is lc and $K_{X_s}+B_s$ is semi-ample by assumption. Then $(X_s,B_s)$ is a good weak lc model of $(X_s',B_s')$. So applying \cite{B12} similar to above, $(X_s',B_s')$ has a good minimal model. Therefore, by \cite[Theorem 1.2]{HMX18}, $(X',B')$ has a good minimal model over $S$. But then we can run an MMP on $K_X+B$ ending with a good minimal model, say $(V,B_V)$, by \cite[Remark 2.8 and Theorem 1.9]{B12} where we use the fact that $(X,B)$ is $\Q$-factorial dlt.
Since $K_{X_s}+B_s$ is nef for $s\in \Pi$, no extremal ray in the MMP intersects $X_s$.
Therefore, shrinking $S$ we can assume that $X\bir V$ is an isomorphism near $X_s$  for $s\in \Pi$. Thus shrinking $S$ again we can assume that $X=V$, hence $K_X+B$ is semi-ample over $S$.

\end{proof}

\begin{lem}\label{l-l.stable-family-s-ample}
Assume that $(X,B)\to S$ is a locally stable family where $S$ is a reduced $k$-scheme of finite type. 
Let $S'$ be the set of (not necessarily closed) points $s\in S$ such that $K_{X_s}+B_s$ is semi-ample.
Then $S'$ is a locally closed subset of $S$ (with reduced structure).
\end{lem}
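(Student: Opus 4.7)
The plan is to mimic the Noetherian-induction argument of Lemma \ref{l-l.stable-family-lc}. After replacing $S$ by the closure $\overline{S'}$ and base-changing the family, it suffices to prove that $S'$ contains a non-empty open subset of $S$ whenever $S'$ is dense in $S$. Picking an irreducible component $T$ of $S$ and replacing $(X,B)\to S$ by its base change to $T$, exactly the same reduction as in the proof of Lemma \ref{l-l.stable-family-lc} (writing $S\setminus S'=R\setminus(S'\cap R)$ for $R$ the complement of an open subset of $T$ contained in $S'$) reduces us to the case in which $S$ is reduced and irreducible, and at this stage we are free to shrink $S$ to any non-empty open subset.

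The next step is to bring the result into the range of Lemma \ref{l-family-dense-lc-s-ample}(2). Let $\nu\colon X^\nu\to X$ be the relative normalization and let $B^\nu$ be the sum of the divisorial pullback of $B$ and the conductor, so that $K_{X^\nu}+B^\nu=\nu^*(K_X+B)$. After shrinking $S$ we may assume $(X^\nu,B^\nu)\to S$ is a locally stable family whose fibre $X^\nu_s$ is the normalization of the slc fibre $X_s$ and on which $B^\nu_s$ is the corresponding birational-transform-plus-conductor sum; each irreducible component of each such fibre is then an lc pair. For every $s\in S'$, semi-ampleness of $K_{X_s}+B_s$ on $X_s$ pulls back along the finite morphism $X^\nu_s\to X_s$ to semi-ampleness of $K_{X^\nu_s}+B^\nu_s$ on $X^\nu_s$.

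Pick a dense set of closed points $\Pi\subseteq S$ contained in $S'$. After further shrinking $S$ and, if necessary, passing to an étale base change via Stein factorization, each irreducible component $X^\nu_j$ of $X^\nu$ admits a contraction $X^\nu_j\to S$ in the sense of this paper, and Lemma \ref{l-family-dense-lc-s-ample}(2) applied to $(X^\nu_j,B^\nu|_{X^\nu_j})\to S$ produces a non-empty open $U_j\subseteq S$ over which $K_{X^\nu_j}+B^\nu|_{X^\nu_j}$ is relatively semi-ample. Setting $U=\bigcap_j U_j$, the divisor $K_{X^\nu}+B^\nu$ is relatively semi-ample over $U$, so for every $s\in U$ the restriction $K_{X^\nu_s}+B^\nu_s$ is semi-ample on $X^\nu_s$. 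By the standard fact that semi-ampleness descends from the normalization of an slc pair to the pair itself (cf.\ \cite[Chapter 5]{K13}), $K_{X_s}+B_s$ is semi-ample on $X_s$ for each $s\in U$, and hence $U\subseteq S'$, completing the induction.

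The hard part will be the geometric input underlying the passage to normalization: one must know that, after shrinking $S$, the relative normalization of a locally stable family is itself a locally stable family whose fibres are the normalizations of the original slc fibres, and that semi-ampleness of the log canonical divisor descends along the normalization of an slc pair. Both are standard properties of slc families over reduced bases, and once they are in hand the proof is driven purely by Noetherian induction together with Lemmas \ref{l-l.stable-family-lc} and \ref{l-family-dense-lc-s-ample}(2).
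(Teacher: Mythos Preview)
Your overall strategy matches the paper's, but there is a genuine gap at the sentence ``Pick a dense set of closed points $\Pi\subseteq S$ contained in $S'$.'' After reducing to $S$ irreducible with $S'$ dense, density of $S'$ does \emph{not} by itself guarantee that $S'$ contains any closed points: for instance $S'=\{\eta_S\}$ is dense in $S$. The paper handles this by a case analysis on $s\in S'$: if $\overline{\{s\}}\subsetneq S$, Noetherian induction applied to the restricted family over $\overline{\{s\}}$ produces a non-empty open of $\overline{\{s\}}$ inside $S'$; if $s=\eta_S$, then $K_{X_\eta}+B_\eta$ is semi-ample on the generic fibre, and Lemma~\ref{l-generic-semi-ampleness} shows that $K_{X/S}+B$ is semi-ample over a non-empty open $U\subset S$, whence $U\subset S'$. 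Either way one obtains closed points, and the union over all $s\in S'$ gives the dense $\Pi$. You need to insert this argument (and in particular invoke Lemma~\ref{l-generic-semi-ampleness}); without it the existence of $\Pi$ is unsupported and Lemma~\ref{l-family-dense-lc-s-ample}(2) cannot be applied.

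Two smaller points. First, your ``passing to an \'etale base change via Stein factorization'' is not what is happening: the Stein factorisation $X^\nu_j\to S_j\to S$ has $S_j\to S$ finite but not \'etale in general; one simply applies Lemma~\ref{l-family-dense-lc-s-ample}(2) to the contraction $X^\nu_j\to S_j$ and then pushes the conclusion down along the finite map. Second, the descent of semi-ampleness from the normalisation of an slc pair to the pair itself is the content of \cite{HX16}, not a direct consequence of \cite[Chapter~5]{K13}; the paper cites \cite{HX16} and applies it in the relative form (concluding $K_{X}+B$ is semi-ample over $S$), which is slightly cleaner than your fibre-by-fibre descent but ultimately equivalent.
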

\begin{proof}
Recall that we defined locally stable families only in the projective setting, so $X\to S$ is assumed projective. Arguing as in the proof of Lemma \ref{l-l.stable-family-lc}, it is enough to show that $S'$ contains a non-empty open subset assuming $S$ is irreducible. We can shrink $S$ to any non-empty open subset if necessary.  

We claim that there is a subset $\Pi\subset S'$ 
of closed points which is dense in $S$.  
Let $s\in S'$ and let $\sigma$ be its closure in $S$. It is enough to show that there is a non-empty open subset $R$
of $\sigma$ such that $R\subset S'$. If $\sigma\neq S$, then the claim follows from Noetherian induction.
Assume then that $\sigma=S$, that is, $s$ is the generic point of $S$. By assumption, $K_{X_s}+B_s$ is semi-ample. Therefore, $K_X+B$ is semi-ample over a non-empty open subset of $S$, by Lemma \ref{l-generic-semi-ampleness}, which implies existence of $R$, and this in turn ensures existence of $\Pi$.

Shrinking $S$ we will assume $S$ is smooth. Then $(X,B)$ is slc.
Let $X^\nu$ be the normalisation of $X$ and let $K_{X^\nu}+B^\nu$ be the pullback of $K_X+B$. 
If $s\in \Pi$ is general and if $(X_s,B_s)$ and $(X_s^\nu,B_{s}^\nu)$ are the log fibres of 
$(X,B)$ and $(X^\nu,B^\nu)$ over $s$, respectively, then $K_{X_s^\nu}+B_s^\nu$ is semi-ample because $K_{X_s}+B_s$ is semi-ample by assumption.
Taking the Stein factorisation of $X^\nu\to S$ and applying Lemma \ref{l-family-dense-lc-s-ample} to the 
irreducible components of $X^\nu$ and shrinking $S$ we can assume that $K_{X^\nu}+B^\nu$ is semi-ample over $S$. 
 Now by \cite{HX16}, $K_{X}+B$ is semi-ample over $S$ which implies the lemma.
 
\end{proof}

\subsection{Vertical volume in families}

\begin{lem}\label{l-l.stable-family-vertical-volume}
Assume that $(X,B),A\to S$ is a family of stable minimal models (as in \ref{defn-stable-minimal-family}) where $S$ is a reduced $k$-scheme of finite type. 
For each $s\in S$, let $X_s\to Z_s$ be the contraction defined by the semi-ample divisor $K_{X_s}+B_s$. Given $\Gamma\subset \Q^{>0}$, let $S'$ be the set of (not necessarily closed) points $s\in S$ such that $\vol(A_s|_{F})\in \Gamma$ for the general fibres $F$ of $X_s\to Z_s$ over each irreducible component of $Z_s$. Then $S'$ is a locally closed subset of $S$ (with reduced structure).
\end{lem}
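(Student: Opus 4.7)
The plan is to proceed by Noetherian induction exactly as in the proofs of Lemmas \ref{l-l.stable-family-lc} and \ref{l-l.stable-family-s-ample}: after replacing $S$ by $\overline{S'}$ and then by one of its irreducible components, it suffices to show that if $S$ is integral and $S'$ is dense in $S$, then $S'$ contains a non-empty open subset of $S$.

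Shrinking $S$, I may assume $S$ is integral and smooth. Since $K_{X_s}+B_s$ is semi-ample for every $s$ by the definition of a family of stable minimal models, Lemma \ref{l-l.stable-family-s-ample} (and its proof) lets me further shrink $S$ so that $K_X+B$ is semi-ample over $S$. This yields a relative contraction $f\colon X\to Z$ over $S$ with $K_X+B\sim_\Q f^*H$ for some $\Q$-divisor $H$ on $Z$ ample over $S$; after shrinking $S$ once more so that the formation of $Z$ commutes with base change, the induced map on each fibre $X_s\to Z_s$ coincides with the contraction in the statement, and $A_s$ is ample over $Z_s$.

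Next I shrink $S$ to arrange a well-behaved decomposition of $Z\to S$ into relative irreducible components. Let $Z^1_\eta,\dots,Z^m_\eta$ be the irreducible components of the generic fibre $Z_\eta$ of $Z\to S$, and let $Z^i\subseteq Z$ be the closure of $Z^i_\eta$ with reduced structure. By generic flatness and upper semicontinuity of fibre dimension, after shrinking $S$ I may assume each $Z^i\to S$ is flat with geometrically irreducible fibres and that for every $s\in S$ the irreducible components of $Z_s$ are exactly $Z^1_s,\dots,Z^m_s$. Let $X^i:=f^{-1}(Z^i)$; then $A|_{X^i}$ is ample over $Z^i$. Shrinking $S$ further so that each $X^i\to Z^i$ is flat on a dense open of $Z^i$ and applying Lemma \ref{l-vol-loc-constant}, the function $z\mapsto \vol(A|_{F_z})$ is locally constant on this open, and hence equal to some fixed $v_i\in\Q^{>0}$, where $F_z$ is the fibre of $X^i\to Z^i$ over a closed point $z$.

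For every $s$ in the resulting dense open $U\subseteq S$ and every general fibre $F$ of $X_s\to Z_s$ over the irreducible component $Z^i_s$, it then follows that $\vol(A_s|_F)=v_i$, independently of $s$. Hence $U\subseteq S'$ if all $v_i\in\Gamma$, while $U\cap S'=\emptyset$ otherwise; density of $S'$ excludes the latter, so $U\subseteq S'$ and the induction closes. The main technical point, which I expect to be the chief obstacle, is extracting the relative decomposition $Z=Z^1\cup\cdots\cup Z^m$ so that each $Z^i_s$ is genuinely an irreducible component of $Z_s$ for $s$ in a dense open of $S$; this is standard but requires generic flatness of $Z\to S$ with reduced fibres together with the fact that the closures of the components $Z^i_\eta$ remain irreducible on fibres after suitable shrinking.
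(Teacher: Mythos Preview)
Your argument is correct and follows essentially the same outline as the paper's proof: Noetherian induction reduces to $S$ integral with $S'$ dense, then one shrinks $S$ so that $K_{X/S}+B$ is semi-ample defining $X\to Z$ over $S$, and uses generic flatness plus Lemma \ref{l-vol-loc-constant} to see that the fibrewise volumes are locally constant and hence fixed once one point of $S'$ is known.

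The one organisational difference is in how the bookkeeping on $Z$ is done. You decompose $Z$ into irreducible components $Z^i$ as closures of the components of the generic fibre $Z_\eta$, and then arrange that the $Z^i_s$ are exactly the irreducible components of $Z_s$. The paper instead takes the single open flat locus $U\subset Z$ of $X\to Z$, shows (after shrinking $S$) that $U_s$ is dense in $Z_s$ and that every irreducible component of $U$ meets every fibre $Z_s$, and concludes directly from local constancy of $\vol(A|_F)$ on the connected components of $U$. The paper's route avoids the extra step of matching irreducible components across fibres, which is the point you yourself flag as the chief technicality; conversely, your decomposition makes the correspondence with the ``each irreducible component of $Z_s$'' clause in the statement more transparent. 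Either way the argument goes through.
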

\begin{proof}
As in the proofs above we can assume that $S'$ is dense in $S$, that $S$ is irreducible, and that it is enough to show that $S'$ contains a non-empty open subset of $S$. 
Moreover, we can assume that $S$ is smooth hence that $K_{X/S}+B$ is semi-ample over $S$ defining a contraction $g\colon X\to Z$. In particular, $A$ is ample over $Z$ as $K_{X/S}+B+tA$ is ample over $S$ for every small $t>0$, by definition of families of stable minimal models. 

Note that $X$ is reduced because $S$ is reduced and all the fibres of $X\to S$ are reduced.
So $Z$ is also reduced as $g$ is a contraction.
There is a dense open subset $U\subset Z$ so that $g$ is flat over $U$, by generic flatness. Then $\vol(A|_F)$ is locally constant on the connected components of $U$ where $F$ is a fibre of $X\to Z$ over some point of $U$, by Lemma \ref{l-vol-loc-constant}. 

Since $U$ is dense in $Z$, the complement $P=Z\setminus U$ does not contain any irreducible component of $Z$. Thus shrinking $S$, we can assume that for each $s\in S$, 
 every irreducible component of $U$ intersects $Z_s$. Moreover, we can assume that $U_s$ is dense in $Z_s$: indeed, $Z_s$ is the union of the $T_s$ where $T$ ranges through the irreducible components of $Z$;  shrinking $S$, $P_s\cap T$ is a proper closed subset of $T_s$ and the components of $T_s$ and $P_s\cap T$ are of different dimension; this just means that $U\cap T_s$ is dense in $T_s$; so $U_s$ is dense in $Z_s$. 
 
 If $s\in S'$, then by assumption $\vol(A_s|_{F})\in \Gamma$ for any general fibre $F$ of $X_s\to Z_s$ over any component of $Z_s$. Thus $\vol(A_s|_{F})\in \Gamma$ for any fibre $F$ of $X_s\to Z_s$ over any point of $U_s$ because $X_s\to Z_s$ is flat over $U_s$ and $U_s$ is dense in $Z_s$. Since each irreducible component of $U$ intersects $Z_s$ and since $U_s$ is dense in $Z_s$, we deduce that $\vol(A|_F)\in \Gamma$ for every fibre $F$ over $U$. Therefore, $\vol(A_s|_{F})\in \Gamma$ for the general fibres $F$ of $X_s\to Z_s$ over any component of $Z_s$ for each $s\in S$ because $U_s$ is dense in $Z_s$.

\end{proof}

\subsection{Coincidence with invertible sheaf}

\begin{lem}\label{l-l.stable-family-inv.sheaf}
Assume that $(X,B)\to S$ is a locally stable family where $S$ is a reduced $k$-scheme of finite type. Assume also that $\mathcal{L}$ is an invertible sheaf on $X$ and $r\in \N$. 
Let $S'$ be the set of (not necessarily closed) points $s\in S$ such that $r(K_{X_s}+B_s)$ is Cartier and 
$\mathcal{O}_{X_s}(r(K_{X_s}+B_s))\simeq \mathcal{L}_s$. Then $S'$ is a locally closed subset of $S$ (with reduced structure).
\end{lem}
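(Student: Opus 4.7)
The plan is to follow the Noetherian-induction strategy used in the previous lemmas of this section. As in the proof of \ref{l-l.stable-family-lc}, it suffices to prove that if $S$ is irreducible and $S'$ is dense in $S$, then $S'$ contains a non-empty open subset of $S$. If $rB$ is not an integer divisor on $X$, then $r(K_{X_s}+B_s)$ is a non-integer $\Q$-divisor on every fibre and $S'$ is empty; hence we may assume that $rB$ is integral.

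Consider the reflexive coherent sheaf $\mathcal{F}:=\omega_{X/S}^{[r]}(rB)$ on $X$, defined as $j_*\bigl(\omega_{V/S}^{\otimes r}\otimes \mathcal{O}_V(rB|_V)\bigr)$, where $j\colon V\hookrightarrow X$ is the inclusion of the relative smooth locus (whose complement in each fibre has codimension $\ge 2$). By the base-change compatibility of reflexive powers for locally stable families \cite[Chapter 4]{K21}, for every $s\in S$ one has $\mathcal{F}|_{X_s}\simeq \omega_{X_s}^{[r]}(rB_s)$, and this sheaf is invertible on $X_s$ if and only if $r(K_{X_s}+B_s)$ is Cartier. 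The locus $S_1:=\{s\in S : \mathcal{F}|_{X_s}\text{ is invertible}\}$ is open in $S$ by the standard fibrewise-free-locus statement for coherent sheaves. Since $S'\subseteq S_1$ and $S'$ is dense in $S$, the open set $S_1$ is non-empty, so after shrinking $S$ we may assume $S=S_1$ and $\mathcal{F}$ is an invertible sheaf on $X$.

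Set $\mathcal{G}:=\mathcal{F}\otimes\mathcal{L}^{-1}$, an invertible sheaf on $X$; then $s\in S'$ if and only if $\mathcal{G}|_{X_s}\simeq \mathcal{O}_{X_s}$. The fibres $X_s$ are projective, geometrically connected, and reduced, so $H^0(X_s,\mathcal{O}_{X_s})=k(s)$. A non-zero section of $\mathcal{G}|_{X_s}$ multiplied by a non-zero section of $\mathcal{G}^{-1}|_{X_s}$ lands in $H^0(X_s,\mathcal{O}_{X_s})=k(s)$ as a non-zero scalar, hence is nowhere vanishing; this forces both factors to be nowhere vanishing, so $\mathcal{G}|_{X_s}\simeq \mathcal{O}_{X_s}$ is equivalent to the simultaneous conditions $h^0(\mathcal{G}|_{X_s})\ge 1$ and $h^0(\mathcal{G}^{-1}|_{X_s})\ge 1$. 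Both are closed conditions by upper semi-continuity of $h^0$, so $S'$ is closed in $S_1=S$. Combined with the density of $S'$ in $S$, this forces $S'=S$, which is the required non-empty open subset.

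The principal technical input is the base-change identity $\mathcal{F}|_{X_s}\simeq \omega_{X_s}^{[r]}(rB_s)$ for reflexive powers in a locally stable family, for which we rely on the foundational material of \cite[Chapter 4]{K21}; the remainder of the argument is routine semi-continuity together with the triviality criterion for an invertible sheaf on a geometrically connected reduced projective fibre.
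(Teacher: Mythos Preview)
Your overall strategy matches the paper's: reduce by Noetherian induction to $S$ irreducible with $S'$ dense, first arrange that $r(K_{X/S}+B)$ is Cartier after shrinking $S$, and then compare with $\mathcal{L}$ via the triviality criterion for a line bundle on a geometrically connected reduced projective fibre. The second half of your argument (the $\mathcal{G}$ part) is correct and is essentially the content of \cite[Lemma~1.19]{V95}, which is what the paper cites.

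The gap is in the first half. The assertions that $\mathcal{F}|_{X_s}\simeq \omega_{X_s}^{[r]}(rB_s)$ for every $s$, that $S_1$ is open, and that $\mathcal{F}$ becomes invertible over $S_1$, are not justified by a blanket reference to \cite[Chapter~4]{K21}. Base change of the reflexive hull $\omega_{X/S}^{[r]}(rB)$ to fibres is precisely the Koll\'ar condition, which is \emph{not} automatic for an arbitrary $r$ in a locally stable family (only $\Q$-Cartierness of $K_{X/S}+B$ is assumed); and openness of the fibrewise-free locus of a coherent sheaf requires $S$-flatness of the sheaf, which you have not established for $\mathcal{F}$. The correct statement is that the Cartier condition on $r(K_{X_s}+B_s)$ is governed by a \emph{locally closed partial decomposition} $S''\to S$ \cite[Theorem~4.28]{K21}: for any $T\to S$, the divisorial pullback of $r(K_{X/S}+B)$ to $X_T$ is Cartier iff $T\to S$ factors through $S''$. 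The paper uses this directly: since every $s\in S'$ factors through $S''$ and $S'$ is dense in the irreducible $S$, some component of $S''$ contains a non-empty open subset of $S$; restricting to it makes $r(K_{X/S}+B)$ Cartier on the total space, after which your second-half argument (equivalently \cite[Lemma~1.19]{V95}) finishes. Your proof becomes correct once you replace the unjustified openness and base-change claims by this appeal to \cite[Theorem~4.28]{K21}.
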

\begin{proof}
Replacing $S$ we can assume $S'$ is dense in $S$. 
By Neotherian induction it is enough to replace $S$ with any of its irreduible components and then show that $S'$ contains some non-empty open subset of $S$.
By \cite[Theorem 4.28]{K21}, there is a locally closed partial decomposition $S''\to S$ satisfying the following: for a morphism of schemes $T\to S$, the divisorial pullback of $r(K_{X/S}+B)$ to $T\times_SX$ is Cartier iff $T\to S$ factors through $S''$. In particular, for each $s\in S'$, $\Spec k(s)\to S$ factors through $S''$. Since $S'$ is dense in $S$ we deduce that some component of $S''$ contains a non-empty open subset of $S$, hence replacing $S$ with this open set we can assume that $r(K_{X/S}+B)$ is Cartier.

Now the lemma follows from \cite[Lemma 1.19]{V95}.
Here we are using the following fact: since the fibres $X_s$ of $X\to S$ are reduced and connected, we have $h^0(\mathcal{O}_{X_s})=1$. 

\end{proof}

We will also use the following lemma. 

\begin{lem}\label{l-l.stable-family-vanishing}
Assume that $X\to S$ is a flat projective morphism where $S$ is a reduced $k$-scheme of finite type, and   
$\mathcal{L}$ is an invertible sheaf on $X$. 
Let $S'$ be the set of (not necessarily closed) points $s\in S$ such that $h^q(\mathcal{L}_s)=0$ for every $q>0$. Then $S'$ is an open subset of $S$.
\end{lem}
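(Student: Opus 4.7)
The plan is to reduce to a classical application of the semicontinuity theorem for cohomology in flat families. Since $\mathcal{L}$ is invertible it is locally free, hence flat over $X$, and since $X\to S$ is flat this makes $\mathcal{L}$ flat over $S$. The morphism $X\to S$ is projective, so $X$ embeds into some $\PP^n_S$ and every fibre $X_s$ has dimension at most $n$. By Grothendieck's vanishing theorem this forces $h^q(\mathcal{L}_s)=0$ for every $q>n$ and every $s\in S$, so in the definition of $S'$ only finitely many values of $q$ are relevant, namely $q=1,\dots,n$.

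Next, for each such $q$ I would invoke the semicontinuity theorem (Hartshorne III.12.8, or EGA III.7.7.5): for a projective morphism $X\to S$ with Noetherian (here reduced, finite-type) base and a coherent sheaf on $X$ flat over $S$, the function $s\mapsto h^q(\mathcal{L}_s)$ is upper semicontinuous on $S$. In particular the subset
$$
S'_q:=\{s\in S:h^q(\mathcal{L}_s)<1\}=\{s\in S:h^q(\mathcal{L}_s)=0\}
$$
is open for each $q$. Then $S'=\bigcap_{q=1}^{n}S'_q$ is a finite intersection of open sets, hence open.

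The only point that requires any care is the uniform vanishing bound on $q$, which is needed because \emph{a priori} the condition "$h^q(\mathcal{L}_s)=0$ for every $q>0$" is an intersection of infinitely many open conditions, and an arbitrary intersection of opens need not be open. Once the projective embedding is used to bound fibre dimension, this is resolved and the rest is immediate from semicontinuity. No hard step is involved; this is essentially a packaging of the standard results, in contrast to the preceding lemmas in this section which required genuine MMP input.
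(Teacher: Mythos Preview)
Your proof is correct and is essentially the same as the paper's: both bound the relevant $q$'s (the paper uses the maximum fibre dimension $e$, you use the ambient $\PP^n_S$, which amounts to the same thing) and then invoke upper semi-continuity to express $S'$ as a finite intersection of open sets.
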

\begin{proof}
Let $e$ be the maximum of dimension of the fibres of $X\to S$. 
The vanishing $h^q(\mathcal{L}_s)=0$ holds for every $s$ whenever $q>e$. On the other hand, if $s\in S'$, then for each $q>0$, $h^q(\mathcal{L}_t)=0$ holds for $t$ in some open neighbourhood $U_q$ of $s$, by upper semi-continuity of cohomology. Thus the vanishing holds for every $q>0$ in the open neighbourhood $\bigcap_{1\le q\le e} U_q$ of $s$. This shows that $S'$ is an open subset.
   
\end{proof}


\section{\bf Embedded stable minimal models}\label{s-strongly-embedded-s.m.model} 

In this section, we construct fine moduli spaces of embedded stable minimal models which are used to construct the coarse moduli spaces discussed in the next section. We follow a standard strategy in the moduli theory of varieties and borrow some notation and a result from \cite{K21}.

\subsection{Base change of families}\label{ss-base-change-fam.s.m.model}
Assume 
$$
f\colon (X,B),A\to S
$$
is a family of stable minimal models (as in \ref{defn-stable-minimal-family}) where $S$ is a reduced scheme over $k$. Let $S'\to S$ be a morphism of reduced schemes. 
Then base change and divisorial pullback as in Section \ref{s-locally-stable-families} gives the induced locally stable family  
$$
f'\colon (X',B'),A'\to S'.
$$
Since 
$$
(X,B+tA)\to S
$$ 
is locally stable for small $t\ge 0$, 
$$
(X',B'+tA')\to S'
$$ 
is also locally stable for small $t\ge 0$. 
Pick $s'\in S'$ and let $s\in S$ be its image. Since $(X_s,B_s),A_s$ is a stable minimal model over $k(s)$, $(X_{s'}',B_{s'}'),A_{s'}'$ is a stable minimal model over $k(s')$ where we use the fact that $X_{s'}'\simeq X_s\times_{\Spec k(s)}k(s')$. This shows that $f'$ is a  family of stable minimal models. 

Now assume $d\in \N$, $c\in \Q^{\ge 0}$, $\Gamma\subset  \Q^{>0}$, $\sigma\in \Q[t]$. 
As before we use the notation $\Phi_c=c\Z^{\ge 0}$.
Assume $f$ above is a family of 
$(d,\Phi_c,\Gamma,\sigma)$-stable minimal models. 
Then $B=cD$ and $A=cN$ where $D,N$ are relative Mumford divisors, hence $B'=cD'$ and $A'=cN'$ where $D',N'$ are relative Mumford divisors. 

Pick $s'\in S'$ and let $s\in S$ be its image.   
Since $(X_s,B_s),A_s$ is a $(d,\Phi_c,\Gamma,\sigma)$-stable minimal model over $k(s)$, 
$$
\sigma(t)=\vol(K_{X_s}+B_s+tA_s)
$$
for small $t>0$, hence 
$$
\sigma(t)=\vol(K_{X_{s'}'}+B_{s'}'+tA_{s'}')
$$
for small $t>0$, by Lemma \ref{l-base-change-field}. 

If $X_s\to Z_s$ is the contraction defined by $K_{X_s}+B_s$, then the base change $X_{s'}'\to Z_{s'}'$ is the contraction defined by $K_{X_{s'}'}+B_{s'}'$.
Also each irreducible component of $Z_{s'}'$ maps onto an irreducible component of $Z_s$.  
If $F'$ is a general fibre of $X_{s'}'\to Z_{s'}'$ over some point $z'$ of some irreducible component of $Z_{s'}'$, then $F'\simeq F\times_{\Spec k(z)}k(z')$ where $z\in Z_s$ is the image of $z'$ and $F$ is the fibre of $X_s\to Z_s$ over $z$ (here $F$ is a general fibre of $X_s\to Z_s$ over some irreducible component of $Z_s$), hence 
$$
\vol(A_{s'}'|_{F'})=\vol(A_s|_{F})\in \Gamma
$$ 
again by Lemma \ref{l-base-change-field}. Therefore, 
$(X_{s'}',B_{s'}'),A_{s'}'$ is a $(d,\Phi_c,\Gamma,\sigma)$-stable minimal model over $k(s')$. 
This in turn shows that $f'$ is a family of 
$(d,\Phi_c,\Gamma,\sigma)$-stable minimal models.

\subsection{Strongly embedded stable minimal models}

\begin{defn}
Let $d,r,n\in \N$, $c\in \Q^{\ge 0}$, $\Gamma\subset  \Q^{>0}$, $a\in \Q^{>0}$, and $\sigma\in \Q[t]$. 
To simplify notation let 
$$
\Xi=(d,\Phi_c,\Gamma,\sigma,a,r,\PP^n).
$$ 
(1) 
Let $S$ be a reduced scheme over $k$. 
A \emph{strongly embedded family of $\Xi$-stable minimal models over $S$} is a family of $(d,\Phi_c,\Gamma,\sigma)$-stable minimal models $f\colon (X,B),A\to S$ 
 together 
with a closed embedding $g\colon X\to \PP^n_S$ such that 
\begin{itemize}
\item $(X,B+aA)\to S$ is locally stable,

\item $f=\pi g$  where $\pi$ denotes the projection $\PP^n_S\to S$, 

\item letting $\mathcal{L}:=g^*\mathcal{O}_{\PP^n_S}(1)$, we have   
$$
R^qf_*\mathcal{L}\simeq R^q\pi_*\mathcal{O}_{\PP^n_S}(1)
$$ 
for all $q$, and

\item for every $s\in S$, we have 
$$
\mathcal{L}_s\simeq \mathcal{O}_{X_s}(r(K_{X_s}+B_s+aA_s)).
$$
\end{itemize}
We denote the family by 
$$
f\colon (X\subset \PP^n_S,B),A\to S.
$$

(2)
When $(X_s,B_s)$ is lc for every $s\in S$, we say the family is a \emph{family of strongly embedded lc $\Xi$-stable minimal models over $S$}. 

(3)
Define the functor $\mathfrak{S^{\rm e}_{\rm slc}}\Xi$ on the category of reduced $k$-schemes by setting 
$$
\mathfrak{S^{\rm e}_{\rm slc}}\Xi(S)=\{\mbox{families of strongly embedded $\Xi$-stable minimal models over $S$}\}.
$$
Similarly define the functor $\mathfrak{S^{\rm e}_{\rm lc}}\Xi$  by setting 
$$
\mathfrak{S^{\rm e}_{\rm lc}}\Xi(S)=\{\mbox{families of strongly embedded lc $\Xi$-stable minimal models over $S$}\}.
$$
Given a morphism of reduced schemes $S'\to S$, the map 
$$
\mathfrak{S^{\rm e}_{\rm slc}}\Xi(S)\to \mathfrak{S^{\rm e}_{\rm slc}}\Xi(S')
$$ 
is defined as in \ref{ss-base-change-fam.s.m.model} (similarly for $\mathfrak{S^{\rm e}_{\rm lc}}\Xi$). 
To be more precise, assume 
$$
f\colon (X\subset \PP^n_S,B),A\to S
$$
is in $\mathfrak{S^{\rm e}_{\rm slc}}\Xi(S)$.
Then base change and divisorial pullback gives the family   
$$
f'\colon (X',B'),A'\to S'.
$$ 
of $(d,\Phi_c,\Gamma,\sigma)$-stable minimal models 
which comes equipped with an induced embedding $g'\colon X'\to \PP^n_{S'}$. Since 
$$
(X,B+aA)\to S
$$ 
is locally stable, 
$$
(X',B'+aA')\to S'
$$ 
is locally stable. Letting $\mathcal{L}':=g'^*\mathcal{O}_{\PP^n_{S'}}(1)$, we have 
$R^qf'_*\mathcal{L'}\simeq R^q\pi'_*\mathcal{O}_{\PP^n_{S'}}(1)$ for all $q$ by base change of cohomology. Moreover, 
for every $s'\in S'$, we have 
$$
\mathcal{L'}_{s'}\simeq \mathcal{O}_{X_{s'}'}(r(K_{X_{s'}'}+B_{s'}'+aA_{s'}')).
$$  
Thus we get
$$
f'\colon (X'\subseteq \PP^n_{S'},B'),A'\to S'
$$
in $\mathfrak{S^{\rm e}_{\rm slc}}\Xi(S')$.
\end{defn}
\smallskip

\subsection{Moduli of strongly embedded stable minimal models}

\begin{prop}\label{p-moduli-se-pol-cy}
With the above notation, the functors $\mathfrak{S^{\rm e}_{\rm slc}}\Xi$ and $\mathfrak{S^{\rm e}_{\rm lc}}\Xi$ are represented by reduced separated $k$-schemes ${M^{\rm e}_{\rm slc}}\Xi$ and ${M^{\rm e}_{\rm lc}}\Xi$ of finite type over $k$.
\end{prop}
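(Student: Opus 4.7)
The strategy is to realise the desired moduli as a locally closed subscheme of the ambient moduli space of strongly embedded marked locally stable pairs constructed in Theorem \ref{t-moduli-embedded-marked-stable-pairs}, by successively cutting out the various additional conditions using the representability/constructibility results of Section \ref{s-locally-stable-families} and the preceding section on good minimal models in families, and finally invoking the boundedness Theorem \ref{t-bnd-s-mmodels-slc} to ensure finite type.

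First, I would take $\alpha=(c,ca)$ and consider the fine moduli space
$$
M_0:={M\!LS^{e}}(d,\alpha,\PP^n)
$$
of strongly embedded $(d,\alpha,\PP^n)$-marked locally stable pairs
$(X\subset \PP^n_S,cD+caN)\to S$, together with its universal family. Setting $B=cD$ and $A=cN$, this already realises the ``$(X,B+aA)\to S$ locally stable'' part of the definition, and both $D$ and $N$ are relative Mumford divisors by construction. Next I would impose that $K_{X/S}+B$ itself is $\Q$-Cartier; by \cite[Theorem 4.28]{K21} (applied as in the proof of Lemma \ref{l-l.stable-family-inv.sheaf}) this is a locally closed condition on $M_0$. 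Over the resulting subscheme, for each sufficiently small rational $t\ge 0$, the pair $(X,B+tA)=(X,cD+tcN)$ inherits $\Q$-Cartier $K_{X/S}+B+tA$, and the slc property of fibres follows from that of $(X_s,cD_s+caN_s)$ since lowering the coefficients preserves slc; this gives a family of stable minimal model candidates in the sense of Definition \ref{defn-stable-minimal-family}.

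Next I would impose the remaining defining conditions one at a time, each of which carves out a locally closed subset by a lemma from the previous section: the semi-ampleness of $K_{X_s}+B_s$ via Lemma \ref{l-l.stable-family-s-ample}; the vertical volume condition $\vol(A_s|_F)\in\Gamma$ via Lemma \ref{l-l.stable-family-vertical-volume}; the polarisation identity $\mathcal{L}_s\simeq \mathcal{O}_{X_s}(r(K_{X_s}+B_s+aA_s))$ via Lemma \ref{l-l.stable-family-inv.sheaf}; and the cohomology vanishing $R^qf_*\mathcal{L}\simeq R^q\pi_*\mathcal{O}_{\PP^n_S}(1)$ for $q>0$ via Lemma \ref{l-l.stable-family-vanishing} (noting that for $q=0$ the equality is forced by the strong embedding). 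The volume polynomial condition $\vol(K_{X_s}+B_s+tA_s)=\sigma(t)$ for small $t>0$ expands into finitely many intersection number equalities $(K_{X_s}+B_s)^{d-i}\cdot A_s^i=\tfrac{1}{\binom{d}{i}}\sigma^{(i)}(0)/i!$, and each such intersection number is locally constant on $M_0$ by Lemma \ref{l-vol-loc-constant} applied to suitable $\Q$-Cartier divisors, so this cuts out a union of connected components. Taking the intersection of all these locally closed loci produces a reduced separated $k$-scheme $M^{\rm e}_{\rm slc}\Xi$ which, by construction, represents $\mathfrak{S^{\rm e}_{\rm slc}}\Xi$ (the universality of the resulting family is inherited from that of the universal family on $M_0$ combined with the fact that each imposed condition is pointwise on fibres).

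For finite type, I would argue that on any family parametrised by $M^{\rm e}_{\rm slc}\Xi$, the degree of $X_s\subset \PP^n_{k(s)}$ with respect to $\mathcal{O}_{\PP^n}(1)$ is $(r(K_{X_s}+B_s+aA_s))^d=r^d\sigma(a)$, and the degrees of $D_s,N_s$ with respect to the same polarisation are likewise determined by $\sigma$, $r$, $a$, $c$. Thus $M^{\rm e}_{\rm slc}\Xi$ is contained in the finite type open locus of $M_0$ of uniformly bounded degree, hence is itself of finite type. Finally, $M^{\rm e}_{\rm lc}\Xi$ is obtained from $M^{\rm e}_{\rm slc}\Xi$ by imposing the lc condition on fibres, which is locally closed by Lemma \ref{l-l.stable-family-lc}. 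The main obstacle is not any single step but rather the bookkeeping of ensuring each condition really is locally closed when expressed over a reduced (possibly non-irreducible) base; for this we have deliberately collected all the requisite fibrewise constructibility lemmas in the previous section.
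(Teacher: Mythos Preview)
Your strategy is essentially the paper's: start from ${M\!LS^{e}}(d,\alpha,\PP^n)$ with $\alpha=(c,ca)$ and carve out the extra conditions one by one using the fibrewise lemmas. The use of \cite[Theorem 4.28]{K21} together with the ``lowering coefficients preserves slc'' argument in place of the paper's direct appeal to \cite[Theorem 4.8]{K21} is a legitimate variant; the two conditions are equivalent here because once $K_{X/S}+cD$ is $\Q$-Cartier the fibrewise slc property is inherited from that of $(X_s,cD_s+caN_s)$.

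There is one genuine ordering gap. You invoke Lemma \ref{l-l.stable-family-vertical-volume} immediately after imposing semi-ampleness of $K_{X_s}+B_s$, but that lemma is stated for a \emph{family of stable minimal models}, which in particular requires $K_{X_s}+B_s+tA_s$ to be ample for some $t>0$. Nothing you have imposed so far guarantees this: being over $M_0$ only tells you $\mathcal{L}_s$ is very ample, not that $\mathcal{L}_s\simeq\mathcal{O}_{X_s}(r(K_{X_s}+\Delta_s))$. The paper fixes this by inserting, as its very first cut, the \emph{open} condition that $K_{X_s}+\Delta_s=K_{X_s}+B_s+aA_s$ be ample; then, once $K_{X_s}+B_s$ is semi-ample, $K_{X_s}+B_s+tA_s$ is ample for all $t\in(0,a]$ and Lemma \ref{l-l.stable-family-vertical-volume} applies. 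You should insert this step (or impose the polarisation identity earlier, which has the same effect). Also note that the cohomology condition $R^qf_*\mathcal{L}\simeq R^q\pi_*\mathcal{O}_{\PP^n_S}(1)$ is already built into the definition of $M_0$ (since $R^q\pi_*\mathcal{O}_{\PP^n_S}(1)=0$ for $q>0$), so your separate appeal to Lemma \ref{l-l.stable-family-vanishing} is redundant.
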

\begin{proof}
First we focus on the functor $\mathfrak{S^{\rm e}_{\rm slc}}\Xi$. In the end we treat $\mathfrak{S^{\rm e}_{\rm lc}}\Xi$. Recall that 
$$
\Xi=(d,\Phi_c,\Gamma,\sigma,a,r,\PP^n).
$$\ 

\emph{Step 1.}
Put $\alpha:=(c,ac)$. 
Recall the functor  $\mathfrak{MLS}^{\rm e}(d,\alpha,\PP^n)$ and its fine moduli space 
$$
M:={{M\!LS}}^{\rm e}({d,\alpha,\PP^n})
$$ 
from Theorem \ref{t-moduli-embedded-marked-stable-pairs}. 
There is a strongly embedded $(d,\alpha,\PP^n)$-marked locally stable universal family 
$$
(X\subset \PP^n_M, \Delta:=cD+acN)\to M
$$
where $D,N$ are relative Mumford divisors on $X$ over $M$.
The moduli space $M$ is very large, we only need parts of it related to the functor 
$\mathfrak{S^{\rm e}_{\rm slc}}\Xi$. More precisely, we want to identify the points of $M$ which parametrise 
strongly embedded $\Xi$-stable minimal models.\\  

\emph{Step 2}.
Suppose we are given a family of strongly embedded $\Xi$-stable minimal models
$$
(X'\subset \PP^n_{S'},B'),A'\to S'
$$
where $S'$ is a reduced $k$-scheme. We will show that there is a unique morphism $S'\to M$ so that the latter family is the pullback of the universal family over $M$, and 
there is an open subset $M^{(0)}\subset M$ of finite type over $k$ depending only on $\Xi$ such that  
$S'\to M$ factors through $M^{(0)}$.

The divisor $\Delta':=B'+aA'$ has a natural 
$\alpha$-marking because by assumption $B'=cD'$ and $A'=cN'$ for some relative Mumford divisors $D',N'$ over $S'$. Moreover, by definition $(X',\Delta')\to S'$ is locally stable, hence it is a $(d,\alpha)$-marked locally stable family. Thus
 taking into account the given embedding $X'\subset \PP^n_{S'}$ we see that the family 
$$
(X'\subset \PP^n_{S'},\Delta')\to S'
$$
 is a strongly embedded $(d,\alpha,\PP^n)$-marked locally stable family.  
Therefore, there is a unique morphism $S'\to M$ so that the latter family is the pullback of the universal family over $M$ (of Step 1) via $S'\to M$. 

By assumption, 
for every $s'\in S'$, we have 
$$
\mathcal{O}_{X_{s'}'}(1)\simeq \mathcal{O}_{X_{s'}'}(r(K_{X_{s'}'}+\Delta_{s'}'))
$$
where $\mathcal{O}_{X_{s'}'}(1)$ is the pullback of $\mathcal{O}_{\PP^n_{S'}}(1)$ to $X'_{s'}$.
Again by assumption, 
$$
\vol(K_{X_{s'}'}+B_{s'}'+tA_{s'}')=\sigma(t)
$$
for every small $t\ge 0$. Since $K_{X_{s'}'}+\Delta_{s'}'=K_{X_{s'}'}+B'_{s'}+aA'_{s'}$ is ample and $K_{X_{s'}'}+B_{s'}'$ is nef by assumption, we deduce that 
$$
\vol(K_{X_{s'}'}+B_{s'}'+tA_{s'}')=\sigma(t)
$$
for every $t\in [0,a]$ because both sides are polynomials in $t$ on the interval $[0,a]$ which agree for small values of $t$. In particular, 
$$
\vol(\mathcal{O}_{X_{s'}'}(1))=\vol(r(K_{X_{s'}'}+\Delta_{s'}'))=r^d\sigma(a)
$$
is fixed, hence the degrees of $X_{s'}'$ and $\Delta_{s'}'$ with respect to 
$\mathcal{O}_{X_{s'}'}(1)$ are bounded from above. Therefore, there is an open subscheme 
$M^{(0)}\subset M$ of finite type over $k$, depending only on $\Xi$, such that $S'\to M$ factors 
through $M^{(0)}$.\\

\emph{Step 3.}
In this step we apply a process to distinguish the strongly embedded $\Xi$-stable minimal models among the fibres of the universal family over $M$ of Step 1.

(0). First pull back the universal family over $M$ to a family over  $M^{(0)}$ and denote it by
$$
(X^{(0)}\subset \PP^n_{M{(0)}}, \Delta^{(0)}=cD^{(0)}+acN^{(0)})\to M^{(0)}.
$$
To simplify notation we will denote the log fibre of this family over a point $s$ by 
$$
(X_s\subset \PP^n_s,\Delta_s=cD_s+acN_s)
$$ 
that is, we drop the superscript $(0)$. 
We do similarly below.

(i). By definition of locally stable families, $K_{X^{(0)}/M^{(0)}}+\Delta^{(0)}$ is $\Q$-Cartier. The points $s$ of $M^{(0)}$ such that $K_{X_s}+\Delta_s$ is ample  
form an open subset, say $M^{(1)}$. Pull back the family over $M^{(0)}$ to a family over $M^{(1)}$ 
to get 
$$
(X^{(1)}\subset \PP^n_{M{(1)}}, \Delta^{(1)}=cD^{(1)}+acN^{(1)})\to M^{(1)}.
$$ 

(ii). By \cite[Theorem 4.8]{K21}, there is a locally 
closed partial decomposition $M^{(2)}\to M^{(1)}$ satisfying the following: given a morphism $S\to M^{(1)}$ 
from a reduced scheme, remove $acN^{(1)}$ in the family in (i) and 
then pull back 
$$
(X^{(1)}\subset \PP^n_{M{(1)}}, cD^{(1)})\to M^{(1)}
$$ 
to a family over $S$ using divisorial pullback introduced in Section \ref{s-locally-stable-families}, to get  
$$
(X_S\subset \PP^n_{S},cD_S)\to S
$$ 
which is a well-defined family of pairs according to \cite[Definition 4.3]{K21};
then $(X_S,cD_S)\to S$ is a locally stable family iff $S\to M^{(1)}$ factors 
through $M^{(2)}\to M^{(1)}$. 

Pull back the family in (i) to a family over $M^{(2)}$ to get 
$$
(X^{(2)}\subset \PP^n_{M{(2)}}, \Delta^{(2)}=cD^{(2)}+acN^{(2)})\to M^{(2)}.
$$
Then 
$$
(X^{(2)},cD^{(2)})\to M^{(2)}
$$
is a locally stable family. In particular, $K_{X^{(2)}/M^{(2)}}+cD^{(2)}$ is $\Q$-Cartier, hence $N^{(2)}$ is also $\Q$-Cartier. 

(iii).  
By Lemma \ref{l-l.stable-family-s-ample}, 
the set of points $s\in M^{(2)}$ such that $K_{X_s}+cD_s$
is semi-ample is locally closed, say $M^{(3)'}$. Pulling back the family over $M^{(2)}$ to a family over $M^{(3)'}$ we get a family
$$
(X^{(3)'}\subset \PP^n_{M{(3)'}}, cD^{(3)'}+acN^{(3)'})\to M^{(3)'}.
$$ 
Then 
$$
(X^{(3)'}\subset \PP^n_{M{(3)'}}, cD^{(3)'}), cN^{(3)'}\to M^{(3)'}
$$
is a family of stable minimal models because 
$$
(X^{(3)'}\subset \PP^n_{M{(3)'}},cD^{(3)'}+tcN^{(3)'})\to M^{(3)'}
$$ 
is locally stable for every $t\in [0,a]$ and because for each $s\in M^{(3)'}$ we have: 
\begin{itemize}
\item $({X_s},cD_s)$ is slc, 
\item $K_{X_s}+cD_s$ is semi-ample defining a contraction $X_s\to Z_s$, 
\item $K_{X_s}+cD_s+tcN_s$ is ample for every $t\in (0,a]$, and 
\item $({X_s},cD_s+tcN_s)$ is slc for every $t\in [0,a]$.
\end{itemize}

Now the set of points $s\in M^{(3)'}$ such that $\vol(cN_s|_{F})\in \Gamma$ for the general fibres $F$ of $X_s\to Z_s$ over each irreducible component of $Z_s$ is a locally closed subset $M^{(3)} \subset M^{(3)'}$, by Lemma \ref{l-l.stable-family-vertical-volume}.

Pull back the family over  $M^{(3)'}$ to a family over $M^{(3)}$ to get   
$$
(X^{(3)}\subset \PP^n_{M{(3)}}, \Delta^{(3)}=cD^{(3)}+acN^{(3)})\to M^{(3)}.
$$ 
By construction, $K_{X^{(3)}/M^{(3)}}+cD^{(3)}$ is nef over $M^{(3)}$.

(iv). 
Since $K_{X^{(3)}/M^{(3)}}+\Delta^{(3)}$ is ample over $M^{(3)}$ and $K_{X^{(3)}/M^{(3)}}+cD^{(3)}$ is nef over $M^{(3)}$, we see that for each $t\in (0,a]$,
$$
K_{X^{(3)}/M^{(3)}}+cD^{(3)}+tcN^{(3)}
$$ 
is ample over $M^{(3)}$. Thus for each $s\in M^{(3)}$, 
$$
\theta_s(t):=\vol(K_{X_s}+cD_s+tcN_s)=(K_{X_s}+cD_s+tcN_s)^d
$$
is a polynomial in $t$ of degree $\le d$ on the interval $(0,a]$. On the other hand, fixing $t\in (0,a]$, we see that 
$\theta_s(t)$ is a locally constant function on $M^{(3)}$, by Lemma \ref{l-vol-loc-constant}.

Pick numbers 
$$
0<t_1<\cdots <t_{d+1}=a.
$$  
Then the set of points $s\in M^{(3)}$ for which $\theta_s(t_i)=\sigma(t_i)$ for each $i$, is a closed and open subset of $M^{(3)}$. Denote this set by $M^{(4)}$. For each $s\in M^{(4)}$, $\theta_s(t)-\sigma(t)$ is a polynomial in $t$ of degree $\le d$ with $d+1$ distinct roots $t_1,\dots,t_{d+1}$ (it was noted in Step 2 that $\deg \sigma\le d$). Hence $\theta_s=\sigma$.
Thus we deduce that $M^{(4)}$ is the set of points $s\in M^{(3)}$ for which $\theta_s=\sigma$ on the interval $[0,a]$.

Pullback over $M^{(4)}$ gives the induced family 
$$
(X^{(4)}\subset \PP^n_{M{(4)}}, \Delta^{(4)}=cD^{(4)}+acN^{(4)})\to M^{(4)}
$$   
and 
$$
(X^{(4)}, cD^{(4)}),cN^{(4)}\to M^{(4)}
$$ 
which is a family of $(d,\Phi_c, \Gamma,\sigma)$-stable minimal models.
 
(v). 
Consider the set of points $s\in M^{(4)}$, say $M^{(5)}$, such that $r(K_{X_s}+\Delta_s)$ is Cartier and  
$$
\mathcal{O}_{X_s}(1) \simeq  \mathcal{O}_{X_s}(r(K_{X_s}+\Delta_s))
$$ 
where $\mathcal{O}_{X_s}(1)$ is the pullback of $\mathcal{O}_{\PP^n_{M^{(4)}}}(1)$ to $X_s$. 
Then $M^{(5)}$ is a locally closed subset of $M^{(4)}$, by Lemma \ref{l-l.stable-family-inv.sheaf}. 

The induced family  
$$
(X^{(5)}\subset \PP^n_{M{(5)}}, \Delta^{(5)}=cD^{(5)}+acN^{(5)})\to M^{(5)}
$$
is a strongly embedded $(d,\alpha,\PP^n)$-marked locally stable family.
Moreover, the family 
$$
(X^{(5)}\subset \PP^n_{M{(5)}}, cD^{(5)}), cN^{(5)}\to M^{(5)}
$$
is a family of strongly embedded $\Xi$-stable minimal models.\\

\emph{Step 4.} 
We claim that $M^{(5)}$ is the desired moduli space ${M^{\rm e}_{\rm slc}}\Xi$. 
We show that for any family of strongly embedded $\Xi$-stable minimal models
$$
h\colon (X'\subset \PP^n_{S'},B'),A'\to S',
$$
there is a unique morphism $S'\to M^{(5)}$ so that the family $h$ is the pullback of the family 
$$
(X^{(5)}\subset \PP^n_{M{(5)}}, cD^{(5)}),cN^{(5)}\to M^{(5)}.
$$
 As noted in Step 2, 
there is a unique morphism $S'\to M$ so that the marked family 
$$
(X'\subset \PP^n_{S'},\Delta':=B'+aA')\to S'
$$
is the pulback of the marked family
$$
(X\subset \PP^n_{M}, \Delta=cD+acN)\to M.
$$ 

By Step 2, $S'\to M$ factors through $M^{(0)}$. Since $K_{X'/S'}+\Delta'$ is ample over $S'$ by definition, $S'\to M^{(0)}$ factors through $M^{(1)}$, by Lemma \ref{l-base-change-field}(1). From the previous paragraph we see that 
$
(X',B')\to S'  
$
is the pulback of
$
(X, cD)\to M.
$ 
Since the former is a locally stable family by definition, $S'\to M^{(1)}$ factors through $M^{(2)}$. 

To check that $S'\to M^{(2)}$ factors through $M^{(i)}$ for $i\ge 3$, it is enough to show that the image of $S'\to M^{(2)}$ is contained in $M^{(i)}$ because $M^{(i)}$ is a locally closed subset of $M^{(i-1)}$ for $i\ge 3$ and so it is a locally closed subset of $M^{(2)}$. 

Pick an arbitrary point $s'\in S'$ and let $s\in M^{(2)}$ be its image. Let $(X_{s'}',B_{s'}'),A_{s'}'$ be the log fibre of the family 
$$
(X',B'),A'\to S'  
$$
over $s'$ and $(X_{s},cD_{s}),cN_{s}$ the log fibre of the family 
$$
(X^{(2)}, cD^{(2)}),cN^{(2)}\to M^{(2)}
$$
 over $s$. Here $X_{s'}'\simeq \Spec k(s')\times_{\Spec k(s)}X_s$.
 By definition of $(d,\Phi_c, \Gamma,\sigma)$-stable minimal models, $K_{X_{s'}'}+B_{s'}'$ is semi-ample defining a contraction $X_{s'}'\to Z_{s'}'$. Then by Lemma \ref{l-base-change-field}, $K_{X_{s}}+cD_{s}$ is semi-ample defining a contraction $X_{s}\to Z_{s}$. 
This shows $s\in M^{(3)'}$. Moreover, since $X_{s'}'\simeq Z_{s'}'\times_{Z_s}X_s$ (as we saw in the proof of \ref{l-base-change-field}) and since $\vol(A_{s'}'|_{F'})\in \Gamma$ for any general fibre of $X_{s'}'\to Z_{s'}'$ over any irreducible component of $Z_{s'}'$, we have $\vol(cN_{s}|_{F})\in \Gamma$ for any general fibre of $X_{s}\to Z_{s}$ over any irreducible component of $Z_{s}$, by Lemma \ref{l-base-change-field}(3), hence $s\in M^{(3)}$. Thus $S'\to M^{(2)}$ factors through $M^{(3)}$.

On the other hand,
$$
\vol(K_{X_{s'}'}+B_{s'}'+tA_{s'}')=\sigma(t)
$$ 
for any $t\in [0,a]$ by definition of $(d,\Phi_c, \Gamma,\sigma)$-stable minimal models and by the assumption that  $K_{X_{s'}'}+B_{s'}'+aA_{s'}'$ is ample. This implies that 
$$
\vol(K_{X_{s}}+cD_{s}+tcN_{s})=\sigma(t)
$$ 
for any $t\in [0,a]$, by Lemma \ref{l-base-change-field}(3), hence $S'\to M^{(3)}$ factors through $M^{(4)}$.  

In addition, 
$$
\mathcal{O}_{X_{s'}'}(1) \simeq  \mathcal{O}_{X_{s'}'}(r(K_{X_{s'}'}+\Delta_{s'}'))
$$ 
holds by definition where $\Delta_{s'}'=B_{s'}'+aA_{s'}'$. In particular,  $r(K_{X_{s'}'}+\Delta_{s'}')$ is Cartier. Then $r(K_{X_{s}}+\Delta_{s})$ is also Cartier. Therefore,  
$$
\mathcal{O}_{X_s}(1) \simeq  \mathcal{O}_{X_s}(r(K_{X_s}+\Delta_s))
$$ 
holds by Lemma \ref{l-base-change-field}(5). So $S'\to M^{(4)}$ factors through $M^{(5)}$ as claimed, hence ${M^{\rm e}_{\rm slc}}\Xi=M^{(5)}$. 
\\

\emph{Step 5.}
We now treat the functor $\mathfrak{S^{\rm e}_{\rm lc}}\Xi$. Consider the universal family 
$$
(X^{(5)}\subset \PP^n_{M{(5)}}, cD^{(5)}), cN^{(5)}\to M^{(5)}
$$
of Step 3. The set of points $s\in M^{(5)}$ such that $(X_s,cD_s)$ is lc is a locally closed subset $M^{(6)}\subset M^{(5)}$, by Lemma \ref{l-l.stable-family-lc}. Given a family of   
strongly embedded lc $\Xi$-stable minimal models
$$
h\colon (X'\subset \PP^n_{S'},B'),A'\to S',
$$
there is a unique morphism $S'\to M^{(5)}$ so that the family $h$ is the pullback of the family over $M^{(5)}$. Moreover, if $s'\in S'$ and if $s\in M^{(5)}$ is its image, then since  
$(X_{s'}',B_{s'}')$ is lc by assumption, we deduce that $(X_{s},cD_{s})$ is also lc because $X_{s'}'$ is normal so it is smooth in codimension one which implies $X_s$ is also smooth in codimension one so it is normal by Serre's criterion. This shows  
$s\in M^{(6)}$. Therefore, $S'\to M^{(5)}$ factors through $M^{(6)}$ which in turn implies ${M^{\rm e}_{\rm lc}}\Xi:=M^{(6)}$ is a fine moduli space for the functor $\mathfrak{S^{\rm e}_{\rm lc}}\Xi$ and the universal family over $M^{(6)}$ is the pullback of the above family over $M^{(5)}$.

\end{proof}


\section{\bf Coarse moduli spaces of stable minimal models}

In this section, we prove our main result on existence of moduli spaces, that is, Theorem \ref{t-moduli-s.min.models}. 

\subsection{Thresholds}

\begin{lem}\label{l-bnd-lct-v.ampleness}
Let $d\in\N$, $c\in \Q^{\ge 0}$, $\Gamma\subset \Q^{>0}$ be a finite set, and $\sigma\in\Q[t]$ be a polynomial.  
Then there exist a positive rational number $a$ and a natural number $r$ such that $rc,ra\in \N$ 
and they satisfy the following.  Assume $(X,B),A$ is a $(d,\Phi_c,\Gamma,\sigma)$-stable minimal model over a field $K$ of characteristic zero. 
Then 
\begin{itemize}
\item $(X,B+aA)$ is slc, and

\item $r(K_X+B+aA)$ is very ample with 
$$
h^q(mr(K_X+B+aA))=0
$$ 
for any $m,q>0$.
\end{itemize} 
\end{lem}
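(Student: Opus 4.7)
The plan is to reduce to the algebraically closed case, invoke the boundedness of $\mathcal{S}_{\rm slc}(d,\Phi_c,\Gamma,\sigma)$ established in Theorem \ref{t-bnd-s-mmodels-slc}, and then extract uniform constants $a$ and $r$. First, using Lemma \ref{l-base-change-field}, I would base-change to the algebraic closure $\overline{K}$. Parts (1)--(4) of that lemma show that ampleness, semi-ampleness, and the volume identity $\vol(K_X+B+tA)=\sigma(t)$ are preserved; meanwhile, the slc property for $(X,B)$ and $(X,B+tA)$ is \emph{by definition} (see \ref{ss-slc-pairs}) tested after base change to the algebraic closure, so it also descends/ascends. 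Hence it suffices to prove the lemma over $k$.

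Next, Theorem \ref{t-bnd-s-mmodels-slc} shows $\mathcal{S}_{\rm slc}(d,\Phi_c,\Gamma,\sigma)$ is a bounded family. Consider the normalisation
$$(X^\nu,B^\nu),A^\nu = \bigsqcup_j (X_j^\nu,B_j^\nu),A_j^\nu,$$
where each component is an lc stable minimal model. By Lemma \ref{l-s.min.mod.base-d,v-determined-by-u,sigma} and Lemma \ref{l-s.min.mod-Ivol-over-components}, the invariants $u_j=\vol(A_j^\nu|_{F_j^\nu})$ and $v_j=\Ivol(K_{X_j^\nu}+B_j^\nu)$ range over a finite set depending only on $d,\Phi_c,\Gamma,\sigma$; moreover the entire collection of components $(X_j^\nu,B_j^\nu),A_j^\nu$ lies in a bounded subfamily of the corresponding $\mathcal{S}_{\rm lc}(d,\Phi_c,u_j,v_j)$. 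Applying Lemma \ref{l-bnd-lct-on-bnd-family} to each such finite batch, I obtain a uniform rational $\lambda>0$ such that $(X_j^\nu, B_j^\nu+\lambda A_j^\nu)$ is lc for every $j$ and every member. By the definition of slc via normalisation this forces $(X,B+\lambda A)$ to be slc. Then I apply the cone-theorem argument of Steps 6--7 of the proof of Theorem \ref{t-bnd-s-mmodels-lc} (componentwise on the normalisation and using that the non-lc locus is already empty) to shrink $\lambda$ to a uniform $a\in(0,\lambda]\cap\Q$ such that additionally $K_X+B+aA$ is ample.

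At this point $(X,B+aA)$ is a KSBA-stable slc pair of dimension $d$, with coefficients in a fixed finite set (a subset of $\Phi_c\cup a\Phi_c$), belonging to a bounded family with fixed volume $\sigma(a)$; so by \cite{HMX18} it lies in a bounded family of KSBA-stable slc pairs. Realising this family over a finite-type base scheme, relative Serre vanishing and effective very-ampleness yield a uniform $r\in\N$, divisible by any common denominator of $c$ and $a$ (so that $rc,ra\in\N$), for which $r(K_X+B+aA)$ is Cartier and very ample and $h^q(mr(K_X+B+aA))=0$ for all $m,q>0$ uniformly in the family. The main technical hurdle is the slc step for $(X,B+aA)$: it cannot be done directly on $X$ since the log threshold of $A$ on a non-normal pair is not controlled by standard lc-threshold ACC statements, and one must pass to the normalisation and apply Lemma \ref{l-bnd-lct-on-bnd-family} componentwise before assembling the uniform threshold $\lambda$.
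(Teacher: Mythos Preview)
Your overall strategy---reduce to an algebraically closed field and invoke the boundedness from Theorem \ref{t-bnd-s-mmodels-slc}---is exactly what the paper does. The paper's proof, however, is much shorter: having reduced to $K=\C$, it simply says that the uniform $a$ is already produced \emph{in the proof} of Theorem \ref{t-bnd-s-mmodels-slc} (namely the $\tau$ in Step~8, coming from Proposition \ref{p-bnd-w-s-min-models-DCC}(3)), and that $r$ then follows from boundedness.

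Your unpacking of this step contains a real gap. You assert that Lemmas \ref{l-s.min.mod.base-d,v-determined-by-u,sigma} and \ref{l-s.min.mod-Ivol-over-components} yield that \emph{all} the $u_j,v_j$ lie in a fixed finite set. But Lemma \ref{l-s.min.mod-Ivol-over-components}(2) only gives finiteness of $v_j=\Ivol(K_{X_j^\nu}+B_j^\nu)$ when $\dim Z_j=\dim Z$; for components with $\dim Z_j<\dim Z$, part (1) gives only a DCC set, and that only when $Z_j$ surjects onto a component of $Z$, which need not hold. Establishing finiteness for every $j$ is precisely the content of the inductive Steps 3--8 in the proof of Theorem \ref{t-bnd-s-mmodels-slc} (where Stein degrees are used). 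Without finiteness of the $(u_j,v_j)$, Lemma \ref{l-bnd-lct-on-bnd-family} cannot be applied uniformly across all components, and the cone-theorem argument you borrow from Steps 6--7 of Theorem \ref{t-bnd-s-mmodels-lc} also breaks down, since it requires the fixed $l$ from Theorem \ref{t-bnd-base-adjunction}, which again needs fixed $u,v$.

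The clean fix is to do what the paper does: cite the end of Step~8 of the proof of Theorem \ref{t-bnd-s-mmodels-slc} (equivalently, Proposition \ref{p-bnd-w-s-min-models-DCC}(3) applied there), which already hands you a uniform rational $\tau>0$ with $(X_j^\nu,B_j^\nu+\tau A_j^\nu)$ lc and $K_{X_j^\nu}+B_j^\nu+\tau A_j^\nu$ ample for every $j$. From that point your argument for $r$ via boundedness of the resulting KSBA-stable pairs is fine and matches the paper.
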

\begin{proof}
Recall that $\Phi_c=c\Z^{\ge 0}$.
By definition, the coefficients of $B$ and $A$ belong to the DCC set $\Phi_c$. It is enough to find $a,r$ when $K=\C$ by the Lefschetz principle. 
Indeed, we can replace $(X,B),A$ with a $(d,\Phi_c,\Gamma,\sigma)$-stable minimal model that is defined over some subfield of $\C$. Next we can extend that subfield to the whole $\C$.  
Note that properties such as being Cartier, very ample,  
vanishing of cohomology, all ascend and descend under field extensions in characteristic zero (such extensions are faithfully flat). 

In the case $K=\C$, the set of such $(X,B),A$ is bounded, by Theorem \ref{t-bnd-s-mmodels-slc}. So 
existence of $a$ follows from the theorem and its proof. Existence of $r$ also follows from the boundedness.


\end{proof}

\subsection{Images of slc pairs}

\begin{lem}\label{l-image-slc-pairs}
Let $(X,B)$ be an slc pair and $f\colon X\to Z$ be a contraction such that $Z$ is pure dimensional and $K_X+B\sim_\Q 0/Z$. Then any codimension one component of the singular locus of $Z$ is the image of a codimension one component of the singular locus of $X$.
\end{lem}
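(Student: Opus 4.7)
The plan is a contradiction argument based on the standard fact that a projective contraction from a normal scheme has normal image. Suppose $D\subset Z$ is a codimension one component of $Z_{\mathrm{sing}}$ such that no codimension one component of $X_{\mathrm{sing}}$ maps onto $D$, and let $\eta=\eta_D$. Since $f$ is projective, the finitely many images $f(S_i)$ of the codimension one components $S_i$ of $X_{\mathrm{sing}}$ are closed subsets of $Z$; by hypothesis none contains $\eta$, and neither do the other codimension one components $T_j$ of $Z_{\mathrm{sing}}$. Shrink $Z$ to $Z':=Z\setminus(\bigcup_i f(S_i)\cup\bigcup_j T_j)$, an open neighborhood of $\eta$, and set $X':=f^{-1}(Z')$ and $B':=B|_{X'}$. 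Then $(X',B')$ is still slc, $f'\colon X'\to Z'$ is still a contraction with $K_{X'}+B'\sim_\Q 0/Z'$, $X'$ has no codimension one singularities, and $D':=D\cap Z'$ is nonempty, of codimension one, and contained in $Z'_{\mathrm{sing}}$.

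From being slc, $X'$ is $S_2$; by construction $X'$ is $R_1$; hence by Serre's criterion $X'$ is normal, so it is a disjoint union of irreducible normal varieties. I then claim $Z'$ must be normal, contradicting $D'\subset Z'_{\mathrm{sing}}$. Indeed, for $z\in Z'$ and an element $t$ of the total ring of fractions of $\mathcal{O}_{Z',z}$ satisfying a monic equation with coefficients in $\mathcal{O}_{Z',z}$, the same equation holds for $t$ viewed in the total ring of fractions of $\mathcal{O}_{X'}((f')^{-1}(U))$ for a small affine neighborhood $U$ of $z$, with coefficients in $\mathcal{O}_{X'}((f')^{-1}(U))=\mathcal{O}_{Z'}(U)$ (using $f'_*\mathcal{O}_{X'}=\mathcal{O}_{Z'}$); since $(f')^{-1}(U)$ is an open subscheme of the normal scheme $X'$, its ring of global sections is integrally closed in its total ring of fractions, so $t\in\mathcal{O}_{Z'}(U)$, and passing to the stalk gives $t\in\mathcal{O}_{Z',z}$. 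Hence $\mathcal{O}_{Z',z}$ is integrally closed in its total ring of fractions, and $Z'$ is normal.

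The principal subtlety is the possible reducibility of $X'$ and $Z'$: as a normal scheme, $X'$ is a disjoint union of its irreducible components, and one has to check that the contraction identity $f'_*\mathcal{O}_{X'}=\mathcal{O}_{Z'}$ then forces the irreducible components of $Z'$ to be disjoint as well (if two components met, $f'_*\mathcal{O}_{X'}$ would locally split as a direct product at a meeting point while $\mathcal{O}_{Z'}$ would not). With this in hand, the ``integrally closed in the total ring of fractions'' characterization of normality really does apply to the reducible $Z'$ and rules out any singular locus at all. Incidentally, the hypothesis $K_X+B\sim_\Q 0/Z$ does not seem to enter the argument; only the $S_2$ and nodal codimension one properties built into slc, together with $f$ being a projective contraction, are used.
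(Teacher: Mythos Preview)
Your argument has a genuine gap at the step ``by hypothesis none [of the $f(S_i)$] contains $\eta$''. The contradiction hypothesis is only that no codimension one component $S_i$ of $X_{\mathrm{sing}}$ satisfies $f(S_i)=D$; this does not exclude the possibility that some $f(S_i)$ is an entire irreducible component of $Z$ properly containing $D$ (and hence containing $\eta_D$). When $\dim X>\dim Z$ a codimension one subvariety $S_i\subset X$ can certainly dominate a component of $Z$, and in that situation your shrunken open set $Z'$ no longer contains $\eta_D$, so no contradiction follows. Equivalently, you cannot in general arrange for $X'=f^{-1}(Z')$ to be $R_1$ in any open neighbourhood of the fibre over $\eta_D$: the double locus of $X$ may pass through every such fibre.

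The paper's proof meets precisely this obstacle. After preliminary reductions it arrives at the case where $Z$ is irreducible and every codimension one component $S_i$ of $X_{\mathrm{sing}}$ dominates $Z$; so $X$ is non-normal over $\eta_T$ and your Serre-criterion shortcut is unavailable. The replacement idea is that, although $X$ is not $R_1$ near the fibre, the set $R:=f^{-1}(T)\cap X_{\mathrm{sing}}$ has codimension $\ge 2$ in $X$ (because each $S_i\cap f^{-1}(T)\subsetneq S_i$). One then works on $V=X\setminus R$: any regular function on $Z^\nu$ pulls back to $X^\nu$, descends to the smooth locus of $V$ and separately to $f^{-1}(Z\setminus T)$, hence glues on $V$, and finally extends across the codimension $\ge 2$ set $R$ by the $S_2$ property; pushing forward along the contraction shows every regular function on $Z^\nu$ descends to $Z$, forcing $Z$ to be normal. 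Your endpoint (normality of the base from a contraction with normal source) is morally right, but reaching it requires this codimension-two excision rather than a global shrinking of the base. Your side remark that $K_X+B\sim_\Q 0/Z$ appears unused is consistent with the paper's own proof.
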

\begin{proof}
Note that by a codimension one component of the singular locus of $Z$ we mean a component that has codimension one in $Z$ (similarly for $X$).
Assume $T$ is a codimension one component of the singular locus of $Z$. 
Since the lemma is local over $Z$, we can shrink $Z$ around the generic point $\eta_T$, hence assume $Z$ is connected in which case $X$ is also connected as $f$ is a contraction. We can then assume $X$ is pure dimensional. 

Shrinking $Z$, we can assume that $\eta_T$ is contained in the image of every irreducible component of $X$, and that if $X_1,X_2$ are components of $X$, then either $X_1\cap X_2=\emptyset$ or that $\eta_T$ is contained in the image of every component of $X_1\cap X_2$. If some component $X_i$ of $X$ maps onto $T$, then we can find another component $X_j$ and a codimension one component of the singular locus of $X$ inside $X_i\cap X_j$ (note that $X$ remains connected if any codimension $2$ closed subset is removed), which maps onto $T$ by the previous sentence. Thus we can assume that each component of $X$ maps onto a component of $Z$. 
   
Assume that $Z$ is not irreducible near $\eta_T$, say $Z_1,\dots,Z_r$ are its irreducible components near $\eta_T$.    
Let $Y_i$ be the union of the irreducible components of $X$ mapping onto $Z_i$. 
Let $F$ be the fibre of $f$ over $\eta_T$.  
Since $F$ is connected and since $F\subset \bigcup _1^r Y_i$, we can find intersecting irreducible components $X_i,X_j$ (in $Y_i,Y_j$, respectively) and a codimension one component of the singular locus of $X$ inside $X_i\cap X_j$. But then $T= f(X_i\cap X_j)$ because $f(X_i)\neq f(X_j)$, so some codimension one component of the singular locus of $X$ maps onto $T$.   

Now we can assume that $Z$ is irreducible, that every component of $X$ maps onto $Z$, and that the image of every component of the singular locus of $X$ contains $T$. Assume that there is no codimension one component of the singular locus of $X$ mapping onto $T$. Shrinking $Z$ around $\eta_T$, we can assume that $Z$ is affine and smooth outside $T$. Let $U=Z\setminus T$. 
Let $\pi\colon Z^\nu\to Z$ be the normalisation of $Z$. Pick any regular function $\alpha^\nu$ on $Z^\nu$. We claim that $\alpha^\nu$ descends to $Z$ as a regular function. 
Since $U$ is smooth, $\pi$ is an isomorphism over $U$, so $\alpha^\nu$ descends to $U$ as a regular function. 

Let $\mu\colon X^\nu\to X$ be the normalisation of $X$. Then $X^\nu\to Z$ factors as $X^\nu\to Z^\nu\to Z$. Let $\beta^\nu$ be the pullback of $\alpha^\nu$ to $X^\nu$. Then $\beta^\nu$ descends to $f^{-1}U$  as a regular function. 
On the other hand, let $R$ be the intersection of $f^{-1}T$ with the singular locus of $X$, with reduced structure. Since no codimension one component of the singular locus of $X$ maps onto $T$, $R$ is of codimension $\ge 2$ in $X$. Letting $V=X\setminus R$, we have 
$$
f^{-1}U=X\setminus f^{-1}T\subset V
$$ 
and the singular locus of $V$ coincides with the singular locus of $f^{-1}U$.

Let $W$ be the smooth locus of $V$. Then $V=W\cup f^{-1}U$. Since $W$ is smooth, $\beta^\nu$ descends to $W$, say as $\lambda$. Moreover, by construction, $\beta^\nu$ descends to $f^{-1}U$, say as $\gamma$. Since $W\cap f^{-1}U$ is inside the smooth locus of $V$, we can see that $X^\nu\to X$ is an isomorphism over $W\cap f^{-1}U$ and from this we get 
$$
\lambda|_{W\cap f^{-1}U}=\gamma|_{W\cap f^{-1}U}.
$$ 
So $\lambda,\gamma$ glue to a regular function on $V$. That is, $\beta^\nu$ descends to $V$. 
This implies that $\beta^\nu$ descends to $X$, say as $\beta$, because $X$ is $S_2$ and $R=X\setminus V$ has codimension $\ge 2$. On the other hand, since $f$ is a contraction, $\beta$ is the pullback of some regular function $\alpha$ on $Z$. Then $\alpha^\nu$ descends to $Z$ as $\alpha$. 

We have  then proved that every regular function on $Z^\nu$ descends to $Z$. This is possible only if 
 $Z^\nu\to Z$ is an isomorphism. Then $Z$ is normal, a contradiction. 

\end{proof}

\subsection{Extension of families over curves}

\begin{lem}\label{l-s.min.model-extension-over-curves}
Let $d\in\N$, $c\in \Q^{\ge 0}$, $\Gamma\subset \Q^{>0}$, and $\sigma\in\Q[t]$ be a polynomial. 
Assume that $\Phi$ is closed under addition. 
Assume that   
$$
f\colon (X,B),A\to S
$$ 
is a family of $(d,\Phi_c,\Gamma,\sigma)$-stable minimal models where $S$ is a smooth curve over $k$.  Then after a finite base change, $f$ can be extended to a family of $(d,\Phi_c,\Gamma,\sigma)$-stable minimal models
$$
f'\colon (X',B'),A'\to S'
$$  
where $S'$ is the smooth compactification of $S$.
\end{lem}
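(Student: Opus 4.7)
The plan is to reduce to the extension of KSBA-stable families: pick a small $a\in\Q^{>0}$ so that $(X,B+aA)$ becomes KSBA-stable, extend that family using properness of the KSBA moduli over curves, and then recover the stable minimal model structure on the extended family.

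By Lemma~\ref{l-bnd-lct-v.ampleness} one can choose $a\in\Q^{>0}$ and $r\in\N$ such that each $(X_s,B_s+aA_s)$ is slc and $r(K_{X_s}+B_s+aA_s)$ is very ample. In particular $(X,B+aA)\to S$ is a locally stable family with $K_{X/S}+B+aA$ relatively ample, and its log fibres form a bounded set by Theorem~\ref{t-bnd-s-mmodels-slc}. By the valuative criterion of properness for the KSBA moduli functor over the smooth curve $\bar S$ --- established via semistable reduction and relative MMP as in~\cite{K21} --- after a suitable finite base change $S'\to\bar S$, the pulled-back family extends to a locally stable family $(\bar X,\bar\Delta)\to S'$ with $K_{\bar X/S'}+\bar\Delta$ ample. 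The semistable-reduction construction exhibits $\bar\Delta$ as $\bar B+a\bar A$, where $\bar B$ and $\bar A$ are the flat extensions of (the pullbacks of) $B$ and $A$ as relative Mumford divisors. Convexity of coefficients then yields local stability of $(\bar X,\bar B+t\bar A)\to S'$ for every $t\in[0,a]$, the volume identity $\vol(K+B+tA)=\sigma(t)$ propagates to the new fibres by flatness and Lemma~\ref{l-vol-loc-constant}, and the vertical-volume condition $\vol(\bar A_p|_F)\in\Gamma$ is handled using Lemma~\ref{l-l.stable-family-vertical-volume}.

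The main obstacle is verifying that $K_{\bar X_p}+\bar B_p$ is semi-ample on the new fibres $p\in S'\setminus S$, so that $(\bar X_p,\bar B_p),\bar A_p$ is a genuine stable minimal model and not merely an slc polarised pair. Since $K_{\bar X/S'}+\bar B$ is a limit, as $t\to 0^+$, of the ample $\Q$-divisors $K+\bar B+t\bar A$, it is relatively nef; by Lemma~\ref{l-l.stable-family-s-ample} the locus where semi-ampleness holds fibrewise is locally closed in $S'$ and contains the dense open $S$. To extend it to all of $S'$, the plan is to run a relative MMP on $K_{\bar X/S'}+\bar B$ over $S'$ to produce a good minimal model, invoking~\cite{HMX18} fibrewise combined with the ACC for lc thresholds; this does not alter the generic fibre. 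The fibrewise ampleness of $K+B+tA$ for small $t>0$ on the new fibres is then recovered via a cone-theorem argument as in the final step of the proof of Theorem~\ref{t-bnd-s-mmodels-lc}, with threshold control from Lemma~\ref{l-bnd-lct-on-bnd-family}. The delicate point is that this refining MMP must be compatible with $\bar A$: this is arranged because $\bar A$ is ample over the relative Iitaka fibration of $K+\bar B$ and hence remains relatively ample after the MMP.
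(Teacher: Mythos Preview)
Your reduction to KSBA properness is natural, but the argument has a genuine gap at the step where you pass from the extended KSBA family $(\bar X,\bar B+a\bar A)\to S'$ back to a family of stable minimal models. The KSBA extension only guarantees that $K_{\bar X/S'}+\bar B+a\bar A$ is $\Q$-Cartier; it does not guarantee that $K_{\bar X/S'}+\bar B$ (equivalently, that $\bar A$) is $\Q$-Cartier on $\bar X$. Without this, the sentence ``$K_{\bar X/S'}+\bar B$ is a limit of ample $\Q$-divisors, hence nef'' has no meaning, the ``convexity'' claim for local stability at every $t\in[0,a]$ fails (convexity needs both endpoints to be locally stable, and $t=0$ is precisely what is in doubt), and you cannot run any MMP on $K_{\bar X/S'}+\bar B$. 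Even if $\Q$-Cartierness were known and the divisor were nef, ``running a relative MMP'' on a nef divisor is vacuous; what you actually need is slc abundance over $S'$, and invoking \cite{HX16} would again require $(\bar X,\bar B)$ to be slc, which loops back to the same issue.

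The paper avoids this by reversing the order of the two MMPs. Working on the (normalisation of the) semistable resolution $W$, which is $\Q$-factorial, it first runs an MMP on $K_W+B_W$ over $S'$ and uses \cite{HX13} to terminate with a good minimal model $(X'',B'')$; only then does it run an MMP on $K_{X''}+B''+aA''$, choosing $a$ small enough that this second MMP is $(K_{X''}+B'')$-trivial, so semi-ampleness of $K+B$ survives to the final lc model. This is the key maneuver your proposal is missing. A secondary point: your appeal to Lemma~\ref{l-l.stable-family-vertical-volume} for the condition $\vol(\bar A_p|_F)\in\Gamma$ is circular, since that lemma presupposes a family of stable minimal models and only produces a locally closed locus; the paper instead proves that the relative Iitaka base $Z'$ is smooth near the generic point of each component of a special fibre (via Lemma~\ref{l-image-slc-pairs}), obtains flatness of $X'\to Z'$ there, and deduces constancy of the fibrewise volume of $A$.
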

\begin{proof}
First assume that $X$ is normal. After a finite base change, there exist a compactification $(\overline{X},\overline{B}),\overline{A}$ of $(X,B),A$ over $S'$ and a semi-stable log resolution $W\to \overline{X}$, by \cite{KKMS73}. Let $B_W$ be the birational transform of $\overline{B}$ plus the horizontal$/S'$ exceptional divisors of $W\to \overline{X}$. Let $A_W$ be the birational transform of $\overline{A}$. 
Then $(X,B)$ is a weak lc model of $(W,B_W)$ over $S$. So we can run an MMP on $K_W+B_W$ over $S'$ with scaling of some ample divisor which terminates over $S$ with some model $(Y,B_Y)$, by the results of \cite{B12}. Then $(Y,B_Y)$ is a dlt model of $(X,B)$ over $S$. 
Moreover, since the generic point of every non-klt centre of $(Y,B_Y)$ maps into $S$, we can run a further MMP on $K_Y+B_Y$ which terminates over $S'$ with a good minimal model $(X'',B'')$, by \cite{HX13}. Note that $Y\bir X''$ is an isomorphism over $S$. 

Pick a small number $a>0$. Since $A$ contains no non-klt centre of $(X,B)$, $K_Y+B_Y+aA_Y$ is the pullback of $K_X+B+aA$ over $S$, hence $K_Y+B_Y+aA_Y$ is semi-ample over $S$. This implies that $K_{X''}+B''+aA''$ is also semi-ample over $S$ as $Y\bir X''$ is an isomorphism over $S$.  Moreover, we can assume that $W\bir Y$ is a partial MMP on $K_W+B_W+aA_W$ and that $Y\bir X''$ is a partial MMP on $K_Y+B_Y+aA_Y$, so $(X'',B''+aA'')$ is dlt.
Since the generic point of every non-klt centre of $(X'',B''+aA'')$ maps into $S$, $(X'',B''+aA'')$ has a good minimal model over $S'$, so we can run an MMP on $K_{X''}+B''+aA''$ over $S'$ so that it terminates with a good minimal model $(X',B'+aA')$ over $S'$. Since $K_{X''}+B''$ is semi-ample over $S'$, we can choose $a$ small enough so that the MMP on $K_{X''}+B''+aA''$ is $K_{X''}+B''$-trivial.

By construction, $(X',B'+tA')$ is lc and $K_{X'}+B'+tA'$ is semi-ample over $S'$ for every sufficiently small $t\ge 0$. Replace $(X',B'+aA')$ with its lc model. Then $(X',B'+tA')$ is lc and $K_{X'}+B'+tA'$ is semi-ample over $S'$ for every sufficiently small $t\ge 0$, and $K_{X'}+B'+tA'$  is ample over $S'$ for small $t>0$. Then $(X',B'),A'\to S'$ is the desired family.

When $X$ is not necessarily normal, we take the normalisation $(X^\nu,B^\nu),A^\nu$ of $(X,B),A$, extend each irreducible component $(X^\nu_j,B^\nu_j),A^\nu_j$ to a family $({X^\nu_j}',{B^\nu_j}'),{A^\nu_j}'$ over $S'$, and then glue them together using \cite{K13} to get a family 
$$
f'\colon (X',B'),A'\to S'
$$ 
where $(X',B'+tA')$ is slc and $K_{X'}+B'+tA'$ is semi-ample (resp. ample) over $S'$ for every small $t\ge 0$ (resp. $t>0$). These follow from arguments similar to those of the proof of Lemma \ref{l-l.stable-family-s-ample}.
For every $s'\in S'$, the log fibre $(X_{s'}',B_{s'}'),A_{s'}'$ is a stable minimal model. Thus $f'$ is a family of stable minimal models.

Now we show that the extended family $f'$ is a family of $(d,\Phi_c,\Gamma,\sigma)$-stable minimal models. 
Pick a closed point $s'\in S'$ and let $(X_{s'}',B_{s'}'),A_{s'}'$ be the log fibre of the family over $s'$.
Since $f'$ is flat, $\dim X_{s'}'=d$. Also since the coefficients of $B',A'$ belong to $\Phi_c$, 
the coefficients of $B_{s'}'$ (resp. $A_{s'}'$) are $\Z^{\ge 0}$-linear combination of the coefficients of $B'$ (resp. $A'$), hence they belong to $\Phi_c$. 
Moreover, for any small $t>0$, 
$$
\vol(K_{X_{s'}'}+B_{s'}'+tA_{s'}')=\vol(K_{X_s'}+B_s'+tA_s')=\sigma(t)
$$
where $s$ is a general point of $S'$. 

Let $X_{s'}'\to Z_{s'}'$ be the contraction defined by $K_{X_{s'}'}+B_{s'}'$. It remains 
to show that for any general fibre $F'$ of $X_{s'}'\to Z_{s'}'$ over any irreducible component $T'$ of $Z_{s'}'$, we have $\vol(A'_{s'}|_{F'})\in \Gamma$. 
Let $X'\to Z'/S'$ be the contraction defined by $K_{X'}+B'$. Then $Z_{s'}'$ is the fibre of $Z'\to S'$ over ${s'}$. Since $T'$ is an irreducible component of $Z_{s'}'$, $Z'$ is pure dimensional near the generic point $\eta_{T'}$: indeed, if an irreducible component $R'$ of $Z'$ contains $\eta_{T'}$, then each irreducible component of the fibre $R_{s'}'$ is of dimension $\dim R'-1$; since $T'$ is an irreducible component of $Z_{s'}'$ and since $T'\subset R_{s'}'$, $T'$ is one of the components of $R_{s'}'$; so $\dim T'=\dim R'-1$; this shows that all the components $R'$ containing $\eta_{T'}$ have the same dimension. 

On the other hand, since $(X',B')\to S'$ is a locally stable family, no fibre contains a non-klt centre of $(X',B')$. In particular, no codimension one component of the singular locus of $X$ maps onto $T'$. Applying Lemma \ref{l-image-slc-pairs}, we deduce that $Z'$ is smooth near $\eta_{T'}$. 
Therefore, $X'\to Z'$ is flat over $\eta_{T'}$ as none of the components of $X'$ map onto $T'$. Thus $\vol(A'|_{G'})$ is constant for fibres $G'$ of $X'\to Z'$ over some open neighbourhood $U$ of $\eta_{T'}$ (note that $X'$ is pure dimensional, so $G'$ are pure dimensional too). 

The fibre $U_s$ of $U\to S'$ is a non-empty open subset of the fibre $Z_s'$ of $Z'\to S'$, for general $s\in S$. Moreover, for such $s$, $X_s'=X_s\to Z_s'=Z_s$ coincides with the contraction defined by $K_{X_s'}+B_s'=K_{X_s}+B_s$. Therefore, 
$$
\vol(A_{s'}'|_{F'})=\vol(A_s|_{F})\in \Gamma,
$$ 
for the general fibres $F'$ of $X_{s'}'\to Z_{s'}'$ over $T'$ and general fibres $F$ of $X_s\to Z_s$ over $U_s$. 
This implies that $(X_{s'}',B_{s'}'),A_{s'}'$ is a $(d,\Phi_c,\Gamma,\sigma)$-stable minimal model 
as desired. 

\end{proof}

\subsection{Existence of moduli spaces}

\begin{proof}(of Theorem \ref{t-moduli-s.min.models})
We follow a standard argument that is frequently used to construct moduli spaces.\\

\emph{Step 1.}
Put
$
\Xi=(d,\Phi_c,\Gamma,\sigma).
$
Let $a,r$ be the numbers given by Lemma \ref{l-bnd-lct-v.ampleness}, for the data $d,\Phi_c,\Gamma,\sigma$.
Let 
$$
f\colon (X,B),A\to S
$$ 
be a family of $(d,\Phi_c,\Gamma,\sigma)$-stable minimal models where $S$ is a reduced $k$-scheme. 
Put $\Delta=B+aA$. For each $s\in S$, $(X_s,B_s),A_s$ is a $(d,\Phi_c,\Gamma,\sigma)$-stable minimal model, so by the lemma,  $(X_s,\Delta_s)$ is slc, 
$r(K_{X_s}+\Delta_s)$ is very ample, and  
$$
h^q(r(K_{X_s}+\Delta_s))=0
$$ 
for $q>0$.  
Moreover, 
there are finitely many possibilities for the number 
$$
n_s:=\mathcal{X}(r(K_{X_s}+\Delta_s))-1=h^0(r(K_{X_s}+\Delta_s))-1
$$ 
depending only on $d,c,\Gamma,\sigma,a,r$: as in the proof of \ref{l-bnd-lct-v.ampleness}, 
this finiteness can be reduced to the case $s=\Spec \C$ in which 
case we can apply the boundedness of Theorem \ref{t-bnd-s-mmodels-slc}. 
In addition, by \cite[4.34 and 5.29]{K21}, $r(K_{X/S}+\Delta)$ is Cartier. 
Thus by flatness of $X\to S$, we see that $n_s$ is locally constant on $S$. 

Now fix $n$ and let $\mathfrak{S}_{\rm slc}\Xi(n)$ be the restriction of the functor 
$\mathfrak{S}_{\rm slc}\Xi$ to families $f$ in which 
$$
n=h^0(r(K_{X_s}+\Delta_s))-1
$$ 
for every $s\in S$.  Put 
$$
\Xi'=(d,\Phi_c,\Gamma,\sigma,a,r,\PP^n).
$$\

\emph{Step 2.}
Let 
$$
f\colon (X,B),A\to S
$$ 
be a family of $\Xi(n)$-stable minimal models where $S$ is a reduced $k$-scheme. 
Put $\Delta=B+aA$. 
By base change of cohomology, 
$$
R^qf_*\mathcal{O}_X(r(K_{X/S}+\Delta))=0
$$ 
for $q>0$. Moreover, for each $s\in S$, the natural map 
$$
R^qf_*\mathcal{O}_X(r(K_{X/S}+\Delta))\otimes k(s) \to H^q(r(K_{X_s}+\Delta_s)) 
$$
is an isomorphism for $q\ge 0$. In particular, 
$$
f_*\mathcal{O}_X(r(K_{X/S}+\Delta))
$$
is locally free and 
$$
f^*f_*\mathcal{O}_X(r(K_{X/S}+\Delta)) \to \mathcal{O}_X(r(K_{X/S}+\Delta))
$$
is surjective, so we get a closed embedding 
$$
g\colon X\to \PP(f_*\mathcal{O}_X(r(K_{X/S}+\Delta)))
$$ 
over $S$. Therefore, for each point $s\in S$, we can shrink $S$ around $s$ so that we get a closed embedding into $\PP^n_S$ in which case 
$$
f\colon (X\subset \PP^n_S,B),A\to S
$$ 
is a strongly embedded family of $\Xi'$-stable minimal models: indeed,
\begin{itemize}
\item $(X,B+aA)\to S$ is locally stable by our choice of $a$,

\item $f=\pi g$  where $\pi$ denotes the projection $\PP^n_S\to S$, 

\item we have 
$$
\mathcal{L}:=g^*\mathcal{O}_{\PP^n_S}(1)\simeq \mathcal{O}_X(r(K_{X/S}+\Delta))
$$ 
and  $R^qf_*\mathcal{L}\simeq R^q\pi_*\mathcal{O}_{\PP^n_S}(1)$ for all $q$, and

\item for every $s\in S$,  
$$
\mathcal{L}_s\simeq \mathcal{O}_{X_s}(r(K_{X_s}+B_s+aA_s)).
$$ 
\end{itemize}
However, note that the embedding $X\to \PP^n_S$ is not unique: it is determined 
up to an automorphism of $\PP^n_S$ over $S$.\\  

\emph{Step 3.}
Now recall from Proposition \ref{p-moduli-se-pol-cy} the functor $\mathfrak{S^{\rm e}_{\rm slc}}\Xi'$ and its fine moduli space ${M^{\rm e}_{\rm slc}}\Xi'$ which is reduced, separated and of finite type over $k$. 
There is a natural action of ${\rm PGL}_{n+1}(k)$ on ${M^{\rm e}_{\rm slc}}\Xi'$ inherited from the action of ${\rm PGL}_{n+1}(k)$ on $\PP^n_k$ and the action on associated Hilbert schemes. Indeed, each closed point $\lambda$ of ${M^{\rm e}_{\rm slc}}\Xi'$ corresponds to a strongly embedded $\Xi'$-stable minimal model $(X,B),A$ over $k$; any automorphism of $\PP^n_k$ maps $(X,B),A$ to a $\Xi'$-stable minimal model $(X',B'),A'$ over $k$, hence mapping $\lambda$ to a closed point of ${M^{\rm e}_{\rm slc}}\Xi'$; note in particular, that the KSBA-stable pair $(X,B+aA)$ is mapped to the KSBA-stable pair $(X',B'+aA')$. 

 The stabilisers of the action of ${\rm PGL}_{n+1}(k)$ on ${M^{\rm e}_{\rm slc}}\Xi'$ are finite because  automorphism groups of the pairs $(X,B+aA)$ are finite. Moreover, this action is proper because stable families over a smooth curve can be extended over the compactification of the curve in at most one way.   
The quotient 
$$
{M_{\rm slc}}\Xi(n):={M^{\rm e}_{\rm slc}}\Xi'/{\rm PGL}_{n+1}(k)
$$ 
then exists as an algebraic space \cite{KM97}\cite{K97} which is a coarse moduli 
space for the functor  $\mathfrak{S}_{\rm slc}\Xi(n)$. 

Now note that by Steps 1 and 2, given any family 
$$
f\colon (X,B),A\to S
$$ 
for the functor $\mathfrak{S}_{\rm slc}\Xi(n)$, there is an open covering 
$S=\bigcup S_i$ and morphisms $S_i\to {M^{\rm e}_{\rm slc}}\Xi'$ such that the induced morphisms 
$S_i\to {S_{\rm slc}}\Xi(n)$ are uniquely determined and they agree on overlaps, 
hence they determine a unique morphism $S\to {M_{\rm slc}}\Xi(n)$; moreover, the map 
$$
{\mathfrak{S}_{\rm slc}}\Xi(n)(\Spec k) \to {M_{\rm slc}}\Xi(n)(k)
$$ 
is bijective. 

The moduli space ${M_{\rm slc}}\Xi(n)$ is proper because every family of $\Xi(n)$-stable minimal models 
 over a smooth curve can be extended to a family 
over the compactification of the curve, after a finite base change, by Lemma \ref{l-s.min.model-extension-over-curves}. 
The moduli space is projective by \cite{K90}\cite{F18}\cite{KP17}.
The moduli space ${M_{\rm slc}}\Xi$ for the functor $\mathfrak{S}_{\rm slc}\Xi$ 
is the disjoint union of ${M_{\rm slc}}\Xi(n)$ 
for the finitely many possible $n$.\\

\emph{Step 4}. 
Since being normal is an open condition in a falt family, there is an open subset ${M_{\rm lc}}\Xi\subset {M_{\rm slc}}\Xi$ which is a quasi-projective coarse moduli space for the functor $\mathfrak{S}_{\rm lc}\Xi$.  

\end{proof}


\section{\bf Examples of moduli spaces and some questions}\label{s-examples}

In this section, we discuss various examples of classes of varieties and pairs and their moduli spaces illustrating the general existence theorem \ref{t-moduli-s.min.models}. We also add remarks related to stable minimal models and list a few questions.

\subsection{Moduli of Fano pairs}
 A particular case of stable Calabi-Yau pairs is that of Fano pairs 
polarised by effective anti-pluri-log-canonical divisors. 
A \emph{stable Fano pair} is of the form $(X,\Lambda),A$ where $(X,\Lambda+A),A$ is a stable Calabi-Yau pair and $\Lambda\ge 0$. Since $-(K_X+\Lambda)\sim_\Q A$, the pair $(X,\Lambda)$ is indeed Fano (with slc singularities) which is polarised by $A$. 

As an example, consider $X=\PP^n$, $\Lambda=0$, and $K_X+A\sim_\Q 0$ where $A=cN$ for some hypersurface $N$ and $(X,A)$ is klt. Then $(X,A),A$ is a stable Calabi-Yau pair, so $(X,0),A$ is a stable Fano pair. Note that fixing $c$, $\deg N=\frac{n+1}{c}$ is fixed. So there is a one-to-one correspondence between the $(X,0),A$ and the hypersurfaces $N=\frac{1}{c}A$. Hacking \cite{Ha04} considered this setting when $n=2$ and $N$ is smooth and used a compactification of the moduli space of the $(X,0),A$ to get a compactifcation of the moduli space of the plane curves $N$.  

Let $d\in\N$, $c\in \Q^{> 0}$, $\Gamma\subset \Q^{>0}$ a finite set, and $\sigma\in\Q[t]$ be a polynomial.  
A \emph{$(d,\Phi_c,\Gamma,\sigma)$-stable Fano pair} is a stable Fano pair $(X,\Lambda),A$ where $(X,\Lambda+A),A$ is a $(d,\Phi_c,\Gamma,\sigma)$-stable Calabi-Yau pair.  

We define stable Fano families.
Let $S$ be a reduced scheme over $k$. A \emph{family of $(d,\Phi_c,\Gamma,\sigma)$-stable Fano pairs over $S$} 
is of the form $(X,\Lambda),A\to S$ where $(X,\Lambda+A),A\to S$ is a family of $(d,\Phi_c,\Gamma,\sigma)$-stable Calabi-Yau pairs and $\Lambda\ge 0$. 

We define the moduli functor $\mathfrak{F}_{\rm slc}(d,\Phi_c,\Gamma,\sigma)$ from the category of reduced $k$-schemes to the category of sets by setting  
$$
{\mathfrak{F}}_{\rm slc}(d,\Phi_c,\Gamma,\sigma)(S)=\{\mbox{families of $(d,\Phi_c,\Gamma,\sigma)$-stable Fano pairs over $S$}, 
$$
$$
\hspace{8cm} \mbox{up to isomorphism over $S$}\}. 
$$ 
 
\begin{thm}\label{t-moduli-pol-Fano}
The functor $\mathfrak{F}_{\rm slc}(d,\Phi_c,\Gamma,\sigma)$ has a projective coarse moduli space.
\end{thm}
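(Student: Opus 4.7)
The plan is to mimic the proof of Theorem \ref{t-moduli-s.min.models}, tracking $\Lambda$ as an additional marked component of the boundary of the associated stable Calabi--Yau pair. Note that the stable Fano condition $K_X+\Lambda+A\sim_\Q 0$ forces $\sigma(t)=\vol(A)t^{d}$, so we may assume this form of $\sigma$ (otherwise the functor is empty). Write $\Lambda=cL$ and $A=cN$ for effective relative Mumford divisors $L,N$. The boundary $\Lambda+(1+a)A=cL+(1+a)cN$ then comes with a natural two-component marking $\alpha=(c,(1+a)c)$ that distinguishes $L$ from $N$, so that families of stable Fano pairs are precisely $\alpha$-marked families of associated stable Calabi--Yau pairs with the first marked divisor interpreted as $\Lambda$ and the second as $A$.

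First I would establish uniform polarizability. By Theorem \ref{t-bnd-slc-min-model-CY-case} the associated stable Calabi--Yau pairs $(X,\Lambda+A),A$ form a bounded family, so the argument of Lemma \ref{l-bnd-lct-v.ampleness} produces a rational number $a>0$ and an integer $r\in\N$ with $rc,(1+a)rc\in\N$ such that $(X,\Lambda+(1+a)A)$ is slc, $r(K_X+\Lambda+(1+a)A)\sim_\Q raA$ is very ample, and $h^q(mraA)=0$ for all $m,q>0$ (here we use $K_X+\Lambda+A\sim_\Q 0$). Since $n=h^0(raA)-1$ is locally constant on families, it suffices to construct moduli spaces for each subfunctor $\mathfrak{F}_{\rm slc}\Xi(n)$. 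Following Section \ref{s-strongly-embedded-s.m.model}, I would then define the functor $\mathfrak{F}^{\rm e}_{\rm slc}\Xi'$ of strongly embedded families $(X\subset\PP^n_S,\Lambda),A\to S$ with embedding induced by $raA$, and represent it by starting from Koll\'ar's fine moduli space ${M\!LS}^{\rm e}(d,\alpha,\PP^n)$ and imposing, exactly as in Step 3 of the proof of Proposition \ref{p-moduli-se-pol-cy}, the locally closed conditions that $K+cL+(1+a)cN$ is ample, that $(X,cL)\to S$ is locally stable, that $K_{X_s}+cL_s+cN_s\sim_\Q 0$ fibrewise, that the vertical volumes of $cN_s$ lie in $\Gamma$, that $\vol(K+cL+(1+a)cN)$ matches the value dictated by $\sigma$, and that $\mathcal{O}(1)\simeq\mathcal{O}(raN)$ fibrewise.

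Finally, I would form the coarse moduli space as the quotient ${M}_{\rm slc}\Xi(n):={M^{\rm e}_{\rm slc}}\Xi'/{\rm PGL}_{n+1}(k)$. Stabilizers are finite because ${\rm Aut}(X,\Lambda,A)$ embeds into the automorphism group of the KSBA-stable pair $(X,cL+(1+a)cN)$, and properness of the action follows from an analogue of Lemma \ref{l-s.min.model-extension-over-curves} for stable Fano families: over a smooth curve one extends the associated Calabi--Yau family after a finite base change, and both Mumford components $L$ and $N$ extend to effective divisors on the central fibre because the MMP steps and the subsequent normalisation--gluing procedure in that proof preserve effectivity componentwise. The quotient then exists as a proper algebraic space by \cite{KM97},\cite{K97} and is projective by \cite{K90},\cite{F18},\cite{KP17}; taking the disjoint union over the finitely many values of $n$ yields the moduli space. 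The main obstacle I anticipate is this extension step, where one must verify that $\Lambda$ remains effective throughout the MMPs and gluing used to extend the Calabi--Yau family; this should follow from the fact that $L$ is tracked as a relative Mumford divisor throughout and its closure in the extended family is automatically effective.
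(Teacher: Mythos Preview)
Your approach is correct but takes a genuinely different route from the paper. The paper does not rebuild an embedded moduli space with a new marking; instead it reuses the already-constructed space ${M^{\rm e}_{\rm slc}}\Xi'$ of strongly embedded stable Calabi--Yau pairs and simply cuts out the Fano locus using Lemma~\ref{l-l.stable-Fano}, which says that the set of points $s$ with $B_s\ge A_s$ (equivalently $\Lambda_s\ge 0$) is locally closed in the base. The resulting locally closed subscheme $F^{\rm e}_{\rm slc}$ is then quotiented by ${\rm PGL}_{n+1}(k)$ exactly as before. No separate tracking of $\Lambda$ as a marked divisor is needed, because in the Calabi--Yau functor both $B=cD$ and $A=cN$ are already tracked and $\Lambda=c(D-N)$ is determined by them.

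Your route---remarking the boundary as $\Delta=cL+(1+a)cN$ so that $\Lambda=cL$ is one of the marked components from the outset---also works and has the mild advantage that effectivity of $\Lambda$ on the central fibre in the curve-extension step is automatic (you are pushing forward an honest effective divisor $L$ through the MMPs, rather than checking afterwards that the extended $B$ still dominates the extended $A$). The cost is that you repeat the entire construction of Proposition~\ref{p-moduli-se-pol-cy} rather than invoking it. One small slip: the locally stable condition to impose in your Step~3(ii) analogue is on $(X,cL+cN)$, the Calabi--Yau boundary $B=\Lambda+A$, not on $(X,cL)$ alone; fortunately, given that $(X,cL+(1+a)cN)$ is already locally stable from the ambient marked family, the two conditions are equivalent, so this does not break your argument.
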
  

The theorem can be proved using the same ideas of the proof of Theorem \ref{t-moduli-s.min.models} and the following lemma. 

\begin{lem}\label{l-l.stable-Fano}
Assume that $(X,B),A\to S$ is a family of stable Calabi-Yau pairs where $S$ is a reduced $k$-scheme of finite type. Let $S'$ be the set of (not necessarily closed) points $s\in S$ such that $B_s\ge A_s$.
Then $S'$ is a locally closed subset of $S$ (with reduced structure).
\end{lem}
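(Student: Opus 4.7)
I would argue by Noetherian induction on $S$, with the main input being the canonical positive--negative decomposition of $B - A$ as a $\Q$-Weil divisor on the total space $X$.

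First, using that $(X, B + tA) \to S$ is locally stable for all sufficiently small rational $t \ge 0$, write $B$ and $A$ as $\Q$-linear combinations of prime relative Mumford divisors over $S$; collecting common terms, put $B = \sum_k b_k F_k$ and $A = \sum_k a_k F_k$ with the $F_k$ distinct prime relative Mumford divisors. Set
\[
E = \sum_{b_k > a_k}(b_k - a_k) F_k, \qquad \Phi = \sum_{b_k < a_k}(a_k - b_k) F_k,
\]
so that $B - A = E - \Phi$ with $E, \Phi \ge 0$ and with no common prime components on $X$.

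If $\Phi = 0$ then $B - A \ge 0$ on $X$, and divisorial pullback gives $B_s - A_s = E_s \ge 0$ for every $s \in S$, so $S' = S$ is closed. Otherwise $\Phi \ne 0$, and I consider the closed subscheme $T = \Supp E \cap \Supp \Phi \subset X$, which has codimension $\ge 2$ in $X$. By upper semi-continuity of fibre dimension, the set
\[
S_0 = \{s \in S : T \cap X_s \text{ has a codimension one component of } X_s\}
\]
is closed in $S$, and since the generic fibre of $T$ has codimension $\ge 2$ in the generic fibre of $X$, $S_0$ is a proper closed subset of $S$. The crucial claim is that $S' \subseteq S_0$: for $s \notin S_0$ the supports of $E_s$ and $\Phi_s$ share no codimension one components (any such shared component would lie in some $F_k \cap F_l$ with $k \ne l$, hence in $T$, forcing it into $T \cap X_s$), while each prime component of $\Phi$ dominates $S$ (being a relative Mumford divisor) so $\Phi_s \ne 0$; consequently $B_s - A_s = E_s - \Phi_s$ has a nonzero negative part and fails to be effective.

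Once $S' \subseteq S_0$ is established, base changing the family to $S_0$ (with reduced structure) as in \ref{ss-base-change-fam.s.m.model} yields a family of stable Calabi-Yau pairs over $S_0$, and the Noetherian inductive hypothesis gives that $S' = S' \cap S_0$ is locally closed in $S_0$, hence in $S$. The step I expect to require the most care is verifying the structural assertions at the level of relative Mumford divisors---that $B$ and $A$ genuinely admit a common decomposition in terms of prime relative Mumford divisors each of whose irreducible components dominates $S$, and that divisorial pullback commutes with the positive--negative decomposition---which should follow from the definitions in Section \ref{s-locally-stable-families} together with a reduction to the case where $S$ is irreducible (handling the irreducible components of $S$ one at a time inside the Noetherian induction), but requires some bookkeeping.
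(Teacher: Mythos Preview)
Your argument is correct and rests on the same core fact as the paper's: over an irreducible base, $B\ge A$ on the total space if and only if $S'=S$, because prime relative Mumford divisors dominate $S$ and their fibrewise restrictions generically have no common components. The paper's proof is much terser---after the Noetherian reduction (as in Lemma~\ref{l-l.stable-family-lc}) to $S$ irreducible with $S'$ dense, it simply observes that density of $S'$ forces $B\ge A$, hence $S'=S$---whereas you make this step explicit by constructing the closed set $S_0$ and showing $S'\subseteq S_0$ when $\Phi\neq 0$; your version is the contrapositive unpacked, and the bookkeeping you flag (that the $F_k$ dominate $S$ and that divisorial pullback is linear) does go through as you anticipate.
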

\begin{proof}
Arguing as in the proof of Lemma \ref{l-l.stable-family-lc}, it is enough to show that $S'$ contains a non-empty open subset when $S$ is irreducible and $S'$ is dense in $S$. But in this case, $B\ge A$, so $S'=S$.   

\end{proof}

Applying the lemma to the universal family of stongly embedded stable minimal models (which are then Calabi-Yau pairs by our choice of $\sigma$) over $M^e_{\rm slc}(d,\Phi_c,\Gamma,\sigma)$ of Section \ref{s-strongly-embedded-s.m.model} we get a locally closed subset $F_{\rm slc}^e(d,\Phi_c,\Gamma,\sigma)$ parametrising srongly embedded stable Fano pairs. Taking quotient by the action of ${\rm PGL}_{n+1}(k)$ as in the proof of \ref{t-moduli-s.min.models} we can construct a projective moduli space for $\mathfrak{F}_{\rm slc}(d,\Phi_c,\Gamma,\sigma)$. To get projectivity one shows first that families of stable Fano pairs can be extended over smooth curves, after a finite base change, using the same arguments of Lemma \ref{l-s.min.model-extension-over-curves}.  

Restricting the functor $\mathfrak{F}_{\rm slc}(d,\Phi_c,\Gamma,\sigma)$ to special pairs, e.g. those where $X$ deform to $\PP^d$, gives more interesting moduli spaces.

\subsection{Moduli of log Fano fibrations}

A \emph{stable log Fano fibration} is of the form $(X,\Lambda),A\to Z$ where $(X,\Lambda+A),A$ is a stable minimal model and $\Lambda\ge 0$, and $X\to Z$ is the contraction defined by $K_X+\Lambda+A$. Then $-(K_X+\Lambda)\sim_\Q A/Z$, so we can see that $(X,\Lambda)\to Z$ is indeed a log Fano fibration (with slc singularities) which is polarised by $A$. 

For an example, consider a Fano fibration $X\to Z$ where $X$ is projective with klt singularities, $-K_X$ is ample over $Z$, and $X\to Z$ is a contraction. Then we can always find $A$ so that $(X,0),A\to Z$ is a stable Fano fibration. To be more specific, for example, assume $\mathcal{E}$ is a locally free sheaf of rank $n+1$ on $Z$ and $f\colon X=\Proj(\mathcal{E})\to Z$ is the associated projective bundle. Then taking a sufficiently ample divisor $L$ on $Z$, we can find $0\le G\sim lH+\pi^*L$ where $H$ is the tautological divisor on $X$ and $n+1<l\in \N$ so that letting $A=\frac{n+1}{l}G$ we get $(X,0),A\to Z$ which is a stable Fano fibration.

Let $d\in\N$, $c\in \Q^{> 0}$, $\Gamma\subset \Q^{>0}$ a finite set, and $\sigma\in\Q[t]$ be a polynomial.  
A \emph{$(d,\Phi_c,\Gamma,\sigma)$-stable log Fano fibration} is a stable log Fano fibration $(X,\Lambda),A\to Z$ where $(X,\Lambda+A),A$ is a $(d,\Phi_c,\Gamma,\sigma)$-stable minimal model. 

Let $S$ be a reduced scheme over $k$. A \emph{family of $(d,\Phi_c,\Gamma,\sigma)$-stable log Fano fibrations over $S$} is of the form $(X,\Lambda),A\to S$ where $(X,\Lambda+A),A\to S$ is a family of $(d,\Phi_c,\Gamma,\sigma)$-stable minimal models and $\Lambda\ge 0$. For each $s\in S$, we get a stable log Fano fibration $(X_s,\Lambda_s),A_s\to Z_s$ where $X_s\to Z_s$ is the contraction defined by $K_{X_s}+\Lambda_s+A_s$.

We define the moduli functor $\mathfrak{F}^{\rm fib}_{\rm slc}(d,\Phi_c,\Gamma,\sigma)$ from the category of reduced $k$-schemes to the category of sets by setting  
$$
{\mathfrak{F}}^{\rm fib}_{\rm slc}(d,\Phi_c,\Gamma,\sigma)(S)=\{\mbox{families of $(d,\Phi_c,\Gamma,\sigma)$-stable log Fano fibrations over $S$}, 
$$
$$
\hspace{8cm} \mbox{up to isomorphism over $S$}\}. 
$$ 
 
\begin{thm}\label{t-moduli-pol-Fano-fib}
The functor $\mathfrak{F}^{\rm fib}_{\rm slc}(d,\Phi_c,\Gamma,\sigma)$ has a projective coarse moduli space.
\end{thm}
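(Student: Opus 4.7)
The plan is to mimic closely the proof of Theorem \ref{t-moduli-pol-Fano}, treating a stable log Fano fibration $(X,\Lambda),A\to Z$ as the datum of a stable minimal model $(X,B),A$ with $B:=\Lambda+A$ together with the extra pointwise constraint $B_s\ge A_s$ (equivalently $\Lambda_s\ge 0$) on each fibre. Since $B$ and $A$ are relative Mumford divisors with coefficients in $\Phi_c=c\Z^{\ge 0}$, the divisor $\Lambda=B-A$ automatically has coefficients in $\Phi_c$ whenever $B\ge A$, so there is a natural inclusion of $\mathfrak{F}^{\rm fib}_{\rm slc}(d,\Phi_c,\Gamma,\sigma)$ as a subfunctor of $\mathfrak{S}_{\rm slc}(d,\Phi_c,\Gamma,\sigma)$, whose image is characterized by the fibrewise condition $B_s\ge A_s$.

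First I would construct the strongly embedded version of the moduli problem. Starting from the fine moduli space $M^{\rm e}_{\rm slc}\Xi'$ of Proposition \ref{p-moduli-se-pol-cy}, apply Lemma \ref{l-l.stable-Fano} to its universal family. Although the lemma is stated for stable Calabi-Yau families, its proof depends only on the fact that $B,A$ are relative Mumford divisors over a reduced base and that $S'$ is dense in $S$; it therefore extends verbatim to families of stable minimal models. This produces a locally closed reduced subscheme $F^{\rm e,fib}_{\rm slc}\Xi'\subset M^{\rm e}_{\rm slc}\Xi'$, of finite type over $k$, which is a fine moduli space for the strongly embedded version of $\mathfrak{F}^{\rm fib}_{\rm slc}$.

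Next, I would form the $\mathrm{PGL}_{n+1}(k)$-quotient exactly as in Step 3 of the proof of Theorem \ref{t-moduli-s.min.models}. The action on $F^{\rm e,fib}_{\rm slc}\Xi'$ is inherited from the action on $M^{\rm e}_{\rm slc}\Xi'$, stabilizers are finite (since the automorphism groups of the associated KSBA-stable pairs $(X,B+aA)$ are finite), and the action is proper by unique extension of stable families over smooth curves. Summing over the finitely many possible Hilbert polynomials gives a coarse moduli space as a separated algebraic space of finite type over $k$. For properness, I would adapt Lemma \ref{l-s.min.model-extension-over-curves}: given a family $(X,\Lambda),A\to S$ of stable log Fano fibrations over a smooth curve, apply the lemma to the underlying stable minimal model $(X,B),A$ to obtain an extension $(X',B'),A'\to S'$ after a finite base change, and set $\Lambda':=B'-A'$. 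Projectivity then follows from the ampleness results of \cite{K90}, \cite{F18}, \cite{KP17} applied to the natural polarization $K_{X/S}+B+aA$ on the universal family, just as in the proof of \ref{t-moduli-s.min.models}.

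The main obstacle will be this last step: verifying that the extension procedure of Lemma \ref{l-s.min.model-extension-over-curves} respects the decomposition $B=\Lambda+A$, that is, that $\Lambda'\ge 0$ on the extended family. Concretely, one has to check that the semi-stable log resolution and the MMPs on $K_W+B_W$ and on $K+B+aA$ used in the proof track the birational transforms of $\Lambda$ and $A$ separately and that no MMP step can contract a component of $\Lambda$ without a corresponding effect on $A$, so that the difference $B'-A'$ remains effective. Since $\Lambda$ and $A$ are the birational transforms of fixed divisors whose difference is effective over the generic point of $S'$, and since both effectivity and the inequality $B\ge A$ are preserved by pushforward under birational maps and by codimension-one contractions, one expects $\Lambda'\ge 0$ to follow—but a careful case-by-case analysis of which divisors can be contracted by the relevant MMPs is needed to confirm this.
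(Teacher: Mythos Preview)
Your approach is essentially the same as the paper's, which simply says the theorem ``can be proved similar to Theorem \ref{t-moduli-pol-Fano}.'' The strategy---embed $\mathfrak{F}^{\rm fib}_{\rm slc}$ as a subfunctor of $\mathfrak{S}_{\rm slc}$ cut out by $B_s\ge A_s$, use Lemma \ref{l-l.stable-Fano} (whose proof indeed makes no use of the Calabi--Yau hypothesis) to get a locally closed subscheme of the embedded moduli space, and then quotient and compactify as in Theorem \ref{t-moduli-s.min.models}---is exactly right.

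The ``main obstacle'' you flag is not an obstacle. In the proof of Lemma \ref{l-s.min.model-extension-over-curves}, one may take the compactifications $\overline{B},\overline{A}$ to be the closures of $B,A$ in $\overline{X}$, so that $\overline{B}-\overline{A}=\overline{\Lambda}\ge 0$. Then $B_W-A_W$ is the birational transform of $\overline{\Lambda}$ plus the horizontal exceptional divisors, hence effective. Every subsequent step---the MMPs on $K_W+B_W$ and on $K_{X''}+B''+aA''$, passage to the lc model, and in the non-normal case the gluing of the normalization components---produces $B',A'$ as pushforwards of $B_W,A_W$ along the \emph{same} birational maps. Pushforward of Weil divisors preserves effectivity, so $B'-A'\ge 0$ is automatic; no analysis of which divisors get contracted is needed.
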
  

This can be proved similar to Theorem \ref{t-moduli-pol-Fano}. 

\subsection{Moduli of abelian varieties with theta divisor}
Consider the data $d=1$, $c=1$, $\Gamma=\{1\}$, and $\sigma(t)=t$. 
Let $X$ be an elliptic curve and $A$ a closed point on $X$. Then $(X,0),A$ is a $(d,\Phi_c,\Gamma,\sigma)$-stable Calabi-Yau pair. The set of points in the moduli space $M_{\rm slc}(d,\Phi,\Gamma,\sigma)$ corresponding to such pairs is an open subset $U$ which is isomorphic to the moduli space $M_{1,1}$ of curves of genus $1$ with one marked point: this can be seen at the strongly embedded level discussed in Section \ref{s-strongly-embedded-s.m.model}. The closure $\overline{U}$ in $M_{\rm slc}(d,\Phi,\Gamma,\sigma)$ gives the compactification $\overline{M}_{1,1}$ which is the moduli space of stable curves of genus $1$ with a marked point. However, note that $\overline{U}\neq M_{\rm slc}(d,\Phi,\Gamma,\sigma)$ 
because the latter also parametrises $(\PP^1,B_1+B_2),S$ where $B_1,B_2,S$ are three distinct points.

More generally, consider the data $d$, $c=1$, $\Gamma=\{u\}$ where $u\in \N$, and $\sigma(t)=ut^d$. 
Let $X$ be an abelian variety of dimension $d$ and $A\ge 0$ an ample Cartier divisor of volume $u$. Then $(X,0),A$ is a $(d,\Phi_c,\Gamma,\sigma)$-stable Calabi-Yau pair. The set of points in the moduli space $M_{\rm slc}(d,\Phi_c,\Gamma,\sigma)$ corresponding to such pairs is an open subset $U$ and its closure $\overline{U}$ gives a meaningful compactification. 

Now additionally assume $u=d!$. Then by Kodaira vanishing and the Riemann-Roch theorem, 
$$
h^0(A)=\mathcal{X}(A)=\frac{\vol(A)}{d!}=1.
$$ 
So $A$ is a Theta divisor and it determines a principal polarisation, and the invertible sheaf $\mathcal{O}_X(A)$ uniquely determines $A$. So $U$ above is the moduli space of the ``pairs" $(X,\mathcal{O}_X(A))$ and its closure $\overline{U}$ is a meaningful compactification. See \cite{Al02} for more general relevant results.

\subsection{Moduli of K3 surfaces}
Consider the data $d=2$, $c=1$, $\Gamma=\{u\}$, and $\sigma(t)=ut^2$. 
Let $X$ be a K3 surface and $A\ge 0$ an ample Cartier divisor on $X$ with $\vol(A)=u$. Then $(X,0),A$ is a $(d,\Phi_c,\Gamma,\sigma)$-stable Calabi-Yau pair. The set of points in the moduli space $M_{\rm slc}(d,\Phi,\Gamma_c,\sigma)$ corresponding to such pairs is an open subset $U$ because being a K3 surface is an open condition in a family of stable Calabi-Yau pairs. The closure $\overline{U}$ gives a meaningful compactification of the moduli space of the $(X,0),A$. For related results see \cite{La16}.

Sometimes one can choose a more special $A$ in its linear system $|A|$ and consider the moduli space of the $(X,0),A$. For more on this see \cite{AET19}.

\subsection{Moduli of stable surfaces}
Let $(X,B),A\to Z$ be a stable lc minimal model. We have the following possibilities: 
\begin{itemize}
\item $\kappa(K_X+B)=2$: $X\to Z$ is birational,
\item $\kappa(K_X+B)=1$: $X\to Z$ is an elliptic fibration or a conic bundle,
\item $\kappa(K_X+B)=0$: $X\to Z$ is constant.
\end{itemize}
We can write the associated polynomial $\sigma$ explicitly as 
$$
\sigma(t)=(K_X+B+tA)^2=(K_X+B)^2+2(K_X+B)\cdot At+A^2t^2.
$$

In case $\kappa(K_X+B)=2$ and $A=0$, then $(X,B)$ is a KSBA stable surface and their moduli is constructed in \cite{Al94} and \cite{K21}. But if $A\neq 0$, then we get new moduli functors not covered by the mentioned works. 

In case $\kappa(K_X+B)=1$, $A$ is ample over $Z$, so the horizontal part of $\Supp A$ is a union of multi-sections of $X\to Z$. We have 
$$
\sigma(t)=2(K_X+B)\cdot At+A^2t^2.
$$
A special case is when $A$ is a section, in which case moduli of similar pairs were studied in \cite{ABI17}. 

In case $\kappa(K_X+B)=0$, $(X,B),A$ is a stable Calabi-Yau. 
We have $\sigma(t)=A^2t^2$.
Special cases of this, abelian surfaces and K3 surfaces, were already mentioned above. 

Although moduli of surfaces has been extensively studied in the literature but still it seems that our results tackle new moduli problems for surfaces.
 
\subsection{Moduli of stable 3-folds}
Let $(X,B),A\to Z$ be a stable lc minimal model where $d=2$. We have the following possibilities: 
\begin{itemize}
\item $\kappa(K_X+B)=3$: $X\to Z$ is birational,
\item $\kappa(K_X+B)=2$: $X\to Z$ is an elliptic fibration or a conic bundle,
\item $\kappa(K_X+B)=1$: $X\to Z$ is a K3 or abelian or del Pezzo etc fibration,
\item $\kappa(K_X+B)=0$: $X\to Z$ is constant.
\end{itemize} 

In case $\kappa(K_X+B)=2$ and $A=0$, then $(X,B)$ is a KSBA stable 3-fold and their moduli is constructed in \cite{K21}. But again as in the case of surfaces, if $A\neq 0$, then we get new moduli functors. 
In case $\kappa(K_X+B)=0$, special situations, e.g. abelian 3-folds, have been studied in the literature but it is likely that many more cases remain to be treated.   

But it seems that the cases $\kappa(K_X+B)=2$ and $\kappa(K_X+B)=1$ are largely unexplored 
in the literature. Our results open the door for the study of moduli spaces of a vast range of special cases in dimension $3$ and similarly in higher dimension. 

\subsection{Relative stable minimal models} 
The notion of stable minimal models also makes sense in the relative case. More precisely, fix a variety or scheme $Y$. A \emph{stable minimal model} $(X,B),A$ over $Y$ consists of $(X,B)$, an $\R$-divisor $A\ge 0$, and a projective moprhism $X\to Y$ such that   
\begin{itemize}
\item $(X,B)$ is a connected slc pair,
\item $K_X+B$ is semi-ample over $Y$ defining a contraction $f\colon X\to Z/Y$, 
\item $K_X+B+tA$ is ample over $Y$ for some $t>0$, and 
\item $(X,B+tA)$ is slc for some $t>0$.
\end{itemize} 
Note that this is not the same as a family of stable minimal models defined in \ref{defn-stable-minimal-family}. For example, we do not assume $X\to Y$ to be flat. But the two notions are equivalent in case $Y$ is smooth and irreducible.

\subsection{Generalisations}
One can consider generalisations of Theorem \ref{t-moduli-s.min.models} in several different directions.
\begin{itemize}
\item \emph{Non-reduced bases}: With more work one should be able to extend \ref{t-moduli-s.min.models} to the case when the moduli functor is defined for arbitrary base schemes, not just reduced ones. 

\item \emph{Real coefficients:} Although for simplicity we only considered stable minimal models $(X,B),A$ where the coefficients of $B,A$ are rational but \ref{t-moduli-s.min.models} should hold for real coefficients. 

\item \emph{Marked divisors:} It is possible to prove a version of \ref{t-moduli-s.min.models} for stable minimal models $(X,B),A$ where both $B,A$ are written as a linear combination of marked divisors. 

\item \emph{Relative case:} One can consider stable minimal models $(X,B),A$ relative to a fixed variety or scheme $T$. That is, we have a morphism $X\to T$ (not necessarily surjective) and $(X,B),A$ is stable over $T$. It should be possible to extend \ref{t-moduli-s.min.models} to this situation. 
\end{itemize}

\subsection{Questions}
We list a few questions related to stable minimal models and their moduli. 

\begin{quest}
For which set of data $d,\Phi,\Gamma,\sigma$, the set $\mathcal{S}_{\rm slc}(d,\Phi,\Gamma,\sigma)$ is non-empty?
\end{quest}

It is easy to get examples where the set is empty. For example, take $d=1$, $\Phi=\emptyset$, $\Gamma=\{1\}$. If $\deg \sigma>1$, then clearly $\mathcal{S}_{\rm slc}(d,\Phi,\Gamma,\sigma)=\emptyset$. Or if we take $\sigma=3+t$, then the set is again empty because if $X$ is a stable minimal model for the data, then $\sigma(t)=\deg K_X+t(\deg A)$, so the constant term is always even (this is clear if $X$ is normal; in the non-normal case the contribution of the nodes to $\deg K_X$ is again even). 

\begin{conj}\label{conj-no-vertical-volume}
Let $d\in\N$, $\Phi\subset \Q^{\ge 0}$ be a DCC set, and $\sigma\in\Q[t]$ be a polynomial. Then $\mathcal{S}_{\rm slc}(d,\Phi,\sigma)$ is a bounded family.
\end{conj}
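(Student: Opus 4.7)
The plan is to reduce Conjecture \ref{conj-no-vertical-volume} to Theorem \ref{t-bnd-s-mmodels-slc} by proving that under the conjecture's hypotheses the set of values
$$\{\vol(A|_F)\mid (X,B),A\in \mathcal{S}_{\rm slc}(d,\Phi,\sigma),\ F\text{ a general fibre of } X\to Z \text{ over a component of } Z\}$$
is finite, depending only on $d,\Phi,\sigma$. Granted such a finite set $\Gamma$, the conjecture becomes $\mathcal{S}_{\rm slc}(d,\Phi,\sigma)=\mathcal{S}_{\rm slc}(d,\Phi,\Gamma,\sigma)$, which is bounded by Theorem \ref{t-bnd-s-mmodels-slc}.

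The arithmetic backbone of the argument is Lemma \ref{l-s.min.mod.base-d,v-determined-by-u,sigma}: for irreducible $Z$,
$$\Ivol(K_X+B)\cdot\vol(A|_F)\ =\ c_{d-\dim Z}\big/\tbinom{d}{d-\dim Z},$$
where $c_{d-\dim Z}$ is the coefficient of $t^{d-\dim Z}$ in $\sigma$. Hence finiteness of $\vol(A|_F)$ follows from the conjunction of (i) DCC on the Iitaka volumes $\Ivol(K_X+B)$ (giving ACC on $\vol(A|_F)$ via the identity) and (ii) DCC on $\vol(A|_F)$ itself. To handle the slc case, normalise and view $(X^\nu,B^\nu),A^\nu$ as a weighted lc stable minimal model; the analogous identity from Step 1 of the proof of Proposition \ref{p-bnd-w-s-min-models-DCC} reduces matters to DCC statements on each irreducible component, and Proposition \ref{p-bnd-w-s-min-models-DCC} itself then gives boundedness once DCC of component Iitaka volumes is established.

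For (i) in the lc case, I would use the canonical bundle formula $K_X+B\sim_\Q f^*(K_Z+B_Z+M_Z)$ and show that the generalised pair $(Z,B_Z+M_Z)$ lies in $\mathcal{G}_{\rm glc}(\dim Z,\Omega)$ for some DCC set $\Omega\subset \Q^{\ge 0}$ depending only on $d,\Phi$; then $\Ivol(K_X+B)=\vol(K_Z+B_Z+M_Z)$ lies in a DCC set by \cite[Theorem 1.3]{B21b}. For the slc case, components $X_j^\nu$ whose base $Z_j$ surjects onto a component of $Z$ are handled as above; for the remaining ``chained'' components, the Stein-degree machinery of Section \ref{ss-stein-deg}, in particular Theorem \ref{t-bnd-stein-deg-lc-main} and the crepant-birational transport of non-klt centres from the proof of Theorem \ref{t-bnd-s-mmodels-slc}, would again be used to compare Iitaka volumes across intersecting components and propagate the DCC. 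For (ii), once $\vol(A|_F)$ is bounded above by the ACC from (i), the stable Calabi-Yau pair $(F,B_F),A_F$ satisfies the hypotheses of \cite[Theorem 6.4]{B20}, yielding a fixed $\tau>0$ depending only on $d,\Phi$ and the upper bound, with $(F,B_F+\tau A_F)$ slc; since $K_F+B_F+\tau A_F\sim_\Q \tau A_F$ is ample, \cite[Theorem 1.3]{HMX14} then places $\vol(\tau A_F)=\tau^{\dim F}\vol(A|_F)$ in a DCC set, giving (ii).

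The principal obstacle is the DCC claim on the coefficients of the adjunction data $(Z,B_Z+M_Z)$ without the finiteness input on $\vol(A|_F)$. The existing inputs, namely \cite[Lemma 7.4]{B21b} and Theorem \ref{t-bnd-base-adjunction}, use a fixed value (or a finite set of values) for $\vol(A|_F)$ at key points: to clear denominators of $A^\nu$ so as to invoke integral adjunction, and to pin down the Cartier index of $M_Z$ on a high resolution. Circumventing this either requires a refined adjunction formula that preserves DCC coefficients under only the DCC hypothesis on the coefficients of $B$ and $A$ (allowing unbounded denominators but controlled growth), or a direct route via the canonical bundle formula for lc-trivial fibrations in which the general-fibre discriminant is allowed to vary but its effect on $B_Z$ is still shown to be DCC. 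Establishing such a form of adjunction, together with the Stein-degree comparison needed in the slc case, is the heart of the conjecture; the remaining ingredients are already in place in the framework developed above.
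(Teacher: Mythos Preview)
The statement is a \emph{conjecture}, not a theorem; the paper does not prove it. It appears among the open questions in Section~\ref{s-examples}, and the only commentary the paper offers is that it is ``related to the effective adjunction conjecture of Prokhorov--Shokurov \cite[Conjecture 7.13]{PSh09} on moduli divisors.'' So there is no paper proof to compare your proposal against.

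Your proposal is not a proof either, and you are candid about this in your final paragraph. What you have written is a correct reduction strategy together with an accurate identification of the missing ingredient. The reduction is the natural one: the product identity of Lemma~\ref{l-s.min.mod.base-d,v-determined-by-u,sigma} converts DCC of $\Ivol(K_X+B)$ into ACC of $\vol(A|_F)$, and once $\vol(A|_F)$ is bounded above, the argument you give for its DCC (via \cite[Theorem 6.4]{B20} and \cite[Theorem 1.3]{HMX14}) is exactly the one used in Step~3 of the proof of Proposition~\ref{p-bnd-w-s-min-models-DCC}. Combining ACC and DCC yields finiteness, after which Theorem~\ref{t-bnd-s-mmodels-slc} applies.

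The obstacle you name --- controlling the Cartier index of the moduli part $M_{Z'}$ in the canonical bundle formula without any finiteness input on the general fibre --- is precisely the Prokhorov--Shokurov effective adjunction conjecture that the paper itself points to. Note that DCC of the discriminant coefficients $B_Z$ does follow from DCC of $\Phi$ alone by standard adjunction results; the entire obstruction is concentrated in bounding the index of $M_{Z'}$, as you say. Your diagnosis therefore agrees with the paper's, and the conjecture remains open.
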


Here, $\mathcal{S}_{\rm slc}(d,\Phi,\sigma)$ is defined similar to $\mathcal{S}_{\rm slc}(d,\Phi,\Gamma,\sigma)$ without the condition on $\vol(A|_F)$. 
The conjecture is related to the effective adjunction conjecture of Prokhorov-Shokurov \cite[Conjecture 7.13]{PSh09} on moduli divisors.

\begin{quest}
What kind of singularities does the moduli space $M_{\rm slc}(d,\Phi_c,\Gamma,\sigma)$ have?
\end{quest}

\begin{quest}
What is the Kodaira dimension of the components of $M_{\rm slc}(d,\Phi_c,\Gamma,\sigma)$? 
\end{quest}

One can ask other related questions, for example, when the components are uniruled or rationally connected or rational.

\begin{quest}
How the moduli spaces $M_{\rm slc}(d,\Phi_c,\Gamma,\sigma)$ vary when the data $\Phi_c,\Gamma,\sigma$ vary? 
\end{quest}


\section{\bf DCC of Iitaka volumes}

In this section, we prove some results about the DCC property of Iitaka volumes. We will not use them to prove the main results of this paper but they are of independent interest and most likely find applications elsewhere.

\subsection{Toroidal pairs}\label{ss-toroidal-pairs}
 
A pair $(Y,\Delta)$ is \emph{toroidal} if for each closed point $y\in Y$ there is a toric pair $(Y',\Delta')$ and a closed point $y'\in Y'$ such that $(Y,\Delta)$ and $(Y',\Delta')$ are formally isomorphic near $y,y'$. In particular, $\Delta$ is reduced, 
$(Y,\Delta)$ is lc, $Y$ is smooth outside $\Delta$, and 
$K_Y+\Delta$ is Cartier. For the theory of toroidal pairs, see \cite{KKMS73} where they are called toroidal embeddings.
A toroidal pair $(Y,\Delta=\sum_1^n S_i)$ is \emph{strict} if the $S_i$ are normal. 
In this case, for each subset $I\subseteq \{1,\dots,n\}$, $\bigcap_{i\in I} S_i$ is normal and $\bigcap_{i\in I} S_i\setminus \bigcup_{i\notin I} S_i $ is smooth  \cite[page 57]{KKMS73}. 
The irreducible components of such $\bigcap_{i\in I} S_i$ are the strata of $(Y,\Delta)$, hence are normal.

\begin{lem}\label{l-toroid-div-crep-bir}
Let $(Y,\Delta)$ be a log smooth toroidal pair. Let $S$ be a toroidal divisor over $Y$, i.e. $a(S,Y,\Delta)=0$, and let $V$ be its centre on $Y$. 
Let $(S,\Delta_S)$ and $(V,\Delta_V)$ be the induced toroidal structures on $S$ and $V$. Then there is a crepant birational map 
$$
(S,\Delta_S)\bir (\PP^n,\Delta_{\PP^n})\times (V,\Delta_V)
$$ 
over $V$ where $n+1$ is the codimension of $V$ in $Y$ and $\Delta_{\PP^n}$ is the toric boundary divisor on $\PP^n$. 
\end{lem}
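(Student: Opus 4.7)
The plan is to exploit the combinatorial description of toroidal divisors together with the local product structure of log smooth toroidal pairs along their strata. First I would verify that $V$ is necessarily a stratum of $(Y,\Delta)$: any toroidal divisor over $(Y,\Delta)$ corresponds combinatorially to a primitive lattice point in some cone of the fan attached to $(Y,\Delta)$, and its centre on $Y$ is the stratum associated to the relative interior of the smallest cone containing that point; the condition $a(S,Y,\Delta)=0$ forces this cone to have dimension equal to $\codim_Y V = n+1$. Shrinking around the generic point of $V$ and passing to an \'etale cover, I would use the standard local model of a log smooth toroidal pair along a stratum to obtain an isomorphism $(Y,\Delta) \cong (\A^{n+1}, \sum_{i=1}^{n+1} H_i) \times (V,\Delta_V)$, where the $H_i$ are the coordinate hyperplanes of $\A^{n+1}$ and $V \hookrightarrow Y$ corresponds to $\{0\}\times V$.

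Under this product decomposition $S$ pulls back to $S'\times V$, where $S'$ is a toric divisor over $\A^{n+1}$ with centre the origin and $a(S',\A^{n+1},\sum H_i)=0$. Such $S'$ is classified by a primitive weight vector $v=(v_1,\dots,v_{n+1})\in \Z_{>0}^{n+1}$, and the corresponding toric weighted blowup realises $S'$, together with its toroidal boundary obtained from the adjunction $K_{S'}+\Delta_{S'} = (K+\Delta+S')|_{S'}$, as the projective toric pair $(\PP(v_1,\dots,v_{n+1}),\Delta_v)$, where $\Delta_v$ is the standard toric boundary. Next I would invoke the general fact that any two projective toric varieties of the same dimension are crepant birational with respect to their toric boundaries: a common toric refinement of their fans produces toric morphisms to each, which are automatically crepant since $K+\partial\sim 0$ on both sides. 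Applied to $\PP(v_1,\dots,v_{n+1})$ and $\PP^n$ this gives a crepant birational map $(\PP(v_1,\dots,v_{n+1}),\Delta_v) \bir (\PP^n,\Delta_{\PP^n})$.

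Combining the previous two steps and multiplying by $V$ yields, on a dense open subset of $V$, a crepant birational map $(S,\Delta_S) \bir (\PP^n,\Delta_{\PP^n}) \times (V,\Delta_V)$ over $V$. Since crepancy is a birational condition that can be checked on any dense open, and since a birational map of integral varieties extends uniquely, this then defines the desired crepant birational map of the full pairs over $V$.

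The main obstacle will be the globalisation in the last step: ensuring that the \'etale-local product decomposition descends to a genuine crepant birational map over $V$ rather than merely an \'etale-local one. I would handle this by working explicitly with the toroidal modification $Y'\to Y$ extracting $S$; on $Y'$ the divisor $S$ inherits a natural fibration $S\to V$ whose generic fibre is $\PP(v_1,\dots,v_{n+1})$, and a further toric modification of this fibration over $V$ (\'etale-locally given by the refinement of fans connecting $\PP(v_1,\dots,v_{n+1})$ to $\PP^n$) realises the crepant birational equivalence globally. Checking that the boundary $\Delta_V$ on the $V$-factor is carried along correctly reduces to the fact that, in the local product model, the components of $\Delta$ meeting $V$ but not among $S_1,\dots,S_{n+1}$ are precisely the pullbacks of the components of $\Delta_V$, hence are inert under the toric modification of the $\A^{n+1}$-factor.
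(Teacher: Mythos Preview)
Your approach is correct but takes a genuinely different route from the paper's. The paper never explicitly identifies the generic fibre of $S\to V$ with a weighted projective space; instead it argues by induction on the codimension of the centre. After reducing to the case where every component of $\Delta$ through $V$ contains $V$ (so $\Delta_V=0$), the paper first treats the base case where $S$ is the exceptional divisor of the smooth blowup of $Y$ along $V$: then $S$ is generically a $\PP^n$-bundle over $V$, giving a birational map to $\PP^n\times V$ which is shown to be crepant by a direct negativity-lemma argument using that all non-klt centres are horizontal over $V$. For general $S$, the paper blows up $V$, lets $W\subset E$ be the new centre of $S$, and combines the inductively obtained crepant equivalence $(S,\Delta_S)\bir (\PP^m,\Delta_{\PP^m})\times(W,\Delta_W)$ with the base-case equivalence $(E,\Delta_E)\bir(\PP^n,\Delta_{\PP^n})\times V$; since $W$ is a toric stratum of the right-hand side, $(W,\Delta_W)\bir(\PP^l,\Delta_{\PP^l})\times V$, and concatenating gives the result. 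Your argument is more conceptual and gives more explicit information about $S$ (namely the weighted projective structure on the generic fibre), at the price of invoking the toric dictionary and the general crepant equivalence of projective toric pairs; the paper's argument is more elementary, staying within smooth blowups and the negativity lemma, but is less transparent about what $S$ actually looks like.
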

\begin{proof}
Note that the toroidal structure on $\PP^n\times V$ is given by the sum of pullbacks of $\Delta_{\PP^n}$ and $\Delta_V$ via the corresponding projections.

It is enough to treat the case when every component of $\Delta$ that intersects $V$, contains $V$. In this case $\Delta_V=0$. Also, we can assume that $V$ is not a divisor otherwise the statement would be trivial.

First assume that $S$ is the exceptional divisor of the blowup of $Y$ along $V$. Then there is a birational map 
$$
(S,\Delta_S)\bir (\PP^n,\Delta_{\PP^n})\times V=:(T,\Delta_T)
$$ 
which is an isomorphism over the generic point $\eta_V$. Since every component of $\Delta$ contains $V$, every non-klt centre on both sides is horizontal over $V$. Thus taking a common resolution $\phi \colon U\to S$ and $\psi\colon U\to T$, we see that 
$$
K_S+\Delta_S\ge \phi_*\psi^*(K_T+\Delta_T) ~~\mbox{and}~~\psi_*\phi^*(K_S+\Delta_S)\le K_T+\Delta_T
$$
both hold: indeed, writing $\phi_*\psi^*(K_T+\Delta_T)=K_T+\Theta_T$, every component of $\Theta_T$ with positive coefficient is a compoenent of $\rddown{\Theta_T}$, hence is a non-klt place of $(T,\Delta_T)$, so is horizontal over $V$ which means it is a component of $\Delta_S$; a similar remark applies on $T$. Applying the negativity lemma implies $\phi^*(K_S+\Delta_S)=\psi^*(K_T+\Delta_T)$, hence the above map is crepant birational.

Now we treat the general case. Let $E$ be the exceptional divisor of the blowup of $Y$ along $V$ and let $W$ be the centre of $S$ on the blowup, which is contained in $E$. If $\dim W=\dim V$, then $W\to V$ is an isomorphism, hence we may replace $Y$ with the blowup and repeat the argument. So assume $\dim W>\dim V$. 
Let $(W,\Delta_W)$ be the induced toroidal pair. By reverse induction on codimension, we can assume that we already have a crepant birational map 
$$
(S,\Delta_S)\bir (\PP^m,\Delta_{\PP^m})\times (W,\Delta_W) 
$$ 
for some $m$. On the other hand, by the previous paragraph, we have a  crepant birational map 
$$
(E,\Delta_E)\bir (\PP^n,\Delta_{\PP^n})\times V.
$$ 
Moreover, $W$ is a horizontal$/V$ non-klt centre of $(E,\Delta_E)$, so we get a crepant birational map 
$$
(W,\Delta_W)\bir (R,\Delta_R) 
$$ 
where $R$ is some non-klt centre of $(\PP^n,\Delta_{\PP^n})\times V$. But such centres are again products, that is, we get a crepant birational map 
$$
(W,\Delta_W)\bir (R,\Delta_R)\simeq (\PP^l,\Delta_{\PP^l})\times V 
$$ 
for some $l$. Combining the above we get a crepant birational map 
$$
(S,\Delta_S)\bir (\PP^m,\Delta_{\PP^m})\times (\PP^l,\Delta_{\PP^l})\times V
$$ 
which in turn gives a crepant birational map 
$$
(S,\Delta_S)\bir (\PP^{m+l},\Delta_{\PP^{m+l}})\times V.
$$ 
Note that $n=m+l$.
\end{proof}

\subsection{DCC over fixed pair}

\begin{lem}\label{l-dcc-ivol-fixed-bir-model}
Let $\Phi\subset \Q^{\ge 0}$ be a DCC set.
Assume that $(Y,\Delta)$ is a projective strictly toroidal pair. Consider projective pairs $(X,B)$ such that 
\begin{itemize}
\item $(X,B)$ is lc and has a good minimal model, 
\item the coefficients of $B$ are in $\Phi$, and 
\item we have a birational contraction $\phi\colon X\to Y$ with $\phi_*B\le \Delta$.  
\end{itemize}
Then the set of $\Ivol(K_X+B)$ for such $(X,B)$ satisfies the DCC.
\end{lem}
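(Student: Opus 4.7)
The plan has two main stages: a toroidal reduction that places $B$ inside a toroidal boundary on a birational model, followed by an analysis of the Iitaka fibration via the canonical bundle formula that reduces the DCC question for $\Ivol(K_X+B)$ to DCC of volumes for generalised pairs.

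For the toroidal reduction, I would pick a toroidal modification $\pi\colon W\to Y$ that dominates $X$ via some $\psi\colon W\to X$, and let $\Delta_W$ be the toroidal boundary of $W$. Set $B_W$ to be the birational transform of $B$ plus the reduced sum of the $\psi$-exceptional prime divisors. Since $\phi\psi=\pi$, every $\psi$-exceptional prime is also $\pi$-exceptional, hence a component of $\Delta_W$, and together with $\phi_*B\le \Delta$ this gives $B_W\le \Delta_W$. Because $(X,B)$ is lc, one obtains $K_W+B_W=\psi^*(K_X+B)+F$ with $F\ge 0$ and $\psi$-exceptional; pushforward along $\psi$ then gives $H^0(\lfloor m(K_W+B_W)\rfloor)\simeq H^0(\lfloor m(K_X+B)\rfloor)$ for every $m\ge 0$, so $\Ivol(K_W+B_W)=\Ivol(K_X+B)$. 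The coefficients of $B_W$ stay in the DCC set $\Phi\cup\{1\}$, and a good minimal model of $(X,B)$ induces one of $(W,B_W)$ since the added exceptional part is contracted in any such model. So I may replace $(X,B)\to Y$ with $(W,B_W)\to Y$ and assume $\phi$ itself is a toroidal morphism with $\Supp B\subseteq \Delta_X$.

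In this reduced setting, take a good minimal model $(X',B')$ of $(X,B)$ and the contraction $g\colon X'\to Z$ defined by the semi-ample divisor $K_{X'}+B'$, and write $K_{X'}+B'\sim_\R g^*L$ with $L$ ample on $Z$, so that $\Ivol(K_X+B)=\vol(L)$. The canonical bundle formula of Kawamata--Ambro--Fujino yields $L\sim_\R K_Z+B_Z+M_Z$ with $(Z,B_Z+M_Z)$ a generalised lc pair; ACC of log canonical thresholds \cite{HMX14} places the coefficients of $B_Z$ in a DCC set depending only on $d$ and $\Phi$. The toroidal hypothesis then supplies the crucial uniform bound on the Cartier index of the nef b-divisor $M'$ on a sufficiently high model of $Z$: the general log fibre $(F,B_F)$ of $g$ is toroidal with $K_F+B_F\sim_\Q 0$, and the minimal $m\in \N$ with $m(K_F+B_F)\sim 0$ is bounded in terms of $(Y,\Delta)$ and $\Phi$ alone, which via the standard construction of the moduli part gives the desired index bound.

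Putting these together, $(Z,B_Z+M_Z)$ is a generalised lc polarised pair of dimension $\le d$, with $K_Z+B_Z+M_Z$ ample, DCC coefficients of $B_Z$, and bounded Cartier index of the nef part; DCC of volumes for such generalised pairs (the generalised-pair analogue of \cite[Theorem 1.3]{HMX14}, used in the proof of Theorem \ref{t-bnd-gen-klt-pairs}) yields $\vol(K_Z+B_Z+M_Z)=\Ivol(K_X+B)$ in a DCC set. The main obstacle is the uniform Cartier-index bound on the moduli part $M_Z$; in full generality this would be the effective adjunction conjecture of Prokhorov--Shokurov, but the toroidal structure of $(Y,\Delta)$ constrains the combinatorial type, and hence the index, of the general fibres of the Iitaka fibration, making the required bound accessible here.
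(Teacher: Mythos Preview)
Your approach diverges from the paper's and has a genuine gap at the point you yourself flag. The claim that the general log fibre $(F,B_F)$ of the Iitaka fibration $g\colon X'\to Z$ is toroidal is not justified: $X'$ is a minimal model of $X$, obtained by an MMP, and there is no reason the toroidal structure survives to $X'$ or restricts to a toroidal structure on a general fibre of $g$. Even granting some toroidal control on $F$, the inference from a bounded index $m$ with $m(K_F+B_F)\sim 0$ to a bounded Cartier index of the moduli b-divisor $M_{Z'}$ is precisely the effective adjunction conjecture; your last sentence acknowledges this but does not supply an argument, and ``the toroidal structure constrains the combinatorial type'' is an assertion, not a proof. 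Without that index bound you cannot place $(Z,B_Z+M_Z)$ in a class of generalised pairs with DCC data, so the appeal to DCC of generalised volumes does not go through.

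The paper avoids the canonical bundle formula entirely here. After the same toroidal reduction you describe, it argues by contradiction on a sequence with strictly decreasing $\Ivol$ and splits into two cases according to whether the minimal model $(X_i'',B_i'')$ is klt over the generic point of $Z_i''$. In the non-klt case it uses Lemma~\ref{l-stein-deg-lc-bnd-comp} to produce a horizontal non-klt divisor $S_i'$ of bounded Stein degree, then Lemma~\ref{l-toroid-div-crep-bir} to land all the restricted pairs $(S_i',B_{S_i'})$ on a \emph{single} lower-dimensional toroidal pair $(T,\Delta_T)$, and finishes by induction on dimension. In the klt-over-$\eta_Z$ case it runs the b-divisor limit argument of \cite{HMX14} (as in \cite{B21b}), comparing $\phi_i^*(K_Y+tC)$ for a suitable limit boundary $C$. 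Neither branch needs any control on the moduli part of an adjunction formula. If you want to salvage your strategy, you would need an honest proof that, for toroidal modifications of a fixed $(Y,\Delta)$ with boundary supported in the toroidal locus, the moduli part of the Iitaka fibration has bounded b-Cartier index; absent that, the paper's case split is the way through.
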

\begin{proof}
Assume not. Then we can find a sequence $\phi_i\colon (X_i,B_i)\to Y$ of pairs as in the proposition so that the Iitaka volumes $v_i=\Ivol(K_{X_i}+B_i)$ form a strictly decreasing sequence. We can assume that $(X_i,B_i)$ is $\Q$-factorial dlt, and running an MMP on $K_{X_i}+B_i$ over $Y$ with scaling of some ample divisor we reach a model on which the pushdown of $K_{X_i}+B_i$ is a limit of movable/$Y$ $\Q$-divisors. Replacing $X_i$ with that model, we can assume that $K_{X_i}+B_i$ is a limit of movable/$Y$ $\Q$-divisors. In particular, by \cite[Lemma 3.3]{B12},
$$
K_{X_i}+B_i+G_i=\phi_i^*(K_Y+\Delta)
$$
where $G_i\ge 0$. Thus every component of $B_i$ is toroidal with respect to $(Y,\Delta)$.

Let $(X_i'',B_i'')$ be a good minimal model of $(X_i,B_i)$ and $X_i''\to Z_i''$ be the contraction defined by $K_{X_i''}+B_i''$. 
First assume that for each $i$, $(X_i'',B_i'')$ is not klt over $\eta_{Z_i''}$. In this case, we first change models and assume $(Y,\Delta)$ is log smooth. By Lemma \ref{l-stein-deg-lc-bnd-comp}, there is a crepant birational model $(X_i',B_i')$ of $(X_i'',B_i'')$ and a normal component $S_i'$ of $\rddown{B_i'}$ such that 
$$
v_i=\Ivol(K_{X_i''}+B_i'')=\Ivol(K_{X_i'}+B_i')=\frac{1}{\sdeg(S_i'/Z_i'')}\Ivol((K_{X_i'}+B_i')|_{S_i'})
$$
where $\sdeg(S_i'/Z_i'')$ is bounded from above depending only on $\dim X_i''=\dim Y$. By the previous paragraph, 
$$
0=a(S_i',X_i'',B_i'')\ge a(S_i',X_i,B_i)\ge a(S_i',Y,\Delta),
$$
hence $S_i'$ is ``toroidal" with respect to $(Y,\Delta)$. More precisely, taking a common resolution $\alpha_i\colon W_i\to X_i'$ and $\beta_i\colon W_i\to X_i$ and $\gamma_i\colon W_i\to Y$, and letting $S_i$ be the birational transform of $S_i'$ on $W_i$, then $S_i$ is toroidal with respect to $(Y,\Delta)$, and \
$$
\alpha_i^*(K_{X_i'}+B_i')\le \beta_i^*(K_{X_i}+B_i)\le \gamma_i^*(K_Y+\Delta),
$$ 
hence 
$$
{\bar{\alpha}_i}^*(K_{S_i'}+B_{S_i'}:=(K_{X_i'}+B_i')|_{S_i'})\le (K_{S_i}+\Delta_{S_i}):=\gamma_i^*(K_Y+\Delta)|_{S_i}
$$ 
where $\bar{\alpha}_i$ denotes $S_i\to S_i'$. On the other hand, by Lemma \ref{l-toroid-div-crep-bir}, 
$({S_i},\Delta_{S_i})$ is crepant birational to a toroidal pair $(T,\Delta_T)$ which we can assume to be independent of $i$. If $\mu_i\colon V_i\to S_i'$ and $\nu_i\colon V_i\to T$ is a common resolution, then 
$$
{\nu_i}_*\mu_i^*(K_{S_i'}+B_{S_i'})\le K_T+\Delta_T.
$$
Therefore, applying induction on dimension we deduce that $\{\Ivol(K_{S_i'}+B_{S_i'})\}$ satisfies DCC. Since $\sdeg(S_i'/Z_i'')$ is bounded from above, $\{\Ivol(K_{X_i}+B_{i})\}$ satisfies DCC, a contradiction. 

From now on we assume that $(X_i'',B_i'')$ is klt over $\eta_{Z_i''}$. In this case we can apply the proof of \cite[?]{HMX14}. We follow the presentation of the proof of \cite[?]{B21b} and give a sketch. Let ${\bf{M}}_i$ be the b-divisor so that the trace ${\bf{M}}_{i,X_i}=B_i$ but its coefficient in any exceptional$/X_i$ prime divisor is $1$. One reduces to the case such that for any prime divisor $D$ over $Y$, the coefficients $\mu_D{\bf{M}}_i$ form an increasing sequence for $i\gg 0$. We can then define a limit b-divisor ${\bf{C}}=\lim {\bf{M}}_i$. Let $C=\bf{C}_Y$. In the end one reduces to the situation in which if we write $K_{X_i}+\Theta_i=\phi_i^*(K_Y+C)$, then for each prime divisor $D$ on $X_i$, we have $\mu_D{\bf{C}}\ge \mu_{D}\Theta_i$.   

Assume that $S$ is a component of $C$ such that $\mu_S{\phi_i}_*B_i=1$ for every $i$. Since $S_i$ is not horizontal over $Z_i''$, decreasing the coefficient of $S$ in $B_i$ slightly can change the Iitaka volume $\Ivol(K_{X_i}+B_i)$ slightly, hence we can assume that $\rddown{{\phi_i}_*B_i}=0$ for every $i$. 

Pick a rational number $t\in (0,1)$ so that ${\phi_i}_*B_j\le tC$ for some $j$. Then $K_{X_i}+B_i\ge \phi_i^*(K_Y+tC)$ for $i\gg 0$ and $K_{X_j}+B_j\le \phi_j^*(K_Y+tC)$. Therefore, for $i\gg j$, 
$$
v_i=\Ivol(K_{X_i}+B_i)\ge v_j=\Ivol(K_{X_j}+B_j).
$$ 
This contradicts the assumption that the Iitaka volumes $\Ivol(K_{X_i}+B_i)$ form a strictly decreasing sequence. 
 
\end{proof}

\subsection{DCC in general}

\begin{lem}\label{l-Ivol-constant-in-log-smooth-families}
Let $(X,B)\to Z$ be a relatively log smooth family where $Z$ is smooth and $B$ is a $\Q$-divisor. 
Assume that a log fibre $(X_0,B_0)$ has a good minimal model. Then $\Ivol(K_{X_z}+B_z)$ is constant for the closed points $z\in Z$.  
\end{lem}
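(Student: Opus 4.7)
The plan is to combine the existence of a relative good minimal model (provided by invariance of log plurigenera in relatively log smooth families) with the base-change properties of the associated Iitaka fibration, so that the Iitaka volume on each fibre is identified with the volume of a fixed relatively ample divisor on the base of that fibration.

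First I would propagate the good minimal model hypothesis to the whole family. Because $(X,B)\to Z$ is relatively log smooth and the log fibre $(X_0,B_0)$ has a good minimal model, by \cite[Theorem 1.2]{HMX18} (compare the argument in Lemma \ref{l-family-dense-lc-s-ample}) $(X,B)$ has a good minimal model over $Z$, and every log fibre $(X_z,B_z)$ has a good minimal model; moreover the log Iitaka dimension $\kappa(K_{X_z}+B_z)=:\kappa$ is the same for all closed $z\in Z$. By \cite[Theorem 1.9]{B12} the relative good minimal model can be reached by an MMP/$Z$ on $K_X+B$; call it $(Y,B_Y)\to Z$. Then $K_Y+B_Y$ is semi-ample over $Z$ and, after shrinking suitably to control the flatness of $Y\to Z$, for each closed $z\in Z$ the log fibre $(Y_z,B_{Y,z})$ is a good minimal model of $(X_z,B_z)$, so
\[
h^0\bigl(X_z,\,m(K_{X_z}+B_z)\bigr)=h^0\bigl(Y_z,\,m(K_{Y_z}+B_{Y,z})\bigr)
\]
for every sufficiently divisible $m\in\N$.

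Next I would use the relative contraction defined by $K_Y+B_Y$. Let $g\colon Y\to W$ over $Z$ be this contraction, with $K_Y+B_Y\sim_\Q g^*H$ and $H$ ample over $Z$. For $m$ sufficiently divisible, the sheaf $\pi_*\mathcal{O}_W(mH)$ (where $\pi\colon W\to Z$) is locally free on $Z$ by ampleness and Serre vanishing, and its formation commutes with base change. The projection formula and base change give $g_*\mathcal{O}_Y(m(K_Y+B_Y))\simeq\pi^*$ of a line bundle, so for each closed $z\in Z$,
\[
h^0\bigl(Y_z,\,m(K_{Y_z}+B_{Y,z})\bigr)=h^0\bigl(W_z,\,mH|_{W_z}\bigr),
\]
and in particular the right-hand side is locally constant in $z$. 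By the asymptotic Riemann--Roch theorem on $W_z$ applied to the ample divisor $H|_{W_z}$ of dimension $\dim W_z=\kappa$, we obtain
\[
\Ivol(K_{X_z}+B_z)=\vol(H|_{W_z})=\lim_{m\to\infty}\frac{h^0(Y_z,\,m(K_{Y_z}+B_{Y,z}))}{m^{\kappa}/\kappa!},
\]
which is therefore locally constant on $Z$. Since $Z$ is smooth, each connected component is irreducible and $\Ivol(K_{X_z}+B_z)$ is constant on it.

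The main obstacle is ensuring that the relative good minimal model $(Y,B_Y)\to Z$ really does restrict to a good minimal model of $(X_z,B_z)$ on every log fibre (not merely over a dense open subset of $Z$), together with the base-change compatibility of the Iitaka fibration $g\colon Y\to W$. This is exactly the content pulled from the invariance of log plurigenera in \cite[Theorem 1.2]{HMX18}; once it is in place, the rest of the argument is the standard identification of the Iitaka volume with the volume of the ample polarisation of the lc model, combined with Lemma \ref{l-vol-loc-constant}.
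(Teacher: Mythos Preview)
Your proposal is correct and follows essentially the same route as the paper: pass to a relative good minimal model/lc model over $Z$ via \cite{HMX18}, identify $\Ivol(K_{X_z}+B_z)$ with the volume of the restriction of the relatively ample divisor on the lc model, and conclude constancy. The paper's proof is shorter only because it cites \cite[Corollary 1.4]{HMX18} directly, which already packages the statement that the relative lc model $V\to Z$ restricts to the lc model of $(X_z,B_z)$ on \emph{every} closed fibre; this dispenses with your intermediate step through $(Y,B_Y)$, the cohomology/base-change discussion, and in particular removes the need for any ``shrinking to control flatness'' (which, as you note, is the only delicate point). Once you replace your citation of \cite[Theorem 1.2]{HMX18} by \cite[Corollary 1.4]{HMX18}, your argument collapses to exactly the paper's three-line proof.
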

\begin{proof}
By \cite[Corollary 1.4]{HMX18}, $(X,B)$ has a good minimal model over $Z$ hence it has lc model $V$ over $Z$, and $V_z$ is the lc model of $({X_z},B_z)$ for every closed point $z$. 
Moreover, if $H$ is the ample$/Z$ divisor on $V$ determined by $(X,B)$, then $H|_{V_z}$ is the ample divisor on $V_z$ determined by $(X_z,B_z)$. Therefore, $\Ivol(K_{X_z}+B_z)=\vol(H|_{V_z})$ is independent of $z$.

\end{proof}

\begin{prop}\label{p-dcc-ivol-bnd-bir-model}
Let $\Phi\subset \Q^{\ge 0}$ be a DCC set and $\mathcal{P}$ be a bounded set of couples.
Consider projective pairs $(X,B)$ such that 
\begin{itemize}
\item $(X,B)$ is lc and has a good minimal model,
\item the coefficients of $B$ are in $\Phi$, and 
\item there exist $(Y,\Delta)\in \mathcal{P}$ and a birational map $\psi\colon Y\bir X$ such that $\psi^*(K_X+B)\le K_Y+\Delta$.  
\end{itemize}
Then the set of $\Ivol(K_X+B)$ for such $(X,B)$ satisfies the DCC.
\end{prop}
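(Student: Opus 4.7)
The plan is to spread the couples of $\mathcal P$ into a single bounded family, take a simultaneous log resolution, and reduce to the fixed-base case of Lemma \ref{l-dcc-ivol-fixed-bir-model} via the constancy of Iitaka volumes in relatively log smooth families (Lemma \ref{l-Ivol-constant-in-log-smooth-families}).

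First, I would argue by contradiction: suppose $(X_i,B_i,Y_i,\Delta_i,\psi_i)$ gives a sequence with strictly decreasing Iitaka volumes $v_i$. By boundedness of $\mathcal P$, after passing to a subsequence all $(Y_i,\Delta_i)$ are fibres $(V_{t_i},C_{t_i})$ of a single projective morphism $V\to T$ with a reduced divisor $C$ on $V$. Take a log resolution $V'\to V$ of $(V,C)$ and let $C'$ be the reduced sum of the birational transform of $C$ and the exceptional divisors. Noetherian induction and stratification let us assume $T$ is smooth irreducible and $(V',C')\to T$ is relatively log smooth, so each fibre $(V'_t,C'_t)$ is a projective strictly toroidal log smooth pair. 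Denote the components of $C'$ by $C'_1,\dots,C'_n$.

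Next, for each $i$, pick a common log resolution of $(Y_i,\Delta_i)$ and $X_i$ dominating $V'_{t_i}$; by further blowups in family and re-stratifying, we may assume this resolution equals $W_i:=V'_{t_i}$, with morphisms $\alpha_i\colon W_i\to Y_i$ and $\beta_i\colon W_i\to X_i$. Let $K_{W_i}+B_{W_i}=\beta_i^*(K_{X_i}+B_i)$. The hypothesis $\psi_i^*(K_{X_i}+B_i)\le K_{Y_i}+\Delta_i$ translates to $B_{W_i}\le \alpha_i^*(K_{Y_i}+\Delta_i)-K_{W_i}$, whose support lies in $C'_{t_i}$; since $(X_i,B_i)$ is lc, coefficients of $B_{W_i}$ are at most $1$, and those on non-$\beta_i$-exceptional components lie in $\Phi$. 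Let $I_i\subseteq\{1,\dots,n\}$ index the components $C'_j$ whose restrictions to $V'_{t_i}$ are $\beta_i$-exceptional; after passing to a subsequence we may assume $I_i=I$ is fixed. Define $\widetilde B_{W_i}$ on $W_i$ by setting the coefficient of $C'_j|_{V'_{t_i}}$ to be $1$ for $j\in I$ and to agree with that of $B_{W_i}$ for $j\notin I$. Then $\widetilde B_{W_i}-B_{W_i}$ is effective and $\beta_i$-exceptional, so $\Ivol(K_{W_i}+\widetilde B_{W_i})=v_i$, and the coefficients of $\widetilde B_{W_i}$ lie in the DCC set $\Phi\cup\{1\}$.

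Finally, let $c^{(i)}_j$ be the coefficient of $C'_j|_{V'_{t_i}}$ in $\widetilde B_{W_i}$ and set $\mathcal B_i:=\sum_j c^{(i)}_j C'_j$ on $V'$. Then $(V',\mathcal B_i)\to T$ is relatively log smooth, its fibre at $t_i$ is $(W_i,\widetilde B_{W_i})$, and this fibre is dlt with a good minimal model because its lc model coincides with that of $(X_i,B_i)$. Lemma \ref{l-Ivol-constant-in-log-smooth-families} gives $v_i=\Ivol(K_{V'_{s_0}}+(\mathcal B_i)_{s_0})$ for any fixed closed point $s_0\in T$. Applying Lemma \ref{l-dcc-ivol-fixed-bir-model} to the fixed projective strictly toroidal pair $(V'_{s_0},C'_{s_0})$ with $\phi$ the identity and boundaries $(\mathcal B_i)_{s_0}\le C'_{s_0}$ whose coefficients lie in $\Phi\cup\{1\}$ yields DCC of $\{v_i\}$, contradicting the initial choice. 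The hard part is the transfer in the second paragraph: one must show $\Supp B_{W_i}\subseteq\Supp C'_{t_i}$ from the discrepancy inequality, and then absorb the $(X_i,B_i)$-dependent $\beta_i$-exceptional coefficients into a family-global divisor by fixing the exceptional pattern $I$ along a subsequence.
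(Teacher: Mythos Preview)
There is a genuine gap in your second paragraph. You assert that ``by further blowups in family and re-stratifying'' one may take the common resolution of $(Y_i,\Delta_i)$ and $X_i$ to be the fibre $W_i=V'_{t_i}$ of a \emph{single} fixed relatively log smooth family $(V',C')\to T$, with a fixed list of components $C'_1,\dots,C'_n$ and a morphism $\beta_i\colon W_i\to X_i$ for every $i$. But only the couples $(Y_i,\Delta_i)$ are bounded; the pairs $(X_i,B_i)$ are not. A common resolution of $Y_i$ and $X_i$ may require arbitrarily many blowups as $i$ varies, and these cannot all be absorbed into one family with a fixed divisor $C'$. Everything downstream---fixing the exceptional index set $I$ along a subsequence, defining the global divisor $\mathcal B_i=\sum_j c^{(i)}_j C'_j$, and applying Lemma~\ref{l-dcc-ivol-fixed-bir-model} with $\phi$ the identity on a fixed fibre---rests on this impossible absorption.

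The paper circumvents this precisely by \emph{not} forcing the $X_i$ into a bounded family. After replacing $(X_i,B_i)$ by a model admitting a morphism to $Y_i$ (as you also do) and running an MMP over $Y_i$, it reduces further to the case where each $X_i$ is obtained from $Y_i$ by a, possibly long, sequence of smooth \emph{toroidal} blowups with respect to $(Y_i,\Delta_i)$. Spreading only the bounded $(Y_i,\Delta_i)$ in a relatively log smooth family $(\overline{Y},\overline{\Delta})\to T$ whose strata have irreducible fibres, each individual toroidal blowup sequence $X_i\to Y_i$ extends over all of $T$ to $\overline{X}_i\to\overline{Y}$ (toroidal blowups being determined by strata). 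Restricting to one fixed fibre $(F,\Delta_F)$ produces varying pairs $(F_i,B_{F_i})$ with birational morphisms $F_i\to F$; the $F_i$ are still unbounded in $i$, but now Lemma~\ref{l-dcc-ivol-fixed-bir-model} applies with the fixed target $(F,\Delta_F)$ and $\phi=\alpha_i$. The transfer of Iitaka volumes via Lemma~\ref{l-Ivol-constant-in-log-smooth-families} is then exactly as you outlined. In short, the missing idea is to keep the $X_i$ as varying covers of a fixed toroidal base rather than try to squeeze all of them into a single model.
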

\begin{proof}
Replacing each $(Y,\Delta)\in \mathcal{P}$ with a bounded log resolution, we can assume that the $(Y,\Delta)$ are log smooth. Let $\phi\colon W\to Y$ and $\beta\colon W\to X$ be a common resolution. Let $E$ be the sum of the exceptional$/X$ prime divisors $D$ on $W$ such that $a(D,X,B)<1$. Let $B_W$ be the sum of the birational transform of $B$ and $E$. Then we can write 
$$
K_W+B_W=\beta^*(K_X+B)+G
$$
where $G\ge 0$. In particular, the good minimal model of $(X,B)$ is a good weak lc model of $(W,B_W)$  from which we can produce a good minimal model of $(W,B_W)$, by the results of \cite{B12}. Moreover,  
$$
\Ivol(K_W+B_W)=\Ivol(K_X+B).
$$ 
Since $\psi^*(K_X+B)\le K_Y+\Delta$ and since $\Delta$ is reduced, $\phi_*B_W\le \Delta$. Replacing $(X,B)$ with $(W,B_W)$, we can assume that $\phi\colon X\bir Y$ is a morphism with $\phi_*B\le \Delta$. 

The rest of the proof is similar to the proof of \cite[Theorem 1.3(1)]{HMX14} (see also the presentation in the proof of \cite[Theorem 1.3]{B21b}). We then only give a sketch.

Running an MMP on $K_X+B$ over $Y$, we can assume that each component of $B$ is toroidal with respect to $(Y,\Delta)$. Moreover, we can replace $(X,B)$ hence assume that $X$ is obtained from $Y$ by a sequence of smooth blowups of $Y$, toroidal with respect to $(Y,\Delta)$.   

Now assume that there is a sequence $(X_i,B_i)\to (Y_i,\Delta_i)$ of pairs and maps as in the proposition, satisfying the properties of the last paragraph, so that the $v_i=\Ivol(K_{X_i}+B_i)$ form a strictly decreasing sequence of numbers. Since the $(Y_i,\Delta_i)$ are bounded, we can assume that there is a relatively log smooth family $(\overline{Y},\overline{\Delta})\to T$ to a smooth variety such that each $(Y_i,\Delta_i)$ is the log fibre of $(\overline{Y},\overline{\Delta})\to T$ over a closed point $t_i$. After a base change we can in addition assume that the strata of $(\overline{Y},\overline{\Delta})$ have irreducible fibres over $T$. Fix a log fibre $(F,\Delta_F)$ of $(\overline{Y},\overline{\Delta})\to T$. 

Each $(X_i,B_i)\to Y_i$ determines a sequence of smooth blowups $\overline{X}_i\to \overline{Y}$ and a boundary $\overline{B}_i$ with coefficients in $\Phi$ so that $B_i=\overline{B}_i|_{X_i}$.  
This in turn induces a sequence of smooth blowups $\alpha_i\colon F_i\to F$ and a boundary $B_{F_i}$ with ${\alpha_i}_*B_{F_i}\le \Delta_F$. By Lemma \ref{l-Ivol-constant-in-log-smooth-families}, 
$$
\Ivol(K_{F_i}+B_{F_i})=\Ivol(K_{X_i}+B_i)
$$ 
for each $i$. Moreover, since $(X_i,B_i)$ has a good minimal model, $(F_i,B_{F_i})$ has a good minimal model, by \cite[Theorem 1.2]{HMX18}. But then applying Lemma \ref{l-dcc-ivol-fixed-bir-model} gives a contradiction.

\end{proof}


\vspace{2cm}

\small
\textsc{Yau Mathematical Sciences Center, JingZhai, Tsinghua University, Hai Dian District, Beijing, China 100084  } \endgraf
\vspace{0.5cm}
\email{Email: birkar@tsinghua.edu.cn\\}


\begin{thebibliography}{elmnp}

\bibitem[Al02]{Al02}
V. Alexeev, \emph{Complete moduli in the presence of semiabelian group action}, 
Ann. of Math.(2) 155 (2002), no. 3, 611--708.

\bibitem[Al96]{Al96}
V. Alexeev, \emph{Log canonical singularities and complete moduli of stable pairs},
arXiv:alg-geom/9608013.

\bibitem[Al94]{Al94} 
V. Alexeev, \emph{Moduli spaces $M_{g,n}(W)$ for surfaces,} 
Higher-dimensional complex varieties (Trento, 1994), de Gruyter, Berlin, 1996, pp. 1--22.

\bibitem[AET19]{AET19} 
V. Alexeev, P. Engel, and A. Thompson, \emph{Stable pair compactification of
moduli of K3 surfaces of degree 2}, arXiv:1903.09742.

\bibitem[Am03]{Am03} 
F. Ambro, \emph{Quasi-log varieties}, Tr. Mat. Inst. Steklova 240 (2003), Bi-
ratsion. Geom. Linein. Sist. Konechno Porozhdennye Algebry, 220--239;
translation in Proc. Steklov Inst. Math. 2003, no. 1 (240), 214--233.

\bibitem[ABI17]{ABI17} 
K. Ascher, D. Bejleri, G. Inchiostro, 
\emph{Moduli of weighted stable elliptic surfaces and invariance of log plurigenera}, 
 arXiv:1702.06107v3.

\bibitem[B22]{B22}  
C. Birkar, {\emph{Boundedness of Fano type fibrations}}, 
To appear in Ann. Sci. ENS, arXiv:2209.08797.

\bibitem[B21b]{B21b}  
C. Birkar, {\emph{Boundedness and volume of generalised pairs.}}
arXiv:2103.14935v2.
 
\bibitem[B20c]{B20c}  
C. Birkar, {\emph{Generalised pairs in birational geometry.}}
EMS Surv. Math. Sci. 8 (2021), 5--24.

\bibitem[B20b]{B20b} 
C. Birkar, {\emph{On connectedness of non-klt loci of singularities of pairs.}}
To appear in Journal of Differential Geometry, arXiv:2010.08226.

\bibitem[B20]{B20}  
C. Birkar, {\emph{Geometry of polarised varieties}}.
To appear in Pub. Math. IHES, arXiv:2006.11238v2 (2020). 
 
\bibitem[B21a]{B21a}  
C. Birkar, {\emph{Singularities of linear systems and boundedness of Fano varieties.}}
Ann. of Math, \textbf{193}, No. 2 (2021), 347--405.

\bibitem[B19]{B19}
C.~Birkar; \emph{Anti-pluricanonical systems on Fano varieties}, 
 Ann. of Math. \textbf{190}, No. 2 (2019), 345--463.

\bibitem[B16]{B16}  
C. Birkar; \emph{Singularities on the base of a Fano type fibration.}  
{J. Reine Angew Math.}, \textbf{715} (2016), 125--142.

\bibitem[B12]{B12}
C.~Birkar,
\emph{Existence of log canonical flips and a special LMMP},
Pub. Math. IHES., \textbf{115} (2012), 325--368.

\bibitem[BCHM10]{BCHM10}
C.~Birkar, P.~Cascini, C.~Hacon and J.~M$^{\rm c}$Kernan;
\emph{Existence of minimal models for varieties of log general type},
J. \ Amer. \ Math. \ Soc. \textbf{23} (2010), no. 2, 405--468.

\bibitem[BH22]{BH22}  
C. Birkar, C.D. Hacon, {\emph{Variations of generalised pairs.}}
arxiv:2204.10456v1.

\bibitem[BZ16]{BZ16}
C.~Birkar and D-Q. Zhang; 
\emph{Effectivity of Iitaka fibrations and pluricanonical systems of polarized pairs},
 Pub. Math. IHES. \textbf{123} (2016), 283--331.
 
\bibitem[BH98]{BH98} 
W. Bruns, J. Herzog, \emph{Cohen-Macaulay rings}, Cambridge University Press (1998).

\bibitem[DM69]{DM69}
P. Deligne and D. Mumford, \emph{The irreducibility of the space of curves of given genus},
Inst. Hautes Études Sci. Publ. Math. (1969), no. 36, 75--109. 

\bibitem[FS20]{FS20}  
S. Filipazzi, R. Svaldi, \emph{On the connectedness principle and dual complexes for generalized pairs}, 
arXiv:2010.08018v2.

\bibitem[F18]{F18}
O. Fujino; \emph{Semipositivity theorems for moduli problems}, 
Ann. of Math 187(2018), 1--27. 

\bibitem[F11]{F11}
O. Fujino, \emph{Fundamental theorems for the log minimal model program},
 Publ. Res. Inst. Math. Sci. 47 (2011), no. 3, 727--789.


\bibitem[Ha04]{Ha04}
P. Hacking, \emph{Compact moduli of plane curves}, 
Duke Math. J. \textbf{124} (2004), no. 2, 213--257.

\bibitem[HMX18]{HMX18}
C.~D.~Hacon, J.~M$^{\rm c}$Kernan and C.~Xu,
\emph{Boundedness of moduli of varieties of general type},
J. Eur. Math. Soc. \textbf{20} (2018), 865--901.

\bibitem[HMX14]{HMX14}
C.~D.~Hacon, J.~M$^{\rm c}$Kernan and C.~Xu,
\emph{ACC for log canonical thresholds}, Ann. of Math. (2) \textbf{180} (2014), no. 2, 523--571.

\bibitem[HMX13]{HMX13}
C.~D.~Hacon, J.~M$^{\rm c}$Kernan and C.~Xu,
\emph{On the birational automorphisms of varieties of general type},
Ann. \ of Math. \ (2) \textbf{177} (2013), no. 3, 1077--1111.

\bibitem[HX16]{HX16}
C.~D.~Hacon and C.~Xu; \emph{On finiteness of B-representations and semi-log canonical abundance.}
 Minimal Models and Extremal Rays (Kyoto, 2011), 361--377, Mathematical Society of Japan, Tokyo, Japan, 2016.

\bibitem[HX13]{HX13}
C.~D.~Hacon and C.~Xu; \emph{Existence of log canonical closures.}
Invent. Math. \textbf{192} (2013), no. 1, 161--195.
 
\bibitem[KM97]{KM97}
S. Keel, S. Mori, \emph{Quotients by groupoids}, Ann. of Math. (2) 145
(1997), no. 1, 193--213. 

\bibitem[KKMS73]{KKMS73}
G. Kempf, F. Knudsen, D. Mumford and B. Saint-Donat, \emph{Toroidal Embeddings I}, Springer, LNM 339, (1973). 

\bibitem[K21]{K21} 
J. Koll\'ar; \emph{Families of varieties of general type.}
Cambridge University Press (2022).

\bibitem[K13]{K13} 
J. Koll\'ar; \emph{Singularities of the minimal model program.}
Cambridge University Press (2013).

\bibitem[K97]{K97} 
J. Koll\'ar; \emph{Quotient spaces modulo algebraic groups}, Ann. of Math. (2) 145 (1997),
no. 1, 33--79.

\bibitem[K90]{K90}
J. Koll\'ar; \emph{Projectivity of complete moduli}, 
J. Differential Geom. 32 (1990), no. 1, 235--268.

\bibitem[KSB88]{KSB88} 
J. Koll\'ar, N. I. Shepherd-Barron, \emph{Threefolds and deformations of surface singularities},
Invent. Math.91(1988), no. 2, 299--338.

\bibitem[KX20]{KX20} 
J. Koll\'ar, C. Xu; \emph{Moduli of polarized Calabi-Yau pairs}, Acta Math.
Sin. (Engl. Ser.) 36 (2020), no. 6, 631--637.

\bibitem[KP17]{KP17} 
S. Kov\'acs, Z. Patakfalvi; \emph{Projectivity of the moduli space of stable
log-varieties and subadditivity of log-Kodaira dimension}, J. Amer. Math. Soc. 30
(2017), no. 4, 959--1021.

\bibitem[La16]{La16} 
R. Laza, \emph{The KSBA compactification for the moduli space of degree two K3 pairs}, J.
Eur. Math. Soc. (JEMS) 18 (2016), no. 2, 225--279.

\bibitem[Od21]{Od21}
Y. Odaka, \emph{On log minimality of weak K-moduli compactifications of Calabi-Yau varieties},  
  arXiv:2108.03832.  

\bibitem[Od20]{Od20}
Y. Odaka, \emph{Degenerated Calabi-Yau varieties with infinite components, Moduli compactifications, and limit toroidal structures}, arXiv:2011.12748.

\bibitem[PSh09]{PSh09} 
{Yu. Prokhorov, V.V. Shokurov, {\emph{Towards the second main theorem on complements.}} 
J.  Algebraic Geometry, \textbf{18} (2009) 151--199.}

\bibitem[V95]{V95}
E. Viehweg, \emph{Quasi-projective moduli for polarized manifolds,} 
Ergebnisse der Mathematik und ihrer Grenzgebiete (3), vol. 30, Springer-Verlag, Berlin, 1995.

\end{thebibliography}
\end{document}